\DeclareMathOperator{\tr}{tr}
\DeclareMathOperator{\diag}{diag}
\DeclareMathOperator{\dist}{dist}
\DeclareMathOperator{\rank}{rank}
\DeclareMathOperator{\Span}{Span}
\newcommand{\Prob}{\mathbb{P}}
\newcommand{\E}{\mathbb{E}}
\renewcommand\Re{\operatorname{Re}}
\renewcommand\Im{\operatorname{Im}}
\newcommand{\eps}{\varepsilon}
\newcommand{\T}{\mathrm{T}}
\theoremstyle{plain}
  \newtheorem{theorem}{Theorem}
  \newtheorem{proposition}[theorem]{Proposition}
  \newtheorem{lemma}[theorem]{Lemma}
  \newtheorem{corollary}[theorem]{Corollary}
\theoremstyle{definition}
  \newtheorem{definition}[theorem]{Definition}
  \newtheorem{remark}[theorem]{Remark}
\begin{document}

\title{Analysis of singular subspaces under random perturbations}

\author{Ke Wang}
\thanks{K. Wang is partially supported by Hong Kong RGC grant GRF 16304222, GRF 16308219 and ECS 26304920.}
\address{Department of Mathematics, Hong Kong University of Science and Technology, Hong Kong}
\email{kewang@ust.hk}

\begin{abstract}
We present a comprehensive analysis of singular vector and singular subspace perturbations in the signal-plus-noise matrix model with random Gaussian noise. Assuming a low-rank signal matrix, we extend the Davis-Kahan-Wedin theorem in a fully generalized manner, applicable to any unitarily invariant matrix norm, building on previous results by O'Rourke, Vu, and the author. Our analysis provides fine-grained insights, including $\ell_\infty$ bounds for singular vectors,  $\ell_{2, \infty}$ bounds for singular subspaces,  and results for linear and bilinear functions of singular vectors. Additionally, we derive $\ell_{2,\infty}$ bounds on perturbed singular vectors, taking into account the weighting by their corresponding singular values. Finally, we explore practical implications of these results in the Gaussian mixture model and the submatrix localization problem. 
\end{abstract}




\maketitle

\section{Introduction}\label{sec:intro}
Matrix perturbation theory has emerged as a central and foundational subject within various disciplines, including probability, statistics, machine learning, and applied mathematics. Perturbation bounds, which quantify the influence of small noise on the spectral parameters of a matrix, are of paramount importance in numerous applications such as matrix completion \cite{MR2565240,Cands2010MatrixCW,KMO09}, principal component analysis (PCA) \cite{JL09}, and community detection \cite{VLBB08, Vu18}, to mention a few. This paper aims to present a comprehensive analysis establishing perturbation bounds for the singular vectors and singular subspaces of a low-rank signal matrix perturbed by additive random Gaussian noise.

Consider an unknown $N \times n$ data matrix $A$.  Suppose we cannot observe $A$ directly but instead have access to a corrupted version $\widetilde{A}$ given by
\begin{equation} \label{def:tildeA}
	\widetilde{A} := A + E, 
\end{equation} 
where $E$ represents the noise matrix.  In this paper, we focus on real matrices, and the extension to complex matrices is straightforward. 

Assume that the $N \times n$ data matrix $A$ has rank $r \geq 1$.  The singular value decomposition (SVD) of $A$ takes the form $A=U D V^T,$ where $D= \diag(\sigma_1,\ldots, \sigma_r)$ is a diagonal matrix containing the non-zero singular values $\sigma_1\ge \sigma_2 \ge \cdots \ge \sigma_r>0$ of $A$; the columns of the matrices $U=(u_1,\ldots,u_r)$ and $V=(v_1,\ldots,v_r)$ are the orthonormal left and right singular vectors of $A$, respectively.  In other words, $u_i$ and $v_i$ are the left and right singular vectors corresponding to $\sigma_i$.  It follows that $U^T U = V^T V = I_r$, where $I_r$ is the $r \times r$ identity matrix.  For convenience we will take $\sigma_{r+i} = 0$ for all $i \geq 1$.  
 Denote the SVD of $\widetilde A$ given in \eqref{def:tildeA} similarly by $\widetilde A = {\widetilde U} {\widetilde D} {\widetilde V}^\T$, where the diagonal entries of ${\widetilde D}$ are the singular values $\widetilde \sigma_1 \ge \widetilde \sigma_2 \ge \cdots \ge \widetilde \sigma_{\min\{N,n\}} \geq 0$, and the columns of $\widetilde U$ and $\widetilde V$ are the orthonormal left and right singular vectors, denoted by $\widetilde u_i$ and $\widetilde v_i$, respectively. 

The primary focus of this paper is the singular subspaces that are spanned by the leading singular vectors. For $1\le k \le r$, let us denote
\begin{equation*}
\begin{aligned}
&U_k := \Span\{u_1,\ldots, u_k \}, \quad V_k := \Span\{v_1,\ldots, v_k \},\\
&\widetilde U_k := \Span\{\widetilde u_1,\ldots, \widetilde u_k \}, \quad \widetilde V_k := \Span\{\widetilde v_1,\ldots, \widetilde v_k \}.
\end{aligned}
\end{equation*}
With a slight abuse of notation, we also use $U_k = (u_1,\ldots, u_k)$ to represent the singular vector matrix. We employ the notation $V_k, \widetilde U_k, \widetilde V_k$ in a similar manner. Let $P_{U_k}=U_k U_k^\T$ (resp. $P_{V_k}=V_k V_k^\T$) be the orthogonal projection on the  subspace $U_k$ (resp. $V_k$). Denote the orthogonal complement of a subspace $W$ as $W^\perp$.

The classical perturbation bounds related to the changes in singular values and singular vectors are detailed below. The matrix norm $\vvvert \cdot \vvvert $ on $\mathbb R^{N\times n}$ is said to be unitarily invariant if $\vvvert A\vvvert  = \vvvert UAV\vvvert $ for all orthogonal matrices $U \in \mathbb R^{N\times N}$ and $V \in \mathbb R^{n\times n}$. In addition, we always consider the norm $\vvvert \cdot \vvvert $ to be \emph{normalized}.  This means that the norm always satisfies $\vvvert A \vvvert = 1$ if $A$ has its $(1,1)$ entry equal to 1 and all other entries equal to zero. A more thorough exploration of the properties of unitarily invariant matrix norms can be found in the supplementary material \cite{WangSupp24}. 

Denote
$\diag(\sigma_i - \widetilde\sigma_i)=\diag(\sigma_1 - \widetilde\sigma_2, \cdots, \sigma_{\min\{N,n\}} - \widetilde{\sigma}_{\min\{N,n\}}).$ This represents the difference in singular values between $A$ and $A+E$. The perturbations or changes in the singular values of $A$ and $A+E$ are provided by Mirsky's theorem (see Theorem 4.11 in Chapter IV from \cite{MR1061154}). 
\begin{theorem}[Mirsky]
Let $\widetilde{A} = A + E$ as in \eqref{def:tildeA}. Then for any unitarily invariant norm $\vvvert \cdot \vvvert$,
$$\vvvert \diag(\sigma_i - \widetilde\sigma_i) \vvvert \le \vvvert E \vvvert.$$
\end{theorem}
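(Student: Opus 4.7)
The plan is to reduce the statement to a weak-majorization inequality between singular values, proving it via the Hermitian dilation trick together with the Lidskii--Wielandt theorem. As a preliminary reduction, I would invoke the Ky Fan (or von Neumann) characterization of unitarily invariant norms: every such (normalized) norm corresponds to a symmetric gauge function of singular values, and $\vvvert X \vvvert \le \vvvert Y \vvvert$ holds for \emph{all} unitarily invariant norms if and only if the singular values of $X$ are weakly majorized by those of $Y$. Since $\diag(\sigma_i - \widetilde{\sigma}_i)$ has singular values $|\sigma_i - \widetilde{\sigma}_i|$, the theorem reduces to showing
$$
\sum_{i=1}^{k}|\sigma_i - \widetilde{\sigma}_i|^{\downarrow} \;\le\; \sum_{i=1}^{k}\sigma_i(E) \qquad \text{for all } k \ge 1,
$$
where $(\cdot)^{\downarrow}$ denotes the decreasing rearrangement.

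Next, I would pass to Hermitian matrices using the Jordan--Wielandt dilation. Define the symmetric $(N+n)\times(N+n)$ matrices
$$
\mathcal{A} := \begin{pmatrix} 0 & A \\ A^{\T} & 0 \end{pmatrix}, \quad \widetilde{\mathcal{A}} := \begin{pmatrix} 0 & \widetilde{A} \\ \widetilde{A}^{\T} & 0 \end{pmatrix}, \quad \mathcal{E} := \begin{pmatrix} 0 & E \\ E^{\T} & 0 \end{pmatrix},
$$
so that $\widetilde{\mathcal{A}} = \mathcal{A} + \mathcal{E}$. The eigenvalues of $\mathcal{A}$ are precisely $\pm \sigma_i(A)$ together with $|N-n|$ zeros, and analogously for $\widetilde{\mathcal{A}}$ and $\mathcal{E}$.

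I would then apply the Lidskii--Wielandt majorization theorem for Hermitian matrices: the vector of coordinate-wise eigenvalue differences $(\lambda_i(\widetilde{\mathcal{A}}) - \lambda_i(\mathcal{A}))_i$ (both sides decreasingly sorted) is majorized by the decreasingly sorted eigenvalues of $\mathcal{E}$. Matching entries through the Jordan--Wielandt spectrum, the nontrivial entries of this difference vector are $\pm(\widetilde{\sigma}_i - \sigma_i)$, while the eigenvalues of $\mathcal{E}$ are $\pm \sigma_i(E)$ (with zeros padding). Using the standard fact that majorization of a real vector implies weak majorization of its absolute-value decreasing rearrangement, the claimed weak majorization of $|\sigma_i-\widetilde{\sigma}_i|^{\downarrow}$ by $\sigma_i(E)^{\downarrow}$ follows, and the theorem is proved.

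The main obstacle is the bookkeeping at the last step: one must correctly align the signed differences in the full $(N+n)$-vector supplied by Lidskii--Wielandt with the unsigned differences $|\sigma_i-\widetilde{\sigma}_i|$, and verify that the doubling of singular values in $\mathcal{E}$ (each $\sigma_i(E)$ appears as $\pm \sigma_i(E)$) is compensated by the mirrored doubling on the left-hand side. An alternative route that avoids the dilation step is a direct Courant--Fischer min-max argument to verify the weak majorization, but the Jordan--Wielandt reduction is the cleanest path given that Lidskii--Wielandt is by now standard.
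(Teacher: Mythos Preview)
The paper does not prove Mirsky's theorem; it merely states it and cites Theorem 4.11 of Chapter IV in \cite{MR1061154}, so there is no in-paper argument to compare against. Your proposal is a correct and standard route to the result: reducing to Ky Fan dominance (Theorem \ref{thm:fan} in the paper), passing to the Hermitian dilation (exactly the $\mathcal A$, $\widetilde{\mathcal A}$, $\mathcal E$ of Section \ref{sec:LA}), and invoking Lidskii--Wielandt majorization. The bookkeeping concern you flag is genuine but resolves cleanly: both the difference vector and the eigenvalue vector of $\mathcal E$ have the same mirrored-doubled structure $(\pm d_i)$ and $(\pm \sigma_i(E))$, so after taking absolute values each entry appears exactly twice on both sides, and the weak-majorization inequality for the even Ky Fan indices $k=2j$ gives precisely $\sum_{i=1}^{j}|\sigma_i-\widetilde\sigma_i|^{\downarrow}\le \sum_{i=1}^{j}\sigma_i(E)$ for every $j$. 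The passage from ordinary majorization $x\prec y$ to weak majorization $|x|\prec_w |y|$ is justified, for instance, by the convexity and permutation-invariance of each Ky Fan $k$-norm together with the Birkhoff characterization of majorization.
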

When applied to the operator norm and eigenvalues of Hermitian matrices, the inequality stated can be recognized as the Weyl's inequality (see \cite[Corollary III.2.6]{Bhatia}).

The differences between subspaces $U_k$ and $\widetilde U_k$ of $A$ and $A+E$ can be quantified by calculating the separation between $U_k$ and $\widetilde U_k$. This is achieved using $k$ principal angles, defined as $0\le \theta_1\le \cdots \le \theta_k\le {\pi}/{2}$. These angles measure the distance between the two subspaces. A detailed definition is given in Section \ref{sec:wdk} below. Denote  
$$\sin\angle(U_k, \widetilde U_k) := \diag(\sin\theta_1, \cdots , \sin\theta_k).$$ Define $\sin\angle(V_k, \widetilde V_k)$ analogously. The classical perturbation bound, which concerns the variations in the eigenspaces for symmetric matrices $A$ and $A+E$, was initially investigated by Davis and Kahan \cite{DK}. Further generalizations to singular subspaces of rectangular matrices are encapsulated in Wedin's theorem (Eq. (3.11) from \cite{Wedin}).
\begin{theorem}[Wedin \cite{Wedin}] \label{thm:wedin}  Let $\widetilde{A} = A + E$ as in \eqref{def:tildeA}. If $\hat\delta_k:=\sigma_k- \widetilde\sigma_{k+1}>0$, then for any unitarily invariant norm $\vvvert \cdot \vvvert$,
\begin{equation}\label{eq:wedin}
\vvvert \sin\angle(U_k, \widetilde U_k) \vvvert  \le \frac{\max\{\vvvert P_{{U}_k^\perp} E P_{\widetilde{V}_k} \vvvert, \vvvert  P_{{V}_k^\perp} E^\T P_{ \widetilde{U}_k} \vvvert \}}{\hat\delta_k}.
\end{equation}
The same result also holds for $\vvvert \sin\angle(V_k, \widetilde V_k) \vvvert$.
\end{theorem}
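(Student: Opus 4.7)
The plan is to reduce $\vvvert\sin\angle(U_k,\widetilde U_k)\vvvert$ to the unitarily invariant norm of a concrete cross-projection matrix, derive a coupled pair of matrix identities from the defining SVD relations of $A$ and $\widetilde A$, and then invert the resulting Sylvester system using the spectral gap. I would start by invoking the standard characterization (to be recalled in Section~\ref{sec:distance}) that the nonzero singular values of $U_{k,\perp}^\T\widetilde U_k$ are exactly $\sin\theta_1,\dots,\sin\theta_k$; hence $\vvvert\sin\angle(U_k,\widetilde U_k)\vvvert = \vvvert U_{k,\perp}^\T\widetilde U_k\vvvert$ for every unitarily invariant norm, so the task reduces to bounding the matrix $X := U_{k,\perp}^\T\widetilde U_k$ (and analogously $Y := V_{k,\perp}^\T\widetilde V_k$).

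Next, I would use the block SVD $A = U_kD_kV_k^\T + U_{k,\perp}\bar\Sigma V_{k,\perp}^\T$ (which yields $U_{k,\perp}^\T A = \bar\Sigma V_{k,\perp}^\T$) together with the defining relations $\widetilde A\widetilde V_k = \widetilde U_k\widetilde D_k$ and $\widetilde A^\T\widetilde U_k = \widetilde V_k\widetilde D_k$. Substituting $\widetilde A = A+E$ and left-multiplying by $U_{k,\perp}^\T$ and $V_{k,\perp}^\T$ respectively yields the coupled system
\begin{align*}
X\widetilde D_k - \bar\Sigma Y &= U_{k,\perp}^\T E\widetilde V_k,\\
Y\widetilde D_k - \bar\Sigma^\T X &= V_{k,\perp}^\T E^\T\widetilde U_k,
\end{align*}
whose right-hand sides have unitarily invariant norms precisely $\vvvert P_{U_k^\perp}EP_{\widetilde V_k}\vvvert$ and $\vvvert P_{V_k^\perp}E^\T P_{\widetilde U_k}\vvvert$, matching the numerator in the claimed bound.

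I would then stack the two identities into a single block Sylvester equation $Z\widetilde D_k - MZ = R$, with $Z = \begin{pmatrix}X\\Y\end{pmatrix}$, $M = \begin{pmatrix}0 & \bar\Sigma \\ \bar\Sigma^\T & 0\end{pmatrix}$ (Hermitian, with spectrum in $[-\sigma_{k+1},\sigma_{k+1}]$), and $R = \begin{pmatrix}R_1\\R_2\end{pmatrix}$. Under the gap assumption the spectra of $\widetilde D_k$ and $M$ are disjoint, so Sylvester's equation has the unique solution $Z = \int_0^\infty e^{tM}R\,e^{-t\widetilde D_k}\,dt$; submultiplicativity of unitarily invariant norms in this integral gives $\vvvert Z\vvvert \le \vvvert R\vvvert/\delta$ where $\delta$ is the spectral separation, and the Ky Fan majorization $\sigma_i(X) \le \sigma_i(Z)$ (coming from $Z^\T Z = X^\T X + Y^\T Y \succeq X^\T X$) transfers this into a bound on $\vvvert X\vvvert$. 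The companion bound on $\vvvert\sin\angle(V_k,\widetilde V_k)\vvvert$ follows from the same argument with the roles of $U$ and $V$ interchanged.

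The main obstacle I anticipate is the correct bookkeeping of the spectral separation: the block-Sylvester setup above naturally produces the gap $\widetilde\sigma_k-\sigma_{k+1}$, whereas the stated $\hat\delta_k = \sigma_k-\widetilde\sigma_{k+1}$ emerges from a mirror-image derivation that instead starts from $AV_k = U_kD_k$ and $A^\T U_k = V_kD_k$ and projects onto $\widetilde A$'s complementary subspaces. Extracting a clean $\max\{\vvvert R_1\vvvert,\vvvert R_2\vvvert\}$ in the numerator (rather than a $\sqrt{2}$-inflated combination arising from the block stacking) is a further technical point, which I would handle by a direct entry-wise argument in a simultaneously diagonalizing basis: in such a basis one has
$$ X_{ij}(\widetilde\sigma_j-\sigma_{k+i}) = \frac{\widetilde\sigma_j(R_1)_{ij} + \sigma_{k+i}(R_2)_{ij}}{\widetilde\sigma_j+\sigma_{k+i}}, $$
exhibiting the left-hand side as a convex combination of $(R_1)_{ij}$ and $(R_2)_{ij}$, from which the entry-wise bound $|X_{ij}|\le \max\{|(R_1)_{ij}|,|(R_2)_{ij}|\}/(\widetilde\sigma_j-\sigma_{k+i})$ is immediate.
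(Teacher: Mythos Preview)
The paper does not prove Theorem~\ref{thm:wedin}: it is quoted as the classical Wedin bound (Eq.~(3.11) of \cite{Wedin}) and used only as a point of comparison for the paper's stochastic improvements. So there is no ``paper's proof'' to compare against; I assess your argument on its own merits.

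Your overall architecture --- rewrite $\vvvert\sin\angle(U_k,\widetilde U_k)\vvvert$ as $\vvvert U_{k,\perp}^\T\widetilde U_k\vvvert$, derive the coupled Sylvester system from the SVD relations, and invert using the spectral gap --- is exactly the standard route to Wedin's theorem, and the steps up through the block equation $Z\widetilde D_k - MZ = R$ are correct. Your observation about the gap bookkeeping is also accurate: the setup you wrote produces the separation $\widetilde\sigma_k-\sigma_{k+1}$, and obtaining $\sigma_k-\widetilde\sigma_{k+1}$ with the residuals as stated in \eqref{eq:wedin} requires the mirror derivation (or a symmetric reformulation), as you note.

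The genuine gap is in your last paragraph. After diagonalizing, you obtain the entrywise identity
\[
X_{ij}(\widetilde\sigma_j-\sigma_{k+i}) \;=\; \frac{\widetilde\sigma_j}{\widetilde\sigma_j+\sigma_{k+i}}\,(R_1)_{ij} \;+\; \frac{\sigma_{k+i}}{\widetilde\sigma_j+\sigma_{k+i}}\,(R_2)_{ij},
\]
and you infer the desired bound from ``$|X_{ij}|\le \max\{|(R_1)_{ij}|,|(R_2)_{ij}|\}/\hat\delta_k$ entrywise''. But entrywise domination (even by a convex combination with position-dependent weights) does \emph{not} imply domination in an arbitrary unitarily invariant norm. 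A two-by-two counterexample: with $A=\mathrm{diag}(1,0)$, $B=\mathrm{diag}(0,1)$ and weights $\lambda_{11}=1$, $\lambda_{22}=0$, the convex combination is $I_2$, whose trace norm is $2>1=\max\{\|A\|_*,\|B\|_*\}$. So the final implication fails as written. To rescue the argument for general $\vvvert\cdot\vvvert$ you need to use the specific Cauchy-type structure of the weights: writing
\[
\frac{\widetilde\sigma_j}{\widetilde\sigma_j^2-\sigma_{k+i}^2}=\tfrac12\Bigl(\frac{1}{\widetilde\sigma_j-\sigma_{k+i}}+\frac{1}{\widetilde\sigma_j+\sigma_{k+i}}\Bigr),\qquad
\frac{\sigma_{k+i}}{\widetilde\sigma_j^2-\sigma_{k+i}^2}=\tfrac12\Bigl(\frac{1}{\widetilde\sigma_j-\sigma_{k+i}}-\frac{1}{\widetilde\sigma_j+\sigma_{k+i}}\Bigr),
\]
each summand is a Loewner/Cauchy kernel, and one can invoke the Schur-multiplier bounds for such kernels (as in Bhatia, \emph{Matrix Analysis}, Ch.~VII, or the Davis--Kahan--Wedin treatment in Stewart--Sun) to get the factor $1/\hat\delta_k$ in every unitarily invariant norm. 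Alternatively, stay with the block Sylvester equation and quote the normal-matrix Sylvester bound directly; this cleanly gives $\vvvert Z\vvvert\le\vvvert R\vvvert/\hat\delta_k$, and then you must argue separately (again via the structure, not a crude $\sqrt2$ bound) that the stacked residual norm is controlled by the maximum. Either way, the ``entrywise convex combination'' shortcut is the step that needs replacing.
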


In the context of a unitarily invariant norm $\vvvert \cdot \vvvert$, there exist several well-established methods to quantify the separation between $U_k$ and $\widetilde U_k$. These include using $$\vvvert \sin\angle(U_k, \widetilde U_k) \vvvert, \, \vvvert P_{U_k} - P_{\widetilde U_k}\vvvert \,\text{ and } \min_{O\in \mathbb{O}^{k\times k}} \vvvert U_kO- \widetilde U_k\vvvert.$$ In  the supplementary material \cite{WangSupp24}, we provide a detailed discussion about the equivalence or relationships among these various methods. 

The traditional bounds previously mentioned offer precise estimates, catering to worst-case scenarios. However, modern applications often operate under the premise that the data matrix $A$ satisfies specific structural assumptions. A typical case is when $A$ has a low rank $r$, where $r$ remains constant or experiences slow growth relative to $N$ and $n$. Moreover, the noise matrix $E$ is generally assumed to be random.

In this paper, we aim to develop a stochastic variant of Wedin's theorem under these additional assumptions that the signal matrix $A$ is low-rank and the noise $E$ consists of i.i.d.\ Gaussian entries. This work builds on the recent developments in \cite{OVW2, OVW22}, and offers several substantial improvements. We extend the classical Davis-Kahan-Wedin theorem for unitarily invariant norms, provide sharper bounds with improved dependence on the signal rank $r$, and relax several technical assumptions used in earlier analyses. A more detailed comparison with prior works appears in Section~\ref{sec:ourcontribution}.

There is currently a surging interest in $\ell_\infty$ analysis (also known as entrywise analysis) of eigenvectors and singular vectors. This dynamic research area focuses on deriving rigorous bounds, such as $\ell_\infty$ bounds \cite{FWZ17,ZB18, EBW18, CFMW19,AFWZ20,BV23} for eigenvectors or singular vectors, and $\ell_{2,\infty}$ bounds for eigenspaces or singular subspaces \cite{CTP19, Lei19, CLCPC21,AFW22, ALP22}, in relation to perturbed matrix models. These analyses have significant impact across statistics and machine learning applications.

Inspired by recent advancements, we have derived  precise $\ell_\infty$ bounds for the perturbed singular vectors and the $\ell_{2,\infty}$ bounds for the perturbed singular subspaces of $A+E$. We have also established results for linear and bilinear forms of the perturbed singular vectors and subspaces, and investigated the $\ell_{2,\infty}$ bounds on the perturbed singular vectors weighted by their singular values. These new results are presented in Section \ref{sec:entrywise}. Throughout, we assume that the noise $E$ has i.i.d.\  Gaussian entries; extensions to sub-Gaussian noise are discussed in Remark~\ref{rem:gaussian-var}. Section~\ref{sec:dis} summarizes our main contributions and discusses the optimality of our bounds, with a focus on rank dependence. A high-level overview of the proof strategy appears in Section~\ref{sec:overview}, following notation in Section~\ref{sec:LA}. Full proofs are deferred to the supplementary material \cite{WangSupp24}.

In Section \ref{sec:app}, we demonstrate the practical applications of our theoretical findings within two statistical models: the Gaussian mixture model and the submatrix localization problem. Our main goal is to use these results to examine how well spectral algorithms work and provide clear, straightforward proofs of their performance.

\medskip

\noindent\textbf{Organization:} 
Section \ref{sec:main} presents our new matrix perturbation results, with Section \ref{sec:wdk} extending Wedin's $\sin\Theta$ theorem to stochastic versions for arbitrary unitarily invariant norms. Section \ref{sec:entrywise} provides results on $\ell_{\infty}$ and $\ell_{2,\infty}$ norms of singular vectors and subspaces, while Section \ref{sec:literature} surveys related literature. Section \ref{sec:dis} discusses our main contributions and their optimality. Applications to Gaussian mixture model and submatrix localization are presented in Sections \ref{sec:gmm} and \ref{sec:submatrix}. Section \ref{sec:basic} introduces basic tools and proof strategies, with detailed proofs and extensive numerical simulations provided in the supplementary material \cite{WangSupp24}.

\medskip

\noindent\textbf{Notation:} For a vector $v=(v_1,\cdots,v_n)\in \mathbb{R}^n$, the following norms are frequently used: $\|v\| = \sqrt{\sum_{i=1}^n v_i^2}$ and $\|v\|_\infty = \max_{i} |v_i|$. Also, $\|v\|_0$ is the number of non-zero elements in $v$. For a real matrix $M$, $\|M\|$ denotes its operator norm, while $\|M\|_F$ represents its Frobenius norm. The term $\|M\|_{\max}$ refers to the largest absolute value among its entries, and $\|M\|_{2,\infty}$ indicates the maximum Euclidean norm of its rows. For a set $S$, let $\mathbf{1}_S$ be the indicator function of this set. For two functions $f(n),g(n)>0$, we use the asymptotic notations $f(n)\gg g(n)$ and $g(n)=o(f(n))$ if $f(n)/g(n)\to \infty$ as $n\to \infty$. The notation $f(n)=O(g(n))$ and $f(n)\lesssim g(n)$ are used when there exists some constant $C > 0$ such that $f(n) \leq Cg(n)$ for sufficiently large $n$. If $f(n)=O(g(n))$ and $g(n)=O(f(n))$, we denote $f(n) \asymp g(n)$. The set of $n\times n$ orthogonal matrices is denoted by $\mathbb O^{n\times n}$. We denote by $
A \oplus B := \begin{pmatrix}
A & 0 \\
0 & B
\end{pmatrix}
$ the direct sum (block diagonal concatenation) of two matrices \( A \) and \( B \).

\section{New results on the matrix perturbation bounds}\label{sec:main}

\subsection{Stochastic Wedin's $\sin\Theta$ theorem}\label{sec:wdk}
We first generalize the previous results in \cite{OVW2, OVW22} to an arbitrary unitarily invariant norm $\vvvert \cdot \vvvert$.  We start with the concept of principal angles. Let $H$ and $W$ are two subspaces each of dimension $k$. The principal angles $0\le \theta_1\le \cdots \le \theta_k\le {\pi}/{2}$  between $H$ and $W$ are defined recursively as follows:
\begin{equation*}
\cos (\theta_i) = \max_{h\in H, w\in W} h^T w =h_i^T w_i, \qquad \|h\|=\|w\|=1
\end{equation*}
subject to the constraint
$$ h_i^T h_l=0, \quad w_i^T w_l =0\quad \text{for } \,l=1,\ldots,i-1.$$
Denote  $\angle (H,W) := \diag(\theta_1, \cdots , \theta_k)$ and $\sin\angle (H,W)  := \diag(\sin\theta_1, \cdots , \sin\theta_k)$. 

For any $1\le k \le s \leq r$, denote
\[ U_{k,s} := \Span\{u_k,\ldots, u_s \}, \quad \widetilde U_{k,s} := \Span\{\widetilde u_k,\ldots, \widetilde u_s \}, \]
$P_{U_{k,s}}$ the orthogonal projection onto $U_{k,s}$, and analogously for $V_{k,s}$,  $\widetilde V_{k,s}$ and $P_{V_{k,s}}$. Denote $$D_{k,s}=\diag(\sigma_{k},\cdots, \sigma_s)$$ and analogously for $\widetilde D_{k,s}$. If $k=1$, we simply use $D_s, \widetilde D_s, U_s, \widetilde U_{s}, P_{U_{s}}$ and $V_s, \widetilde V_{s}, P_{V_{s}}$.

The spectral gap (or separation) $$\delta_k:=\sigma_k-\sigma_{k+1},$$ which refers to the difference between consecutive singular values of a matrix, will play a key role in the following results. 

\begin{theorem}[Unitarily invariant norms: simplified asymptotic version] \label{thm:subspace-simple}
Let $A$ and $E$ be $N \times n$ real matrices, where $A$ is deterministic with rank $r \geq 1$ and the entries of $E$ are i.i.d.  $\mathcal{N}(0,\tau^2)$ random variables. Let $\vvvert \cdot \vvvert$ be any normalized, unitarily invariant norm.  Consider $1\le k \le r$ such that $\delta_{k}\gtrsim  \tau r\sqrt{r+\log(N+n)}$. Denote $k_0=\min\{k, r-k\}$. Then with probability $1 - (N+n)^{-C}$ for some $C>0$,
\begin{align}\label{eq:subspacebd-simple}
\vvvert \sin \angle ( U_{k}, \widetilde{ U}_{k}) \vvvert \lesssim  \tau  \sqrt{k k_0} \frac{\sqrt{r+\log(N+n)}}{\delta_{k}} +  \frac{\vvvert P_{U^\perp} E P_{\widetilde V_{k}}\vvvert + \vvvert P_{V^\perp} E^\T P_{\widetilde U_{k}} \vvvert }{{\sigma}_{k}}.
\end{align}
Specifically, for the operator norm, we have with probability $1 - (N+n)^{-C}$,
\begin{align}\label{eq:subspace-op}
&\| \sin \angle ( U_{k}, \widetilde{ U}_{k}) \| \lesssim  \tau \sqrt{k}\frac{\sqrt{r+\log(N+n)}}{\delta_{k}} \mathbf{1}_{\{k\neq r \}} + \frac{\|E\|}{{\sigma}_{k}}
\end{align}  
The same conclusion also holds for $\sin \angle (V_{k}, \widetilde{V}_{k})$. 
\end{theorem}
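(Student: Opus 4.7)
The plan is to refine the classical Wedin bound \eqref{eq:wedin} by splitting the $\sin\Theta$ distance into two pieces: a leakage of $\widetilde U_k$ outside the full signal column span $U$, and a mixing between $\widetilde U_k$ and the trailing signal directions $U_{k+1,r}$. Since $\vvvert\sin\angle(U_k,\widetilde U_k)\vvvert=\vvvert P_{U_k^\perp}\widetilde U_k\vvvert$ and $P_{U_k^\perp}=P_{U^\perp}+P_{U_{k+1,r}}$, the triangle inequality reduces the problem to controlling the two pieces separately. For the first, the SVD identity $(A+E)\widetilde V_k=\widetilde U_k\widetilde D_k$ combined with $P_{U^\perp}A=0$ yields $P_{U^\perp}\widetilde U_k=P_{U^\perp}E\widetilde V_k\widetilde D_k^{-1}$, so $\vvvert P_{U^\perp}\widetilde U_k\vvvert\le\vvvert P_{U^\perp}EP_{\widetilde V_k}\vvvert/\widetilde\sigma_k$. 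The assumption $\delta_k\gtrsim r\sqrt{r+\log(N+n)}$, the Gaussian tail bound $\|E\|\lesssim\sqrt{N+n}$, and Weyl's inequality together imply $\widetilde\sigma_k\gtrsim\sigma_k$, producing the second summand of \eqref{eq:subspacebd-simple}.

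For the intra-signal piece, project the identities $\widetilde A\widetilde V_k=\widetilde U_k\widetilde D_k$ and $\widetilde A^T\widetilde U_k=\widetilde V_k\widetilde D_k$ onto $U_{k+1,r}$ and $V_{k+1,r}$ respectively, using $U_{k+1,r}^T A=D_{k+1,r}V_{k+1,r}^T$. With $X=U_{k+1,r}^T\widetilde U_k$, $Y=V_{k+1,r}^T\widetilde V_k$, $\alpha=U_{k+1,r}^TE\widetilde V_k$, and $\beta=V_{k+1,r}^TE^T\widetilde U_k$, I obtain the coupled Sylvester system
\[
X\widetilde D_k-D_{k+1,r}Y=\alpha,\qquad Y\widetilde D_k-D_{k+1,r}X=\beta.
\]
Viewed as a single Sylvester equation for $\binom{X}{Y}$, the left coefficient $\widetilde D_k$ has spectrum $\ge\widetilde\sigma_k$ while the right block-operator has spectrum in $[-\sigma_{k+1},\sigma_{k+1}]$, so the separation is $\ge\widetilde\sigma_k-\sigma_{k+1}\gtrsim\delta_k$. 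The standard unitarily invariant Sylvester-Rosenblum estimate then gives $\vvvert X\vvvert\lesssim(\vvvert\alpha\vvvert+\vvvert\beta\vvvert)/\delta_k$, and $\vvvert P_{U_{k+1,r}}\widetilde U_k\vvvert=\vvvert X\vvvert$.

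The key technical estimate is the high-probability bound $\vvvert\alpha\vvvert\lesssim\sqrt{kk_0(r+\log(N+n))}$ (and symmetrically for $\beta$). Split $\alpha=(U_{k+1,r}^TEV)(V^T\widetilde V_k)+U_{k+1,r}^TEP_{V^\perp}\widetilde V_k$. The matrix $G:=U_{k+1,r}^TEV$ is, by rotational invariance of $E$, a $(r-k)\times r$ i.i.d.\ standard Gaussian matrix, so $\|G\|\lesssim\sqrt{r+\log(N+n)}$ with probability $1-(N+n)^{-C}$. Since $G\cdot V^T\widetilde V_k$ has rank at most $k_0$, and using $\vvvert M\vvvert\le\sqrt{\rho}\,\|M\|_F$ for any rank-$\rho$ matrix (a consequence of normalized unitarily invariance plus Cauchy-Schwarz),
\[
\vvvert G\cdot V^T\widetilde V_k\vvvert\le\sqrt{k_0}\,\|G\|\,\|V^T\widetilde V_k\|_F\lesssim\sqrt{kk_0(r+\log(N+n))},
\]
where $\|V^T\widetilde V_k\|_F\le\sqrt{k}$. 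The second summand is effectively second order in $E$ via $P_{V^\perp}\widetilde V_k=P_{V^\perp}E^T\widetilde U_k\widetilde D_k^{-1}$, of size $\|E\|/\sigma_k$, and should be absorbed into the main term under the spectral gap condition.

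The main obstacle is the rigorous control of this second-order cross term: because $\widetilde V_k$ depends on $E$, a direct Gaussian tail bound is unavailable. The clean remedy is a self-consistent bootstrap — use a crude Wedin estimate to obtain a preliminary bound on $\|P_{V^\perp}\widetilde V_k\|_F$, feed it into the Sylvester bound, and iterate, using the quantitative gap $\delta_k\gtrsim r\sqrt{r+\log(N+n)}$ to ensure the cross-term is absorbed. For the operator-norm statement \eqref{eq:subspace-op}, the sharper factor $\sqrt{k}$ (rather than $\sqrt{kk_0}$) comes from the route $\|X\|\le\|X\|_F\le(\|\alpha\|_F+\|\beta\|_F)/\delta_k$ together with the Hanson-Wright estimate $\|\alpha\|_F^2\lesssim k(r+\log(N+n))$; when $k=r$ one has $P_{U_{k+1,r}}=0$, which kills the intra-signal contribution and explains the indicator $\mathbf{1}_{\{k\ne r\}}$. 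The bound for $V_k,\widetilde V_k$ follows by swapping the roles of $U$ and $V$ throughout.
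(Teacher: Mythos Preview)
Your decomposition $P_{U_k^\perp}=P_{U^\perp}+P_{U_{k+1,r}}$ and your treatment of the out-of-signal piece $P_{U^\perp}\widetilde U_k$ match the paper. The Sylvester system for the intra-signal piece is also correct and is exactly the route the paper takes for the \emph{general-noise} Theorem~\ref{thm:generalnoise} (compare \eqref{eq:gensv1}--\eqref{eq:genui}). The trouble is that this route only yields the bound of Theorem~\ref{thm:generalnoise}, which differs from Theorem~\ref{thm:subspace-simple} by precisely the extra term $\|E\|^2/(\delta_k\sigma_k)$, and that term is exactly your cross piece. Concretely, $U_{k+1,r}^TEP_{V^\perp}$ is $(r-k)$-dimensional on the left but $(n-r)$-dimensional on the right, so its operator norm is of order $\sqrt{n}$, not $\sqrt{r}$; combined with $\|P_{V^\perp}\widetilde V_k\|\lesssim\|E\|/\sigma_k$ (this is already sharp --- it comes from a one-line identity and no bootstrap can improve it), the cross term contributes $\|E\|^2/(\delta_k\sigma_k)$ to $\|X\|$. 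Absorbing this into either target term would require $\delta_k\gtrsim\|E\|\asymp\sqrt{N+n}$, which is \emph{not} implied by the hypothesis $\delta_k\gtrsim r\sqrt{r+\log(N+n)}$. Eliminating this second-order term is the entire content of Theorem~\ref{thm:subspace-simple} over Theorem~\ref{thm:generalnoise}, and it requires a different mechanism.

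The paper's mechanism is the resolvent/local-law approach described in Section~\ref{sec:overview}. After linearizing to the symmetric model $\mathcal A+\mathcal E$, one writes $\widetilde{\mathbf u}_i=G(\widetilde\lambda_i)\mathcal A\widetilde{\mathbf u}_i$ with $G(z)=(z-\mathcal E)^{-1}$, replaces $G$ by the diagonal approximation $\Phi$ via the isotropic local law (Lemmas~\ref{lem:iso} and~\ref{lem:betterbd}), and uses the precise singular-value locations of Theorem~\ref{thm:singularlocation} to lower-bound the coefficient matrix on the left of \eqref{eq:recursive}. This gives $\|\mathcal U_J^T\widetilde{\mathbf u}_i\|\lesssim\eta/\delta_k$ directly (Lemma~\ref{lem:projsv}), with $\eta\asymp\sqrt{r+\log(N+n)}$ and no second-order cross term ever appearing. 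A secondary issue in your write-up: the claims $\widetilde\sigma_k\gtrsim\sigma_k$ and $\widetilde\sigma_k-\sigma_{k+1}\gtrsim\delta_k$ do not follow from Weyl under the stated hypothesis; the latter needs the finer lower bound $\widetilde\sigma_k\ge\sigma_k-\|U_k^TEV_k\|$ of \eqref{eq:lowerbd}.
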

This bound serves as a comprehensive generalization of the classical Wedin's bound in Theorem \ref{thm:wedin} when applied to the context of random noise. When $k=r$, the first term on the right-hand side of \eqref{eq:subspacebd-simple} vanishes, then \eqref{eq:subspacebd-simple} is essentially consistent with the Wedin's bound in Theorem \ref{thm:wedin}. When $k<r$, it is worth noting that  $P_{U_{k}^\perp} = P_{U_{k+1,r}} + P_{U^\perp}$ and $P_{V_{k}^\perp} = P_{V_{k+1,r}} + P_{V^\perp}$. Using Wedin's bound \eqref{eq:wedin}, one can deduce that
\begin{align}\label{eq:comparew}
&\vvvert \sin \angle ( U_{k}, \widetilde{ U}_{k}) \vvvert \nonumber\\
& \le \frac{\vvvert P_{U_{k+1,r}}E P_{\widetilde V_{k}}\vvvert + \vvvert P_{V_{k+1,r}}E^\T P_{\widetilde U_{k}}\vvvert}{\hat \delta_{k}}+\frac{\vvvert P_{U^\perp}E P_{\widetilde V_{k}}\vvvert+\vvvert P_{V^\perp}E^\T P_{\widetilde V_{k}}\vvvert}{\hat \delta_{k}}.
\end{align}
In the setting of a low-rank signal matrix $A$ and random noise $E$, our result \eqref{eq:subspacebd-simple} improves the second term on the right-hand side of \eqref{eq:comparew} by replacing the denominator $\hat \delta_{k} = \sigma_{k}- \widetilde \sigma_{k+1}$ with a usually much larger quantity $\sigma_{k}$. Additionally, we demonstrate that the first term on the right-hand side of \eqref{eq:comparew} is essentially $C(r)/\delta_{k}$, where $C(r)\lesssim r^{3/2}$.  This improvement is particularly important in statistical applications where $\delta_k \ll \sigma_k$, which frequently occurs in settings such as principal component analysis, matrix completion, and spiked covariance models. In these regimes, the Wedin's bounds become loose or even vacuous because they require the spectral gap to be comparable to the noise level. By contrast, our bounds remain effective by leveraging the signal strength $\sigma_k$, providing meaningful and tighter control even when $\delta_k \ll \sigma_k$.

In practice, computing the second term on the right-hand side of \eqref{eq:subspacebd-simple} precisely is challenging due to the dependence among $E, P_{\widetilde U_{k}}, P_{\widetilde V_{k}}$. Therefore, for practical applications, a simplified bound below offers convenience.
\begin{corollary}Under the assumptions of Theorem \ref{thm:subspace-simple}, the following holds with probability $1-(N+n)^{-C}$,
\begin{align}\label{eq:simplerbd}
\vvvert \sin \angle ( U_{k}, \widetilde{ U}_{k})\vvvert \lesssim  \tau \sqrt{k k_0} \frac{\sqrt{r+\log(N+n)}}{\delta_{k}} + k\frac{\|E\|}{\sigma_{k}}.
\end{align}
\end{corollary}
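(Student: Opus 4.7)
The plan is to deduce the corollary directly from Theorem \ref{thm:subspace-simple} by controlling the two ``residual'' terms $\vvvert P_{U^\perp} E P_{\widetilde V_k}\vvvert$ and $\vvvert P_{V^\perp} E^\T P_{\widetilde U_k}\vvvert$ in terms of the operator norm $\|E\|$. The key observation is that both matrices have low rank: since $P_{\widetilde V_k}$ and $P_{\widetilde U_k}$ are orthogonal projections onto $k$-dimensional subspaces, the products $P_{U^\perp} E P_{\widetilde V_k}$ and $P_{V^\perp} E^\T P_{\widetilde U_k}$ are each of rank at most $k$.

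I would then invoke the following standard fact about unitarily invariant norms: if $\vvvert\cdot\vvvert$ is any normalized unitarily invariant norm on $\mathbb{R}^{N\times n}$, and $M$ is a matrix of rank at most $k$, then
\[
\vvvert M \vvvert \;\le\; \|M\|_{*} \;=\; \sum_{i=1}^{\rank(M)} \sigma_i(M) \;\le\; k\,\|M\|,
\]
where $\|M\|_*$ is the nuclear norm. This is a consequence of the characterization of unitarily invariant norms in terms of symmetric gauge functions (see the discussion of matrix norms in Section \ref{sec:matrixnorms}, or Proposition IV.2.11 of \cite{Bhatia}), together with the elementary inequality $\sigma_i(M)\le \sigma_1(M)=\|M\|$.

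Applying this with $M = P_{U^\perp} E P_{\widetilde V_k}$, and using $\|P_{U^\perp}\|\le 1$ and $\|P_{\widetilde V_k}\|\le 1$, gives
\[
\vvvert P_{U^\perp} E P_{\widetilde V_k}\vvvert \;\le\; k\,\|P_{U^\perp} E P_{\widetilde V_k}\| \;\le\; k\,\|E\|,
\]
and symmetrically $\vvvert P_{V^\perp} E^\T P_{\widetilde U_k}\vvvert \le k\,\|E\|$. Substituting these two deterministic bounds into \eqref{eq:subspacebd-simple} replaces the second term on its right-hand side by $\lesssim k\|E\|/\sigma_k$, yielding \eqref{eq:simplerbd} on the same high-probability event as Theorem \ref{thm:subspace-simple}.

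There is no real obstacle here; the argument is a direct corollary once one notices the rank-$k$ structure of the two residual matrices. The only small subtlety is making sure the rank-versus-norm inequality $\vvvert M\vvvert \le k\|M\|$ is used with the normalization convention of the paper (so that the nuclear norm is indeed an upper bound and the operator norm a lower bound among normalized unitarily invariant norms); this is exactly the normalization adopted in Section \ref{sec:matrixnorms}.
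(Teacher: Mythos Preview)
Your proposal is correct and is exactly the argument the paper has in mind: the corollary is stated without proof, and the intended derivation is precisely to bound the residual terms in \eqref{eq:subspacebd-simple} via $\vvvert M\vvvert \le \|M\|_* \le (\rank M)\,\|M\|$ (this is \eqref{eq:equivnorms} together with the trivial nuclear-norm bound), using that $P_{\widetilde V_k}$ and $P_{\widetilde U_k}$ are rank-$k$ projections.
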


Theorem \ref{thm:subspace-simple} follows immediately from the next general and non-asymptotic result. For notation, let $\sigma_0:=\infty$, $\delta_0:=\infty$, and working with $\tau^{-1}(A+E)$, we assume $E$ has $\mathcal N(0,1)$ entries. We define $$\chi(b):= 1+ \frac{1}{4b(b-1)}  \text{ for } b\ge 2.$$
\begin{theorem}[Unitarily invariant norms: Gaussian noise]\label{thm:subspace}
Let $A$ and $E$ be $N \times n$ real matrices, where $A$ is deterministic and the entries of $E$ are i.i.d. standard Gaussian random variables. Let $\vvvert \cdot \vvvert$ be any normalized, unitarily invariant norm. Assume $A$ has rank $r \geq 1$. Let $K>0$ and $b\ge 2$. Denote $\eta:= \frac{11b^2}{(b-1)^2} \sqrt{2(\log 9) r+(K+7) \log (N+n)}$. Assume $(\sqrt N + \sqrt n)^2 \ge 32(K+7)\log(N+n) + 64(\log 9) r$. Consider $1\le r_0 \le r$ such that $ \sigma_{r_0} \ge 2b(\sqrt N+ \sqrt{n}) + 80b\eta r$ and $\delta_{r_0}\geq 75\chi(b)\eta r$. For any $1\le k \le s \leq r_0$, if  $\min\{\delta_{k-1}, \delta_s\}\geq 75\chi(b)\eta r$,  then
\begin{align}\label{eq:subspacebd}
\vvvert \sin \angle ( U_{k,s}, \widetilde{ U}_{k,s})\vvvert \le& 6\sqrt{2} \frac{(b+1)^2}{(b-1)^2}\sqrt{\min\{s-k+1,r-s+k-1\}} \frac{ \eta\sqrt{s-k+1}}{\min\{\delta_{k-1}, \delta_s\}} \nonumber\\
& + 2 \frac{\vvvert P_{U^\perp} E P_{\widetilde V_{k,s}} \oplus P_{V^\perp} E^\T P_{\widetilde U_{k,s}} \vvvert}{{\sigma}_{s}}
\end{align}
with probability at least $1 - 20(N+n)^{-K}$. 

Specifically, for the operator norm, we have with probability at least $1 - 20(N+n)^{-K}$
\begin{align*}
&\|\sin \angle ( U_{k,s}, \widetilde{ U}_{k,s})\| \le 3\sqrt{2} \frac{(b+1)^2}{(b-1)^2}\mathbf{1}_{\{s-k+1\neq r \}} \frac{ \eta\sqrt{s-k+1}}{\min\{\delta_{k-1}, \delta_s\}} + 2 \frac{\|E\|}{{\sigma}_{s}}.
\end{align*}
The same conclusion also holds for $\sin \angle ( V_{k,s}, \widetilde{ V}_{k,s})$.
\end{theorem}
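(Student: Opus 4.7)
The plan is to adapt the approach of \cite{OVW2, OVW22} to arbitrary unitarily invariant norms and to sharpen the rank dependence. Under the hypotheses of the theorem, Gaussian operator-norm concentration (giving $\|E\|\lesssim \sqrt N+\sqrt n$) together with Weyl's inequality place us on a high-probability event where $\widetilde\sigma_j\asymp \sigma_j$ for $j\le r_0$ and where the two-sided separations $\sigma_{k-1}-\widetilde\sigma_s$ and $\widetilde\sigma_k-\sigma_{s+1}$ are comparable to $\min\{\delta_{k-1},\delta_s\}$. Starting from the two-sided block version of Wedin's sin-$\Theta$ theorem yields, on this event,
\[
\vvvert \sin\angle(U_{k,s},\widetilde U_{k,s})\vvvert \lesssim \frac{\vvvert P_{U_{k,s}^\perp} E P_{\widetilde V_{k,s}}\oplus P_{V_{k,s}^\perp} E^\T P_{\widetilde U_{k,s}}\vvvert}{\min\{\delta_{k-1},\delta_s\}}.
\]

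I would next decompose $P_{U_{k,s}^\perp}=P_{U_{1,k-1}}+P_{U_{s+1,r}}+P_{U^\perp}$ (and analogously for $V$) and bound the three resulting pieces separately. The outer piece involving $P_{U^\perp}$ is improved via the identity $P_{U^\perp}(A+E)\widetilde V_{k,s}=P_{U^\perp} E \widetilde V_{k,s}=P_{U^\perp} \widetilde U_{k,s}\widetilde D_{k,s}$, valid because $\mathrm{range}(A)\subseteq U$; inverting $\widetilde D_{k,s}$ and using $\widetilde\sigma_s\ge \sigma_s/2$ replaces the denominator with $\sigma_s$ and produces the second term of \eqref{eq:subspacebd}. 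For an inner piece such as $P_{U_{1,k-1}} E P_{\widetilde V_{k,s}}$, I would split $P_{\widetilde V_{k,s}}=P_{V_{k,s}}+(P_{\widetilde V_{k,s}}-P_{V_{k,s}})$: the deterministic summand is a Gaussian matrix sandwiched between two low-rank deterministic projections and, by Gaussian concentration for unitarily invariant norms, has norm $\lesssim \sqrt{(k-1)(s-k+1)}\,\eta$; the stochastic summand contributes a factor of $\vvvert \sin\angle(V_{k,s},\widetilde V_{k,s})\vvvert$. Together with the analogous $V$-side inequality, this yields a coupled system in the two sin-$\Theta$ norms which, under the spectral gap hypothesis $\min\{\delta_{k-1},\delta_s\}\ge 75\chi(b)\eta r$, can be closed by a fixed-point / Neumann series argument.

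The main obstacle will be extracting the sharp prefactor $\sqrt{\min\{s-k+1,r-s+k-1\}\cdot(s-k+1)}$ rather than the cruder $\sqrt{r(s-k+1)}$ that one obtains by naively summing the three pieces. The sharper factor reflects the rank constraint $\rank\bigl(P_{U_{1,k-1}}\widetilde U_{k,s}+P_{U_{s+1,r}}\widetilde U_{k,s}\bigr)\le \min\{r-s+k-1,\,s-k+1\}$; exploiting this within the direct-sum structure of unitarily invariant norms requires either careful resolvent bookkeeping or a duality argument between $U_{k,s}$ and its signal-space complement $U_{1,k-1}\oplus U_{s+1,r}$, which has the same principal angles with the corresponding perturbed subspace. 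Finally, tuning the parameter $b$ and the function $\chi(b)=1+1/(4b(b-1))$---which arise from balancing the contraction factor in the fixed-point step against Gaussian tail estimates---produces the exact constants $6\sqrt{2}(b+1)^2/(b-1)^2$ and $\chi(b)$ appearing in the statement.
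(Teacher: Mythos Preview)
Your coupled-system step does not close under the theorem's hypotheses. When you split $P_{\widetilde V_{k,s}}=P_{V_{k,s}}+(P_{\widetilde V_{k,s}}-P_{V_{k,s}})$ inside $P_{U_{1,k-1}} E P_{\widetilde V_{k,s}}$, the ``stochastic summand'' is bounded by
\[
\vvvert P_{U_{1,k-1}} E (P_{\widetilde V_{k,s}}-P_{V_{k,s}})\vvvert \;\le\; \|P_{U_{1,k-1}} E\|\cdot \vvvert \sin\angle(V_{k,s},\widetilde V_{k,s})\vvvert,
\]
and by rotation invariance $\|P_{U_{1,k-1}} E\|\asymp \sqrt{k-1}+\sqrt n\asymp \|E\|$, not $\eta$. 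Hence the coupled system for $X=\vvvert\sin\angle(U_{k,s},\widetilde U_{k,s})\vvvert$ and $Y=\vvvert\sin\angle(V_{k,s},\widetilde V_{k,s})\vvvert$ has contraction factor of order $\|E\|/\min\{\delta_{k-1},\delta_s\}$, and the gap hypothesis $\min\{\delta_{k-1},\delta_s\}\ge 75\chi(b)\eta r$ says nothing about this ratio: the theorem is meant to apply precisely in the regime $\eta r\ll \|E\|$ (small gaps, large noise), where your fixed point diverges. If instead you split $P_{\widetilde V_{k,s}}=P_V P_{\widetilde V_{k,s}}+P_{V^\perp}P_{\widetilde V_{k,s}}$ and use $\|P_{V^\perp}P_{\widetilde V_{k,s}}\|\le \|E\|/\widetilde\sigma_s$, you avoid the coupling but pick up an additive term $\|E\|^2/(\sigma_s\min\{\delta_{k-1},\delta_s\})$, which is exactly the extra term in Theorem~\ref{thm:generalnoise} that Theorem~\ref{thm:subspace} is supposed to \emph{remove}.

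The paper does not start from Wedin at all. It linearizes to $\mathcal A,\mathcal E$, writes $\widetilde{\mathbf u}_i=G(\widetilde\lambda_i)\mathcal A\widetilde{\mathbf u}_i$ with $G(z)=(z-\mathcal E)^{-1}$, and replaces $G(\widetilde\lambda_i)$ by the random diagonal approximation $\Phi(\widetilde\lambda_i)$ via an isotropic local law (Lemmas~\ref{lem:iso} and \ref{lem:betterbd}). The resulting equation for $\mathcal U_J^\T\widetilde{\mathbf u}_i$ has a coefficient matrix whose smallest singular value is controlled by the singular-value location Theorem~\ref{thm:singularlocation}, and a right-hand side bounded by $\eta/\widetilde\sigma_i$; this yields $\|\mathcal U_J^\T\widetilde{\mathbf u}_i\|\lesssim \eta/\min\{\delta_{k-1},\delta_s\}$ directly (Lemma~\ref{lem:projsv}), with no factor of $\|E\|$ and no coupling to the $V$-side. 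The $\sqrt{\min\{s-k+1,r-s+k-1\}}$ factor then falls out of the trivial rank bound $\vvvert\mathcal U_J^\T\widetilde{\mathcal U}_I\vvvert\le \sqrt{\rank}\cdot\|\mathcal U_J^\T\widetilde{\mathcal U}_I\|_F$; it is not obtained by any duality argument. The outer piece is handled exactly as you describe.
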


\begin{remark}
The parameters $K$ and $b$ are user-specified positive constants. The parameter $K$ controls the probability level: larger $K$ yields a smaller failure probability $(N+n)^{-K}$ at the cost of slightly larger constants. The parameter $b$ governs the trade-off between the required singular value separation (through $\chi(b)$) and the constants in the bound. A larger $b$ corresponds to a stronger signal regime: it reduces $\chi(b)$, thereby relaxing the spectral gap assumption, and can also lead to sharper constants in the final estimates. 
\end{remark} 

\begin{remark}Throughout the proofs, we work on the event that $\|E\| \le 2(\sqrt N+ \sqrt{n}).$ Lemma \ref{lemma:norm} below guarantees this event holds with very high probability.   In Theorem \ref{thm:subspace}, the parameter $b\ge 2$ represents the signal-to-noise ratio, and in the proof, we ensure that $\sigma_{r_0}/\|E\| \ge b$. The parameter $b$, which could depend on $N$ and $n$, could account for a particularly strong signal. We have selected certain constants and expressions such as 80, $75\chi(b)$, and $ \frac{(b+1)^2}{(b-1)^2}$ for the sake of convenience in our computations while our primary objective was not to optimize these constants within the proof. It is also feasible to conduct work on the event that $\|E\| \le (1+\epsilon_1)(\sqrt N+ \sqrt{n})$, and assume $b \ge 1+\epsilon_2$ for $\epsilon_1, \epsilon_2>0$. By following the same proof, one can arrive at refined constants and bounds.
\end{remark}

To go beyond the i.i.d. Gaussian noise matrix, we record the following results on the perturbation of singular values and singular subspaces that is obtained using a similar approach as in the previous work by O'Rourke, Vu and the author \cite{OVW2}. In particular, these results remain valid for random noise of any specific structure, as long as the noise has a negligible effect on the singular subspaces of matrix $A$.
\begin{theorem}[Singular value bounds: general noise]\label{thm:general-sv}
Assume $A$ has rank $r$ and $E$ is random. Let $1\le k \le r$. Consider any $\varepsilon\in (0,1)$.
\begin{itemize}
\item If there exists $t>0$ such that  $\|U_k^T E V_k\| \le t$ with probability at least $1-\varepsilon$, then we have, with probability at least $1-\varepsilon$,
\begin{align}\label{eq:lowerbd}
\widetilde \sigma_k \ge \sigma_k - t.
\end{align}
\item If there exist $L, B>0$ such that $\|U^T E V\| \le L$ and $\|E\| \le B$ with probability at least $1-\varepsilon$, then we have, with probability at least $1-\varepsilon$,
\begin{align}\label{eq:upperbd}
\widetilde \sigma_k \le \sigma_k + 2\sqrt{k} \frac{B^2}{\widetilde \sigma_k} + k \frac{B^3}{\widetilde \sigma_k^2} + L.
\end{align}
\end{itemize}
\end{theorem}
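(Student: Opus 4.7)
The plan is to reduce each statement to Weyl's singular-value perturbation inequality on a well-chosen $k\times k$ compression of $\widetilde A$, with the noise contribution isolated and estimated separately.

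For the lower bound, I would use the Courant--Fischer identity $\widetilde\sigma_k=\max_{\dim S=k}\min_{y\in S,\,\|y\|=1}\|\widetilde A y\|$ evaluated at the test subspace $S=V_k$. Since $U_k^{\T}$ is a contraction, $\min_{\|c\|=1}\|\widetilde A V_k c\|\ge \sigma_k(U_k^{\T}\widetilde A V_k)$. Writing $U_k^{\T}\widetilde A V_k=\diag(\sigma_1,\ldots,\sigma_k)+U_k^{\T} E V_k$ and applying Weyl to this $k\times k$ matrix gives $\widetilde\sigma_k\ge \sigma_k-\|U_k^{\T}E V_k\|\ge \sigma_k-t$ on the event from the hypothesis.

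For the upper bound the dual observation $\widetilde U_k^{\T}\widetilde A\widetilde V_k=\diag(\widetilde\sigma_1,\ldots,\widetilde\sigma_k)$ yields $\widetilde\sigma_k=\sigma_k(\widetilde U_k^{\T}\widetilde A\widetilde V_k)$. Splitting this as $\widetilde U_k^{\T}A\widetilde V_k+\widetilde U_k^{\T}E\widetilde V_k$, Weyl bounds $\widetilde\sigma_k$ by $\sigma_k(\widetilde U_k^{\T}A\widetilde V_k)+\|\widetilde U_k^{\T}E\widetilde V_k\|$, and the standard monotonicity $\sigma_i(P^{\T}MQ)\le \sigma_i(M)$ for $P,Q$ with orthonormal columns gives $\sigma_k(\widetilde U_k^{\T}A\widetilde V_k)\le \sigma_k$. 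It remains to control $\|\widetilde U_k^{\T}E\widetilde V_k\|$ by $L+2\sqrt{k}B^2/\widetilde\sigma_k+kB^3/\widetilde\sigma_k^2$. Inserting $I=P_U+P_{U^\perp}$ and $I=P_V+P_{V^\perp}$ splits it into four pieces: the pure signal piece $(U^{\T}\widetilde U_k)^{\T}(U^{\T}EV)(V^{\T}\widetilde V_k)$ has norm at most $L$, and the remaining three are handled using $\|E\|\le B$ together with bounds on $\|P_{U^\perp}\widetilde U_k\|$ and $\|P_{V^\perp}\widetilde V_k\|$.

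The step supplying the $\sqrt{k}$ scaling is the crux and will be the main delicacy. From $\widetilde A\widetilde v_i=\widetilde\sigma_i\widetilde u_i$ together with $A\widetilde v_i\in\mathrm{range}(U)$, I obtain the columnwise identity $P_{U^\perp}\widetilde u_i=P_{U^\perp}E\widetilde v_i/\widetilde\sigma_i$, so $\|P_{U^\perp}\widetilde u_i\|\le B/\widetilde\sigma_i$. A naive columnwise operator-norm bound would lose the $\sqrt{k}$, so instead I sum in Frobenius norm: $\|P_{U^\perp}\widetilde U_k\|\le \|P_{U^\perp}\widetilde U_k\|_F\le\sqrt{k}B/\widetilde\sigma_k$, and symmetrically for $V$. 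Feeding these into the two mixed pieces yields $\sqrt{k}B^2/\widetilde\sigma_k$ each, and into the doubly orthogonal piece yields $kB^3/\widetilde\sigma_k^2$, completing the claimed bound.
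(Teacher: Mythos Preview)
Your proposal is correct and follows essentially the same approach as the paper. The only cosmetic difference is that the paper uses the bilinear min--max characterization $\sigma_j(S)=\max_{W,K}\min_{(x,y)}x^{\T}Sy$ (taking $W=U_k$, $K=V_k$ for the lower bound and $W=\widetilde U_k$, $K=\widetilde V_k$ for the upper bound), whereas you use the one-sided Courant--Fischer form and work directly with the compressions $U_k^{\T}\widetilde A V_k$ and $\widetilde U_k^{\T}\widetilde A\widetilde V_k$; the subsequent decomposition via $P_U,P_{U^\perp},P_V,P_{V^\perp}$, the columnwise identity $P_{U^\perp}\widetilde u_i=P_{U^\perp}E\widetilde v_i/\widetilde\sigma_i$, and the $\sqrt{k}$ factor (which the paper obtains via Cauchy--Schwarz on $x=\sum_s c_s\widetilde u_s$, you via $\|\cdot\|\le\|\cdot\|_F$) are identical.
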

\begin{theorem}[Singular subspace bounds: general noise]\label{thm:generalnoise} Assume $A$ has rank $r$ and $E$ is random. Let $1\le k \le r$. For  $\varepsilon>0$, assume there exist $L,B>0$ such that $\|U^\T E V\| \le L$ and $\|E\| \le B$ with probability at least $1-\varepsilon$. Furthermore, assume $\delta_k=\sigma_k-\sigma_{k+1}\ge 2L$. Then for any normalized, unitarily invariant norm $\vvvert \cdot \vvvert$, the following holds with probability at least $1-\varepsilon$,
\begin{align*}
\vvvert \sin\angle (U_k, \widetilde U_k) \vvvert \le 2 \sqrt{k \min\{k,r-k \}} \left(\frac{L }{\delta_k} + 2\frac{B^2}{\delta_k \sigma_k} \right) +2 k\frac{ B }{ \sigma_k}.
\end{align*}
More specifically, for the operator norm, 
\begin{align*}
\| \sin\angle (U_k, \widetilde U_k) \| \le 2 \sqrt{k} \left(\frac{L }{\delta_k} + 2\frac{B^2}{\delta_k \sigma_k} \right)\mathbf{1}_{\{ k< r\}} + 2 \frac{B}{\sigma_k}.
\end{align*}
The same result also holds for $\sin\angle (V_k, \widetilde V_k)$.
\end{theorem}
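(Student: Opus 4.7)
The plan is to combine Wedin's theorem with the rank-$r$ structure of $A$ so as to replace the empirical gap $\hat\delta_k=\sigma_k-\widetilde\sigma_{k+1}$ (which could degenerate when $B$ is large) by the true gap $\delta_k$ for the within-signal contribution, and by $\sigma_k$ for the leakage contribution. Using $\vvvert\sin\angle(U_k,\widetilde U_k)\vvvert=\vvvert P_{U_k^\perp}\widetilde U_k\vvvert$ together with the orthogonal decomposition $P_{U_k^\perp}=P_{U_{k+1,r}}+P_{U^\perp}$ available because $A$ has rank $r$, it suffices to control the within-signal term $\vvvert P_{U_{k+1,r}}\widetilde U_k\vvvert$ and the leakage term $\vvvert P_{U^\perp}\widetilde U_k\vvvert$ separately.

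For the leakage term, since $P_{U^\perp}A=0$, the singular-value identity $\widetilde A\widetilde V_k=\widetilde U_k\widetilde D_k$ yields $P_{U^\perp}\widetilde U_k=P_{U^\perp}E\widetilde V_k\widetilde D_k^{-1}$. The right-hand side has rank at most $k$, so the nuclear-norm bound $\vvvert M\vvvert\le\|M\|_*\le k\|M\|$ for rank-$k$ matrices gives $\vvvert P_{U^\perp}\widetilde U_k\vvvert\le k\|E\|/\widetilde\sigma_k\le kB/\widetilde\sigma_k$. The assumption $\delta_k\ge 2L$, combined with Weyl's inequality and $\sigma_k\ge\delta_k$, yields $\widetilde\sigma_k\ge\sigma_k-L\ge\sigma_k/2$, producing the stated term $2kB/\sigma_k$. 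For the operator norm the rank prefactor collapses to $1$, giving $2B/\sigma_k$.

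For the within-signal term, set $X:=U_{k+1,r}^T\widetilde U_k$ and $Y:=V_{k+1,r}^T\widetilde V_k$, so $\vvvert P_{U_{k+1,r}}\widetilde U_k\vvvert=\vvvert X\vvvert$. Using the identity $U_{k+1,r}^T A=D_{k+1,r}V_{k+1,r}^T$ and projecting both SVD relations for $\widetilde A$ gives the coupled system
\begin{align*}
X\widetilde D_k &= D_{k+1,r}Y + U_{k+1,r}^T E\widetilde V_k,\\
Y\widetilde D_k &= D_{k+1,r}X + V_{k+1,r}^T E^T\widetilde U_k.
\end{align*}
Adding and subtracting produces two Sylvester equations for $X\pm Y$ whose spectral gaps are $\widetilde\sigma_k-\sigma_{k+1}\ge\delta_k/2$ and $\widetilde\sigma_k+\sigma_{k+1}\ge\sigma_k/2$ respectively. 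The noise blocks on the right-hand sides are controlled by splitting $\widetilde V_k=P_V\widetilde V_k+P_{V^\perp}\widetilde V_k$ (and similarly for $\widetilde U_k$): the $P_V$ piece contributes an error $\le L$ via $\|U^T E V\|\le L$, while the $P_{V^\perp}$ piece produces a second-order term of order $B^2/\widetilde\sigma_k$ after applying the leakage bound of the previous paragraph to $\widetilde V_k$. The unitarily invariant Sylvester bound $\vvvert Z\vvvert\le\vvvert C\vvvert/g$, combined with the rank bound on the $(r-k)\times k$ matrices on the right-hand sides, then yields a within-signal contribution of order $\sqrt{k\min(k,r-k)}(L/\delta_k+B^2/(\delta_k\sigma_k))$.

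The principal obstacle is that the two target denominators $\delta_k$ and $\sigma_k$ cannot both come from a single application of Wedin's theorem, which would force $\hat\delta_k$ and potentially degenerate when $B\gtrsim\delta_k$. The resolution is to use the direct identity to extract $\sigma_k$ for the leakage term and the Sylvester structure to extract $\delta_k$ for the within-signal term, with the assumption $\delta_k\ge 2L$ ensuring $\widetilde\sigma_k\ge\sigma_k/2$ throughout so that all denominators remain on the correct order. The operator-norm statement follows by the same scheme with the rank prefactors replaced by $1$, and the bound for $\sin\angle(V_k,\widetilde V_k)$ is symmetric under the interchange $U\leftrightarrow V$ and $E\leftrightarrow E^T$.
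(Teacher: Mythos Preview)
Your overall strategy is essentially the same as the paper's: split $P_{U_k^\perp}=P_{U^\perp}+P_{U_{k+1,r}}$, handle the leakage term via $P_{U^\perp}\widetilde U_k=P_{U^\perp}E\widetilde V_k\widetilde D_k^{-1}$, and handle the within-signal term through the coupled equations obtained by projecting the two SVD identities onto $U_{k+1,r}$ and $V_{k+1,r}$. The paper eliminates $Y$ column-by-column to get $(\widetilde\sigma_i^2 I-D_{k+1,r}^2)U_{k+1,r}^\T\widetilde u_i=\widetilde\sigma_i\,U_{k+1,r}^\T E\widetilde v_i+D_{k+1,r}V_{k+1,r}^\T E^\T\widetilde u_i$ and then passes to the Frobenius norm via $\vvvert X\vvvert\le\sqrt{\min\{k,r-k\}}\,\|X\|_F$; your Sylvester formulation for $X\pm Y$ is an equivalent packaging of the same linear algebra.

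There is, however, one genuine gap. You write that ``Weyl's inequality \dots\ yields $\widetilde\sigma_k\ge\sigma_k-L$.'' Weyl only gives $\widetilde\sigma_k\ge\sigma_k-\|E\|\ge\sigma_k-B$, and the theorem imposes no relation between $B$ and $\sigma_k$, so this could be negative. The inequality $\widetilde\sigma_k\ge\sigma_k-L$ that you need (and that the paper uses) is \emph{not} Weyl: it comes from the min--max characterization, choosing the test subspaces $U_k,V_k$ so that
\[
\widetilde\sigma_k\;\ge\;\min_{\substack{(x,y)\in U_k\times V_k\\ \|x\|=\|y\|=1}} x^\T(A+E)y\;\ge\;\sigma_k-\|U_k^\T E V_k\|\;\ge\;\sigma_k-L.
\]
This is precisely the lower bound \eqref{eq:lowerbd} of Theorem~\ref{thm:general-sv}. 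Once you invoke this, the rest of your argument goes through: $\sigma_k\ge\delta_k\ge 2L$ gives $\widetilde\sigma_k\ge\sigma_k/2$ and $\widetilde\sigma_k-\sigma_{k+1}\ge\delta_k/2$, exactly as in the paper. Without it, both your leakage denominator $\widetilde\sigma_k$ and your Sylvester gap $\widetilde\sigma_k-\sigma_{k+1}$ are uncontrolled.
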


To apply Theorems \ref{thm:general-sv} and \ref{thm:generalnoise}, it is necessary to obtain effective bounds on $\|E\|$ and $\|U^\T E V \|$. In general, matrix concentration inequalities (refer to \cite{Tropp} for example) can provide good upper bounds on $\|E\|$ for random noise $E$ with heteroskedastic entries and even complex correlations among the entries. On the other hand, $U^\T E V$ is an $r\times r$ matrix, and $\|U^\T E V\|$ typically depends on $r$. Bounds on $\|U^\T E V\|$ can be obtained by applying concentration inequalities. We note that in the symmetric setting, Eldridge et al.~\cite[Theorem 6]{EBW18} established related eigenvalue upper bounds. Both approaches begin with the min-max characterization, but differ in methodology: we use a direct bilinear form argument, while they leverage signal-aligned subspaces. For random noise matrices, our bounds can provide better control when $k$ is not too large, while their framework offers additional precision through explicit exploitation of spectral gaps when present.

\subsection{$\ell_\infty$ and $\ell_{2, \infty}$ analysis}\label{sec:entrywise}
Next, we present a result regarding the estimation of the singular vectors on an entrywise basis. In this context, a parameter known as the incoherence parameter of the singular vector matrices $U$ and $V$, denoted as $\| U\|_{2,\infty}$ and $\| V\|_{2,\infty}$, is of central importance. Smaller values of  $\| U \|_{2,\infty}$ and $\| V \|_{2,\infty}$ suggest that the information contained in the signal matrix $A$ is less concentrated in just a few rows or columns. 

In this section, we use $U_{k,s}=(u_k,\cdots,u_s)$ to denote the singular vector matrix for $1\le k \le s \le r$. We abbreviate $U_{k,s}$ to $U_s$ when $k=1$. Note that $P_{U_{k,s}} = U_{k,s} U_{k,s}^\T$. These notations also apply to $\widetilde U_{k,s}$ and $\widetilde U_{s}$. For simplicity, we only state the results for the left singular vectors $U_{k,s}$. The corresponding results for the right singular vectors can be derived by applying these results to the transposes of matrices $A^\T$ and $A^\T+E^\T$.

Let us make a temporary assumption that $\sigma_1 \le n^2$. This assumption is reasonable because if $\sigma_k > n^2$, it indicates a highly significant signal, and the impact of noise becomes negligible in such cases.  Theorem \ref{thm:infinitybd} later will provide a precise treatment of these strong singular values (i.e., without the assumption $\sigma_1\le n^2$). Denote $\sigma_0:=\infty$ and $\delta_0:=\infty$. 
\begin{theorem}[$\ell_\infty$ and $\ell_{2, \infty}$ bounds: simplified asymptotic version]\label{thm:entrywise} Let $A$ and $E$ be $N \times n$ real matrices, where $A$ is deterministic with rank $r \geq 1$ and the entries of $E$ are i.i.d. $\mathcal{N}(0,\tau^2)$ random variables. Let $1\le k \le r$ and $ \sigma_{k} \ge 2\|E\|$.  Assume that $\sigma_1 \le n^2$.
\begin{itemize}
\item If $\min\{\delta_{k-1},\delta_{k}\} \gtrsim  \tau r\sqrt{r+\log(N+n)}$, then with probability $1-(N+n)^{-C}$,
\begin{align*}
\| \widetilde u_{k} - (\widetilde u_{k}^\T u_{k}) u_{k}\|_\infty \lesssim   \tau \frac{\sqrt{r+\log(N+n)}}{\min\{\delta_{k-1},\delta_{k}\}}  \|U\|_{2,\infty} + \tau \frac{\sqrt{r\log(N+n)}}{\sigma_k} .
\end{align*}
\item If $\delta_{k} \gtrsim  \tau r\sqrt{r+\log(N+n)}$, then with probability $1-(N+n)^{-C}$,
\begin{align*}
\| \widetilde{U}_{k} - P_{U_{k}} \widetilde{U}_{k} \|_{2,\infty} \lesssim    \tau \sqrt{k}\frac{\sqrt{r+\log(N+n)}}{\delta_{k}}  \|U\|_{2,\infty} +  \tau \sqrt{k}\frac{\sqrt{r\log(N+n)}}{\sigma_k}.
\end{align*}
\end{itemize}
\end{theorem}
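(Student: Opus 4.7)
My approach is to combine the $\sin\angle$ bounds from Theorem \ref{thm:subspace-simple} with identities derived from the perturbed eigen-equations. Starting from $\widetilde A\widetilde v_k = \widetilde\sigma_k\widetilde u_k$ and $\widetilde A^\T\widetilde u_k = \widetilde\sigma_k\widetilde v_k$, projecting onto $u_j$ and $v_j$ and eliminating one of the inner products yields, for $j\neq k$ with $j\le r$, the off-diagonal identity
\[
u_j^\T\widetilde u_k \;=\; \frac{\sigma_j\,(\widetilde u_k^\T E v_j) \;+\; \widetilde\sigma_k\,(u_j^\T E \widetilde v_k)}{\widetilde\sigma_k^2-\sigma_j^2},
\]
while $P_{U^\perp}A=0$ gives the cleaner identity $P_{U^\perp}\widetilde u_k = \widetilde\sigma_k^{-1}P_{U^\perp}E\widetilde v_k$. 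Under the hypothesis $\sigma_k\ge 2\|E\|$, Weyl's inequality ensures $|\widetilde\sigma_k^2-\sigma_j^2|\gtrsim \sigma_k\min\{\delta_{k-1},\delta_k\}$ for all $j\neq k$ with $j\le r$.

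Using the decomposition $\widetilde u_k-(u_k^\T\widetilde u_k)u_k = \sum_{j\neq k,\,j\le r}(u_j^\T\widetilde u_k)u_j + P_{U^\perp}\widetilde u_k$, I would bound the $\ell$-th entry by substituting the identity above and regrouping the resulting sum as $\widetilde u_k^\T E p_\ell + \widetilde v_k^\T E^\T q_\ell$, where
\[
p_\ell := \sum_{j\neq k,\,j\le r}\frac{\sigma_j\,u_j(\ell)}{\widetilde\sigma_k^2-\sigma_j^2}\, v_j, \qquad q_\ell := \sum_{j\neq k,\,j\le r}\frac{\widetilde\sigma_k\,u_j(\ell)}{\widetilde\sigma_k^2-\sigma_j^2}\, u_j.
\]
Bessel's inequality together with the denominator estimate and the bound $\sigma_j/|\widetilde\sigma_k^2-\sigma_j^2|\lesssim 1/|\sigma_j-\widetilde\sigma_k|$ gives $\|p_\ell\|,\|q_\ell\|\lesssim \|U\|_{2,\infty}/\min\{\delta_{k-1},\delta_k\}$, crucially avoiding a $\sigma_1/\sigma_k$ loss. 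To decouple each bilinear form from the perturbed vectors, I split $\widetilde u_k = (u_k^\T\widetilde u_k)u_k + \text{residual}$: the principal part contributes the fully deterministic Gaussian form $u_k^\T E p_\ell$, bounded by $\|p_\ell\|\sqrt{r+\log(N+n)}$ via Gaussian concentration and a net over the $r$-dimensional range of the coefficient vectors; the residual is absorbed using the operator-norm $\sin\angle$ bound together with $\|E\|\lesssim\sqrt{N+n}$, producing the first claimed term. The $\widetilde v_k^\T E^\T q_\ell$ form is treated symmetrically.

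For $P_{U^\perp}\widetilde u_k$, the $\ell$-th entry $(P_{U^\perp}E\widetilde v_k)(\ell)/\widetilde\sigma_k$ is analyzed by splitting $\widetilde v_k = (v_k^\T\widetilde v_k)v_k + \text{residual}$: the principal part yields $(P_{U^\perp}Ev_k)(\ell)/\widetilde\sigma_k$, a Gaussian linear form of variance at most one, controlled by $\sqrt{\log(N+n)}/\sigma_k$ uniformly in $\ell$; the residual is dominated by $\|E\|\cdot\|\widetilde v_k - (v_k^\T\widetilde v_k)v_k\|/\sigma_k$, which via the $\sin\angle$ bound produces the $\|U\|_{2,\infty}$-weighted piece of the noise term. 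The $\sqrt{r\log(N+n)}$ (rather than $\sqrt{\log(N+n)}$) scaling arises from an $\eps$-net over the $r$-dimensional image of $V$ that one uses to disentangle $E$ from $\widetilde v_k$. The $\ell_{2,\infty}$ bound on $\widetilde U_k - P_{U_k}\widetilde U_k$ then follows by running the same program for each column $\widetilde u_j$, $1\le j\le k$, and combining the entrywise squared bounds via Cauchy--Schwarz, which supplies the factor $\sqrt{k}$.

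The main obstacle is the self-referential nature of the argument: the residual in the split $\widetilde u_k = (u_k^\T\widetilde u_k)u_k + \cdots$ involves precisely the entrywise quantity one is trying to control. I would resolve this by a bootstrap: first plug in a crude $\ell_\infty$ estimate obtained by brute force from $\|\widetilde u_k-(u_k^\T\widetilde u_k)u_k\|_2\cdot\|E\|_{\max}$ combined with the $\sin\angle$ bound, then iterate once to tighten to the claimed sharp estimate. Carefully tracking this dependence between $E$ and the perturbed vectors $\widetilde u_k,\widetilde v_k$ without introducing parasitic $\sqrt{N}$ or $\sqrt{n}$ factors — which would spoil the targeted $\sqrt{r+\log(N+n)}$ scaling — is the most delicate part of the argument.
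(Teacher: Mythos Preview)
Your decomposition and the off-diagonal identity for $u_j^\T\widetilde u_k$ are correct and match the deterministic calculations in the paper's proof of Theorem~\ref{thm:generalnoise}. The gap is in the decoupling step, and it is not merely ``delicate'' --- it fails quantitatively under the stated hypotheses.

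Consider your residual term $(\widetilde u_k-(u_k^\T\widetilde u_k)u_k)^\T E p_\ell$. Bounding it by $\sin\angle(u_k,\widetilde u_k)\cdot\|E\|\cdot\|p_\ell\|$ and inserting the operator-norm $\sin\angle$ bound from Theorem~\ref{thm:subspace-simple} gives
\[
\left(\frac{\sqrt{r+\log(N+n)}}{\min\{\delta_{k-1},\delta_k\}}+\frac{\|E\|}{\sigma_k}\right)\cdot\|E\|\cdot\frac{\|U\|_{2,\infty}}{\min\{\delta_{k-1},\delta_k\}}.
\]
For this to be absorbed into the first claimed term $\frac{\sqrt{r+\log(N+n)}}{\min\{\delta_{k-1},\delta_k\}}\|U\|_{2,\infty}$ you need $\|E\|\lesssim\min\{\delta_{k-1},\delta_k\}$, i.e.\ a spectral gap of order $\sqrt{N+n}$. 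The theorem only assumes $\min\{\delta_{k-1},\delta_k\}\gtrsim r\sqrt{r+\log(N+n)}$, which can be far smaller. The same obstruction appears in your $P_{U^\perp}$ analysis: the residual $\|E\|\cdot\sin\angle(v_k,\widetilde v_k)/\sigma_k$ contains a piece $\|E\|^2/\sigma_k^2$, which is not dominated by $\sqrt{r\log(N+n)}/\sigma_k$ under $\sigma_k\ge 2\|E\|$ alone. Your proposed bootstrap does not help, because the obstruction is the $\|E\|$ factor multiplying the $\ell_2$ residual, not the choice of norm on $\widetilde u_k-(u_k^\T\widetilde u_k)u_k$.

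The paper's proof (Theorem~\ref{thm:infinitybd}) sidesteps this entirely by working with the resolvent $G(z)=(zI-\mathcal E)^{-1}$ of the linearized noise. The identity $\widetilde{\mathbf u}_i=G(\widetilde\lambda_i)\mathcal A\widetilde{\mathbf u}_i$ yields $\mathcal Q\widetilde{\mathbf u}_i=\mathcal Q\bigl(G(\widetilde\lambda_i)-\Phi(\widetilde\lambda_i)\bigr)\mathcal U\mathcal D\mathcal U^\T\widetilde{\mathbf u}_i$, and the isotropic local law (Lemma~\ref{lem:iso}) furnishes $|e_\ell^\T(G(z)-\Phi(z))\mathbf u_s|\lesssim\sqrt{\log(N+n)}/|z|^2$. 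The extra factor of $|z|^{-1}\asymp\sigma_k^{-1}$ compared to the trivial bound $\|G(z)\|\lesssim|z|^{-1}$ is exactly what cancels the $\|\mathcal D\mathcal U^\T\widetilde{\mathbf u}_i\|\lesssim|\widetilde\lambda_i|$ factor and produces $\sqrt{r\log(N+n)}/\sigma_k$ directly, with no $\|E\|$ residue and no need to decouple $\widetilde u_k$ from $E$ by splitting. This random-matrix input is the missing idea in your outline; without it (or a leave-one-out surrogate), the argument only recovers the Koltchinskii--Xia regime $\delta\gtrsim\sqrt{N+n}$.
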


In many applications, the primary interest lies in comparing  $\widetilde U_k$ with $U_k O$, accounting for the non-uniqueness of singular vectors via an orthogonal matrix $O$. A suitable choice of $O$ that aligns $U_k$ with $\widetilde U_k$ effectively can be determined by examining the SVD of $U_k^\T \widetilde U_k$. By Proposition A.5 in \cite{WangSupp24}, the SVD of $U_k^\T \widetilde U_k$ is $U_k^\T \widetilde U_k =O_1 \cos\angle(U_k, \widetilde U_k) O_2^\T$ and we choose $O=O_1 O_2^\T$. Note that the discrepancy between $U_k^\T \widetilde U_k$ and $O$ can be measured by the principal angles between the subspaces $U_k$ and $\widetilde U_k$. In Proposition A.6 in \cite{WangSupp24}, we establish
\begin{align}\label{eq:propbd}
 \| \widetilde{U}_{k} - U_k O \|_{2,\infty}\le \| \widetilde{U}_{k} - P_{U_{k}} \widetilde{U}_{k} \|_{2,\infty} + \left\|U_k \right\|_{2,\infty} \|\sin\angle(U_k, \widetilde{U}_{k}) \|^2.
\end{align}
Therefore, by combining Theorem \ref{thm:subspace} with Theorem \ref{thm:entrywise}, we obtain the next result.
\begin{corollary}\label{cor:matrix}Under the same assumption as Theorem \ref{thm:entrywise}, the following holds:
\begin{itemize}
\item If $\min\{\delta_{k-1},\delta_{k}\} \gtrsim  \tau r\sqrt{r+\log(N+n)}$, then with probability $1-(N+n)^{-C}$,
\begin{align*}
\min_{\mathsf s\in\{\pm1\}}\|u_k  - \mathsf s \widetilde u_k\|_{\infty} \lesssim&   \tau \frac{\sqrt{r+\log(N+n)}}{\min\{\delta_{k-1},\delta_{k}\}}  \|U\|_{2,\infty} +  \tau \frac{\sqrt{r\log(N+n)}}{\sigma_k} + \frac{\|E\|^2}{\sigma_k^2} \|u_k\|_\infty . 
\end{align*}
\item If $\delta_{k} \gtrsim  \tau r\sqrt{r+\log(N+n)}$, then with probability $1-(N+n)^{-C}$,
\begin{align*}
\min_{O\in \mathbb O^{k\times k}} \| \widetilde{U}_{k} - U_k O \|_{2,\infty} \lesssim &  \tau \sqrt{k}\frac{\sqrt{r+\log(N+n)}}{\delta_{k}}  \|U\|_{2,\infty}\\
& +  \tau \sqrt{k}\frac{\sqrt{r\log(N+n)}}{\sigma_k}  +  \frac{\|E\|^2}{\sigma_k^2} \|U_k\|_{2,\infty}.
\end{align*}
\end{itemize}
\end{corollary}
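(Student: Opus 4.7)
\medskip

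\noindent\textbf{Proof proposal for Corollary~\ref{cor:matrix}.}

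The plan is to combine Theorem~\ref{thm:entrywise}, which controls the projection error $\widetilde U_k - P_{U_k}\widetilde U_k$ (and its vector analogue), with the operator-norm bound \eqref{eq:subspace-op} coming from Theorem~\ref{thm:subspace}, which controls the squared sine of the principal angles. The bridge between these is the inequality \eqref{eq:propbd} for the $\ell_{2,\infty}$-case and a scalar analogue for the $\ell_\infty$-case. Throughout I work on the intersection of the high-probability events furnished by Theorem~\ref{thm:entrywise} and Theorem~\ref{thm:subspace}; a union bound keeps the overall failure probability at $O((N+n)^{-C})$.

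For the first bullet (the vector case), I would choose $\mathsf s = \operatorname{sign}(\widetilde u_k^\T u_k)$ and decompose
\[
u_k - \mathsf s \widetilde u_k = \bigl(1 - \mathsf s(\widetilde u_k^\T u_k)\bigr) u_k - \mathsf s\bigl(\widetilde u_k - (\widetilde u_k^\T u_k) u_k\bigr).
\]
Since the unique principal angle satisfies $\cos\theta_1 = |\widetilde u_k^\T u_k|$ and $\theta_1 \in [0,\pi/2]$, one has $|1 - \mathsf s(\widetilde u_k^\T u_k)| = 1-\cos\theta_1 \le \sin^2\theta_1 = \|\sin\angle(u_k,\widetilde u_k)\|^2$. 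The triangle inequality then gives
\[
\min_{\mathsf s\in\{\pm 1\}}\|u_k - \mathsf s\widetilde u_k\|_\infty \le \|u_k\|_\infty\,\|\sin\angle(u_k,\widetilde u_k)\|^2 + \|\widetilde u_k - (\widetilde u_k^\T u_k) u_k\|_\infty.
\]
The second term is bounded directly by the first bullet of Theorem~\ref{thm:entrywise}. For the first term, \eqref{eq:subspace-op} (applied with $k=s$) yields $\|\sin\angle(u_k,\widetilde u_k)\|^2 \lesssim \frac{r+\log(N+n)}{\min\{\delta_{k-1},\delta_k\}^2} + \frac{\|E\|^2}{\sigma_k^2}$, and the first summand here, multiplied by $\|u_k\|_\infty \le \|U\|_{2,\infty}$, is dominated by the first term of the bound in Theorem~\ref{thm:entrywise} under the gap hypothesis $\min\{\delta_{k-1},\delta_k\}\gtrsim r\sqrt{r+\log(N+n)} \ge \sqrt{r+\log(N+n)}$. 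The remaining contribution $\frac{\|E\|^2}{\sigma_k^2}\|u_k\|_\infty$ is exactly the third term of the corollary.

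For the second bullet (the matrix case), I would apply \eqref{eq:propbd} from Proposition~\ref{prop:2inf} with the orthogonal alignment $O = O_1 O_2^\T$ from the SVD of $U_k^\T \widetilde U_k$:
\[
\min_{O\in \mathbb O^{k\times k}}\|\widetilde U_k - U_k O\|_{2,\infty} \le \|\widetilde U_k - P_{U_k}\widetilde U_k\|_{2,\infty} + \|U_k\|_{2,\infty}\,\|\sin\angle(U_k,\widetilde U_k)\|^2.
\]
The first summand is controlled by the second bullet of Theorem~\ref{thm:entrywise}. For the second summand, \eqref{eq:subspace-op} gives $\|\sin\angle(U_k,\widetilde U_k)\|^2 \lesssim k\frac{r+\log(N+n)}{\delta_k^2}\mathbf{1}_{\{k<r\}} + \frac{\|E\|^2}{\sigma_k^2}$; the deterministic part, multiplied by $\|U_k\|_{2,\infty}$, is absorbed into $\sqrt{k}\frac{\sqrt{r+\log(N+n)}}{\delta_k}\|U\|_{2,\infty}$ under the hypothesis $\delta_k\gtrsim r\sqrt{r+\log(N+n)}$, while the stochastic part yields the trailing term $\frac{\|E\|^2}{\sigma_k^2}\|U_k\|_{2,\infty}$.

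Since the proof is essentially a clean composition of already-established bounds, no serious obstacle is expected; the only mildly delicate point is verifying that the squared gap terms arising from $\|\sin\angle(\cdot,\cdot)\|^2$ are genuinely subsumed by the linear-in-gap terms from Theorem~\ref{thm:entrywise}, which is where the gap assumptions $\min\{\delta_{k-1},\delta_k\}\gtrsim r\sqrt{r+\log(N+n)}$ and $\delta_k\gtrsim r\sqrt{r+\log(N+n)}$ are used.
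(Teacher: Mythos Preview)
Your proposal is correct and follows essentially the same approach as the paper: the paper explicitly derives Corollary~\ref{cor:matrix} by combining Theorem~\ref{thm:subspace} with Theorem~\ref{thm:entrywise} via the deterministic inequality \eqref{eq:propbd} (Proposition~\ref{prop:2inf}), which is precisely your bridge, and your scalar decomposition for the first bullet is just the $k=1$ instance of the same argument. Your observation that the squared-gap contributions from $\|\sin\angle(\cdot,\cdot)\|^2$ are absorbed into the linear-gap terms under the stated hypotheses is the only bookkeeping step, and you handle it correctly.
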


Theorems \ref{thm:entrywise} follows as a direct consequence of the next general and non-asymptotic result, which we will prove in \cite{WangSupp24}.  By a scaling, it suffices to assume $E$ has $\mathcal N(0,1)$ entries.

\begin{theorem}\label{thm:infinitybd} Let $A$ and $E$ be $N \times n$ real matrices, where $A$ is deterministic and the entries of $E$ are i.i.d. standard Gaussian random variables. Assume $A$ has rank $r \geq 1$. Let $K>0$ and $b\ge 2$. Denote $\eta:= \frac{11b^2}{(b-1)^2} \sqrt{2(\log 9) r+(K+7) \log (N+n)}$. Assume $(\sqrt N + \sqrt n)^2 \ge 32(K+7)\log(N+n) + 64(\log 9) r$. Consider $1\le r_0 \le r$ such that $ \sigma_{r_0} \ge 2b(\sqrt N+ \sqrt{n}) + 80 b\eta r$ and $\delta_{r_0}\geq 75\chi(b)\eta r$. For any $1\le k \le s \leq r_0$, if  $\min\{\delta_{k-1}, \delta_s\}\geq 75\chi(b)\eta r$,  then with probability at least $1 - 40(N+n)^{-K}$,
\begin{align}\label{infnorm}
\| \widetilde{U}_{k,s} - P_{U_{k,s}} \widetilde{U}_{k,s} \|_{2,\infty} &\le 3\sqrt{2}\frac{(b+1)^2}{(b-1)^2} \| U \|_{2,\infty}  \frac{\eta\sqrt{s-k+1}}{\min\{\delta_{k-1}, \delta_s\}} \mathbf{1}_{\{s-k+1\neq r \}} \nonumber\\
&  + \frac{4\sqrt{2} b^2}{(b-1)^2}\sqrt{\sum_{i\in \llbracket k, s\rrbracket, \sigma_i\le n^2} \frac{\gamma^2}{\sigma_i^2} + \sum_{i\in \llbracket k, s\rrbracket, \sigma_i> n^2} \frac{16 n}{\sigma_i^2}},
\end{align}
where $\gamma:= \frac{9 b^2}{(b-1)^2}\sqrt{r(K+7)\log(N+n)} $. 
\end{theorem}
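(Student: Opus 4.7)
The plan is to start from the fundamental identity $\widetilde U_{k,s} = \widetilde A \widetilde V_{k,s} \widetilde D_{k,s}^{-1}$, apply the projection $I - P_{U_{k,s}}$ on the left, and split $\widetilde A = A + E$ to obtain
\[ (I - P_{U_{k,s}}) \widetilde U_{k,s} = (I - P_{U_{k,s}}) A \widetilde V_{k,s} \widetilde D_{k,s}^{-1} + (I - P_{U_{k,s}}) E \widetilde V_{k,s} \widetilde D_{k,s}^{-1} =: T_1 + T_2. \]
I would bound $\|T_1\|_{2,\infty}$ and $\|T_2\|_{2,\infty}$ separately; $T_1$ produces the sine-angle term (first line of \eqref{infnorm}) and $T_2$ produces the Gaussian concentration term (second line).

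For the signal term $T_1$, writing $A = UDV^\T$ and noting $(I - P_{U_{k,s}}) U_{k,s} = 0$, I would factor $T_1$ through the submatrix $U_\pm := [U_{1,k-1}, U_{s+1,r}]$, yielding $\|T_1\|_{2,\infty} \le \|U\|_{2,\infty}\,\|D_\pm V_\pm^\T \widetilde V_{k,s} \widetilde D_{k,s}^{-1}\|$. To control the operator norm, I would use the identity $D_\pm V_\pm^\T = U_\pm^\T A = U_\pm^\T(\widetilde A - E)$, which converts the expression into $U_\pm^\T \widetilde U_{k,s} - U_\pm^\T E \widetilde V_{k,s} \widetilde D_{k,s}^{-1}$. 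The first summand is bounded by $\|\sin\angle(U_{k,s}, \widetilde U_{k,s})\|$ (because $U_\pm \perp U_{k,s}$), and Theorem \ref{thm:subspace} gives $\eta\sqrt{s-k+1}/\min\{\delta_{k-1}, \delta_s\}$, matching the first line of \eqref{infnorm}. The indicator $\mathbf{1}_{\{s-k+1\ne r\}}$ handles the degenerate case where $U_\pm$ is empty.

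For the noise term $T_2$, the key idea is to decouple $E$ from $\widetilde V_{k,s}$ by splitting $\widetilde V_{k,s} = V V^\T \widetilde V_{k,s} + P_{V^\perp} \widetilde V_{k,s}$ along the deterministic subspace $V$. For the $V$-part: by Gaussian rotational invariance, $EV$ has i.i.d.\ $N(0,1)$ entries, so $e_i^\T EV$ is a Gaussian vector in $\mathbb R^r$ whose norm is bounded by $\gamma$ uniformly in $i$ via a union bound. The triangle decomposition $\|e_i^\T (I - P_{U_{k,s}}) EV\| \le \|e_i^\T EV\| + \|e_i^\T U_{k,s}\|\,\|U_{k,s}^\T EV\|$ combined with $\|e_i^\T U_{k,s}\| \le \|U\|_{2,\infty}$ explains the factor $(1 + \|U\|_{2,\infty})$. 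Using $\|V^\T \widetilde v_j\| \le 1$ and summing over $j \in [k,s]$ with weights $\widetilde \sigma_j^{-2}$ produces the $\sqrt{\sum_{j} \gamma^2/\sigma_j^2}$ term. For the $V^\perp$-part, $\|P_{V^\perp} \widetilde V_{k,s}\|$ is controlled by a Wedin-type bound with denominator $\sigma_s$; this contribution appears in the statement only via the crude $16n/\sigma_i^2$ piece in the regime $\sigma_i > n^2$, where one replaces the $\gamma$-concentration of $e_i^\T EV$ by the trivial operator bound $\|E\| \lesssim \sqrt N + \sqrt n$.

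The main obstacle will be carefully handling the statistical dependence between $E$ and $\widetilde V_{k,s}$ in $T_2$; the splitting along $V$ and $V^\perp$ is the crucial step, since it isolates the Gaussian concentration on the fixed subspace $V$ from the residual contribution of $E P_{V^\perp}$, which is subsequently tamed by Wedin-type gap estimates from Theorem \ref{thm:subspace}. Secondary challenges include tracking the constants involving $b$, $\chi(b)$, and $\eta$, absorbing the sub-leading piece $U_\pm^\T E \widetilde V_{k,s} \widetilde D_{k,s}^{-1}$ arising in the signal analysis, and passing from $\widetilde\sigma_j$ to $\sigma_j$ in the denominators using the signal-to-noise condition $\sigma_{r_0} \ge 2b(\sqrt N + \sqrt n) + 80b\eta r$.
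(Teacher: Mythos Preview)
Your $T_1$ analysis is in the right spirit and parallels the paper's bound on $\|\mathcal U_J^\T\widetilde{\mathcal U}_I\|$ via Lemma~\ref{lem:projsv}. The genuine gap is in $T_2$. For a column $j$ with $\sigma_j\le n^2$, the $V^\perp$-piece is
\[
\bigl|e_l^\T(I-P_{U_{k,s}})\,E\,P_{V^\perp}\widetilde v_j\bigr|/\widetilde\sigma_j,
\]
and because $P_{V^\perp}\widetilde v_j$ depends on $E$, the only bound your splitting affords is $\|E\|\cdot\|P_{V^\perp}\widetilde v_j\|/\widetilde\sigma_j\le\|E\|^2/\widetilde\sigma_j^2\asymp(N+n)/\sigma_j^2$. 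The target in \eqref{infnorm} allots only $\gamma/\sigma_j\asymp\sqrt{r\log(N+n)}/\sigma_j$ to each such column. In the critical regime $\sigma_j\asymp\sqrt{N+n}$ permitted by the hypotheses, $(N+n)/\sigma_j^2$ is of constant order while $\gamma/\sigma_j\asymp\sqrt{r\log(N+n)/(N+n)}$, so you are off by a factor $\sqrt{(N+n)/(r\log(N+n))}$. Your claim that the $V^\perp$-contribution ``appears only via the crude $16n/\sigma_i^2$ piece in the regime $\sigma_i>n^2$'' is incorrect: the $V^\perp$-leakage is present and dominates your estimate precisely when $\sigma_i\le n^2$. (The ``sub-leading'' cross-term $U_\pm^\T E\widetilde V_{k,s}\widetilde D_{k,s}^{-1}$ in your $T_1$ analysis suffers from the same obstruction.)

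The paper sidesteps this by passing to the linearization $\widetilde{\mathcal A}=\mathcal A+\mathcal E$ and using the resolvent identity $\widetilde{\mathbf u}_i=G(\widetilde\lambda_i)\mathcal A\widetilde{\mathbf u}_i$. Because $\mathcal A=\mathcal U\mathcal D\mathcal U^\T$ has rank $2r$ and the diagonal approximation $\Phi$ to $G$ preserves the range of $\mathcal U$, the null-space component becomes
\[
\mathcal Q\widetilde{\mathbf u}_i=\mathcal Q\,\Xi(\widetilde\lambda_i)\,\mathcal U\cdot\mathcal D\mathcal U^\T\widetilde{\mathbf u}_i,\qquad \Xi=G-\Phi.
\]
Now the random error $\Xi$ need only be controlled between the \emph{fixed} vectors $e_l$ and the columns $\mathbf u_t$ of $\mathcal U$: the isotropic local law (Lemma~\ref{lem:iso}) gives $|e_l^\T\Xi(\widetilde\lambda_i)\mathbf u_t|\lesssim\sqrt{\log(N+n)}/\widetilde\sigma_i^2$, hence $\|e_l^\T\Xi\,\mathcal U\|\lesssim\sqrt{r\log(N+n)}/\widetilde\sigma_i^2$, which together with $\|\mathcal D\mathcal U^\T\widetilde{\mathbf u}_i\|\lesssim\widetilde\sigma_i$ yields exactly $\gamma/\sigma_i$. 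The dependence between $E$ and $\widetilde v_j$ that blocks your approach is dissolved because the factorization routes everything through the fixed low-rank frame $\mathcal U$; the $16n/\sigma_i^2$ term in \eqref{infnorm} arises not from any $V^\perp$-splitting but simply from replacing the local law by the trivial bound $\|\Xi\|\le\|E\|/\widetilde\lambda_i^2$ when $\sigma_i>n^2$ exceeds the range of the $\varepsilon$-net argument in Lemma~\ref{lem:betterbd}.
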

It should be noted that, as per the aforementioned result, the term $\sum_{k\le i\le s, \sigma_i> n^2} \frac{16 n}{\sigma_i^2} < \frac{16}{n^2}$ can always be considered negligible in comparison to the other terms.  Indeed, when the signal is extremely strong, i.e., $\sigma_i>n^2 \gg \|E\|=\Theta(\sqrt N +\sqrt n)$, the impact of noise becomes minimal. 

More generally, we can establish the following result, which provides bounds for the singular subspaces in any arbitrary direction. The complete result follows a format similar to Theorem \ref{thm:infinitybd}. The proofs are provided in \cite{WangSupp24}.
\begin{theorem}[Bounds on linear and bilinear forms] \label{thm:linearform}Under the assumptions of Theorem \ref{thm:infinitybd}, for any unit vectors $x \in \mathbb R^{N}$ and $y=(y_k,\cdots,y_s)^\T \in \mathbb R^{s-k+1}$, the following holds with probability at least $1 - 40(N+n)^{-K}$:
\begin{align*}
\|x^\T (\widetilde U_{k,s}-P_{U_{k,s}}\widetilde U_{k,s}) \|  \le & 3\sqrt{2}\frac{(b+1)^2}{(b-1)^2}  \|x^\T U\| \frac{\eta\sqrt{s-k+1}}{\min\{\delta_{k-1}, \delta_s\}}\mathbf{1}_{\{s-k+1\neq r \}} \\
& + \frac{4\sqrt{2} b^2}{(b-1)^2}   \sqrt{\sum_{i\in \llbracket k, s\rrbracket, \sigma_i\le n^2} \frac{\gamma^2}{\sigma_i^2} +  \sum_{i\in \llbracket k, s\rrbracket, \sigma_i> n^2} \frac{16 n}{\sigma_i^2} } 
\end{align*}
and
\begin{align}\label{eq:bilinearbd}
\left| x^\T (\widetilde U_{k,s}-P_{U_{k,s}}\widetilde U_{k,s}) y \right|\le & 3\sqrt{2} \frac{(b+1)^2}{(b-1)^2}  \| x^\T U\| \frac{\eta \sqrt{\|y\|_0}}{\min\{\delta_{k-1}, \delta_s\}}\mathbf{1}_{\{s-k+1\neq r \}}\nonumber\\
& + \frac{4\sqrt{2} b^2}{(b-1)^2} \left[\sum_{i\in \llbracket k, s\rrbracket, \sigma_i\le n^2}  {\gamma } \frac{|y_i|}{\sigma_i} +  \sum_{i\in \llbracket k, s\rrbracket, \sigma_i> n^2} \frac{4\sqrt{n}}{\sigma_i} \right],
\end{align}
where $\gamma= \frac{9 b^2}{(b-1)^2}\sqrt{r(K+7)\log(N+n)}$. 
\end{theorem}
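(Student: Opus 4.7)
The plan is to run essentially the same argument as the proof of Theorem \ref{thm:infinitybd}, but replacing the coordinate test vector $e_j$ (which yields the $\ell_{2,\infty}$ bound via $\|e_j^\T U\|\le \|U\|_{2,\infty}$) by an arbitrary unit vector $x\in\mathbb R^N$, tracking the quantity $\|x^\T U\|$ instead. The starting point is the orthogonal decomposition of the ambient space along $U$: writing $\mathbb R^N = U_{1,k-1}\oplus U_{k,s}\oplus U_{s+1,r}\oplus U^\perp$, we split
\begin{equation*}
\widetilde U_{k,s}-P_{U_{k,s}}\widetilde U_{k,s}
= P_{U^\perp}\widetilde U_{k,s} + P_{U_{1,k-1}}\widetilde U_{k,s} + P_{U_{s+1,r}}\widetilde U_{k,s},
\end{equation*}
and then apply $x^\T$ (and eventually $y$) to each piece separately.

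For the \emph{subspace} pieces $P_{U_{1,k-1}}\widetilde U_{k,s}$ and $P_{U_{s+1,r}}\widetilde U_{k,s}$, I would factor $x^\T P_{U_j}\widetilde U_{k,s}=(x^\T U_j)(U_j^\T\widetilde U_{k,s})$ and bound the two factors separately: the first by $\|x^\T U_j\|\le \|x^\T U\|$, the second by $\|\sin\angle(U_{k,s},\widetilde U_{k,s})\|$-type quantities, which Theorem \ref{thm:subspace} controls by $\frac{\eta\sqrt{s-k+1}}{\min\{\delta_{k-1},\delta_s\}}$ (with the indicator $\mathbf 1_{\{s-k+1\neq r\}}$ accounting for the case where $U_{1,k-1}$ and $U_{s+1,r}$ are both empty). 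For the \emph{fine-grained} piece $P_{U^\perp}\widetilde U_{k,s}$, the proof of Theorem \ref{thm:infinitybd} already provides a Neumann-type expansion in which the leading term has the form $P_{U^\perp}E\,\widetilde V_{k,s}\widetilde D_{k,s}^{-1}$, plus correction terms involving additional factors of $P_U$ and powers of $E/\sigma_{r_0}$. Contracting against $x^\T$ on the left, the leading term becomes $(x^\T P_{U^\perp}E)\widetilde V_{k,s}\widetilde D_{k,s}^{-1}$, where the row vector $x^\T P_{U^\perp}E$ is Gaussian with covariance $\|P_{U^\perp}x\|^2 I_n\preceq I_n$. Standard Gaussian concentration (specifically a net argument of the sort used in Theorem \ref{thm:infinitybd}) yields that for any fixed-rank projection $Q$, $\|x^\T P_{U^\perp}EQ\|\lesssim \gamma$ with high probability, which when combined with the $\widetilde\sigma_i\asymp\sigma_i$ comparison (valid under the SNR hypotheses on $\sigma_{r_0}$) delivers the bound $\gamma\sqrt{\sum_{i=k}^s \sigma_i^{-2}}$. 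The additive $(1+\|x^\T U\|)$ prefactor comes from splitting $x=P_{U^\perp}x+P_U x$: the first component contributes the constant $1$, while the second enters only through the higher-order correction terms, whose action is controlled by $\|x^\T U\|$.

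For the bilinear form, I would then contract with $y$ on the right. On the subspace term, $(U_j^\T\widetilde U_{k,s})y$ only sees the columns of $\widetilde U_{k,s}$ indexed by the support of $y$; Cauchy--Schwarz against that $\|y\|_0$-sparse projection replaces $\sqrt{s-k+1}$ by $\sqrt{\|y\|_0}$. On the fine-grained term,
\begin{equation*}
(x^\T P_{U^\perp}E)\widetilde V_{k,s}\widetilde D_{k,s}^{-1}y=\sum_{i=k}^s\frac{y_i}{\widetilde\sigma_i}(x^\T P_{U^\perp}E)\widetilde v_i,
\end{equation*}
and an $\varepsilon$-net argument over the choice of singular-vector directions gives $|(x^\T P_{U^\perp}E)\widetilde v_i|\lesssim \gamma$ simultaneously for all $i\in\llbracket k,s\rrbracket$, whence the triangle inequality delivers $\sum_{i=k}^s |y_i|/\sigma_i$.

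The main obstacle is not the Gaussian concentration per se, but keeping the Neumann-type expansion sharp enough that the $(1+\|x^\T U\|)$ prefactor survives: the higher-order corrections in $P_{U^\perp}\widetilde U_{k,s}$ contain products of the form $P_{U^\perp}E\,P_U\,(\cdots)$ whose action on $x$ must be bounded using the fact that only the $P_U x$ component enters those terms, while the $P_{U^\perp}x$ component contributes only through the leading Gaussian term. Organizing the net arguments so that each term is handled on the correct high-probability event while retaining this separation—and, for the bilinear bound, doing so uniformly in the $\|y\|_0$ active coordinates—is where the book-keeping becomes delicate.
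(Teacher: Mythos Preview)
Your high-level plan---rerun the proof of Theorem~\ref{thm:infinitybd} with $e_l$ replaced by $\mathbf a=(x^\T,0)^\T$ and track $\|x^\T U\|$ in place of $\|U\|_{2,\infty}$---is exactly what the paper does, and your handling of the subspace pieces $P_{U_{1,k-1}}\widetilde U_{k,s}$, $P_{U_{s+1,r}}\widetilde U_{k,s}$ and of the bilinear contraction with $y$ matches the paper's argument.

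The gap is in your description of the null-space piece $P_{U^\perp}\widetilde U_{k,s}$. You frame it as a Neumann expansion with leading term $P_{U^\perp}E\,\widetilde V_{k,s}\widetilde D_{k,s}^{-1}$ and propose to control $|(x^\T P_{U^\perp}E)\widetilde v_i|$ by Gaussian concentration plus a net ``over the choice of singular-vector directions.'' But $\widetilde v_i$ depends on $E$, so this quantity is not Gaussian conditional on anything useful; a net over all of $\mathbb S^{n-1}$ costs $\sqrt n$, not $\gamma$, and netting only over $\mathrm{span}(V)$ leaves the residual $P_{V^\perp}\widetilde v_i$, whose contribution $\|E\|^2/\sigma_i^2$ is not dominated by $\gamma/\sigma_i$ when $\sigma_i\asymp\sqrt{N+n}$. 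The paper never isolates $E\widetilde v_i$: it writes $\mathcal Q\widetilde{\mathbf u}_i=\mathcal Q\,\Xi(\widetilde\lambda_i)\,\mathcal U\cdot\mathcal D\mathcal U^\T\widetilde{\mathbf u}_i$ with $\Xi=G-\Phi$, bounds the second factor deterministically by $\frac{b}{b-1}|\widetilde\lambda_i|$, and controls $\mathbf a^\T\Xi(\widetilde\lambda_i)\mathcal U$ via the isotropic local law (Lemma~\ref{lem:iso}) applied to the \emph{deterministic} test vectors $\mathbf a$ and $\mathbf u_s$, made uniform in the scalar $\widetilde\lambda_i$ by netting over $\mathsf D\subset\mathbb C$ (Lemma~\ref{lem:betterbd}). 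All the $E$--$\widetilde{\mathbf u}_i$ dependence is packaged into the bounded-norm factor $\mathcal D\mathcal U^\T\widetilde{\mathbf u}_i$, so the concentration step only ever sees fixed directions. Your Neumann picture can be made rigorous by iterating and resumming, but that is precisely what the resolvent identity accomplishes in one line; if you actually follow Theorem~\ref{thm:infinitybd} verbatim with $e_l\to\mathbf a$, you will land on the paper's proof rather than on the mechanism you sketched.
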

\begin{remark}When the focus is on comparing the linear (or bilinear) forms of $U_{k,s}$ and $\widetilde U_{k,s}$, in a manner analogous to Corollary \ref{cor:matrix}, one can leverage the fact provided in Proposition A.6 of \cite{WangSupp24}:
\begin{align*}
&\|x^\T (\widetilde U_{k,s}-U_{k,s}O ) \| \le \|x^\T (\widetilde U_{k,s}-P_{U_{k,s}}\widetilde U_{k,s}) \| + \|x^\T U_{k,s}\| \|\sin\angle(U_{k,s}, \widetilde U_{k,s})\|^2
\end{align*}
and combine Theorems  \ref{thm:subspace} and \ref{thm:linearform}. 

A natural extension is to consider entrywise control of the perturbation. By applying \eqref{eq:bilinearbd} with the canonical vectors and using Proposition A.6 of \cite{WangSupp24}, we obtain the following bound under the matrix max-norm (which measures the largest absolute entry):  with probability $1-(N+n)^{-C}$,
\begin{align}\label{eq:maxbd}
&\|\widetilde U_{k,s} - U_{k,s} O\|_{\max} \lesssim \frac{\sqrt{r+\log(N+n)}}{\min\{\delta_{k-1}, \delta_s\}}\|U\|_{2,\infty} + \frac{\sqrt{r\log(N+n)}}{\sigma_s}+ \frac{\|E\|^2}{\sigma_s^2}\|U_{k,s}\|_{2,\infty}
\end{align}
for some orthogonal matrix $O$. 
\end{remark}

Building upon the proof of Theorem 12 and incorporating minor modifications, we obtain the subsequent bounds. These describe the extent to which the dominant singular vectors of the perturbed matrix, when weighted by their singular values, deviate from the original subspace. The proof can be found in  the supplementary material \cite{WangSupp24}.
\begin{theorem}[Bounds on singular value-adjusted projection perturbation]\label{thm:weighted}Under the assumptions of Theorem \ref{thm:infinitybd}, the following holds with probability at least $1 - 40(N+n)^{-K}$:
\begin{align*}
\| \widetilde{U}_{k,s}\widetilde{D}_{k,s} - P_{U_{k,s}} \widetilde{U}_{k,s} \widetilde{D}_{k,s}\|_{2,\infty} &\le 3\sqrt{2}\frac{(b+1)^2}{(b-1)^2} \| U \|_{2,\infty}  \frac{\eta\sigma_k\sqrt{s-k+1}}{\min\{\delta_{k-1}, \delta_s\}} \mathbf{1}_{\{s-k+1\neq r \}} \nonumber\\
&  + \frac{4\sqrt{2} b^2}{(b-1)^2}  \sqrt{\gamma^2(s-k+1) +16},
\end{align*}
where $\gamma= \frac{9 b^2}{(b-1)^2}\sqrt{r(K+7)\log(N+n)} $. 
\end{theorem}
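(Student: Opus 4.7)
The proof parallels that of Theorem \ref{thm:infinitybd}, the essential difference being that no factor of $\widetilde{D}_{k,s}^{-1}$ is introduced. Starting from the SVD identity $(A+E)\widetilde{V}_{k,s}=\widetilde{U}_{k,s}\widetilde{D}_{k,s}$ and applying $I-P_{U_{k,s}}$ to both sides gives
\begin{equation*}
(I-P_{U_{k,s}})\widetilde{U}_{k,s}\widetilde{D}_{k,s}=U_I D_I V_I^\T\widetilde{V}_{k,s}+(I-P_{U_{k,s}})E\widetilde{V}_{k,s},
\end{equation*}
where $I=\{1,\ldots,r\}\setminus\{k,\ldots,s\}$. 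The two summands are estimated in the $\ell_{2,\infty}$ norm separately and combined.

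For the deterministic summand, use $\|U_I D_I V_I^\T\widetilde{V}_{k,s}\|_{2,\infty}\le\|U\|_{2,\infty}\|D_I V_I^\T\widetilde{V}_{k,s}\|$ and split $I=I_1\cup I_2$ with $I_1=\{i<k\}$, $I_2=\{s<i\le r\}$. The $I_2$ block is immediate: $\|D_{I_2}\|\le\sigma_{s+1}\le\sigma_k$ and $\|V_{I_2}^\T\widetilde{V}_{k,s}\|\le\|\sin\angle(V_{k,s},\widetilde{V}_{k,s})\|$, the latter bounded by Theorem \ref{thm:subspace} at the target order $\eta\sqrt{s-k+1}/\min\{\delta_{k-1},\delta_s\}$. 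The $I_1$ block is subtler since $\|D_{I_1}\|=\sigma_{k-1}$ may be much larger than $\sigma_k$; instead, I would use the dual SVD identity $U_{I_1}^\T A=D_{I_1}V_{I_1}^\T$ to rewrite
\begin{equation*}
D_{I_1}V_{I_1}^\T\widetilde{V}_{k,s}=U_{I_1}^\T A\widetilde{V}_{k,s}=U_{I_1}^\T\widetilde{U}_{k,s}\widetilde{D}_{k,s}-U_{I_1}^\T E\widetilde{V}_{k,s},
\end{equation*}
which is bounded by $\widetilde{\sigma}_k\|U_{I_1}^\T\widetilde{U}_{k,s}\|+\|U_{I_1}^\T E\widetilde{V}_{k,s}\|$. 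Since $U_{I_1}\subset U_{k,s}^\perp$, the first piece is controlled by $\widetilde{\sigma}_k\|\sin\angle(U_{k,s},\widetilde{U}_{k,s})\|\lesssim\sigma_k\eta\sqrt{s-k+1}/\min\{\delta_{k-1},\delta_s\}$ via Theorem \ref{thm:subspace} and $\widetilde{\sigma}_k\le\sigma_k+\|E\|\lesssim\sigma_k$, while the second is handled by Gaussian concentration on the $r\times r$ block $U^\T EV$ combined with a sin-angle approximation of $\widetilde{V}_{k,s}$.

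For the stochastic summand $(I-P_{U_{k,s}})E\widetilde{V}_{k,s}$, replace $\widetilde{V}_{k,s}$ by $V_{k,s}O$ via Theorem \ref{thm:subspace}; the residual $\|E\|\cdot\|\sin\angle\|$ is absorbed using the signal-to-noise hypothesis $\sigma_{r_0}\ge 2b(\sqrt N+\sqrt n)+80b\eta r$, producing the additive constant $16$ inside the square root. For each row $j$ of the main piece, write $e_j^\T(I-P_{U_{k,s}})EV_{k,s}=e_j^\T EV_{k,s}-(e_j^\T U_{k,s})U_{k,s}^\T EV_{k,s}$: the first term is an i.i.d. $N(0,1)$ Gaussian vector in $\mathbb{R}^{s-k+1}$, while the second has norm at most $\|U\|_{2,\infty}\cdot\|U_{k,s}^\T EV_{k,s}\|$ where $U_{k,s}^\T EV_{k,s}$ is a standard $(s-k+1)\times(s-k+1)$ Gaussian matrix of norm $\lesssim\sqrt{s-k+1}$. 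A $\chi^2$ tail bound and a union bound over $j\in[N]$ yield the target $(1+\|U\|_{2,\infty})\gamma\sqrt{s-k+1}$ under the square root.

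The principal technical difficulty is the $I_1$ case: a direct bound using $\|D_{I_1}\|\le\sigma_{k-1}$ would produce the wrong scaling. The dual SVD identity resolves this by trading $\sigma_{k-1}$ for $\widetilde{\sigma}_k\lesssim\sigma_k$. Beyond this rearrangement, the argument is a mechanical adaptation of Theorem \ref{thm:infinitybd}; no new probabilistic input is required, and the same high-probability event of measure $1-40(N+n)^{-K}$ suffices.
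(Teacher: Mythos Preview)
Your decomposition $(I-P_{U_{k,s}})\widetilde U_{k,s}\widetilde D_{k,s}=U_I D_I V_I^\T\widetilde V_{k,s}+(I-P_{U_{k,s}})E\widetilde V_{k,s}$ is correct, and the dual SVD trick for the $I_1$ block is a nice observation. However, the argument as written does not reach the stated bound, and the gap is essentially in how you handle the dependence between $E$ and the perturbed singular vectors.

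For the stochastic summand you replace $\widetilde V_{k,s}$ by $V_{k,s}O$ and claim the residual $\|E\|\cdot\|\sin\angle(V_{k,s},\widetilde V_{k,s})\|$ is what generates the additive ``$+16$''. This is not so: by Theorem~\ref{thm:subspace} the sin-angle contains a null-space piece $2\|E\|/\sigma_s$, so the residual is at least of order $\|E\|^2/\sigma_s$. Under the hypotheses this is only bounded by $2(\sqrt N+\sqrt n)/b$, which grows like $\sqrt{n}$ and is far larger than any constant (and also larger than $\gamma\sqrt{s-k+1}\asymp\sqrt{r(s-k+1)\log(N+n)}$). The same extraneous term appears in your deterministic piece whenever you invoke Theorem~\ref{thm:subspace} to bound $\|U_{I_1}^\T\widetilde U_{k,s}\|$ or $\|V_{I_2}^\T\widetilde V_{k,s}\|$: once multiplied by $\widetilde\sigma_k$ or $\sigma_{s+1}$ you again pick up a contribution of order $\sqrt n$ that does not fit the target. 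For the deterministic piece this is repairable---$U_{I_1}^\T\widetilde U_{k,s}$ lives entirely in the signal subspace, so Lemma~\ref{lem:projsv} (not the full sin-angle) is the right tool and gives only the $\eta/\min\{\delta_{k-1},\delta_s\}$ scale. But for the stochastic piece there is no such fix within your framework: the quantity $(I-P_{U_{k,s}})E\widetilde V_{k,s}$ genuinely couples $E$ with $\widetilde V_{k,s}$, and decoupling via sin-angle costs $\|E\|^2/\sigma_s$.

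The paper's proof takes an entirely different route that sidesteps this issue. It works in the linearized setting $\widetilde{\mathcal A}=\mathcal A+\mathcal E$ and uses the resolvent identity $\widetilde{\mathbf u}_i=G(\widetilde\lambda_i)\mathcal A\widetilde{\mathbf u}_i$ to write $\mathcal Q\widetilde{\mathbf u}_i\widetilde\lambda_i=\mathcal Q\,\Xi(\widetilde\lambda_i)\,\mathcal A\widetilde\lambda_i\widetilde{\mathbf u}_i$, where $\Xi=G-\Pi$ and $\Pi$ is the approximate resolvent. The isotropic local law (Lemma~\ref{lem:iso}) controls $e_l^\T\Xi(\widetilde\lambda_i)\mathcal U$ \emph{uniformly in the random argument} $\widetilde\lambda_i$, so no decoupling of $E$ from $\widetilde{\mathbf u}_i$ is ever needed; the weighting by $\widetilde\lambda_i$ cancels against the $1/\widetilde\lambda_i^2$ decay of $\Xi$ to produce the $\gamma$ per index. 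The ``$+16$'' arises solely from the large-signal regime $\sigma_i>n^2$, where a second-order resolvent expansion $\Pi=\widetilde\lambda_i^{-1}I+\widetilde\lambda_i^{-2}\mathcal E$ gives $\|\Xi\|\le 2\|E\|^2/|\widetilde\lambda_i|^3$ and hence a per-index contribution $\|E\|^2/|\widetilde\lambda_i|\le 16/n$---summing crudely over at most $n$ indices gives the constant.
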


Comparing  $\widetilde{U}_{k,s}\widetilde{D}_{k,s}$ with ${U}_{k,s}O\widetilde{D}_{k,s}$, with respect to the choice of an orthogonal matrix $O$, requires more analysis than the unweighted case in Corollary \ref{cor:matrix}. Consider the top $k$-singular subspaces for any $1\le k \le r$. We establish that for some $O\in\mathbb{O}^{k\times k}$:
 \begin{align}\label{eq:weightedbdnew}
 \| \widetilde{U}_{k}\widetilde{D}_{k} - {U}_kO\widetilde{D}_{k}\|_{2,\infty}\le &\| \widetilde{U}_{k}\widetilde{D}_{k} - P_{U_k} \widetilde{U}_{k} \widetilde{D}_{k}\|_{2,\infty}\nonumber\\
 &+ \|U_k\|_{2,\infty} \|\sin\angle(U_k, \widetilde U_{k})\| \left(\|E\| +\sigma_{k+1}\|\sin\angle(V_k, \widetilde V_{k})\|\right) .
 \end{align}
The proof of \eqref{eq:weightedbdnew} is provided in \cite{WangSupp24}. Consequently, the weighted case bounds follow from combining Theorem \ref{thm:weighted} with Theorem \ref{thm:subspace}. Of particular interest is the special case $k=r$, which is crucial for our statistical applications. Here, \eqref{eq:weightedbdnew} simplifies to:
\begin{align}\label{eq:weightedwhole}
\| \widetilde{U}_{r}\widetilde{D}_{r} - {U}O\widetilde{D}_{r}\|_{2,\infty}\le &\| \widetilde{U}_{r}\widetilde{D}_{r} - P_{U} \widetilde{U}_{r} \widetilde{D}_{r}\|_{2,\infty}+ \|U\|_{2,\infty} \|\sin\angle(U, \widetilde U_{r})\| \|E\|.
\end{align}
Prior works (e.g., Proposition 3 in \cite{YW24}) have used an alternative formulation $\|\widetilde{U}_r \widetilde{D}_r O^\T- U D \|_{2,\infty}$. In \cite{WangSupp24} (after the proof of \eqref{eq:weightedbdnew}), we analyze their connections. 

Combining \eqref{eq:weightedwhole} with Theorems \ref{thm:weighted} and \ref{thm:subspace} yields the following key result for our applications:
\begin{corollary}\label{cor:weighted}
Assume the setting of Theorem \ref{thm:infinitybd}, where we set $r_0=s=r$ and $k=1$, the following holds with probability at least $1 - 40(N+n)^{-K}$:
\begin{align*}
\min_{O\in \mathbb{O}^{r\times r}}\| \widetilde{U}_{r}\widetilde{D}_{r} - U O \widetilde{D}_{r}\|_{2,\infty}\le &\frac{72 b^4}{(b-1)^4} r\sqrt{(K+7)\log(N+n)} + 2\|U\|_{2,\infty}  \frac{ \|E\|^2}{\sigma_r}.
\end{align*}
\end{corollary}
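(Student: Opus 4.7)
The plan is to combine the identity \eqref{eq:weightedbdnew}, already established in Appendix \ref{app:eq:weightedbdnew}, with Theorem \ref{thm:weighted} for the weighted projection-error term and Theorem \ref{thm:subspace} in the operator norm for the sine-angle term, both specialized to $k=1$ and $s=r$. Since the left side of the corollary minimizes over $O\in \mathbb{O}^{r\times r}$, it suffices to prove the bound for the specific orthogonal matrix $O$ supplied by \eqref{eq:weightedbdnew}. I would thus start from
\[
\|\widetilde{U}_r \widetilde{D}_r - UO\widetilde{D}_r\|_{2,\infty} \le \|\widetilde{U}_r \widetilde{D}_r - P_U \widetilde{U}_r\widetilde{D}_r\|_{2,\infty} + \|U\|_{2,\infty}\,\|\sin\angle(U, \widetilde{U}_r)\|\,\|E\|
\]
and bound the two pieces on the right-hand side separately, on a common high-probability event.

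For the first piece, I would invoke Theorem \ref{thm:weighted} with $k=1$, $s=r$. Since $s-k+1=r$, the indicator $\mathbf{1}_{\{s-k+1\neq r\}}$ vanishes and only the $\sqrt{\gamma^2(s-k+1)+16}=\sqrt{\gamma^2 r+16}$ contribution remains, with $\gamma = \frac{9b^2}{(b-1)^2}\sqrt{r(K+7)\log(N+n)}$. Using $\sqrt{\gamma^2 r+16}\le \gamma\sqrt{r}+4$ and $\gamma\sqrt{r}=\frac{9b^2}{(b-1)^2}r\sqrt{(K+7)\log(N+n)}$, the dominant constant is $\frac{2\sqrt{2}\cdot 9\, b^4}{(b-1)^4} = \frac{18\sqrt{2}\,b^4}{(b-1)^4}$; the leftover additive $4$-term is absorbed into the leading one using $b\ge 2$ together with the standing size constraints in Theorem \ref{thm:infinitybd} on $r$ and $N+n$, which make $r\sqrt{(K+7)\log(N+n)}$ bounded away from zero. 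This comfortably fits within the stated constant $\frac{36 b^4}{(b-1)^4}$.

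For the second piece, I would apply the operator-norm form of Theorem \ref{thm:subspace} with $k=1$ and $s=r$; once again $s-k+1=r$ kills the first term in that bound, leaving $\|\sin\angle(U,\widetilde{U}_r)\|\le 2\|E\|/\sigma_r$. Multiplying by $\|U\|_{2,\infty}\|E\|$ produces $2\|U\|_{2,\infty}\|E\|^2/\sigma_r$, which is exactly the second summand in the corollary. The good events underlying Theorems \ref{thm:weighted} and \ref{thm:subspace} are constructed from the same standard high-probability estimates on $\|E\|$ and $\|U^\T E V\|$, so they can be intersected on a common event of probability at least $1-40(N+n)^{-K}$, yielding the claimed probability. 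The only conceptual ingredient is \eqref{eq:weightedbdnew} itself, which is already in hand; no genuine obstacle remains, and the argument reduces to specializing the two earlier theorems and carrying out routine bookkeeping on constants.
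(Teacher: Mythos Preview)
Your proposal is correct and follows exactly the route the paper indicates: start from \eqref{eq:weightedbdnew}, specialize Theorem \ref{thm:weighted} and the operator-norm case of Theorem \ref{thm:subspace} to $k=1$, $s=r$ (so that both indicators $\mathbf{1}_{\{s-k+1\neq r\}}$ vanish), and do the constant bookkeeping. One minor sharpening: in the specialization $k=1$, $s=r$, the operator-norm bound $\|\sin\angle(U,\widetilde U_r)\|\le 2\|E\|/\sigma_r$ from Theorem \ref{thm:subspace} actually only uses the deterministic estimate \eqref{eq:nullbd} on the event $\{\|E\|\le 2(\sqrt N+\sqrt n)\}$, which is already part of the event underlying Theorem \ref{thm:weighted}; this is why the final probability stays at $1-40(N+n)^{-K}$ without any further loss.
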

The proof technique of Theorem \ref{thm:weighted} could be extended to bound the bilinear form $x^\T(\widetilde{U}_{k,s}\widetilde{D}_{k,s} - P_{U_{k,s}} \widetilde{U}_{k,s} \widetilde{D}_{k,s})y$ for arbitrary unit vectors $x$ and $y$, similar to the analysis in Theorem \ref{thm:linearform}. However, we do not pursue this generalization in the present work.

\begin{remark}\label{rem:gaussian-var}
Our theoretical framework, while presented for noise matrices $E$ with i.i.d. centered Gaussian entries, extends naturally to more general settings. The underlying methodology accommodates sub-Gaussian noise matrices through established random matrix theory techniques, particularly isotropic local laws and concentration inequalities. The detailed technical treatment of these extensions, including sub-Gaussian cases and more general noise assumptions, will be addressed in forthcoming work.
\end{remark}

\section{Related Work}\label{sec:literature}

This section reviews recent developments in singular vector perturbation theory and their connections to high-dimensional statistics and random matrix theory. 

\paragraph*{Classical and \texorpdfstring{$\ell_2$}{l2}-Type Subspace Perturbation Results}
Foundational results such as the Davis--Kahan and Wedin $\sin\Theta$ theorems \cite{DK,Wedin} provide classical $\ell_2$-type perturbation bounds for eigenvectors and singular subspaces. Several deterministic extensions refine or generalize these results. Yu, Wang, and Samworth \cite{YWS15} replace empirical gaps with population gaps in the Frobenius norm setting. Vu and Lei \cite{VL13} offer a variational form applicable to unaligned subspaces. Cai and Zhang \cite{CZ18} and Luo, Han, and Zhang \cite{LHZ21} further generalize to unbalanced dimensions and Schatten-$q$ norm bounds, respectively. Zhang and Zhou \cite{ZZ22} give Frobenius norm bounds for closely related matrix pairs, useful for spectral clustering.

In the stochastic regime, Wang \cite{MR3310977} analyzes the non-asymptotic distribution of singular vectors under Gaussian noise. Allez and Bouchaud \cite{MR3256861} study eigenvector dynamics in symmetric matrices with Brownian motion noise. Benaych-Georges, Enriquez, and  Micha\"il  \cite{MR4260218} analyze a perturbative expansion for eigenvector coordinates in symmetric matrices with stochastic noise. Zhong \cite{Zhong17} develops a Rayleigh--Schr\"odinger-type expansion to study angular perturbations of leading eigenvectors in symmetric low-rank plus noise models.

\paragraph*{\texorpdfstring{$\ell_{\infty}$}{linf} and \texorpdfstring{$\ell_{2,\infty}$}{l2inf}-Type Subspace Perturbation Results}
Recent works have increasingly focused on more refined, entrywise and row-wise perturbation bounds, which offer localized guarantees particularly useful in applications such as submatrix localization and community detection. A foundational contribution in this direction is due to Fan, Wang, and Zhong \cite{FWZ17}, who derive $\ell_\infty$ perturbation bounds under incoherence conditions on the low-rank signal matrix. Building on this idea, a series of works \cite{AFWZ20, AFW22, CLCPC21, CFMW19, BV23, YCF21, YW24} develop increasingly sharp $\ell_{\infty}$ and $\ell_{2,\infty}$ bounds using leave-one-out techniques tailored to various noise models and matrix structures.

Abbe, Fan, Wang, and Zhong \cite{AFWZ20} establish $\ell_{2,\infty}$ perturbation bounds for symmetric low-rank plus noise matrices with sub-Gaussian noise. This is further extended by Abbe, Fan, and Wang \cite{AFW22}, who provide a comprehensive analysis of $\ell_{2,p}$ norms in the context of hollowed PCA. Chen, Fan, Ma, and Wang \cite{CFMW19} generalize the analysis to asymmetric transition matrices, while Lei \cite{Lei19} considers more intricate dependence structures in the noise. Zhong and Boumal \cite{ZB18} obtain $\ell_{\infty}$ bounds for the leading eigenvector in the phase synchronization model, using tools from semidefinite programming. Bhardwaj and Vu \cite{BV23} propose a stochastic analogue of the Davis--Kahan theorem that yields $\ell_\infty$ bounds for both eigenvectors and singular vectors under general noise. More recently, Yan and Wainwright \cite{YW24} introduce a novel expansion technique that yields sharp $\ell_{2,\infty}$ bounds and distributional characterizations of the estimation error, refining earlier results by Yan, Chen, and Fan \cite{YCF21}. Agterberg, Lubberts, and Priebe \cite{ALP22} also contribute by analyzing heteroskedastic noise and deriving Berry--Esseen-type results for entrywise estimation of singular vectors.

Several works investigate perturbation of linear and bilinear forms involving singular vectors. Koltchinskii and Xia \cite{MR3565274} derive concentration bounds for such forms, later extended to tensor settings by Xia and Zhou \cite{MR3960915}. Cape, Tang, and Priebe \cite{CTP19} provide $\ell_{2,\infty}$ bounds for structured matrices, and Eldridge, Belkin, and Wang \cite{EBW18} use a Neumann series expansion to control entrywise deviations. Li, Cai, Poor, and Chen \cite{LCPC21} propose de-biased estimators for projected eigenvectors under mild eigengap assumptions. Recent works \cite{CWC21, Ag23} explore this regime further, especially when eigen-gaps are small or scale with noise.

Comprehensive insights into the $\ell_2$ and $\ell_{\infty}$ analyses of current perturbation results and their practical implications are available in the survey \cite{CCFM21}.

\paragraph*{Connections to Random Matrix Theory}
Random matrix theory offers asymptotic insights into the spectral behavior of low-rank plus noise models. The BBP phase transition \cite{BBP05} describes a critical threshold for the emergence of outlier eigenvalues and eigenvector localization. Subsequent studies \cite{DP07, BGN, BGN1, BS06, El07, BY08, BGM11, BEKYY2016, CM2017, CD18, BDWW20, BDW21, FFHL19, BW21, Ben20} explore the fine behavior of extreme eigenvalues and eigenvectors across different regimes and ensembles. While these results are typically asymptotic and ensemble-specific, they inform the design and understanding of non-asymptotic perturbation techniques used in this work.

The selection of references cited herein represents a snapshot of a rapidly advancing field and is not intended to be exhaustive.

\section{Discussions}\label{sec:dis}
This section summarizes the key contributions of this work, and examines the lower bounds and the role of rank $r$ in our perturbation bounds.
\subsection{Optimality of our results}\label{sec:optimality}
Our operator norm bound in Theorem~\ref{thm:subspace-simple} improves upon prior results, notably \cite{OVW2}, in its dependence on the signal rank $r$. In particular, when $k=1$, we show that with probability $1-(N+n)^{-C}$,
\begin{equation}\label{eq:better-r}
\sin \angle ( u_1, \widetilde{u}_1) \lesssim \frac{\sqrt{r +\log(N+n)}}{\delta_1} + \frac{\|E\|}{\sigma_1}.
\end{equation}
This bound is conjectured to be near-optimal, up to constant factors.  First, the necessity of the second term in \eqref{eq:better-r} is supported by a high-probability lower bound established in \cite[Theorem~3]{OVW22}, which shows that $\max\{\sin \angle ( u_1, \widetilde{u}_1), \sin \angle ( v_1, \widetilde{v}_1)\} \gtrsim \frac{\|E\|}{\sigma_1}$. Second, the first term of order ${\sqrt{r +\log(N+n)}}/{\delta_1}$ merits further attention. A line of work including \cite{CZ18, CFMW19, CLCPC21, LCPC21, CCFM21, CWC21} establishes minimax lower bounds for singular subspace estimation.  In particular, \cite[Theorem~3]{CWC21} shows that, under Gaussian noise, any estimator must incur error at least of order $1/\delta_1$. This confirms the necessity of the $1/\delta_1$ scaling in our bound.

To better understand the role of $r$, we conduct simulations varying $r$ and plot the empirical distribution of $\sin \angle ( u_1, \widetilde{u}_1).$ The left panel of Figure~\ref{fig:varynr} shows that the distribution depends on the rank $r$, while the right panel demonstrates its sensitivity to the matrix dimensions, which controls the noise level $\|E\|\asymp \sqrt{n}$. Although a matching lower bound capturing the full ${\sqrt{r}}$ dependence is not currently known, we denote the unknown scaling factor as $f(r)$ and empirically investigate its behavior in the supplementary material. These simulations suggest a sublinear but non-negligible scaling with $r$, with $\sqrt{r}$ emerging as a plausible candidate.

For the $\ell_{\infty}$ bound, we observe that 
\[ \min_{\mathsf{s}\in\{\pm 1\}}\| u_1- \mathsf{s}\widetilde{u}_1\|_{\infty}\ge \min_{\mathsf{s}\in\{\pm 1\}}\frac{1}{\sqrt{N}} \| u_1- \mathsf{s}\widetilde{u}_1\| \ge \frac{1}{\sqrt{N}} \sin \angle ( u_1, \widetilde{u}_1).\]
Combining the above discussion, we expect the following lower bound:
\[ \min_{\mathsf{s}\in\{\pm 1\}}\| u_1- \mathsf{s}\widetilde{u}_1\|_{\infty}\gtrsim \frac{1}{\sqrt N} \frac{f(r)}{\delta_1} + \frac{1}{\sigma_1}.\]
On the other hand, under the incoherence assumption $\|U\|_{\max} \lesssim \frac{1}{\sqrt N}$, our bound in Corollary~\ref{cor:matrix} (with $\tau=1$) gives with high probability
\[  \min_{\mathsf{s}\in\{\pm 1\}}\| u_1- \mathsf{s}\widetilde{u}_1\|_{\infty} \le  \frac{1}{\sqrt N}\frac{\sqrt{r}\sqrt{r +\log(N+n)}}{\delta_1} + \frac{\sqrt{r\log(N+n)}}{\sigma_1}.\]
The gap in $r$-dependence between the upper and lower bounds is at most a factor of $r$ (up to logs), suggesting near-optimal rank scaling in our entrywise bound.

\begin{figure}[!ht]
\centering
   \includegraphics[width=11cm,height=5.5cm]{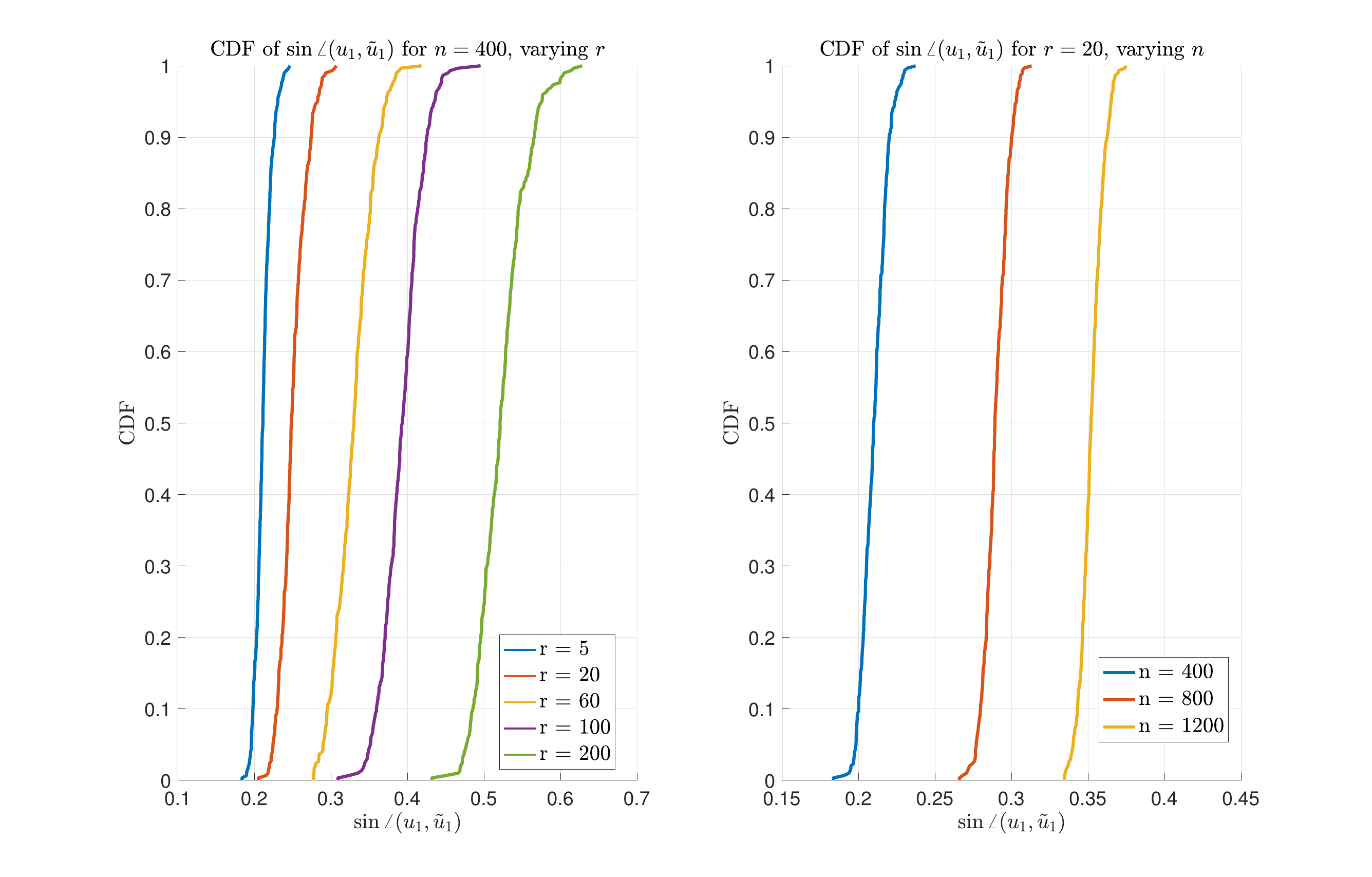}
   \caption{CDF plots of $\sin \angle ( u_1, \widetilde{u}_1)$ across 300 trials. The signal matrix $A\in \mathbb{R}^{n\times n}$ has rank $r$. The noise matrix $E$ has i.i.d. standard Gaussian entries. We set the largest singular value of $A$ as $\sigma_1=100$. \textbf{Left:} We set $n=400$, $\delta_1=20$ and $\sigma_2=\ldots=\sigma_r=80$, varying $r=5, 20, 60, 100, 200$. \textbf{Right:} We fix $r=20$, $\delta_1=40$, and $\sigma_2=\ldots=\sigma_r=60$, and vary the dimension $n=400, 800, 1200.$}
\label{fig:varynr}
\end{figure}

The supplementary material \cite{WangSupp24} provides extensive numerical validation of our bounds. While these experiments confirm the tightness of rank dependence in angular and Frobenius norm bounds, they also reveal opportunities for improvement. Specifically, the $\ell_{\infty}$ and $\ell_{2,\infty}$ bounds might be refined under incoherence assumptions, and the operator norm bounds could be slightly improved through refined analysis.

\subsection{Contribution of This Work}\label{sec:ourcontribution}

This paper establishes near-optimal, non-asymptotic perturbation bounds for singular vectors in the low-rank matrix denoising model, under the assumption that the noise matrix has i.i.d.\ Gaussian entries. Building on the framework of \cite{OVW22}, which introduced the isotropic local law into singular vector analysis, we extend and refine the results in several important directions.

First, we obtain tighter perturbation bounds under milder assumptions by sharpening the decomposition of error terms and enhancing the control of resolvent expansions. 
Most notably, we eliminate a restrictive condition in \cite{OVW22}, which required distinct singular values among $\sigma_k,\cdots,\sigma_s$ to be separated by a distance of order $r^2\sqrt{\log(N+n)}$. This restrictive spectral gap condition, often challenging to verify in practice, is no longer necessary in our theorem through our refined analysis of perturbed singular value locations.

Second, we provide a more detailed investigation of the role of signal rank $r$, establishing bounds with improved $r$-dependence. In several key regimes, this dependence is shown to be near-optimal, with our numerical experiments supporting its necessity. Our analysis addresses key limitations of previous works: while \cite{OVW2} provided bounds for general noise without utilizing the isotropic local law, it included an extra additive term that we eliminate here under Gaussian noise. Similarly, \cite{OVW22}, though introducing the isotropic local law for Gaussian noise, focused primarily on operator norm bounds without optimizing rank scaling. The current framework thus achieves sharper results by employing a more delicate decomposition of the perturbation terms and sharper control over resolvent expansions. 

Our bounds are formulated under a wide class of norms, including unitarily invariant norms, the $\ell_{2,\infty}$ norm, and a weighted $\ell_{2,\infty}$ norm that captures heterogeneity in the signal structure. This weighted bound is especially powerful in statistical applications, where standard norm-based bounds often fail to achieve optimal practical performance. We illustrate its utility through two well-studied problems--Gaussian mixture model and submatrix localization--in Section~\ref{sec:app}. These examples demonstrate how our general perturbation bounds yield concise and effective guarantees for important statistical tasks.

Finally, the isotropic local law framework we adopt proves to be both flexible and robust. It accommodates various matrix norms and opens the door to further extensions beyond Gaussian noise.

\section{Applications}\label{sec:app}

\subsection{Gaussian mixture model}\label{sec:gmm}

The Gaussian mixture model (GMM) is a type of probabilistic model often used for clustering and density estimation. It assumes that the observed data are generated from a mixture of several Gaussian distributions, each characterized by a mean vector and a covariance matrix. 

Consider observed data $X=(X_1,\cdots,X_n) \in \mathbb R^{p\times n}$, where each $X_i$ is a $p$-dimensional vector. We assume there are $k$ distinct clusters represented by the centers $\theta_{1},\cdots, \theta_k \in \mathbb R^p$. Denote $[n]:=\{1,\cdots,n\}$. Let $\mathbf z=(z_1,\cdots,z_n)^\T \in [k]^n$ be the latent variable that represents the true cluster labels for each observation $X_i$. The model assumes that each $X_i$ is generated as a result of adding a Gaussian noise term $\epsilon_i$ to its corresponding center $\theta_{z_i}$, with $\epsilon_i$'s being i.i.d. $\mathcal N(0,I_p)$. In particular,  
$X_i = \theta_{z_i} + \epsilon_i$ and we denote 
\begin{align}\label{eq:gmm}
X=\E(X) + E.
\end{align}

The goal of the GMM is to classify the observed data $X$ into $k$ clusters, and recover the latent variable $\mathbf z$. Let $\widetilde{\mathbf z}$ be the output of a clustering algorithm for the GMM and the accuracy of this algorithm can be evaluated using the misclassification rate, defined as:
$$\mathcal M (\mathbf z, \widetilde{\mathbf z}):=\frac{1}{n} \min_{\pi \in \mathcal S_k} \left| \{ i\in [n]: z_i \neq \pi(\widetilde z_i)\} \right|,$$
where $\mathcal S_k$ is the set of all permutations of $[k]$.

To solve the clustering problem, typically, more satisfying outcomes can be obtained by beginning with an initial estimate and then refining it with other tools like iteration or semidefinite programming (SDP). However, our discussion will focus exclusively on the application of simple spectral methods to illustrate perturbation results. Such methods have recently received considerable attention in the literature, as seen in \cite{CZ18, AFW22, LZZ21, ZZ22}, among others. Notably, the case of a two-cluster GMM with centers $\pm \mu$ for a fixed vector $\mu$ has been extensively studied in \cite{CZ18, AFW22}.

In the context of a general $k$-cluster framework, it is important to recognize insights from \cite{LZZ21} that establish spectral clustering as optimal for GMM. Our main goal is to show that the application of our perturbation results provides a succinct and effective proof for examining the theoretical performance of spectral algorithms.


Denote the minimum distance among centers as
$$\Delta:=\min_{j,l\in [k]: j\neq l} \|\theta_j -\theta_l\|.$$
When the separation between cluster centers, denoted by 
$\Delta$, is sufficiently large, distance-based clustering methods become particularly commendable.  

The principle of spectral clustering is elegantly simple.  Consider the SVD of $\E(X)=U \Sigma V^\T$, where $\Sigma$ is a $k \times k$ diagonal matrix. If the rank $r$ of $\E(X)$ is less than $k$, then $\Sigma$ will have $k-r$ zero diagonal entries. The matrices $U$ and $V$ respectively consist of $k$ orthonormal vectors that contain the left and right singular vectors of $\E(X)$. Let us denote $(U^\T \E (X))_j$ the columns of $U^\T \E (X) \in \mathbb{R}^{k\times n}$. We can demonstrate, as elaborated in  \cite{WangSupp24}, that for any columns $\theta_i$ and $\theta_j$ of $\E(X) = (\theta_{z_1},\cdots,\theta_{z_n})$, 
$$\|\theta_ i - \theta_j \| = \|(U^\T \E (X))_i-(U^\T \E (X))_j\|.$$
This indicates that the columns of $U^\T \E (X) =  \Sigma V^\T$ preserve the geometric relationship among the centers. 

Consider the SVD of $X = \widetilde U \widetilde \Lambda \widetilde V^\T$ and we use the previously defined notations $\widetilde U_{s}, \widetilde \Lambda_{s}, \widetilde V_{s}$. The crux of the analysis lies in proving that, with high probability, the following  holds:
\begin{align}\label{eq:gmmineq}
\max_{1\le j \le n}\|(\widetilde U_{k}^T X)_j - (U^\T \E (X))_j\| < \frac{1}{5} \Delta.
\end{align}
If this is the case, then performing clustering based on the distances among the columns of $\widetilde U_{r}^T X$ will, with high probability, successfully recover the correct cluster labels. In light of the preceding analysis, we hereby present the following algorithm:

\begin{algorithm}
    \caption{Spectral algorithm for GMM}
    \label{alg:gmm}
    \begin{algorithmic}
   \State \textbf{Input:} data matrix $X \in \mathbb R^{n\times p}$ and cluster number $k$.
     \State \textbf{Output:} cluster labels $\widetilde{\mathbf z} \in [k]^n$.
    \State  \emph{Step 1.} Perform SVD on $X$ and denote $\widetilde U_k \in \mathbb R^{p\times k}$ the singular vector matrix composed of the leading $k$ left singular vectors of $X$.
    \State \emph{Step 2.} Perform $k$-means clustering on the columns of $\widetilde U_k^\T X$.
    \end{algorithmic}
\end{algorithm}
 Algorithm \ref{alg:gmm} is identical to the algorithm proposed in \cite{LZZ21} and \cite{ZZ22}. This SVD-based algorithm has been widely adopted to address a variety of well-known problems in computer science and statistics, including the hidden clique, hidden bisection, hidden coloring, and matrix completion, among others (see for instance \cite{Vu18,LZZ21} and references therein for more discussion). 

The use of $k$-means clustering in \emph{Step 2} of Algorithm \ref{alg:gmm} is not a crucial component. The key requirement is to establish the inequality in \eqref{eq:gmmineq}; once this is achieved, alternative distance-based clustering algorithms may be employed in place of $k$-means.

For the output $\widetilde{\mathbf z}$ of Algorithm \ref{alg:gmm}, we could show the following result:
\begin{theorem}\label{thm:gmm} Consider the GMM \eqref{eq:gmm} with cluster number $k$. Let $\sigma_{\min}>0$ be the smallest singular value of $\E(X)$. Denote the smallest cluster size by $c_{\min}$.  Let $L>0$ and assume $(\sqrt n + \sqrt p)^2 \ge 32(L+7) \log(n+p) + 64(\log 9) k$.
If 
\begin{align}\label{eq:Deltabd}
&\Delta \ge \max\left\{ \frac{40(\sqrt n + \sqrt p)}{\sqrt{c_{\min}}}, 1800k\sqrt{(L+7)\log(n+p)}  \right\},\\
&\sigma_{\min} \ge 40(\sqrt n + \sqrt p)+ 3.8\times 10^4  k \sqrt{2(\log 9)  k + (L+7)\log(n+p)}\nonumber,
\end{align}
then $\E \mathcal M (\mathbf z, \widetilde{\mathbf z}) \le 40(n+p)^{-L}$.
\end{theorem}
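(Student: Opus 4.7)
Since $k$-means depends only on Euclidean distances, to conclude $\mathcal M(\mathbf z, \widetilde{\mathbf z})=0$ it suffices to exhibit ``reference centers'' $c_1,\dots, c_k$, one per cluster, with $\min_{l\ne l'}\|c_l - c_{l'}\|$ substantially larger than $\max_j \|\widetilde U_k^\T X_j - c_{z_j}\|$. I will take $c_l := \widetilde U_k^\T \theta_l$, so that $\widetilde U_k^\T X_j - c_{z_j} = \widetilde U_k^\T \epsilon_j$. Separation of these centers follows from
\[\|\widetilde U_k^\T(\theta_l-\theta_{l'})\|\ge \|\theta_l-\theta_{l'}\|\sqrt{1-\|\sin\angle(U,\widetilde U_k)\|^2}\ge \Delta/\sqrt 2,\]
once Theorem~\ref{thm:subspace-simple} delivers $\|\sin\angle(U,\widetilde U_k)\|\le 1/\sqrt 2$, which holds under the hypothesis on $\sigma_{\min}$. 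Working with these intrinsically-aligned centers, rather than $QU^\T\theta_{z_j}$ for some orthogonal $Q$, avoids an error of order $\|\sin\angle\|^2\|\theta_{z_j}\|\le \|\sin\angle\|^2\sigma_1$ that would require an additional assumption on the condition number of $\E(X)$.

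The heart of the proof is to bound $\max_j\|\widetilde U_k^\T\epsilon_j\|<\Delta/5$. Writing the SVD $\E(X)=U\Sigma V^\T$ (assuming rank $k$, so that $\sigma_{\min}=\sigma_k$), I split
\[\widetilde U_k^\T\epsilon_j = \widetilde U_k^\T E VV^\T e_j + \widetilde U_k^\T E(I-VV^\T)e_j =: T_1(j)+T_2(j).\]
Since $\E(X)(I-VV^\T)=0$, a short calculation shows that $T_2(j)$ equals the transpose of the $j$-th row of $(I-P_V)\widetilde V_k\widetilde\Sigma_k$. Applying Theorem~\ref{thm:weighted} to $X^\T$ (with $k=1$ and $s=r_0=k$ in the notation there) bounds $\max_j\|T_2(j)\|$ by a multiple of $k\sqrt{\log(n+p)}\bigl(1+\|V\|_{2,\infty}\bigr)$. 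For $T_1(j)$ I use $\|T_1(j)\|\le \|\widetilde U_k^\T EV\|\cdot\|V_{j\cdot}\|$ and estimate the operator norm via the Procrustes decomposition $\widetilde U_k=U\widetilde O^\T+R$ with $\|R\|\lesssim \|\sin\angle(U,\widetilde U_k)\|$:
\[\|\widetilde U_k^\T EV\|\le \|U^\T EV\|+\|R\|\cdot\|E\|.\]
Since $U$ and $V$ are deterministic orthonormal frames, $U^\T EV$ is a standard $k\times k$ Gaussian matrix, so $\|U^\T EV\|\lesssim \sqrt k+\sqrt{\log(n+p)}$ with high probability; the factor $\|\sin\angle(U,\widetilde U_k)\|\lesssim \|E\|/\sigma_k$ comes from Theorem~\ref{thm:subspace-simple}, and $\|E\|\le 2(\sqrt n+\sqrt p)$ by standard Gaussian-norm concentration, as in the remark following Theorem~\ref{thm:subspace}.

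The remaining structural ingredient is the incoherence of $V$. Because $V_{j\cdot}^\T=\Sigma^{-1}U^\T\theta_{z_j}$ depends on $j$ only through $z_j$, the rows of $V$ are identical within each cluster; letting $w_l$ denote the common row in cluster $l$, the identity $V^\T V=I_k$ yields $\sum_l c_l\|w_l\|^2=k$ and hence $\|V\|_{2,\infty}\le\sqrt{k/c_{\min}}$. Inserting this into both ingredients above and invoking $\Delta\ge 40(\sqrt n+\sqrt p)/\sqrt{c_{\min}}$ together with $\Delta\ge 1800k\sqrt{(L+7)\log(n+p)}$, each summand in $\max_j\|\widetilde U_k^\T\epsilon_j\|$ fits within $\Delta/5$ with the chosen absolute constants. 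A union bound over the failure events of Theorems~\ref{thm:subspace-simple} and \ref{thm:weighted}, the Gaussian norm bound on $\|E\|$, and the $k\times k$ Gaussian operator-norm bound on $\|U^\T EV\|$, each of probability $\lesssim (n+p)^{-L}$, gives total failure probability at most $40(n+p)^{-L}$. Since $\mathcal M\in[0,1]$ and vanishes on the good event, $\E\mathcal M(\mathbf z,\widetilde{\mathbf z})\le 40(n+p)^{-L}$.

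The main obstacle I anticipate is constant-bookkeeping: each contribution to $\max_j\|\widetilde U_k^\T\epsilon_j\|$ must fit below $\Delta/5$ uniformly over $(k,c_{\min},n,p,L)$. In particular the cross-term $\|R\|\cdot\|E\|\cdot\|V\|_{2,\infty}\lesssim \|E\|^2\sqrt{k/c_{\min}}/\sigma_k$ is matched against the branch $\Delta\ge 40(\sqrt n+\sqrt p)/\sqrt{c_{\min}}$, which is precisely why the hypothesis on $\sigma_{\min}$ contains the additive term $40(\sqrt n+\sqrt p)$ in addition to the $k\sqrt{k\log(n+p)}$ piece.
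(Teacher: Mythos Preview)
Your approach is correct and reaches the same conclusion, but it takes a genuinely different route from the paper's proof. The paper compares the columns of $\widetilde U_k^\T X=\widetilde\Lambda_k\widetilde V_k^\T$ directly with those of $U^\T\E(X)=\Lambda V^\T$ (after aligning the SVD basis of $\E(X)$), and controls $\|\widetilde V_r\widetilde\Lambda_r-V_r\Lambda_r\|_{2,\infty}$ by invoking Corollary~\ref{cor:weighted} for $\|\widetilde V_r\widetilde\Lambda_r-V_rO\widetilde\Lambda_r\|_{2,\infty}$ and then Weyl's inequality for the remaining piece $\|V_r(\widetilde\Lambda_r-\Lambda_r)\|_{2,\infty}$. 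Your decomposition instead keeps the data-driven centers $\widetilde U_k^\T\theta_l$ and splits $\widetilde U_k^\T E$ through $P_V$: the $(I-P_V)$ part is exactly the quantity bounded by Theorem~\ref{thm:weighted} (so this is the same key tool), while the $P_V$ part is handled by a separate Gaussian bound on the deterministic $k\times k$ block $U^\T EV$ together with a $\sin\Theta$ correction. The tradeoff: your approach sidesteps the SVD-alignment trick and the Weyl step, at the cost of an extra argument that the projected centers $\widetilde U_k^\T\theta_l$ remain well separated.

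Two small points to tighten. First, your incoherence estimate $\|V\|_{2,\infty}\le\sqrt{k/c_{\min}}$ is looser than needed: since the $k$ distinct rows $w_1,\dots,w_k$ of $V$ satisfy $\sum_l c_l\,w_lw_l^\T=I_k$, the vectors $\sqrt{c_l}\,w_l$ are orthonormal, whence $\|w_l\|=1/\sqrt{c_l}$ exactly and $\|V\|_{2,\infty}=1/\sqrt{c_{\min}}$, as in the paper. Second, you assume $\rank\E(X)=k$; the theorem (and the paper's proof) allows $r<k$, handling the residual block $\widetilde\Lambda_{\llbracket r+1,k\rrbracket}\widetilde V_{\llbracket r+1,k\rrbracket}^\T$ via $\widetilde\sigma_{r+1}\le\|E\|$. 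Your scheme extends to this case with the same device, but as written it does not cover it.
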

The proof of Theorem \ref{thm:gmm} is a direct application of Corollary \ref{cor:weighted} and is detailed in \cite{WangSupp24}. By setting $L=(n+p)/\log(n+p)$, for instance, we achieve an exponential rate of misclassification. 

L{\"o}ffler, Zhang and Zhou \cite{LZZ21} have demonstrated that for the output $\widetilde{\mathbf z}$ of Algorithm \ref{alg:gmm}, provided that $\Delta \gg \frac{k^{10} (\sqrt n + \sqrt p)}{\sqrt{c_{\min}}},$ the following bound holds:  
 \begin{align}\label{bd:gmm-lzz}
 \E \mathcal M (\mathbf z, \widetilde{\mathbf z}) \le \exp \left( - (1-o(1)) \Delta^2/8 \right) + \exp(-0.08n).
 \end{align}
More recently, Zhang and Zhou \cite{ZZ22} have developed another innovative approach to analyze the output $\widetilde{\mathbf z}$ and obtained the same asymptotic exponential error rate \eqref{bd:gmm-lzz} for the GMM, assuming 
\begin{align*}
c_{\min} \ge 100 k^3 \quad\text{and} \quad \Delta\gg \frac{k^{3} (n + p)/\sqrt{n}}{\sqrt{c_{\min}}}.
\end{align*} 
Additionally, \cite[Theorem 3.1]{ZZ22} analyzes the estimator $\widetilde{\mathbf z}$ for the sub-Gaussian mixture model. For the output $\widetilde{\mathbf z}$ of Algorithm \ref{alg:gmm}, where in \emph{Step 2} the selection is made for $\widetilde{U}_r$ with $r = \rank(\E(X))$ (implying the use of exactly all $r$ singular vectors of $\E(X)$), an exponential error rate is attainable when
$$c_{\min}  \ge 10 k, \quad {\Delta}> C\frac{\sqrt{k}({\sqrt n + \sqrt p})}{\sqrt{c_{\min}}}
\quad \text{and}\quad
 {\sigma_r}> C({\sqrt n +\sqrt p})$$
 for some $C>0$. Abbe, Fan, and Wang \cite{AFW22} also explored the sub-Gaussian mixture model, employing the eigenvectors of the hollowed Gram matrix $\mathcal{H}(X^\top X)$ for clustering. Their approach leverages the $\ell_p$ perturbation results formulated in their paper but necessitates stricter conditions on the number of clusters, their sizes, and the collinearity of the cluster centers.

It is noteworthy that in the context of the GMM, results in \cite{LZZ21} and \cite{ZZ22} do not require any assumptions regarding the smallest singular value $\sigma_{\min}$, due to the exploitation of the Gaussian nature of the noise matrix $E$. Our Theorem \ref{thm:gmm} aligns with the findings for the sub-Gaussian mixture model in \cite{ZZ22}. Since our proof does not fully utilize the Gaussianality, we only employ the rotation invariance property to simplify the proof of isotropic local law, as given in Lemma \ref{lem:iso}. Our findings can be extended to scenarios where the entries of $E$ are sub-Gaussian random variables.  The extension to sub-Gaussian noise can be achieved through standard random matrix techniques, as discussed in Remark \ref{rem:gaussian-var}.

\subsection{Submatrix localization}\label{sec:submatrix} The general formulation of the submatrix localization or recovery problem involves locating or recovering a $k \times s$ submatrix with entries sampled from a distribution $\mathcal{P}$ within a larger $m \times n$ matrix populated with samples from a different distribution $\mathcal{Q}$.  In particular, when $\mathcal P$ and $\mathcal Q$ are both Bernoulli or Gaussian random matrices, the detection and recovery of the submatrix have been extensively studied. These investigations span various domains, including hidden clique, community detection, bi-clustering, and stochastic block models (see \cite{AKS98, FR10, Mc01, BKRSW11, BI13, BIS15,ACV14, DGGR14, KBR11, MRZ15, MW15,CX16, CLR17, HWX16, BBH18, Vu18, BV23, DHB23} and references therein).   

The task of recovering a single submatrix has been intensively explored (see for instance, \cite{CLR17,HWX16,BIS15,CX16,Vu18,Mc01} and references therein), but research on locating a growing number of submatrices is comparatively limited \cite{CX16, CLR17, DHB23}. In this section, we focus on the recovery of multiple (non-overlapping) submatrices within the model of size $m\times n$: 
\begin{align}\label{def:sub}
X=M + E,
\end{align}
where the entries of the noise matrix $E$ are i.i.d. standard Gaussian random variable. The signal matrix is given by
\begin{align*}
M=\sum_{i=1}^k \lambda_i \mathds{1}_{R_i} \mathds{1}_{C_i}^\T , 
\end{align*}
where $\{ R_i\}_{i=1}^k$ are disjoint subsets in $[m]$ and $\{ C_i\}_{i=1}^k$ are non-overlapping subsets in $[n]$. We denote $\mathds{1}_{R_i}$ as a vector in $\mathbb R^m$ with entries equal to 1 for indices in the set $R_i$ and 0 elsewhere, and $\mathds{1}_{C_i}$ is defined analogously.  Denote $|R_i| =r_i$ and $|C_i| = c_i$. Assume $\lambda_i\neq 0$ for all $1\le i \le k$. The goal is to discover the pairs $\{ (R_i, C_i) \}_{i=1}^k$ from the matrix $X$. 

Observe 
that the SVD of $M$ is given by 
$M= \sum_{i=1}^k \sigma_ i u_i v_i^\T:=U D V^\T,$ where $$\sigma_i := |\lambda_i | \sqrt{r_i c_i}, \, u_i := \mathrm{sgn}(\lambda_i) \frac{\mathds{1}_{R_i}}{\sqrt{r_i}},\, v_i:=\frac{\mathds{1}_{C_i}}{\sqrt{c_i}}.$$The columns of $U$ and $V$ are composed of $u_i$'s and $v_i$'s respectively and $D=\diag(\sigma_1,\cdots,\sigma_k)$. 

Note that $|X_{ij} - M_{ij}| = |E_{ij}|$ and with high probability, $\max_{i,j}|E_{ij}| \lesssim \sqrt{\log n}$. If $\min_{i,j} |M_{ij}| = \min_{1\le l \le k} |\lambda_l| \gtrsim \sqrt{\log n}$ and is greater than $\max_{i,j}|E_{ij}| $, a simple element-wise thresholding proves effective for identifying the submatrices. 

In general, as in Section \ref{sec:gmm}, we apply the same spectral clustering method to locate the submatrices. Denote $C_0:=[n] \setminus \cup_{i=1}^k C_i$ the set of isolated column indices with size $|C_0|=c_0$; define $R_0$ and its size $r_0$ analagouly. Let $(U^\T M)_j$ represent the columns of $U^\T M$. From $U^\T M = D V^\T$ and the definitions of $D$ and $V$, it follows that $(U^\T M)_j$ has only 1 non-zero entry $\lambda_l \sqrt{r_l}$ if $j \in C_l$ for some $l\in [k]$ and it is a zero vector if $j \in C_0$. In particular, if $i,j \in [n]$ belong to the same $C_l$ for $0\le l \le k$, it holds that $(U^\T M)_i = (U^\T M)_j$. For $i,j \in [n]$ from different submatrices, we have that $$\min_{\substack{i \in C_{l}, j \in C_{s}, \\0\le l \neq s\le k}}\|(U^\T M)_i - (U^\T M)_j\| = \min_{1\le i \le k}|\lambda_i| \sqrt{r_i} :=\Delta_R.$$ 
In particular, if $\Delta_R$ is sufficiently large, distance-based clustering can effectively be adapted to identify the column index sets of the submatrices.

Let $\widetilde U \widetilde D \widetilde V^\T$ be the SVD of $X=M+E$ and consider $\widetilde U_k^\T X$. The main objective is to show that, with high probability, 
$$\max_{1\le j \le n}\| (\widetilde U_k^\T X)_j - (U^\T M)_j \| < \frac{1}{5} \Delta_R.$$
Achieving this allows us to employ a standard clustering approach, such as 
$k$-means, based on distance to classify the columns of $\widetilde U_k^\T X$ and thus recover the column index subsets $\{ C_i\}_{i=0}^k$. Similarly, to identify the row index subsets  $\{ R_i\}_{i=0}^k$, we utilize the parameter 
$$
\Delta_C:= \min_{1\le i \le k}|\lambda_i| \sqrt{c_i}
$$
and apply $k$-means clustering to the rows of $X \widetilde V_r$. We propose the following algorithm: 

\begin{algorithm}
    \caption{Spectral algorithm for submatrix localization}
    \label{alg:sub}
    \begin{algorithmic}
   \State \textbf{Input:} data matrix $X \in \mathbb R^{m\times n}$ and submatrix number $k$.
     \State \textbf{Output:} column index subsets $\{ \widetilde C_i\}_{i=0}^k$ and row index subsets $\{\widetilde R_i\}_{i=0}^k$.
    \State  \emph{Step 1.} Perform SVD on $X$ and denote $\widetilde U_k \in \mathbb R^{m\times k}$ and $\widetilde V_k \in \mathbb R^{n\times k}$ the singular vector matrices composed of the leading $k$ left and right singular vectors of $X$ respectively.
    \State \emph{Step 2.} Perform $(k+1)$-means clustering on the columns of $\widetilde U_k^\T X$. Output the column index subsets $\{ \widetilde C_i\}_{i=0}^k$.
     \State \emph{Step 3.} Perform $(k+1)$-means clustering on the rows of $X \widetilde V_r$. Output the row index subsets $\{ \widetilde R_i\}_{i=0}^k$.
    \end{algorithmic}
\end{algorithm}

Define 
\begin{align*}
&\sigma_{\min}:= \min_{1\le i \le k} |\lambda_i| \sqrt{r_i c_i}, \quad r_{\min} := \min_{0\le i \le k} r_i, \quad c_{\min}:=\min_{0\le i \le k} c_i. 
\end{align*}

\begin{theorem}\label{thm:sub}Consider the submatrix localization model \eqref{def:sub} with $k$ submatrices. Let $L>0$ and assume $(\sqrt n + \sqrt p)^2 \ge 32(L+7) \log(n+p) + 64(\log 9) k$. Given that
\begin{align*}
&\Delta_R \ge \max\left\{ \frac{40(\sqrt m + \sqrt n)}{\sqrt{r_{\min}}}, 1800k\sqrt{(L+7)\log(m+n)} \right\},\\
&\Delta_C \ge \max\left\{ \frac{40(\sqrt m + \sqrt n)}{\sqrt{c_{\min}}}, 1800k\sqrt{(L+7)\log(m+n)}  \right\},\\
&\sigma_{\min} \ge 40(\sqrt m + \sqrt n)+ 3.8\times 10^4 k \sqrt{2(\log 9)  k + (L+7)\log(m+n)},
\end{align*}
Algorithm \ref{alg:sub} succeeds in finding $\widetilde R_i = R_i$ and $\widetilde C_i = C_{\pi(i)}$, $0\le i \le k$ for a bijection $\pi:[k+1]\to[k+1]$ with probability at least $1-40(m+n)^{-L}$.
\end{theorem}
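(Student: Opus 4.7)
The plan is to mirror the strategy used for Theorem \ref{thm:gmm} in Section \ref{sec:app:gmm}, handling the column and row clusterings of Algorithm \ref{alg:sub} in turn. For the recovery of $\{C_i\}_{i=0}^k$ via $(k+1)$-means on the columns of $\widetilde U_k^\T X$, the discussion preceding Algorithm \ref{alg:sub} reduces the task to proving that, with high probability, there exists an orthogonal $O_V \in \mathbb{O}^{k \times k}$ such that
\begin{align*}
\max_{1 \le j \le n}\|(\widetilde U_k^\T X)_j - O_V^\T (U^\T M)_j\| < \frac{\Delta_R}{5}.
\end{align*}
Using the identities $\widetilde U_k^\T X = \widetilde D_k \widetilde V_k^\T$ and $U^\T M = D V^\T$, this is equivalent to $\|\widetilde V_k \widetilde D_k - VDO_V\|_{2,\infty} < \Delta_R/5$.

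To establish this, I apply Corollary \ref{cor:weighted} to the SVD $X = M + E$ with $r_0=s=r=k$ (since $\rank(M)=k$). The block structure of $M$ gives $\|V\|_{2,\infty}=1/\sqrt{c_{\min}}$, so the corollary produces, with probability at least $1 - 40(m+n)^{-L}$, an orthogonal $O_V$ with
\begin{align*}
\|\widetilde V_k \widetilde D_k - V O_V \widetilde D_k\|_{2,\infty} \lesssim k\sqrt{(L+7)\log(m+n)}\bigl(1+\tfrac{1}{\sqrt{c_{\min}}}\bigr) + \tfrac{1}{\sqrt{c_{\min}}}\,\frac{\|E\|^2}{\sigma_{\min}}.
\end{align*}
I then pick $O_V$ so that it commutes with $D$, which is possible by respecting the multiplicity blocks of $D$ (and forced to be diagonal with entries $\pm 1$ in the generic case of distinct $\sigma_i$). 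Since $VDO_V - VO_V\widetilde D_k = VO_V(D - \widetilde D_k)$ when $O_V D = D O_V$, Weyl's inequality $\|D-\widetilde D_k\|\le\|E\|$ yields
\begin{align*}
\|\widetilde V_k \widetilde D_k - V D O_V\|_{2,\infty} \le \|\widetilde V_k \widetilde D_k - V O_V \widetilde D_k\|_{2,\infty} + \|V\|_{2,\infty}\|E\|.
\end{align*}
Using $\|E\|\le 2(\sqrt m+\sqrt n)$ from Lemma \ref{lemma:norm} together with $\Delta_R\gtrsim (\sqrt m+\sqrt n)/\sqrt{r_{\min}}$ and $\Delta_R\gtrsim k\sqrt{\log(m+n)}$, the right-hand side is smaller than $\Delta_R/5$.

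Given the $\ell_{2,\infty}$ bound, cluster separation follows from the explicit row structure of $VDO_V$: for $j\in C_l$ with $l\ge 1$ the $j$-th row equals $|\lambda_l|\sqrt{r_l}\,(O_V)_{l,\cdot}$, identical across $j\in C_l$, and for $j\in C_0$ it vanishes. Since the rows of $O_V$ are orthonormal, the between-cluster row distances are $\sqrt{\lambda_l^2 r_l + \lambda_m^2 r_m}\ge \sqrt{2}\Delta_R$ for $l,m\ge 1$ and $\ge \Delta_R$ against $C_0$. Combined with an $\ell_{2,\infty}$ slack of at most $\Delta_R/5$, pairwise column distances in $\widetilde U_k^\T X$ are at most $2\Delta_R/5$ within a cluster and at least $3\Delta_R/5$ between clusters, so $(k+1)$-means correctly identifies $\{\widetilde C_i\}_{i=0}^k$ as a permutation of $\{C_i\}_{i=0}^k$.

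The symmetric argument---applied with $U$, $\widetilde U_k$, $R_i$, $r_{\min}$, $\Delta_C$ in place of $V$, $\widetilde V_k$, $C_i$, $c_{\min}$, $\Delta_R$---recovers $\{\widetilde R_i\}_{i=0}^k$ from the rows of $X\widetilde V_k = \widetilde U_k\widetilde D_k$, and both statements hold on the single event produced by applying Corollary \ref{cor:weighted} to $X$, yielding the claimed probability at least $1-40(m+n)^{-L}$. The main obstacle is the commutation step bridging $V O_V \widetilde D_k$ to $V D O_V$: without a choice of $O_V$ respecting the shared (approximate) multiplicity structure of $D$ and $\widetilde D_k$, the discrepancy $\|DO_V - O_V D\|$ could be of order $\|D\|$ rather than $\|E\|$, and the bridging would fail.
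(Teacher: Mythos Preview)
Your outline matches the paper's intended route: the proof of Theorem~\ref{thm:sub} is stated to parallel that of Theorem~\ref{thm:gmm}, and the ingredients you invoke (Corollary~\ref{cor:weighted} with $r_0=s=r=k$, the explicit $\|V\|_{2,\infty}=1/\sqrt{c_{\min}}$, Weyl's inequality, and a distance argument for $(k{+}1)$-means applied to both the columns and the rows) are exactly those of the GMM argument in Section~\ref{sec:app:gmm}.

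There is one genuine gap, which you correctly isolate at the end. The orthogonal matrix $O_V$ furnished by Corollary~\ref{cor:weighted} is not free to choose: it is the specific $O_1O_2^\T$ coming from the SVD of $V^\T\widetilde V_k$ (see the derivation of~\eqref{eq:weightedbdnew}), so the claim ``I then pick $O_V$ so that it commutes with $D$'' is unjustified, and your bridge $VO_V\widetilde D_k\to VDO_V$ via $VDO_V - VO_V\widetilde D_k = VO_V(D-\widetilde D_k)$ does not go through as written. The paper's GMM proof handles this step differently and avoids commutation altogether. Rather than trying to move $O_V$ past $D$, it absorbs $O_V$ into the choice of $V$ from the outset (``due to non-uniqueness of the choice of $V$ in the SVD \ldots\ we choose a specified $V_r$''), so that the corollary already reads $\|\widetilde V_k\widetilde D_k - V\widetilde D_k\|_{2,\infty}$ small, and then bridges to $VD$ via the elementary estimate
\[
\|V\widetilde D_k - VD\|_{2,\infty}\le\|V\|_{2,\infty}\,\|\widetilde D_k - D\|\le\|V\|_{2,\infty}\,\|E\|.
\]
In your notation this amounts to taking the reference $VO_VD$ rather than $VDO_V$: the bridge is then $\|VO_V(\widetilde D_k - D)\|_{2,\infty}\le\|VO_V\|_{2,\infty}\|E\|=\|V\|_{2,\infty}\|E\|$, with no commutator appearing. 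Since the rows of $V$ are constant within each $C_l$, so are the rows of $VO_VD$, and the cluster-separation argument proceeds as in the paper.
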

The proof of Theorem \ref{thm:sub} parallels that of Theorem \ref{thm:gmm}, and therefore we omit the details.

Previous research on the model \eqref{def:sub} of multiple submatrix localization includes notable contributions such as those found in \cite{CX16, CLR17, DHB23}.  Chen and Xu \cite{CX16} examine this problem across different regimes, each corresponding to unique statistical and computational complexities. They focus on scenarios where all $k$ submatrices are identically sized at $K_R \times K_C$ and share a common positive value $\lambda_i = \lambda$. Their analysis of the Maximum Likelihood Estimator (MLE), a convexified version of MLE, and a simple thresholding algorithm address the challenges specific to hard, easy, and simple regimes, respectively. In the work of Dadon, Huleihel and Bendory \cite{DHB23}, the primary objective is to explore the computational and statistical limits associated with the detection and reconstruction of hidden submatrices. Under the same setting as \cite{CX16} in the context of the multiple submatrix recovery problem, the authors introduce a MLE alongside an alternative peeling estimator and investigate the performance of these estimators.

Our Algorithm \ref{alg:sub} is identical to Algorithm 3 presented in Cai, Liang and Rakhlin's paper \cite{CLR17}. The assumptions laid out in \cite{CLR17} include $r_i \asymp K_R, c_i \asymp K_C, \lambda_i \asymp \lambda$ for all  $1 \le i \le k$ and $\min\{ K_R, K_C \}\gtrsim \max\{\sqrt m, \sqrt n \}$. Given that 
\begin{align}\label{assump:clr}
\lambda \gtrsim \frac{\sqrt k}{\min\{\sqrt{K_R}, \sqrt{K_C}\}} + \max\left\{\sqrt{\frac{\log m}{K_C}}, \sqrt{\frac{\log n}{K_R} }\right\} + \frac{\sqrt m + \sqrt n}{\sqrt{K_R K_C}},
\end{align}
 the authors of \cite{CLR17} demonstrate that Algorithm \ref{alg:sub} successfully recovers the true submatrix row and column index sets with probability at least $1-m^{-c} - n^{-c} -2 \exp(-c(m+n)).$ The entries of the noise matrix $E$ in \cite{CLR17} are assumed to be i.i.d zero-mean sub-Gaussian random variables. 

While our method does not require that all row or column index sets have the same order of sizes, in the special case where $r_i \asymp K_R$, $c_i \asymp K_C$, and $\lambda_i \asymp \lambda$ for all $1 \le i \le k$, and furthermore $r_0\gtrsim K_R$ and $c_0\gtrsim K_C$, our analysis indicates that if
\begin{align}\label{assump:ours}
\lambda \gtrsim   \frac{k \sqrt{\log(m+n)}}{\min\{\sqrt{K_R}, \sqrt{K_C} \}}+\frac{\sqrt m + \sqrt n}{\min\{K_R, K_C \}},
\end{align}
then Algorithm \ref{alg:sub} successfully recovers the submatrix index subsets with probability at least $1-(m+n)^{-c}$. It should be emphasized that the condition in \eqref{assump:ours} is more stringent than that in \eqref{assump:clr}, a difference that becomes particularly pronounced in cases where $K_R$ and $K_C$ are highly unbalanced. An interesting direction for future research would be to improve our perturbation bounds to accommodate cases with unbalanced matrix dimensions.

\section{Basic tools and proof overview}\label{sec:basic}

\subsection{Basic tools}\label{sec:LA} This section presents the basic tools necessary for the proofs of our main results, many of which build upon the previous work by O'Rourke, Vu and the author \cite{OVW22}.

We start with the standard linearization of the perturbation model \eqref{def:tildeA}. Consider the $(N+n)\times (N+n)$ matrices 
\begin{align*}
\mathcal A := \begin{pmatrix}
0& A\\
A^\T& 0
\end{pmatrix}
\quad\text{and} \quad \mathcal E:=\begin{pmatrix}
0& E\\
E^\T& 0
\end{pmatrix}
\end{align*}
in block form.  
Define
\[ \widetilde{\mathcal A}:=\mathcal A + \mathcal E. \]
The non-zero eigenvalues of $\mathcal A$ are given by $\lambda_j= \sigma_j$ and $\lambda_{j+r} = -\sigma_j$ for $1\le j \le r$. Then $\mathbf u_j:=\frac{1}{\sqrt 2}(u_j^\T, v_j^\T)^\T$  and $\mathbf u_{j+r}:=\frac{1}{\sqrt 2}(u_j^\T, -v_j^\T)^\T$ for $1\le j \le r$ are their corresponding orthonormal eigenvectors. The spectral decomposition of $\mathcal A$ is 
\begin{equation}\label{eq:A}
 \mathcal A = \mathcal{U} \mathcal{D} {\mathcal U}^\T, 
 \end{equation}
where $\mathcal U:=(\mathbf{u}_1,\ldots,\mathbf{u}_{2r})$ and $\mathcal D := \diag(\lambda_1,\cdots, \lambda_{2r})$. It follows that $\mathcal U^\T \mathcal U = I_{2r}$. Similarly, the non-zero eigenvalues  
of $\widetilde{\mathcal A}$ are denoted by $\widetilde\lambda_j=\widetilde\sigma_j$ and  $\widetilde\lambda_{j+\min\{N, n\}}=-\widetilde\sigma_j$ for $1\le j \le \min\{N, n\}$.  The eigenvector corresponding to $\widetilde\lambda_j$ is denoted by $\widetilde{\mathbf{u}}_j$ and is formed by the right and left singular vectors of $\widetilde{A}$.  

For $z \in \mathbb{C}$ with $|z| > \|\mathcal E\|$, we define the resolvent of $\mathcal E$ as
\[ G(z) := (zI - \mathcal E)^{-1}. \] 
Often we will drop the identity matrix and simply write $(z - \mathcal E)^{-1}$ for this matrix. We use $G_{ij}(z)$ to denote the $(i,j)$-entry of $G(z)$.   

The key observation is that $G(z)$ can be approximated by a diagonal matrix. 
Consider a random diagonal matrix 
\begin{equation}\label{eq:Phi}
 \Phi(z):=\begin{pmatrix}
\frac{1}{\phi_1(z)}I_N & 0 \\
0 &\frac{1}{\phi_2(z)}I_n
\end{pmatrix},
\end{equation}
where 
\begin{equation}\label{eq:phi12}
\phi_1(z):= z - \sum_{t \in \llbracket N+1,N+n \rrbracket} G_{tt}(z),\qquad\phi_2(z):= z - \sum_{s \in \llbracket 1,N \rrbracket} G_{ss}(z).
\end{equation}
Using the Schur complement, the Green function $G(z)$ of $\mathcal E$ is composed of four blocks: the Green function $G_1(z)$ of $EE^T$, the Green function $G_2(z)$ of $E^TE$, and $E^TG_1(z)$ and $G_1(z)E$ (with minor modifications). The approximation $\Phi(z)$ is motivated by the quadratic equations satisfied by the Stieltjes transforms of the spectral distributions of $EE^\T$ and $E^\T E$, similar to how the Stieltjes transform of the semi-circle law satisfies a quadratic equation in the symmetric case.

By setting 
\[
\mathcal I^\mathrm{u} := \begin{pmatrix}
I_N & 0\\
0& 0
\end{pmatrix} 
\quad\text{and}\quad
\mathcal I^\mathrm{d} := \begin{pmatrix}
0 & 0\\
0& I_n
\end{pmatrix},
 \]
one can rewrite \eqref{eq:phi12} as 
\begin{align}\label{eq:defphi}
\phi_1(z) = z - \tr \mathcal I^\mathrm{d} G(z), \qquad \phi_2(z) = z- \tr \mathcal I^\mathrm{u} G(z).
\end{align}
By elementary linear algebra (the proof deferred to \cite{WangSupp24}), it can be verified that
\begin{align}\label{eq:difference}
\phi_1(z) = \phi_2(z) - \frac{1}{z}(n-N).
\end{align}
From the definition of $ \mathcal U$ in \eqref{eq:A}, it is easy to verify that  
\begin{align}\label{eq:productPhi}
 \mathcal U^\T \Phi(z) \mathcal U = \begin{pmatrix}
\alpha(z) I_r & \beta(z) I_r\\
\beta(z) I_r & \alpha(z) I_r
\end{pmatrix}, 
\end{align}
where we denote $$\alpha(z):=\frac{1}{2}\left(\frac{1}{\phi_1(z)} + \frac{1}{\phi_2(z)} \right)\quad \text{and}\quad \beta(z):= \frac{1}{2}\left(\frac{1}{\phi_1(z)} - \frac{1}{\phi_2(z)} \right)$$ for notational brevity. 
It follows that 
\begin{align}\label{eq:Phinorm}
\|\mathcal U^\T \Phi(z) \mathcal U\| = \max \left\{\frac{1}{|\phi_1(z)|}, \frac{1}{|\phi_2(z)|} \right\}.
\end{align}
The next lemma offers bounds for the resolvent and the functions $\phi_i(z)$'s. The proof follows similarly to that of Lemma 16 in \cite{OVW22} and is omitted for brevity. 
\begin{lemma} \label{lemma:resolventnorm}
On the event where $\|E \| \leq 2 (\sqrt{N}+\sqrt{n})$,
\[ \| G(z) \| \leq \frac{b}{b-1}\frac{1}{|z|}\]
and
\begin{equation}\label{eq:phibd}
\left(1-\frac{1}{4b(b-1)}\right) |z| \le |\phi_i(z)| \le \left(1+\frac{1}{4b(b-1)}\right) |z| \quad\text{for } i=1,2
\end{equation}
for any $z \in \mathbb{C}$ with $|z| \geq 2b (\sqrt{N}+\sqrt{n})$ and for any $k \in \llbracket 1,N+n \rrbracket$.  
\end{lemma}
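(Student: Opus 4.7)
The plan is to exploit the block structure of the Hermitization $\mathcal{E}$ together with a Neumann expansion of the resolvent. The key preliminary observation is that $\|\mathcal{E}\|=\|E\|$, since the nonzero eigenvalues of $\mathcal{E}$ are exactly $\pm\sigma_j(E)$ (with eigenvectors $\tfrac{1}{\sqrt 2}(u_j^\T,\pm v_j^\T)^\T$). Hence on the event $\|E\|\le 2(\sqrt N+\sqrt n)$ and under the hypothesis $|z|\ge 2b(\sqrt N+\sqrt n)$, one has $\|\mathcal{E}\|/|z|\le 1/b<1$, so that the Neumann series
\[
G(z) \;=\; \frac{1}{z}\sum_{k\ge 0}\bigl(\mathcal{E}/z\bigr)^k
\]
converges absolutely. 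The triangle inequality then gives the first claim,
\[
\|G(z)\|\;\le\;\frac{1}{|z|}\sum_{k\ge 0}\frac{1}{b^k}\;=\;\frac{b}{(b-1)\,|z|}.
\]

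For the bounds on $\phi_i$, by the reverse triangle inequality it suffices to show $|\phi_i(z)-z|\le |z|/(4b(b-1))$. From \eqref{eq:defphi} we have $\phi_1(z)-z=-\tr\bigl(\mathcal{I}^{\mathrm{d}}G(z)\bigr)$ and $\phi_2(z)-z=-\tr\bigl(\mathcal{I}^{\mathrm{u}}G(z)\bigr)$. Plugging the Neumann expansion into the trace, the crucial structural point is that even powers of $\mathcal{E}$ are block-diagonal, $\mathcal{E}^{2m}=\diag\bigl((EE^{\T})^m,(E^{\T}E)^m\bigr)$, while odd powers $\mathcal{E}^{2m+1}=\mathcal{E}\,\mathcal{E}^{2m}$ are block-off-diagonal and therefore have zero diagonal. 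Consequently the odd-indexed terms vanish under $\tr(\mathcal{I}^{\mathrm{d}}\cdot)$ and $\tr(\mathcal{I}^{\mathrm{u}}\cdot)$, leaving
\[
\tr\bigl(\mathcal{I}^{\mathrm{d}}G(z)\bigr)\;=\;\sum_{m\ge 0}\frac{\tr\bigl((E^{\T}E)^m\bigr)}{z^{2m+1}},\qquad \bigl|\tr\bigl((E^{\T}E)^m\bigr)\bigr|\;\le\; n\,\|E\|^{2m}.
\]
Summing the resulting geometric series with ratio $\|E\|^2/|z|^2\le 1/b^2$ gives
\[
\bigl|\tr(\mathcal{I}^{\mathrm{d}}G(z))\bigr|\;\le\;\frac{n}{|z|}\cdot\frac{1}{1-1/b^2}\;=\;\frac{n\,b^2}{|z|\,(b^2-1)}.
\]

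To close the estimate, I would use the hypothesis $|z|^2\ge 4b^2(\sqrt N+\sqrt n)^2\ge 4b^2 n$, which reduces the required inequality $\tfrac{n b^2}{|z|(b^2-1)}\le \tfrac{|z|}{4b(b-1)}$ to the trivial statement $b+1\ge b$. The same computation, with $\mathcal{I}^{\mathrm{d}}$ replaced by $\mathcal{I}^{\mathrm{u}}$ and $n$ by $N$, controls $|\phi_2(z)-z|$; combining gives the stated two-sided bound on $|\phi_i(z)|$. The argument is essentially mechanical and I do not anticipate a substantive obstacle; the one observation that deserves highlighting is the vanishing of the odd-power traces, which is what makes the block-diagonal truncation tight and keeps the sharp constant $\tfrac{1}{4b(b-1)}$ rather than something of order $\tfrac{1}{b-1}$.
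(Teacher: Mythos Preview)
Your proof is correct. The paper itself omits the proof, pointing instead to Lemma~16 of \cite{OVW22}, so there is no line-by-line comparison to make; your argument is certainly in the intended spirit.

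One comment: your closing remark overstates the role of the odd-power vanishing. The cruder bound
\[
\bigl|\tr(\mathcal{I}^{\mathrm d}G(z))\bigr|\;=\;\Bigl|\sum_{t=N+1}^{N+n}G_{tt}(z)\Bigr|\;\le\; n\,\|G(z)\|\;\le\;\frac{nb}{(b-1)|z|}
\]
already suffices, since $|z|^2\ge 4b^2(\sqrt N+\sqrt n)^2\ge 4b^2 n$ gives $\tfrac{nb}{(b-1)|z|}\le \tfrac{|z|}{4b(b-1)}$ on the nose. Your series analysis yields the slightly sharper intermediate bound $\tfrac{nb^2}{(b^2-1)|z|}=\tfrac{nb}{(b-1)|z|}\cdot\tfrac{b}{b+1}$, but this extra factor $\tfrac{b}{b+1}$ is not needed to reach the constant $\tfrac{1}{4b(b-1)}$; the paper's line~\eqref{eq:bdontrace} suggests the coarser trace-by-norm route is what the authors have in mind. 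Either way the lemma follows, and your Neumann-series formulation has the minor advantage of making the structure explicit.
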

Consequently, by Lemma \ref{lemma:resolventnorm}, we obtain
\begin{align}\label{eq:bdontrace}
\max\{|\tr G(z )|, |\tr \mathcal I^u G(z)  |, |\tr \mathcal I^d G(z)  |\}  \le (N+n)\|G(z)\| \le \frac{b}{b-1}\frac{N+n}{|z|}.
\end{align}

The subsequent isotropic local law is derived using a proof similar to that of \cite[Lemma 27]{OVW22}. For completeness, we briefly describe the proof in  the supplementary material \cite{WangSupp24}.
\begin{lemma}\label{lem:iso}Let $K>0$ and assume $(\sqrt N + \sqrt n)^2 \ge 32 (K+1)\log(N+n)$. For any unit vectors $\mathbf x, \mathbf y\in \mathbb{R}^{N+n}$ and for any $z \in \mathbb{C}$ with $|z| \ge 2b(\sqrt{N}+ \sqrt{n})$, 
\begin{align}\label{eq:quadlocal}
\left| \mathbf x^\T \left(G(z)-\Phi(z) \right) \mathbf y\right| \le \frac{5b^2}{(b-1)^2}\frac{  \sqrt{(K+1)\log (N+n)}}{ |z|^2}
\end{align}
with probability at least $1 - 9(N+n)^{-(K+1)}$.  
\end{lemma}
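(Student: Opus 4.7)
The goal is to bound $\mathbf{x}^\T (G(z) - \Phi(z)) \mathbf{y}$, and my plan has two ingredients: Gaussian concentration of $F(E) := \mathbf{x}^\T G(z) \mathbf{y}$ about its mean, and a self-consistent estimate (via Gaussian integration by parts) comparing $\mathbb{E}[G(z)]$ with the mean of the random diagonal $\Phi(z)$. I would condition throughout on the event $\Omega := \{\|E\| \le 2(\sqrt N + \sqrt n)\}$, which has probability at least $1 - (N+n)^{-(K+1)}$ by standard Gaussian operator-norm concentration under the lemma's hypothesis on $(\sqrt N + \sqrt n)^2$, and on which Lemma~\ref{lemma:resolventnorm} yields $\|G(z)\| \le b/((b-1)|z|)$ for $|z| \ge 2b(\sqrt N + \sqrt n)$.

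For the first ingredient, view $F$ as a function of the $Nn$ i.i.d.\ standard Gaussian entries of $E$. The block form of $\mathcal{E}$ couples each $E_{ij}$ to the pair $\mathcal{E}_{i,N+j}$, $\mathcal{E}_{N+j,i}$, so from $dG = G\,d\mathcal{E}\,G$ with $a := G^\T\mathbf{x}$, $b := G\mathbf{y}$, one obtains $\partial_{E_{ij}} F = a_i b_{N+j} + a_{N+j} b_i$. Squaring, summing, and applying Cauchy--Schwarz gives $\|\nabla F\|_2 \le \sqrt{2}\,\|G\mathbf{x}\|\|G\mathbf{y}\| \le \sqrt{2}\,b^2/((b-1)^2|z|^2)$ on $\Omega$. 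After a smooth cutoff extending $F$ globally with the same Lipschitz constant, Gaussian concentration for Lipschitz functions produces
\[
\Pr\bigl(|F - \mathbb{E}F| > t\bigr) \le 2\exp\!\left(-\frac{t^2(b-1)^4|z|^4}{4b^4}\right),
\]
and taking $t = \tfrac{2b^2}{(b-1)^2}\sqrt{(K+1)\log(N+n)}/|z|^2$ gives failure probability at most $2(N+n)^{-(K+1)}$. The same argument applied to the linear functionals $\phi_1(z), \phi_2(z)$ of $G$ (each a signed sum of diagonal entries) shows that $\mathbf{x}^\T \Phi(z) \mathbf{y}$ concentrates around its expectation at the same rate.

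For the second ingredient, I would use the rotational invariance of the law of $E$ under $\mathrm{O}(N) \times \mathrm{O}(n)$ to conclude $\mathbb{E}[G(z)] = \mathrm{diag}(a(z) I_N, b(z) I_n)$, matching the block structure of $\Phi(z)$, so it suffices to compare the scalars $a,b$ with $\mathbb{E}[1/\phi_1], \mathbb{E}[1/\phi_2]$. Applying Stein's lemma to $zG_{ii} = 1 + \sum_k \mathcal{E}_{ik} G_{ki}$ and using $\partial_{\mathcal{E}_{ik}} G_{ki} = G_{ki}^2 + G_{ii} G_{kk}$, summing over $k$ in the block opposite to $i$, rearranges to $\mathbb{E}[G_{ii}\, \phi_1] = 1 + \mathbb{E}\bigl[\sum_{k>N} G_{ki}^2\bigr]$ for $i \le N$ (and analogously for $i > N$), whose residual is bounded by $\mathbb{E}(G^2)_{ii} \le \|G\|^2 = O(1/|z|^2)$. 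Combined with the Jensen-type identity $\mathbb{E}[1/\phi_1] - 1/\mathbb{E}[\phi_1] = O(\mathrm{Var}(\phi_1)/|z|^3)$, controlled via the $\phi_1$-concentration of the first step, this closes the loop and yields $|a - \mathbb{E}[1/\phi_1]|, |b - \mathbb{E}[1/\phi_2]| \lesssim \sqrt{(K+1)\log(N+n)}/|z|^2$ with the right constant.

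The hard part will be the constant accounting in the mean-comparison residual: I must confirm that the Stein off-diagonal term and the Jensen gap together contribute at most $\tfrac{3 b^2}{(b-1)^2}\sqrt{(K+1)\log(N+n)}/|z|^2$, so that adding the concentration bound $\tfrac{2b^2}{(b-1)^2}\sqrt{(K+1)\log(N+n)}/|z|^2$ from the first step yields the stated $\tfrac{5b^2}{(b-1)^2}$. The $9(N+n)^{-(K+1)}$ probability budget should then be exhausted by union-bounding over $\Omega$ and the two-sided concentration failures of $F$, $\phi_1$, and $\phi_2$, in line with the constant-tracking template of \cite[Lemma~27]{OVW22}.
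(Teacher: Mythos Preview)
Your approach is viable but genuinely different from the paper's. The paper does not use Lipschitz concentration or Stein's lemma at all. Instead, it first exploits rotational invariance in a different way: since $\mathcal O_1 \mathcal E \mathcal O_2 \sim \mathcal E$ for block-orthogonal $\mathcal O_1,\mathcal O_2$, one may rotate $\mathbf x$ and $\mathbf y$ so that each is supported on the two coordinates $1$ and $N+1$. The bilinear form then decomposes into four terms involving only $G_{11}-1/\phi_1$, $G_{N+1,N+1}-1/\phi_2$, $G_{1,N+1}$, and $G_{N+1,1}$, and the bound \eqref{eq:quadlocal} follows from entrywise estimates: the diagonal ones via the Schur-complement identity $G_{kk}(z)=\bigl(z-\sum_{s,t}^{(k)}\mathcal E_{sk}G^{(k)}_{st}\mathcal E_{tk}\bigr)^{-1}$ together with Hanson--Wright/Bernstein concentration for the quadratic form (conditionally on $G^{(k)}$, which is independent of the $k$-th row/column of $\mathcal E$), and the off-diagonal ones via a single Gaussian tail bound on $\sum_k^{(i)}\mathcal E_{ik}G^{(i)}_{kj}$. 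The constant $5b^2/(b-1)^2$ then drops out of the split $2+2\sqrt{2}<5$ with essentially no further bookkeeping.

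Your route---concentrate $F$ around $\mathbb E F$, identify $\mathbb E[G]$ as block-scalar by symmetry, and close the gap to $\mathbb E[\Phi]$ via integration by parts---also works; in fact the mean-comparison residuals you worry about (the Stein off-diagonal term $\mathbb E\sum_k G_{ki}^2=O(|z|^{-2})$, the covariance $\mathrm{Cov}(G_{ii},\phi_1)$, and the Jensen gap for $1/\phi_1$) are all $O(|z|^{-3})$ or smaller once you use $|z|^2\gtrsim N+n$, so they are negligible next to the $\sqrt{\log(N+n)}/|z|^2$ fluctuation. The trade-off is that your argument has more moving parts (you must also control the Lipschitz constant of $\tr\mathcal I^{\mathrm d}G$, which scales like $\sqrt n/|z|^2$ rather than $1/|z|^2$, and handle the smooth cutoff and complex-valued $F$ by splitting real/imaginary parts), whereas the paper's reduction to four explicit entries makes the constant accounting almost automatic. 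The paper's Schur-complement approach also generalizes more readily beyond Gaussians, since it only needs independence of rows and a quadratic-form concentration inequality, while your Stein step is specific to the Gaussian case.
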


Recall $$\eta=\frac{11 b^2}{(b-1)^2} \sqrt{(K+7)\log (N+n)+(\log 9) r}.$$
Denote $$\mathsf D:= \{ z\in \mathbb C : 2b(\sqrt N + \sqrt n)\le |z| \le 2n^3\}.$$ Using the previous lemma and a standard $\varepsilon$-net argument, we obtain the following result that is analogous to \cite[Lemma 9]{OVW22}:
 \begin{lemma} \label{lem:betterbd}
Under the assumptions of Theorem \ref{thm:subspace}, one has
\[ \max_{ z \in \mathsf D} |z|^2 \left\|  \mathcal U^\T \left(G(z)-\Phi(z) \right) \mathcal U \right\| \leq \eta \] 
with probability at least $1 - 9(N+n)^{-K}$.
\end{lemma}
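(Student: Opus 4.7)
The plan is to lift the fixed-vector estimate in Lemma \ref{lem:iso} to a uniform-in-$z$ operator norm bound by nesting an $\varepsilon$-net argument on the unit sphere in $\mathbb R^{2r}$ with a discretization of the annulus $\mathsf D$ in the spectral parameter $z$. A crucial preliminary observation is that since $\mathcal E$ is symmetric, the resolvent $G(z) = (zI - \mathcal E)^{-1}$ is symmetric, and $\Phi(z)$ is diagonal; hence the $2r \times 2r$ matrix $\mathcal U^\T(G(z) - \Phi(z))\mathcal U$ is symmetric, and its operator norm can be controlled by quadratic (rather than bilinear) forms, which saves a factor of two in the exponent of the net.

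First, I would fix a $1/4$-net $\mathcal N$ of the unit sphere in $\mathbb R^{2r}$ with $|\mathcal N| \le 9^{2r}$. For any symmetric $2r \times 2r$ matrix $M$, the standard covering argument gives $\|M\| \le 2 \max_{\mathbf a \in \mathcal N} |\mathbf a^\T M \mathbf a|$. Because $\mathcal U^\T \mathcal U = I_{2r}$, the vector $\mathcal U \mathbf a$ is a unit vector in $\mathbb R^{N+n}$ for each $\mathbf a \in \mathcal N$, so Lemma \ref{lem:iso} applied to $\mathbf x = \mathbf y = \mathcal U \mathbf a$ with parameter $K'$ yields, at any fixed $z \in \mathsf D$,
\begin{equation*}
|z|^2\,\|\mathcal U^\T(G(z) - \Phi(z))\mathcal U\| \le \frac{10\, b^2}{(b-1)^2}\sqrt{(K'+1)\log(N+n)}
\end{equation*}
with probability at least $1 - 9^{2r+1}(N+n)^{-(K'+1)}$ after a union bound over $\mathcal N$.

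Second, I would extend the bound to all $z \in \mathsf D$ by discretization. On the event $\|\mathcal E\| \le 2(\sqrt N + \sqrt n)$, the resolvent identity $G(z) - G(w) = (w-z)G(z)G(w)$ combined with Lemma \ref{lemma:resolventnorm} gives $\|G(z)-G(w)\| \le \frac{b^2}{(b-1)^2}\,\frac{|z-w|}{|z|\,|w|}$. Differentiating $\phi_i$ in \eqref{eq:defphi} gives $\phi_i'(z) = 1 + \tr\mathcal I^\mathrm{d} G(z)^2$ (respectively with $\mathcal I^\mathrm{u}$), which is bounded in absolute value by a constant since $n\|G(z)\|^2 \lesssim n/|z|^2 \ll 1$ on $\mathsf D$ by the assumption $|z|\ge 2b(\sqrt N+\sqrt n)$; combined with the lower bound on $|\phi_i|$ from Lemma \ref{lemma:resolventnorm} this implies that $\Phi$ is Lipschitz in operator norm with constant of order $1/|z|^2$. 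Hence the map $z \mapsto |z|^2\,\mathcal U^\T(G(z)-\Phi(z))\mathcal U$ is Lipschitz on $\mathsf D$ with a constant polynomial in $n$, and a net $\mathcal Z \subset \mathsf D$ of cardinality $n^{O(1)}$ at scale $\delta = n^{-C}$ (for some large constant $C$) suffices to make the interpolation error negligible compared to $\eta$.

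Finally, taking a union bound over $\mathcal Z$ of the event in Step 1 produces a total failure probability at most $9^{2r+1}\,|\mathcal Z|\,(N+n)^{-(K'+1)}$. Choosing $K'$ so that $(K'+1)\log(N+n) = (K+7)\log(N+n) + 2r\log 9$, the net-theoretic loss $9^{2r}\cdot|\mathcal Z|$ is absorbed into the shift from $K+1$ to $K+7$, yielding the claimed probability $1 - 9(N+n)^{-K}$. With this choice, the bound in Step 1 becomes $\frac{10 b^2}{(b-1)^2}\sqrt{(K+7)\log(N+n) + 2r\log 9}$, and adding a small slack for the $z$-interpolation delivers exactly $\eta = \frac{11 b^2}{(b-1)^2}\sqrt{2(\log 9)r + (K+7)\log(N+n)}$. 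The main obstacle is the numerical book-keeping to match the constants $11$ and $K+7$ in the definition of $\eta$; once the symmetric-matrix $\varepsilon$-net and the polynomial-size $z$-net are in place, everything else is a routine Lipschitz continuation argument.
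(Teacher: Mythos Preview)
Your approach is essentially the same as the paper's: a $1/4$-net on the sphere in $\mathbb R^{2r}$ (exploiting symmetry of $\mathcal U^\T(G-\Phi)\mathcal U$ to reduce to quadratic forms), a discretization of $\mathsf D$, and a Lipschitz interpolation. The one place where your write-up does not quite close is the quantitative Lipschitz step. You say the map $z\mapsto |z|^2\,\mathcal U^\T(G(z)-\Phi(z))\mathcal U$ is Lipschitz with a constant ``polynomial in $n$'' and then take a $z$-net at scale $n^{-C}$ for large $C$; but a net of $\mathsf D$ (outer radius $2n^3$) at that scale has cardinality $\asymp n^{6+2C}$, which is \emph{not} absorbed by the fixed shift $K\to K+7$ that you invoke in the final union bound. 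In other words, your choice of $K'$ (giving only a factor $(N+n)^7\cdot 9^{2r}$ of slack) is incompatible with a $z$-net of size $n^{6+2C}$ for $C\ge 1$.

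The fix is already implicit in your own estimates: the Lipschitz constant is in fact $O(1)$, not merely polynomial in $n$. The paper makes this explicit by writing $f(z)=z^2\,\mathcal U^\T G(z)\mathcal U$ and decomposing
\[
f(z)-f(w)=z(z-w)\,\mathcal U^\T G(z)\mathcal U + w(z-w)\,\mathcal U^\T G(z)\mathcal U + w^2\,\mathcal U^\T(G(z)-G(w))\mathcal U,
\]
and similarly for $g(z)=z^2\,\mathcal U^\T\Phi(z)\mathcal U$; together with $\|G(z)\|\le \frac{b}{(b-1)|z|}$ and the resolvent identity this yields $\|f(z)-f(w)\|,\|g(z)-g(w)\|\le \frac{3b^2}{(b-1)^2}|z-w|$. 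With this $O(1)$ Lipschitz constant one can take a scale-$1$ net of $\mathsf D$ of cardinality $<n^7$, which fits precisely into the $K\to K+7$ shift and leaves the additive interpolation error $\le \frac{6b^2}{(b-1)^2}$, exactly the slack between the $10$ and the $11$ in $\eta$.
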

Lemma \ref{lem:betterbd} improves the rank $r$-dependence in the bound of \cite[Lemma 9]{OVW22}. The proof of Lemma \ref{lem:betterbd} is included in \cite{WangSupp24}. For the case $2b(\sqrt N + \sqrt n)> 2n^3$ where $\mathsf D$ is empty, $G(z)$ can be approximately be even simpler matrices (see Lemma \ref{lem:simpleapprox} below). 

The following result on the location of perturbed singular values is obtained using Lemma \ref{lem:betterbd}. Consider the random function
\begin{equation} \label{eq:def:varphi}
	\varphi(z):=\phi_1(z)\phi_2(z),
\end{equation} 
where $\phi_1(z)$ and $\phi_2(z)$ are defined in \eqref{eq:phi12}. Define the auxiliary functions for $b\ge 2$:
\begin{align*}
\xi(b):= 1+ \frac{1}{2(b-1)^2}\quad \text{and} \quad \chi(b):= 1+ \frac{1}{4b(b-1)}.
\end{align*}
Define a set in the complex plane in the neighborhood of any $\sigma\in \mathbb R$ by
\begin{equation} \label{eq:resolventset}
	S_{\sigma} : = \left\{w \in \mathbb{C} : | \Im(w) | \leq 20 \chi(b) \eta r, \sigma - 20 \chi(b) \eta r \leq \Re(w) \leq \chi(b)\sigma +  20 \chi(b) \eta r \right\}.
\end{equation}

\begin{theorem}[Singular value locations] \label{thm:singularlocation}
Let $A$ and $E$ be $N \times n$ real matrices, where $A$ is deterministic and the entries of $E$ are i.i.d. standard Gaussian random variables. Assume $A$ has rank $r \geq 1$. Let $K>0$ and $b\ge 2$. Denote $\eta:= \frac{11b^2}{(b-1)^2} \sqrt{2(\log 9) r+(K+7) \log (N+n)}$. Assume $(\sqrt N + \sqrt n)^2 \ge 32(K+7)\log(N+n) + 64(\log 9) r$. Let $1\le r_0 \le r$ such that $ \sigma_{r_0} \ge 2b(\sqrt N+ \sqrt{n}) + 80b\eta r$ and $\delta_{r_0}\geq 75\chi(b)\eta r$. Consider any $1 \le k\le s \le r_0$ satisfying $\min\{\delta_{k-1},\delta_s \} \ge 75 \chi(b) \eta r$. For any $j \in \llbracket k, s \rrbracket$, there exists $j_0 \in \llbracket k, s \rrbracket$ such that $\widetilde \sigma_j \in S_{\sigma_{j_0}}$, and
\begin{align}\label{eq:location}
|\varphi(\widetilde{\sigma}_{j}) - \sigma_{j_0}^2| \leq 20\xi(b) \chi(b)\eta r \left(\widetilde{\sigma}_{j}+\chi(b)\sigma_{j_0} \right)
\end{align}
with probability at least $1-10(N+n)^{-K}$.
\end{theorem}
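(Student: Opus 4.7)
The plan is to derive a reduced characteristic equation that each $\widetilde\sigma_j$ must satisfy, approximate it using the isotropic local law (Lemma~\ref{lem:betterbd}), and then extract the location of $\widetilde\sigma_j$ from the behaviour of $\varphi$.

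On the high-probability event $\|\mathcal E\|\le 2(\sqrt N+\sqrt n)$, the hypothesis $\sigma_{r_0}\ge 2b(\sqrt N+\sqrt n)+80b\eta r$ combined with Weyl's inequality ensures $\widetilde\sigma_j > \|\mathcal E\|$ for every $j\in\llbracket k,s\rrbracket$. Since $\widetilde\sigma_j$ is then a nonzero eigenvalue of $\widetilde{\mathcal A}=\mathcal A+\mathcal E$ lying outside $\mathrm{spec}(\mathcal E)$, the matrix determinant lemma applied to $\mathcal A=\mathcal U\mathcal D\mathcal U^\T$ reduces the characteristic equation to
\[
\det\bigl(I_{2r}-\mathcal D\mathcal U^\T G(\widetilde\sigma_j)\mathcal U\bigr)=0,
\]
i.e.\ $1$ is an eigenvalue of the $2r\times2r$ matrix $\mathcal D\mathcal U^\T G(\widetilde\sigma_j)\mathcal U$.

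Next I would analyze the deterministic surrogate $\mathcal D\mathcal U^\T\Phi(z)\mathcal U$. The block formula \eqref{eq:productPhi} together with the structure $\mathcal D=\diag(D,-D)$ shows, after pairing the indices $i$ and $r+i$, that this matrix is block-diagonal with $r$ blocks $\bigl(\begin{smallmatrix}\alpha(z)\sigma_i & \beta(z)\sigma_i\\ -\beta(z)\sigma_i & -\alpha(z)\sigma_i\end{smallmatrix}\bigr)$, each of trace $0$ and determinant $-\sigma_i^2/\varphi(z)$ in view of $\alpha(z)^2-\beta(z)^2=1/\varphi(z)$. Hence its spectrum is $\{\pm\sigma_i/\sqrt{\varphi(z)}\}_{i=1}^r$, and
\[
\det\bigl(I_{2r}-\mathcal D\mathcal U^\T\Phi(z)\mathcal U\bigr)=\prod_{i=1}^r\!\Bigl(1-\tfrac{\sigma_i^2}{\varphi(z)}\Bigr).
\]
By Lemma~\ref{lem:betterbd}, the discrepancy $\mathcal D\mathcal U^\T(G(z)-\Phi(z))\mathcal U$ has operator norm at most $\sigma_1\eta/|z|^2$ uniformly on $\mathsf D$ with probability at least $1-9(N+n)^{-K}$. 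Standard non-Hermitian eigenvalue stability (Bauer--Fike applied to the explicit $2\times2$ block diagonalization, whose similarity condition numbers are controlled by \eqref{eq:phibd}) then forces some eigenvalue $\pm\sigma_{i_0}/\sqrt{\varphi(\widetilde\sigma_j)}$ of the surrogate to lie within $O(\sigma_1\eta/\widetilde\sigma_j^2)$ of $1$. By \eqref{eq:phibd}, $\sqrt{\varphi(\widetilde\sigma_j)}$ is close to $\widetilde\sigma_j>0$, so only the $+$ branch is relevant, yielding $|\sqrt{\varphi(\widetilde\sigma_j)}-\sigma_{i_0}|\lesssim \xi(b)\eta r/\widetilde\sigma_j$. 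Multiplying by $|\sqrt{\varphi(\widetilde\sigma_j)}+\sigma_{i_0}|\le \chi(b)\widetilde\sigma_j+\sigma_{i_0}$ and tracking constants carefully produces the desired inequality \eqref{eq:location}.

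It remains to promote $i_0$ to an index $j_0\in\llbracket k,s\rrbracket$ and to deduce $\widetilde\sigma_j\in S_{\sigma_{j_0}}$. Here I would combine Weyl's inequality $|\widetilde\sigma_j-\sigma_j|\le\|\mathcal E\|$ with the gap assumption $\min\{\delta_{k-1},\delta_s\}\ge 75\chi(b)\eta r$: any index $i\notin\llbracket k,s\rrbracket$ has $\sigma_i$ separated from $[\sigma_s,\sigma_k]$ by more than the slack allowed by \eqref{eq:location}, so such $i$ can be excluded, and inverting $\varphi$ on the same strip using \eqref{eq:phibd} places $\widetilde\sigma_j$ in $S_{\sigma_{j_0}}$ for some $j_0\in\llbracket k,s\rrbracket$. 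The main obstacle is step three: the surrogate matrix is not normal, so one must verify that the $2\times 2$ block diagonalizations have uniformly bounded condition numbers under the quantitative hypotheses on $b$, $\eta$, and the spectral gap, so that the Bauer--Fike slack genuinely reduces to the $\sigma_1\eta/\widetilde\sigma_j^2$ quoted above without introducing hidden $r$-dependent factors.
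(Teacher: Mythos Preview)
Your approach diverges from the paper's in a way that leaves two genuine gaps.

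\textbf{The $\sigma_1$ factor.} You bound the perturbation $\mathcal D\,\mathcal U^\T(G-\Phi)\mathcal U$ by $\sigma_1\eta/|z|^2$ and then invoke Bauer--Fike. But nothing in the hypotheses keeps $\sigma_1$ comparable to $\widetilde\sigma_j$: one may have $\sigma_1$ as large as $n^2$ while $\widetilde\sigma_j$ is only of order $\sqrt n$, so $\sigma_1\eta/\widetilde\sigma_j^2$ is enormous and the Bauer--Fike output is vacuous. The jump from ``within $O(\sigma_1\eta/\widetilde\sigma_j^2)$ of $1$'' to ``$|\sqrt{\varphi(\widetilde\sigma_j)}-\sigma_{i_0}|\lesssim \xi(b)\eta r/\widetilde\sigma_j$'' silently drops $\sigma_1$ and inserts $r$, and this step is unjustified. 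The paper avoids this by working instead with $\mathcal D^{-1}-\mathcal U^\T G(z)\mathcal U$ (whose zeros coincide with those of your determinant) and measuring the perturbation relatively via $\eps(z)=\bigl\|(\mathcal D^{-1}-\mathcal U^\T\Phi\mathcal U)^{-1}\bigr\|\cdot\bigl\|\mathcal U^\T(G-\Phi)\mathcal U\bigr\|$; large $\sigma_l$ then contribute only $O(1)$ to the first factor because $\mathcal D^{-1}$ is small in those directions.

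\textbf{The labeling step.} Even granting a correct pointwise bound of the form ``$\widetilde\sigma_j$ is within $O(\eta r)$ of some $\sigma_{i_0}$'', your argument for forcing $i_0\in\llbracket k,s\rrbracket$ is circular. Weyl's inequality only pins $\widetilde\sigma_j$ to within $\|E\|\asymp\sqrt{N}+\sqrt{n}$ of $\sigma_j$, whereas the gap hypothesis $\min\{\delta_{k-1},\delta_s\}\ge 75\chi(b)\eta r$ is of order $\sqrt{r+\log(N+n)}$, which is far smaller. So the Weyl window can easily straddle $\sigma_{k-1}$ or $\sigma_{s+1}$, and the pointwise bound alone cannot exclude $i_0=k-1$ or $i_0=s+1$; nothing in your sketch rules out a ``shift by one'' in which several $\widetilde\sigma_j$ cluster near the same $\sigma_{i_0}$. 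The paper settles this by a global counting argument: it defines circles $\mathcal C_l$ of radius $20\chi(b)\eta r$ around the solutions $z_l$ of $\varphi(z_l)=\sigma_l^2$, shows $\eps(z)\le 0.34/r$ on the boundary of each union $\mathcal K_{k,s}=\bigcup_{l=k}^s\mathcal C_l$, and applies Rouch\'e's theorem (via the Ipsen determinant bound \eqref{eq:rouche}) to match the zero counts of $f(z)=\det(\mathcal D^{-1}-\mathcal U^\T G(z)\mathcal U)$ and $g(z)=\det(\mathcal D^{-1}-\mathcal U^\T\Phi(z)\mathcal U)$ inside $\mathcal K_{k,s}$. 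Three separate claims then certify that the $s-k+1$ eigenvalues inside $\mathcal K_{k,s}$ are precisely $\widetilde\sigma_k,\ldots,\widetilde\sigma_s$. This counting, not any pointwise stability estimate, is what delivers the index $j_0\in\llbracket k,s\rrbracket$.
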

Our Theorem \ref{thm:singularlocation} extends the analysis of Theorem 12 in \cite{OVW22} in an important direction by removing the singular value separation requirement. This improvement stems from a careful refinement of the theoretical framework, detailed in the supplementary material \cite{WangSupp24}.

The next result suggests that when $|z|$ is large, the resolvent $G(z)$ can be approximated by simpler matrices. The proof is analogous to that of Lemma 17 in \cite{OVW22}, and is omitted. 
\begin{lemma} \label{lem:simpleapprox}On the event where $\|E \| \leq 2 (\sqrt{N}+\sqrt{n})$,
\begin{equation*}
\left\| G(z) - \frac{1}{z} I_{N+n} \right\| \le \frac{b}{b-1}\frac{\| E\|}{|z|^2}
\quad\mbox{and}\quad
\left\| G(z) - \frac{1}{z} I_{N+n} - \frac{\mathcal E}{z^2}\right\| \le \frac{b}{b-1}\frac{\| E\|^2}{|z|^3}
\end{equation*}
for any $z \in \mathbb{C}$ with $|z| \geq 2b (\sqrt{N}+\sqrt{n})$.
\end{lemma}
\begin{lemma}[Lemma 13 from \cite{OVW22}]\label{lem:svloc-large}Under the assumptions of Theorem \ref{thm:subspace}, 
$$\max_{l \in \llbracket 1, r_0\rrbracket : \sigma_l >\frac{1}{2}n^2} |\widetilde \sigma_l - \sigma_l |\le \eta r$$
with probability at least $1- (N+n)^{-1.5 r^2(K+4)}$.
\end{lemma}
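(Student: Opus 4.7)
The strategy is to mirror the proof of Theorem \ref{thm:singularlocation}, but to leverage the purely deterministic resolvent approximation Lemma \ref{lem:simpleapprox} in place of the probabilistic isotropic local law Lemma \ref{lem:betterbd}. This replacement is precisely what drives the probability of failure down to that of the single event $\mathcal H := \{\|E\| \le 2(\sqrt N + \sqrt n)\}$, yielding the much sharper bound $1 - (N+n)^{-1.5 r^2 (K+4)}$. By standard Gaussian operator-norm concentration, $\mathbb P(\mathcal H^c) \le 2\exp(-(N+n)/2)$, and the standing hypothesis $(\sqrt N + \sqrt n)^2 \ge 32(K+7)\log(N+n) + 64(\log 9) r$ is comfortably strong enough to absorb $1.5 r^2 (K+4)\log(N+n)$ into $(N+n)/2$. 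Everything that follows is deterministic on $\mathcal H$.

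Fix an index $l \in \llbracket 1, r_0\rrbracket$ with $\sigma_l > n^2/2$. Weyl's inequality on $\mathcal H$ yields $\widetilde\sigma_l \ge \sigma_l - \|E\| \ge n^2/4 \gg 2b(\sqrt N + \sqrt n)$, so $z = \widetilde\sigma_l$ lies in the regime where the secular-equation framework of Section \ref{sec:LA} is valid and Lemma \ref{lem:simpleapprox} is directly applicable. Following the determinant identity $\det(I - \mathcal D\, \mathcal U^\T G(\widetilde\sigma_l)\mathcal U) = 0$ used in the proof of Theorem \ref{thm:singularlocation}, together with the near-diagonal structure of $\mathcal U^\T \Phi(z) \mathcal U$ in \eqref{eq:productPhi}, I would first reduce the problem to an approximate secular identity $\varphi(\widetilde\sigma_l) = \sigma_{l_0}^2 + \mathrm{err}$, where the gap hypothesis $\delta_{r_0} \ge 75\chi(b)\eta r$ combined with the Weyl bound forces $l_0 = l$.

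To bound the error, plug Lemma \ref{lem:simpleapprox} directly into $\phi_1(z) = z - \tr(\mathcal I^\mathrm{d} G(z))$ and $\phi_2(z) = z - \tr(\mathcal I^\mathrm{u} G(z))$ to obtain $\phi_1(z) = z - n/z + O(\|E\|(N+n)/|z|^2)$ and an analogous expansion for $\phi_2$. Multiplying them out gives $\varphi(z) = z^2 - (N+n) + Nn/z^2 + O(\|E\|(N+n)/|z|)$, so that at $z = \widetilde\sigma_l \ge n^2/4$ with $\|E\| \le 2(\sqrt N + \sqrt n)$ every remainder term is of size $O(1)$. Combined with the secular identity from the previous paragraph, this rearranges to $|\widetilde\sigma_l^2 - \sigma_l^2| = O(N+n)$, hence $|\widetilde\sigma_l - \sigma_l| = O((N+n)/\sigma_l) = O(1/n)$, which is overwhelmingly smaller than the target $\eta r$. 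A union bound over the at most $r$ eligible indices $l$ does not degrade the probability estimate.

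The main obstacle is propagating the deterministic operator-norm bound supplied by Lemma \ref{lem:simpleapprox} through the $2r \times 2r$ secular determinant cleanly enough both to certify the index matching $l_0 = l$ and to isolate the explicit $(N+n)/\widetilde\sigma_l^2$ correction with an honest $O(1)$ remainder. This is essentially a streamlined rerun of the determinant expansion from the proof of Theorem \ref{thm:singularlocation}, but the fact that $|z| \gtrsim n^2$ means every factor of $1/|z|$ supplied by Lemma \ref{lem:simpleapprox} dwarfs the $r$- and $\eta$-dependent bookkeeping arising from the $2r$-dimensional range of $\mathcal U$, so the estimates go through without any loss depending on $r$ or $\log(N+n)$.
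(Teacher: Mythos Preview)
The paper does not supply its own proof of this lemma---it is quoted verbatim from \cite{OVW22}---so there is nothing in the present paper to compare against, and I evaluate your sketch on its own terms. The idea of replacing the probabilistic Lemma~\ref{lem:betterbd} by the deterministic Lemma~\ref{lem:simpleapprox} inside the secular-equation framework is reasonable, and the observation that for $|z|\gtrsim n^2$ every $1/|z|$ factor swamps the $r$- and $\log$-dependent bookkeeping is correct. But two steps break down as written.

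First, the probability bound. You assert that the standing hypothesis $(\sqrt N+\sqrt n)^2\ge 32(K+7)\log(N+n)+64(\log 9)r$ forces $\mathbb P(\mathcal H^c)\le (N+n)^{-1.5r^2(K+4)}$. It does not: the hypothesis is \emph{linear} in $r$ while the target exponent is \emph{quadratic}, and the required inequality $16(K+7)\ge 1.5r^2(K+4)$ already fails for $r\ge 5$ (any $K>0$) or for $r=4$ with $K>2$; the extra $O(r)$ term cannot close a quadratic gap. The $r^2$ in the stated failure probability must therefore come from a concentration step that genuinely scales with $r$, not from the single event $\{\|E\|\le 2(\sqrt N+\sqrt n)\}$. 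Second, the index matching: the gap $\delta_{r_0}$ you invoke separates $\sigma_{r_0}$ from $\sigma_{r_0+1}$ only and says nothing about separations among the large singular values themselves, which may cluster arbitrarily; and Weyl alone gives $|\widetilde\sigma_l-\sigma_l|\le\|E\|\sim\sqrt n\gg\eta r$, so it cannot pin down $l_0=l$. A Rouch\'e argument together with a connected-component diameter bound would close this, but it must be stated. A cleaner route that produces the $r^2$ exponent for free: apply Theorem~\ref{thm:general-sv} to get $|\widetilde\sigma_l-\sigma_l|\le\|U^\T EV\|+O(\sqrt r\,\|E\|^2/\widetilde\sigma_l)$, note the last term is $O(\sqrt r/n)\ll\eta r$ when $\sigma_l>n^2/2$, and use that $U^\T EV$ is an $r\times r$ matrix with i.i.d.\ standard Gaussian entries so that $\mathbb P(\|U^\T EV\|\ge 2\sqrt r+t)\le 2e^{-t^2/2}$; taking $t$ a small multiple of $\eta r$ gives failure probability $\le(N+n)^{-c\,r^2(K+7)}$ directly.
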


The next result provides a non-asymptotic bound on the operator bound of $\|E\|$. 
\begin{lemma}[Spectral norm bound; see (2.3) from \cite{MR2827856}] \label{lemma:norm}
Let $E$ be an $N \times n$ matrix whose entries are independent standard Gaussian random variables.  Then
\[ \|E \| \leq 2 (\sqrt{N}+\sqrt{n}) \]
with probability at least $1 - 2 e^{-(\sqrt{N}+\sqrt{n})^2/2}$.  
\end{lemma}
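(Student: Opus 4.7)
The plan is to combine a mean bound for $\|E\|$ with Gaussian concentration of measure. Since the paper cites a direct reference, I would frame this as a self-contained two-step sketch based on classical tools.

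First, I would establish the expectation bound $\mathbb{E}\|E\| \le \sqrt{N} + \sqrt{n}$ via Gordon's comparison theorem. Writing $\|E\| = \sup_{u \in S^{N-1}, v \in S^{n-1}} u^\T E v$, this is the supremum of a centered Gaussian process indexed by $(u,v)$. Comparing this process with the simpler Gaussian process $(u,v) \mapsto \langle g, u\rangle + \langle h, v\rangle$, where $g \sim \mathcal N(0, I_N)$ and $h \sim \mathcal N(0, I_n)$ are independent, one checks the increment inequality required by the Slepian--Gordon lemma. The latter yields
\begin{equation*}
\mathbb{E}\|E\| \;\le\; \mathbb{E}\sup_{u \in S^{N-1}}\langle g, u\rangle + \mathbb{E}\sup_{v \in S^{n-1}}\langle h, v\rangle \;=\; \mathbb{E}\|g\| + \mathbb{E}\|h\| \;\le\; \sqrt{N} + \sqrt{n},
\end{equation*}
using Jensen's inequality on the Euclidean norms at the end.

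Second, I would apply Gaussian concentration for Lipschitz functions (Borell--Tsirelson--Ibragimov--Sudakov). Viewing $E$ as a standard Gaussian vector in $\mathbb{R}^{Nn}$, the map $E \mapsto \|E\|$ is $1$-Lipschitz with respect to the Euclidean (Frobenius) norm, since $\bigl|\|E_1\| - \|E_2\|\bigr| \le \|E_1 - E_2\| \le \|E_1 - E_2\|_F$ by the operator--norm triangle inequality and the dominance of the Frobenius norm over the operator norm. Borell--TIS then gives, for every $t > 0$,
\begin{equation*}
\mathbb{P}\bigl(\|E\| \ge \mathbb{E}\|E\| + t\bigr) \;\le\; e^{-t^2/2}.
\end{equation*}
Choosing $t = \sqrt{N} + \sqrt{n}$ and using the first step yields $\mathbb{P}(\|E\| \ge 2(\sqrt{N}+\sqrt{n})) \le e^{-(\sqrt{N}+\sqrt{n})^2/2}$, which is even slightly stronger than the stated probability $1 - 2e^{-(\sqrt{N}+\sqrt{n})^2/2}$ (the factor $2$ being a typical artifact of absorbing lower-tail deviations of the mean or using a two-sided form of concentration).

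The only conceptual obstacle is Gordon's theorem; once that mean bound is accepted, the rest is a routine application of concentration. In practice the paper simply quotes equation (2.3) of \cite{MR2827856}, so my proof proposal would be to cite this reference and sketch the two ingredients above for completeness, rather than re-deriving Slepian--Gordon from scratch.
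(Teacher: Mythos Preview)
Your proposal is correct and is exactly the standard argument behind the cited result: the paper itself gives no proof of this lemma, simply invoking (2.3) of \cite{MR2827856}, and that reference proves the bound precisely via the two ingredients you describe (Gordon's comparison for the mean and Gaussian Lipschitz concentration for the tail). Your observation that the one-sided Borell--TIS bound actually yields the constant $1$ rather than $2$ in front of the exponential is also accurate.
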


\subsection{Proof overview}\label{sec:overview}
In this section, we outline our proof strategy, which leverages techniques from random matrix theory, particularly the resolvent method, to analyze the eigenvalues and eigenvectors of the symmetric matrices $\mathcal A$ and $\widetilde{\mathcal A}=\mathcal A + \mathcal E$ in Section \ref{sec:LA}.

At the heart of our analysis is the isotropic local law (Lemma \ref{lem:iso}), which asserts that the resolvent $G(z)=(zI - \mathcal E)^{-1}$ can be approximated by a simpler matrix $\Phi(z)$. This approximation streamlines complex calculations involving $G(z)$ and is a technique commonly used to study extreme eigenvalues and eigenvectors in random matrix theory, as seen in, for instance, \cite{BEKYY14, KY, BDW21,BDWW20}. Our work diverges from these prior approaches by selecting $\Phi(z)$ as a random matrix derived from $G(z)$ itself, which better suits the finite sample context, compared to the deterministic approximations used in previous studies that rely on Stieltjes transforms in the asymptotic regime.

Building upon the isotropic local law, we determine the singular value locations of $\widetilde A$ in Theorem \ref{thm:singularlocation} and achieve the control of $\left\|  \mathcal U^\T \left(G(z)-\Phi(z) \right) \mathcal U \right\|$ as given in Lemma \ref{lem:betterbd}. These instruments have been previously explored in the previous work by O'Rourke, Vu and the author \cite{OVW22}. In this paper, we refine these estimations and ease the conditions in \cite{OVW22}. Furthermore, we deploy these refined tools to derive a variety of new perturbation bounds.

To illustrate the key ideas behind our perturbation bounds, we simply focus on the largest eigenvector $\widetilde{\mathbf u}_1$ of $\widetilde{\mathcal A}$. While a complete analysis requires considering both $\widetilde{\mathbf u}_1$ and $\widetilde{\mathbf u}_{r+1}$ (as they jointly involve the largest singular vectors $u_1$ and $v_1$), we temporarily consider only $\widetilde{\mathbf u}_1$ for clarity. We emphasize that the calculation presented below is simplified to convey the main idea. In the actual proof, we work with all eigenvectors $\mathbf{u}_j$ with $j\neq 1, r+1$ and $j\in [2r]$ simultaneously, leading to a more comprehensive analysis involving the block structure of $\Phi$.

We start with the decomposition 
\begin{align}\label{eq:simdec}
\widetilde{\mathbf u}_1 = ({\mathbf u}_1 {\mathbf u}_1^\T )\widetilde{\mathbf u}_1 + \mathcal P_1 \widetilde{\mathbf u}_1 + \mathcal Q \widetilde{\mathbf u}_1,
\end{align}
where $ \mathcal P_1 = \mathcal U_1 \mathcal U_1^\T$ and $\mathcal U_1$ is the matrix of eigenvectors of $\mathcal A$ excluding ${\mathbf u}_1$.  Meanwhile, $\mathcal Q$ is the orthogonal projection matrix onto the null space of $\mathcal A$. The challenge in establishing perturbation bounds for $\widetilde{\mathbf u}_1$ lies in quantifying the latter two terms on the right-hand side of \eqref{eq:simdec}. 

First, for the $\ell_2$ analysis, we aim to bound $\sin \angle ({\mathbf u}_1, \widetilde{\mathbf u}_1)$. By taking the Frobenius norm on both sides of \eqref{eq:simdec}, we obtain
$$1 = \cos^2  \angle ({\mathbf u}_1, \widetilde{\mathbf u}_1) + \| \mathcal P_1 \widetilde{\mathbf u}_1\|^2 + \|\mathcal Q \widetilde{\mathbf u}_1\|^2.$$
Rearranging the terms yields
$$\sin^2 \angle ({\mathbf u}_1, \widetilde{\mathbf u}_1) = \| \mathcal P_1 \widetilde{\mathbf u}_1\|^2 + \|\mathcal Q \widetilde{\mathbf u}_1\|^2.$$
A straightforward linear algebra argument allows us to bound  $\|\mathcal Q \widetilde{\mathbf u}_1\|$ by the noise-to-signal ratio $\|E\|/\sigma_1$. The main task is then to establish a bound for $\| \mathcal P_1 \widetilde{\mathbf u}_1\| \le \|\mathcal U_1^\T\widetilde{\mathbf u}_1\|$, which effectively comes down to bounding $|{\mathbf u}_j^\T \widetilde{\mathbf u}_1|$ for $j\neq 1$. We explain how to achieve this bound below.

From the equation $\widetilde{\mathcal A} \widetilde{\mathbf u}_1 = (\mathcal A + \mathcal E) \widetilde{\mathbf u}_1 = \widetilde{\lambda}_1 \widetilde{\mathbf u}_1$, we can express $\widetilde{\mathbf u}_1$ as
$$ \widetilde{\mathbf u}_1 =(\widetilde \lambda_1 I - \mathcal E)^{-1} \mathcal A \widetilde{\mathbf u}_1=G(\widetilde \lambda_1) \mathcal A \widetilde{\mathbf u}_1$$ and further rewrite it as 
$$ \widetilde{\mathbf u}_1 = \Phi(\widetilde \lambda_1) \mathcal A \widetilde{\mathbf u}_1 + \left(G(\widetilde \lambda_1)-\Phi(\widetilde \lambda_1) \right) \mathcal A \widetilde{\mathbf u}_1.$$
Hence, for $j\neq 1$, we have
\begin{equation}\label{eq:idea1}
{\mathbf u}_j^\T \widetilde{\mathbf u}_1={\mathbf u}_j^\T \Phi(\widetilde \lambda_1) \mathcal A \widetilde{\mathbf u}_1 + {\mathbf u}_j^\T\left(G(\widetilde \lambda_1)-\Phi(\widetilde \lambda_1) \right) \mathcal A \widetilde{\mathbf u}_1
\end{equation}
Calculations similar to those in \eqref{eq:productPhi} indicate that the first term on the right-hand side of \eqref{eq:idea1}, ${\mathbf u}_j^\T \Phi(\widetilde \lambda_1) \mathcal A \widetilde{\mathbf u}_1$, is exactly $\lambda_j \alpha(\widetilde \lambda_1){\mathbf u}_j^\T \widetilde{\mathbf u}_1$ (omitting the term containing ${\mathbf u}_{r+j}$). We continue from \eqref{eq:idea1} to get 
$$\left(1-\lambda_j \alpha(\widetilde \lambda_1) \right) {\mathbf u}_j^\T \widetilde{\mathbf u}_1 \approx {\mathbf u}_j^\T \left(G(\widetilde \lambda_1)-\Phi(\widetilde \lambda_1) \right) \mathcal A \widetilde{\mathbf u}_1.$$
To control $|{\mathbf u}_j^\T \widetilde{\mathbf u}_1|$, we apply Theorem \ref{thm:singularlocation} to analyze the coefficient $1-\lambda_j \alpha(\widetilde \lambda_1)$ that precedes it. Lemma \ref{lem:betterbd} is applied to manage the term on the right-hand side. 

Next, for the $\ell_\infty$ analysis, from \eqref{eq:simdec}, we obtain
$$\| \widetilde{\mathbf u}_1 - ({\mathbf u}_1 {\mathbf u}_1^\T )\widetilde{\mathbf u}_1\|_{\infty} \le \|\mathcal P_1 \widetilde{\mathbf u}_1\|_{\infty} + \|\mathcal Q \widetilde{\mathbf u}_1\|_{\infty}\le \|\mathcal U\|_{2,\infty} \|\mathcal U_1^\T \widetilde{\mathbf u}_1\| + \|\mathcal Q \widetilde{\mathbf u}_1\|_{\infty}.$$
The bound for $\|\mathcal U_1^\T \widetilde{\mathbf u}_1\|$ has already been established in the preceding $\ell_2$ analysis. The second term, $\|\mathcal Q \widetilde{\mathbf u}_1\|_{\infty}$, can be bounded by considering the fact
$$\mathcal Q \widetilde{\mathbf u}_1 = \mathcal Q \left(G(\widetilde \lambda_1)-\Phi(\widetilde \lambda_1) \right)\mathcal A \widetilde{\mathbf u}_1$$ and then applying Lemma \ref{lem:betterbd}. 

These are the main ideas that we have incorporated in our proofs. Before concluding this section, we would like to highlight that the results presented in this paper can be extended to scenarios where the noise matrix $E$ contains independent sub-Gaussian entries. This extension would rely on a lemma similar to Lemma \ref{lem:iso}, which can be demonstrated using the tools provided by random matrix theory. However, due to the technical complexities involved, we have chosen to reserve the discussion of this extension to sub-Gaussian cases for a forthcoming paper. 
It remains a highly interesting direction to further establish these perturbation bounds when the noise matrix $E$ comprises heteroskedastic random variables. We believe that new tools and insights, extending beyond the scope of the methods presented in this paper, will be required to rigorously establish such extensions.






\medskip

{\bf Acknowledgement:} The author would like to thank Sean O'Rourke and Van Vu for their insightful discussions and invaluable suggestions.  The author is also grateful to Yuling Yan for bringing the relevant results \cite{YCF21,YW24} to her attention. The author would also like to thank the anonymous referees, Associate Editor, and Editor for their constructive comments that improved the quality of this paper.


\newpage
\appendix

\begin{center}
    {\huge \textbf{Supplementary material to ``Analysis of singular subspaces under random perturbations"}} \\[0.5cm]
    {\large Ke Wang}
\end{center}

\section*{Abstract}
\begin{quotation}
\noindent This file contains the detailed proofs of the theorems in the paper \cite{Wang24}, as well as those of basic tools related to them and numerical simulations. Specifically, Section \ref{sec:simu} presents simulation results that empirically explore the sharpness of our perturbation bounds and examine their dependence on the signal rank \(r\). Section \ref{sec:basic1} provides preliminary information on matrix norms (in Section \ref{sec:matrixnorms}) and the distance between subspaces (in Section \ref{sec:distance}). The proofs of Theorems 2.3, 2.10, 2.11 and 2.13 can be found in Section \ref{sec:proofmain}. In Section \ref{sec:general}, the proofs of Theorems 2.6 and 2.7 are presented. The proof of Theorem 5.1 is given in Section \ref{sec:app:gmm}. Proofs of \eqref{eq:equiv}, \eqref{eq:equivup}, Proposition \ref{prop:2inf} and \eqref{eq:svdUV} are compiled in Section \ref{sec:otherproofs}. Section \ref{app:twolem} collects the proofs of Lemma 6.2 and Lemma 6.3. Finally, Section \ref{app:bdeach}, Section \ref{app:singularlocation} and Section \ref{app:proof2parts} include the proofs of Lemma \ref{lem:projsv}, Theorem 6.4, and (28) and Proposition \ref{prop:blockphi}, respectively. 
\end{quotation}

\vspace{1cm}
\hrule
\vspace{1cm}

\setcounter{section}{0}
\setcounter{equation}{0}
\renewcommand{\thesection}{S.\arabic{section}}
\renewcommand{\theequation}{S.\arabic{equation}}

\section{Discussions and Simulations}\label{sec:simu}
This section presents numerical experiments that evaluate the sharpness of our perturbation bounds, focusing specifically on how error scales with signal rank $r$. Our simulations aim to assess the tightness of our bounds and identify potential areas for improvement. We also compare our bounds with related results in the literature.

\subsection{Leading singular vector perturbations}
In Section 4.1 of \cite{Wang24}, we explored the optimality of the leading singular vector bound. In particular, we established that with probability $1-(N+n)^{-C}$,
\begin{equation}\label{eq:better-r-sup}
\sin \angle ( u_1, \widetilde{u}_1) \lesssim \frac{\sqrt{r +\log(N+n)}}{\delta_1} + \frac{\|E\|}{\sigma_1}.
\end{equation}
We argued that the a high-probability lower bound should be in terms of 
\begin{equation*}
\sin \angle ( u_1, \widetilde{u}_1) \gtrsim \frac{f(r)}{\delta_1} + \frac{\|E\|}{\sigma_1}.
\end{equation*}
Here, we further investigate the precise form of $f(r)$.

To examine whether a lower bound on $\sin \angle(u_1, \widetilde{u}_1)$ should scale as $\sqrt{r}$ or follow a different power law in $r$, we simulate the model $\widetilde{A}=A + E$ with various ranks $r \in \{50, 55, 60,\ldots, 600\}$ and compute the 10th percentile of $\sin \angle(u_1, \widetilde{u}_1)$ over 100 trials for each $r$. The noise $E$ has i.i.d. standard Gaussian noise. To assess potential scaling laws, we compare three candidates ($\sqrt{r}$, $r^{1/3}$, and $\log(r)$) by fitting the empirical data via linear regression against each scaling term. Figure~\ref{fig:powerfit-top-n800} shows that the $\sqrt{r}$ scaling provides the strongest fit (coefficient of determination $R^2 = 0.9986$, where $R^2=1$ indicates perfect fit), outperforming both $r^{1/3}$ ($R^2=0.9939$) and $\log(r)$ ($R^2=0.9675$).

\begin{figure}[!ht]
 \begin{center}
   \includegraphics[width=13cm,height=5.5cm]{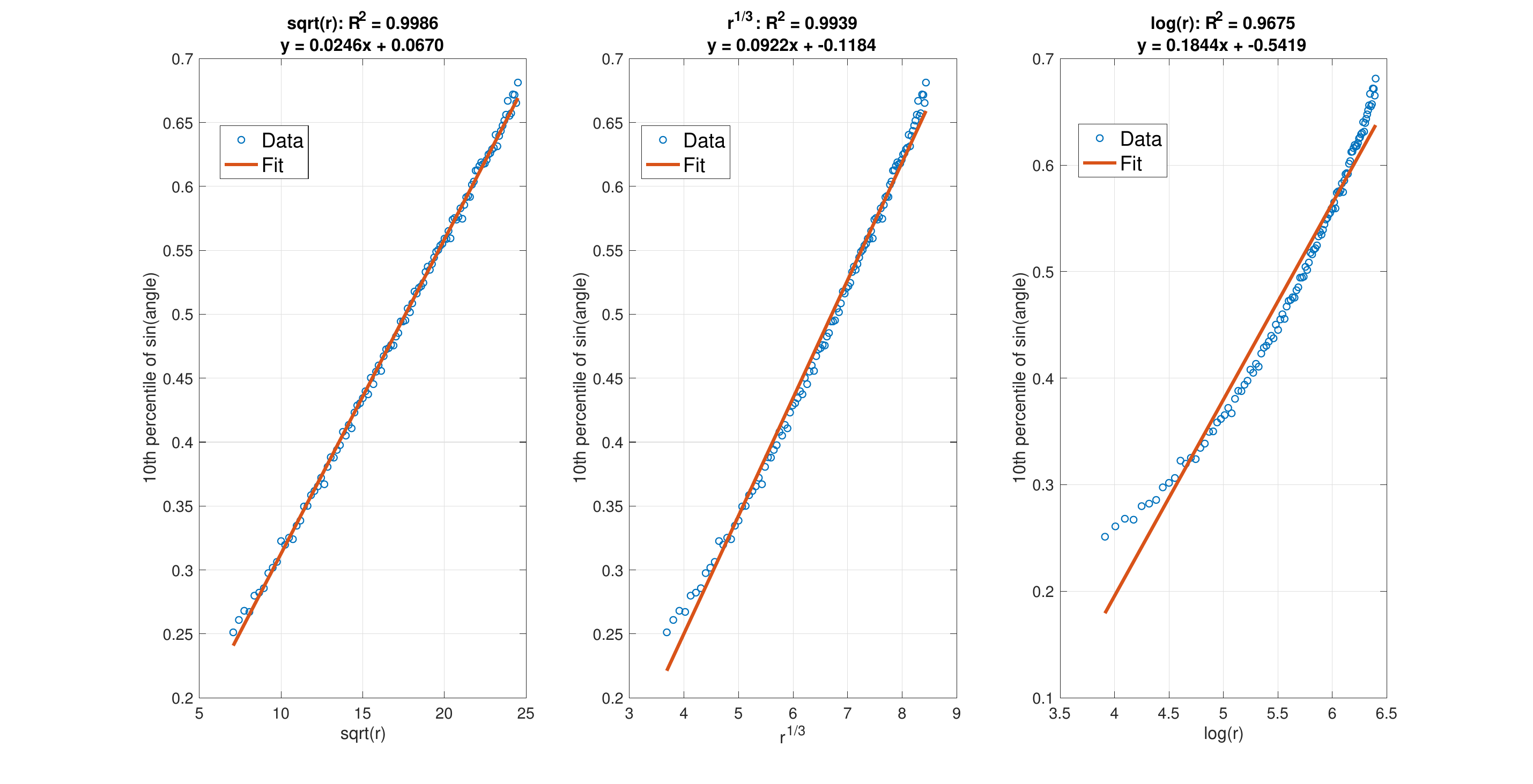}
   \caption{\textbf{Empirical exploration of rank dependence in singular vector perturbation.} For each rank $r \in \{50, 55, 60, \ldots, 600\}$, we simulate 100 independent trials of a rank-$r$ signal matrix $A$ corrupted by Gaussian noise and compute the \emph{10th percentile} of $\sin \angle(u_1, \widetilde{u}_1)$. We set $n=800$. The singular values of $A$ are given as follows:  $\sigma_1 = 150$, $\sigma_2 = \sigma_1 - 20= 130$ with $\delta_1=20$, and the remaining $r-2$ nonzero singular values decay linearly down to 120. The figure compares three scaling laws, e.g., $\sqrt{r}$, $r^{1/3}$, and $\log(r)$, via linear fits and $R^2$ values, showing that $\sqrt{r}$ provides the best empirical fit ($R^2 = 0.9986$). }
 \end{center}\label{fig:powerfit-top-n800}
\end{figure}

We now examine the \(\ell_\infty\) norm bound, following the discussion Section 4.1 of \cite{Wang24}. Under the incoherence assumption $\|U\|_{\max} \lesssim \frac{1}{\sqrt N}$,  Corollary~2.9 implies that with high probability
\begin{align}\label{eq:cor29bd}
\min_{\mathsf{s}\in\{\pm 1\}}\| u_1- \mathsf{s}\widetilde{u}_1\|_{\infty} \lesssim  \frac{1}{\sqrt N}\frac{\sqrt{r}\sqrt{r +\log(N+n)}}{\delta_1} + \frac{\sqrt{r\log(N+n)}}{\sigma_1}.
\end{align}
 Based on the lower bound for $\sin \angle(u_1, \widetilde{u}_1)$, we expect 
\[ \min_{\mathsf{s}\in\{\pm 1\}}\| u_1- \mathsf{s}\widetilde{u}_1\|_{\infty}\gtrsim \frac{1}{\sqrt N} \frac{f(r)}{\delta_1} + \frac{1}{\sigma_1},\]
with $f(r)$ consistent with the angular lower bound.

We numerically explore this lower bound using the model $\widetilde{A}=A + E$ with parameters $n = 400$, $\sigma_1 = 1000$, $\sigma_r = 950$, and $\sigma_2 = \sigma_1 - \delta_1 = 980$ ($\delta_1 = 20$). Assume $E$ has i.i.d. standard Gaussian entries.  For ranks $r = 10, 30, \ldots, 350$, we compute the 10th percentile of \(\min_{\mathsf{s}\in\{\pm 1\}} \| u_1 - \mathsf{s} \widetilde{u}_1 \|_\infty\) over 100 trials. The singular vectors of $A$ are drawn uniformly from the Haar measure, ensuring incoherence. As shown in Figure~\ref{fig:linf}, comparing different scaling laws reveals that the $\sqrt{r}$-fit provides the best fit ($R^2 = 0.9965$), outperforming both $r$ ($R^2 = 0.9591$) and $r^{1/3}$ ($R^2 = 0.9909$). Additionally, a log-log regression estimates the scaling as $r^{0.567}$, close to the expected $r^{0.5}$. These results suggest that $\ell_{\infty}$-norm perturbations exhibit similar rank behavior as the $\ell_2$ case and imply that we could potentially improve the rank dependence in the first term of \eqref{eq:cor29bd} by an extra $\sqrt{r}$ factor.



Finally, we compare our results with existing bounds in related settings. For symmetric matrix models with sub-gaussian noise, Zhong \cite{Zhong17} analyzes the top eigenvector perturbation $\sin\angle(u_1,\widetilde{u}_1)$. For low-rank signal matrices, they obtain a similar two-term upper bound as \eqref{eq:better-r-sup}, but require a stronger signal condition $\lambda_1 \gtrsim n\log n$. Other works, such as \cite[Theorem 1]{CWC21}, establish individual singular vector perturbation bounds under more general noise assumptions, but their $\ell_2$ and $\ell_{\infty}$ bounds exhibit less favorable dependence on the signal rank $r$, even with additional constraints. Our numerical experiments suggest the $r$ dependence in our bound \eqref{eq:better-r-sup} for $\sin\angle(u_1,\widetilde{u}_1)$ is tight, while for the $\ell_{\infty}$ bound  \eqref{eq:cor29bd}, there is potential to improve the factor $\sqrt{r}\sqrt{r +\log(N+n)}$ to $\sqrt{r +\log(N+n)}$.

\begin{figure}[ht]
\centering
\includegraphics[width=0.9\textwidth]{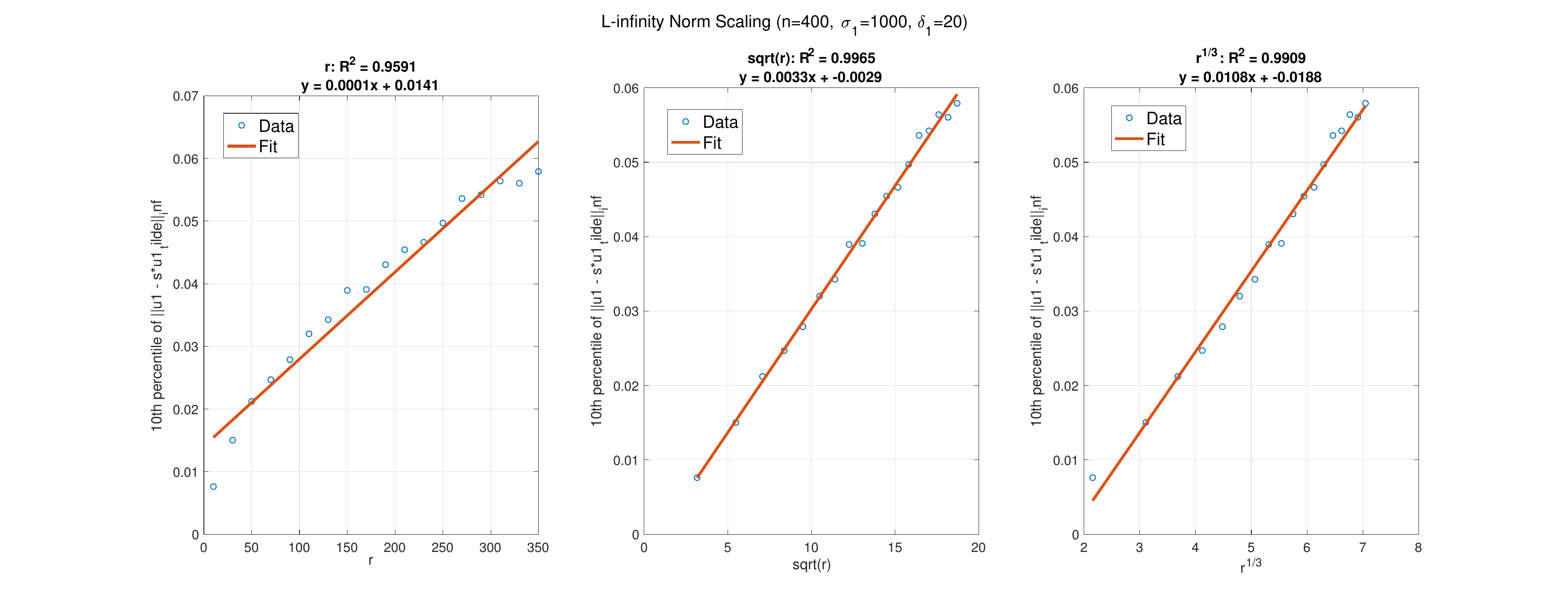}
\caption{\textbf{Empirical exploration of rank dependence in singular vector perturbation: $\ell_{\infty}$ norm.} For each rank $r \in \{10, 30, \ldots, 350\}$, we simulate 100 independent trials of a rank-$r$ signal matrix $A$ corrupted by Gaussian noise and compute the \emph{10th percentile} of $\min_{\mathsf{s}\in\{\pm 1\}} \|u_1 - \mathsf{s} \widetilde{u}_1\|_\infty$. We set $n = 400$. The singular values of $A$ are given as follows: $\sigma_1 = 1000$, $\sigma_2 = 980$ with $\delta_1=20$, and the remaining $r-2$ nonzero singular values decay linearly down to 950. The figure compares three scaling laws, $r$, $\sqrt{r}$, and $r^{1/3}$, via linear fits and $R^2$ values, showing that $\sqrt{r}$ provides the best empirical fit ($R^2 = 0.9965$).}
\label{fig:linf}
\end{figure}

\subsection{Singular subspace perturbations} We now analyze the optimality of our singular subspace perturbation bounds, with particular attention to their dependence on the signal rank $r$.

\medskip

\paragraph*{\textbf{The Frobenius and operator norm bounds}} We begin by examining bounds for unitarily invariant norms, specifically the Frobenius and operator norms. Under the assumptions of Theorem~2.1, we establish the following high-probability bounds for any $1 \leq k \le r$ (note that the Frobenius norm bound below follows directly from our proof and improves upon Theorem~2.1 by removing the $\sqrt{k_0}$ factor):
\begin{align}
\|\sin \angle (U_k, \widetilde{U}_k)\|_F &\lesssim \sqrt{k} \cdot \frac{\sqrt{r + \log(N+n)}}{\delta_k} + \frac{\|E\|_F}{\sigma_k}, \label{eq:Frobnorm} \\
\|\sin \angle (U_k, \widetilde{U}_k)\| &\lesssim \sqrt{k} \cdot \frac{\sqrt{r + \log(N+n)}}{\delta_k} + \frac{\|E\|}{\sigma_k}, \label{eq:opnorm}
\end{align}
which reduce to the classical Wedin's bounds when $k = r$. 

We first focus on the necessity of the second terms involving $\|E\|_F$ and $\|E\|$. The following lemma shows that these terms are not merely technical artifacts but reflect intrinsic lower bounds in the presence of random noise:
\begin{lemma}\label{lem:lowerbdFO}
Let $A$ and $E$ be $N\times n$ real matrices, where $A$ is deterministic with rank $r\ge 1$ and the entries of $E$ are i.i.d. sub-gaussian random variables with mean 0 and variance 1. Assume the $k$-th ($1\le k \le r$) largest singular value of $A$ satisfies $\sigma_k \ge 4(\sqrt N+ \sqrt n)$. Then, with probability at least $1 - C\exp(-cN) - C\exp(-cn)$, we have  
\begin{align}
&\max\{ \|\sin \angle ( U_{k}, \widetilde{ U}_{k})\|_F, \|\sin \angle ( V_{k}, \widetilde{ V}_{k})\|_F\} \ge \frac{ \sqrt{N+n-2r}}{3(1+\sqrt 2)} \sqrt{\sum_{i=1}^k \frac{1}{\sigma_i^2}},\label{eq:lowerbd-F}\\
&\max\{\|\sin \angle ( U_{k}, \widetilde{ U}_{k})\|, \|\sin \angle ( V_{k}, \widetilde{ V}_{k})\| \} \ge \frac{ \sqrt{N+n-2r}}{3(1+\sqrt 2)\sqrt{k}} \sqrt{\sum_{i=1}^k \frac{1}{\sigma_i^2}}\label{eq:lowerbd-O}
\end{align}
where $C,c>0$ are constants depending on the sub-gaussian norm of the entries of $E$.
\end{lemma}
Lemma~\ref{lem:lowerbdFO} shows the necessity of the second terms in \eqref{eq:Frobnorm} and \eqref{eq:opnorm}. For an \(n \times n\) matrix \(A\) of rank \(r \leq 0.8n\) (say), the lemma yields:
\[
\|\sin \angle (U_k, \widetilde{U}_k)\|_F \gtrsim \sqrt{n} \sqrt{\sum_{i=1}^k \frac{1}{\sigma_i^2}}, \quad \|\sin \angle (U_k, \widetilde{U}_k)\| \gtrsim \frac{\sqrt{n}}{\sqrt{k}} \sqrt{\sum_{i=1}^k \frac{1}{\sigma_i^2}}.
\]
When the singular values are relatively flat (\(\sigma_1 \asymp \sigma_k\)) and \(k \asymp n\), the Frobenius norm lower bound reduces to \(\frac{ \sqrt{k n}}{\sigma_k} \asymp \frac{\|E\|_F}{\sigma_k}\), since \(\|E\|_F\asymp n\). This matches \(\frac{\|E\|_F}{\sigma_k}\) up to a constant. Similarly, the operator norm lower bound simplifies to \(\frac{\sqrt{n}}{\sigma_k}$, matching the term $\frac{\|E\|}{\sigma_k}\). These lower bounds verify that the noise terms \(\frac{\|E\|_F}{\sigma_k}\) and \(\frac{\|E\|}{\sigma_k}\) are unavoidable, as they are achieved (up to constants) in this regime. The proof of Lemma~\ref{lem:lowerbdFO} is deferred to Section~\ref{sec:prooflower}.

The leading term $\sqrt{k} \cdot \frac{\sqrt{r + \log(N+n)}}{\delta_k}$ appears in both bounds, though we believe it is sharp only for the Frobenius norm. The use of the inequality $\|B\| \leq \|B\|_F$ in the proof for \eqref{eq:opnorm} suggests potential looseness in the operator norm bound.

To assess the $\sqrt{r}$-dependence in the Frobenius case, we conduct numerical experiments for $k = 10$ and $k = 50$. For each $k$, we construct a rank-$r$ matrix $A \in \mathbb{R}^{400 \times 400}$ with top $k$ singular values equal to $1000$, decaying linearly to $\sigma_r = 950$ to ensure $\delta_k \approx 50$. The noise matrix $E$ has i.i.d. standard Gaussian entries. Over 100 trials, we compute $\|\sin \angle(U_k, \widetilde{U}_k)\|_F$ and report the 10th percentile to capture typical behavior.

Figures~\ref{fig:fro-k10} and \ref{fig:fro-k50} support the plausibility of the $\sqrt{r}$-dependence. For $k = 10$, the $\sqrt{r}$-fit achieves $R^2 = 0.9528$, slightly better than $\sqrt{r + \log(2n)}$ ($R^2 = 0.9475$) and significantly better than $r$ ($R^2 =0.8610$). A log-log fit yields $r^{0.366}$ ($R^2 = 0.9825$). For $k = 50$, the corresponding values are $R^2 = 0.9596$ for $\sqrt{r}$, $R^2=0.9576$ for $\sqrt{r + \log(2n)}$, $R^2=0.9062$ for $r$, and a log-log scaling of $r^{0.663}$ ($R^2 = 0.9266$). For reference, $k = 30$ produces $r^{0.520}$ ($R^2 = 0.9447$). The consistently high $R^2$ values for $\sqrt{r}$ validate its role in the bound. 

In contrast, for the operator norm, the rank dependence in \eqref{eq:opnorm} appears to be loose. Numerical evidence suggests that the growth of $\|\sin \angle(U_k, \widetilde{U}_k)\|$ with $r$ and $k$ is much slower than the $\sqrt{k r}$ factor in our bound. This discrepancy stems from the use of the inequality $\|B\| \leq \|B\|_F$ in the proof. A sharper analysis, potentially directly bounding the largest singular value of the perturbation matrix, may yield improved bounds. We leave this improvement for future exploration.

\begin{figure}[ht]
\centering
\includegraphics[width=0.9\textwidth]{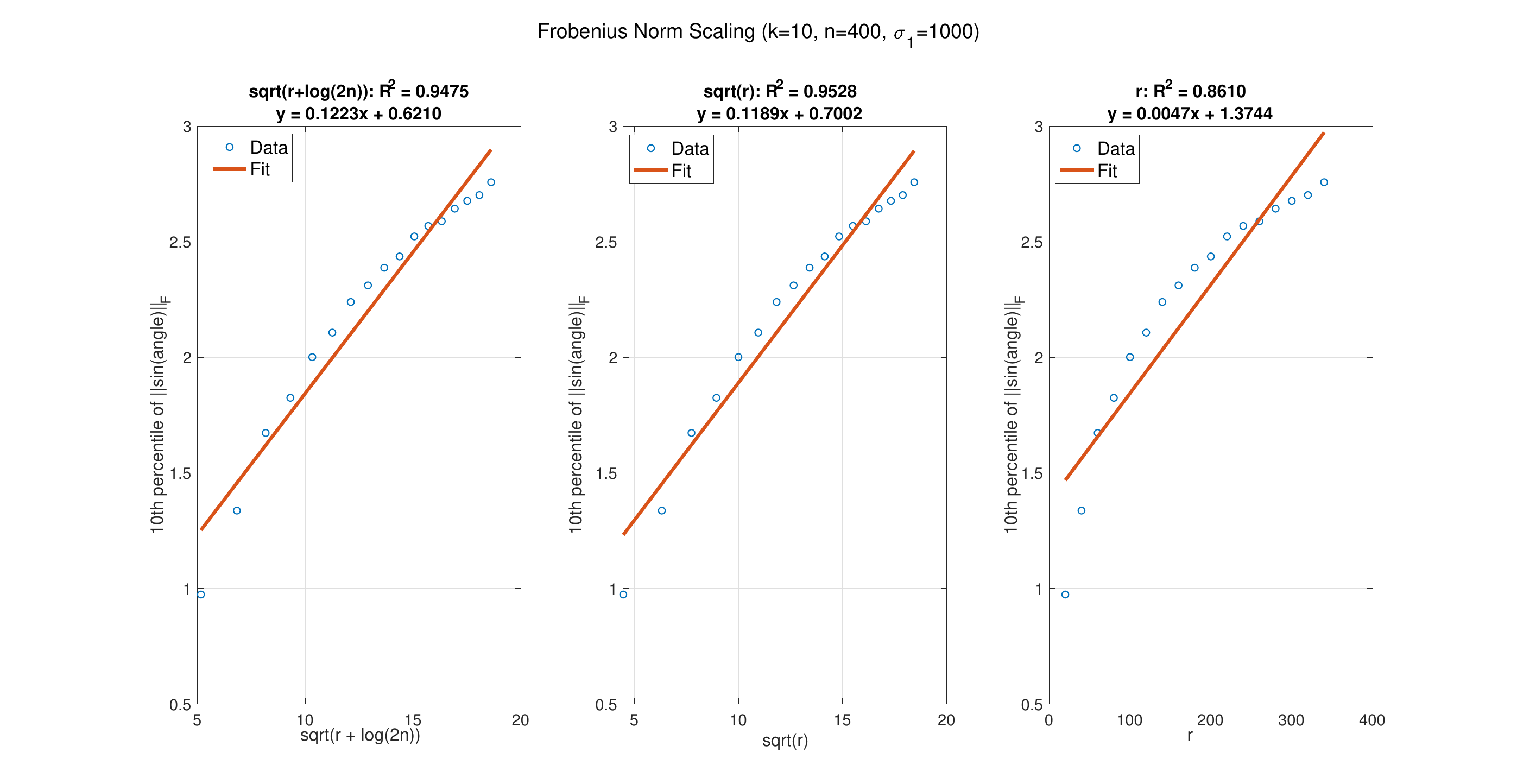}
\caption{\textbf{Empirical exploration of rank dependence in singular subspace perturbation: Frobenius norm.} For each rank $r \in \{20, 40, 60,\ldots, 350\}$, we simulate 100 independent trials of a rank-$r$ signal matrix $A$ corrupted by Gaussian noise and compute the \emph{10th percentile} of $\|\sin \angle(U_{10}, \widetilde{U}_{10})\|_F$. We set $n = 400$. The singular values of $A$ are given as follows: $\sigma_1 = \cdots = \sigma_{10} = 1000$, and the remaining $r-10$ nonzero singular values decay linearly down to 950. The figure compares three scaling laws, $\sqrt{r + \log(2n)}$, $\sqrt{r}$, and $r$, via linear fits and $R^2$ values, showing that $\sqrt{r}$ provides the best empirical fit ($R^2 = 0.9528$).}
\label{fig:fro-k10}
\end{figure}

\begin{figure}[ht]
\centering
\includegraphics[width=0.9\textwidth]{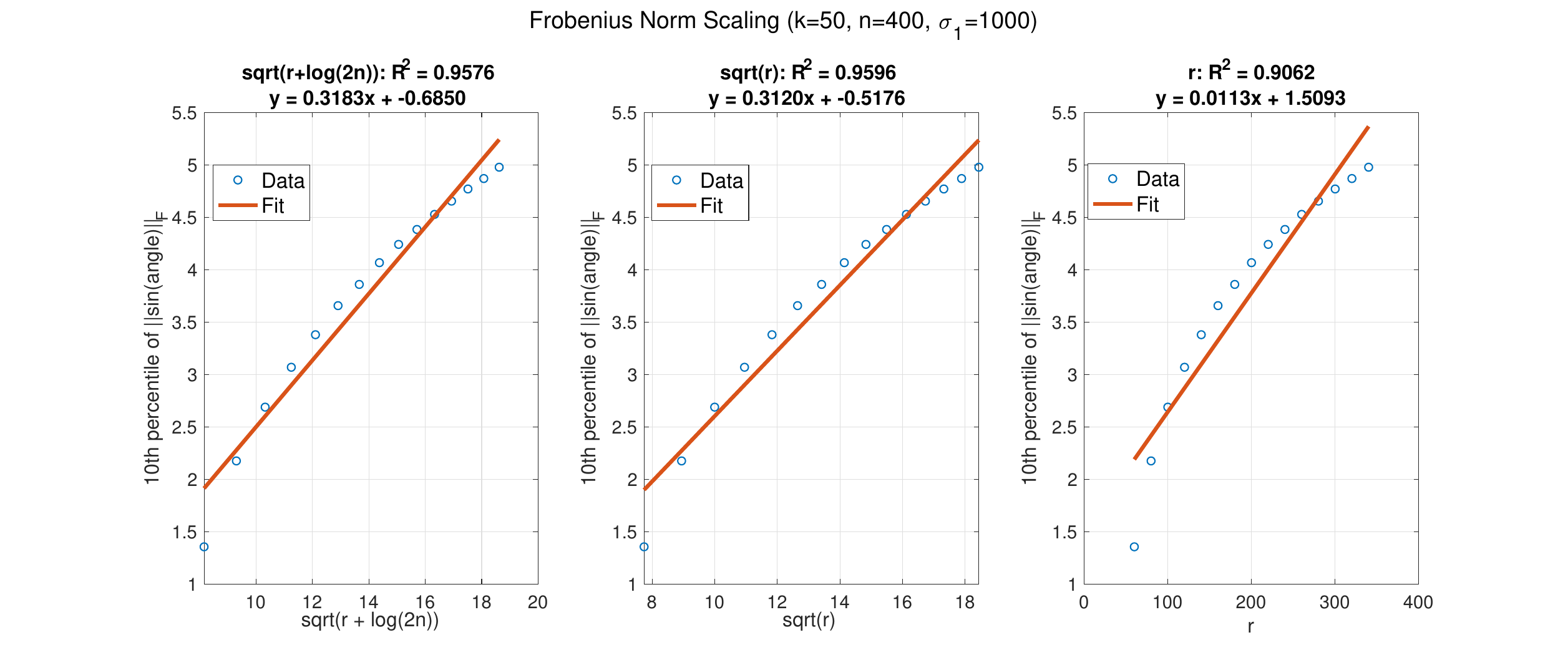}
\caption{\textbf{Empirical exploration of rank dependence in singular subspace perturbation: Frobenius norm.} For each rank $r \in \{20, 40, 60,\ldots, 350\}$, we simulate 100 independent trials of a rank-$r$ signal matrix $A$ corrupted by Gaussian noise and compute the \emph{10th percentile} of $\|\sin \angle(U_{50}, \widetilde{U}_{50})\|_F$. We set $n = 400$. The singular values of $A$ are given as follows: $\sigma_1 = \cdots = \sigma_{50} = 1000$, and the remaining $r-50$ nonzero singular values decay linearly down to 950. The figure compares three scaling laws, $\sqrt{r + \log(2n)}$, $\sqrt{r}$, and $r$, via linear fits and $R^2$ values, showing that $\sqrt{r}$ provides the best empirical fit ($R^2 = 0.9696$).}
\label{fig:fro-k50}
\end{figure}

\medskip

\paragraph*{\textbf{The \(\ell_{2,\infty}\) bound}} Corollary 2.9 establishes that for some $O\in \mathbb O^{k\times k}$,
\begin{align}\label{eq:ell-k}
 \| \widetilde{U}_{k} - U_k O \|_{2,\infty} \lesssim & \sqrt{k}\frac{\sqrt{r+\log(N+n)}}{\delta_{k}}  \|U\|_{2,\infty}+ \sqrt{k}\frac{\sqrt{r\log(N+n)}}{\sigma_k}  +  \frac{\|E\|^2}{\sigma_k^2} \|U_k\|_{2,\infty}.
\end{align}
Assume $U$ is incoherent, e.g., $\|U\|_{\max} \lesssim \frac{1}{\sqrt N}$, \eqref{eq:ell-k} simplifies to 
\begin{align}\label{eq:ell-inco}
\| \widetilde{U}_{k} - U_k O \|_{2,\infty} \lesssim \sqrt{k}\frac{\sqrt{r+\log(N+n)}}{\delta_k} \frac{\sqrt{r}}{\sqrt N} + \sqrt{k}\frac{\sqrt{r\log(N+n)}}{\sigma_k}.
\end{align}
In comparison, a naive bound
\[\| \widetilde{U}_{k} - U_k O \|_{2,\infty} \lesssim \sqrt{k}\frac{\sqrt{r+\log(N+n)}}{\delta_k} \sqrt{r} + \frac{\sqrt{r}\sqrt{N}}{\sigma_k}\]
follows from the inequality $\|B\|_{2,\infty} \le \sqrt{r} \|B\|$ for $B\in\mathbb R^{n\times r}$ and \eqref{eq:opnorm}. Compared to this, the refined bound in \eqref{eq:ell-k} gains a factor of \(\frac{1}{\sqrt{N}}\), assuming \(k = O(1)\) and ignoring log terms. This improvement is crucial in statistical applications--such as clustering--where sharper \(\ell_{2,\infty}\) control enables simple spectral methods to achieve optimal or near-optimal performance. 

We also explored the \(\ell_{2,\infty}\) norm bound in \eqref{eq:ell-inco} for \( k = 10 \). With \( n = 400 \), \(\sigma_1 = \cdots = \sigma_{10} = 1000\), \(\sigma_{11} = 980\), \(\sigma_r = 950\), and incoherent \( U \) (generated using Haar-measure), we computed the 10th percentile of \(\| \widetilde{U}_{10} - U_{10} O \|_{2,\infty}\) over 100 trials for \( r = 20, 40, \ldots, 350 \). Figure~\ref{fig:l2inf-k10} shows linear fits against \( r \), \(\sqrt{r}\), and \( r^{1/3} \), with \(\sqrt{r}\) providing the best fit (\( R^2 = 0.9993 \)). A log-log fit yields \( r^{0.540} \) (\( R^2 = 0.9987 \)), supporting the \(\sqrt{r}\)-dependence. These results indicate that the current $\sqrt{k}r$ factor in the first term of \eqref{eq:ell-inco} may be improved.

\begin{figure}[ht]
\centering
\includegraphics[width=0.9\textwidth]{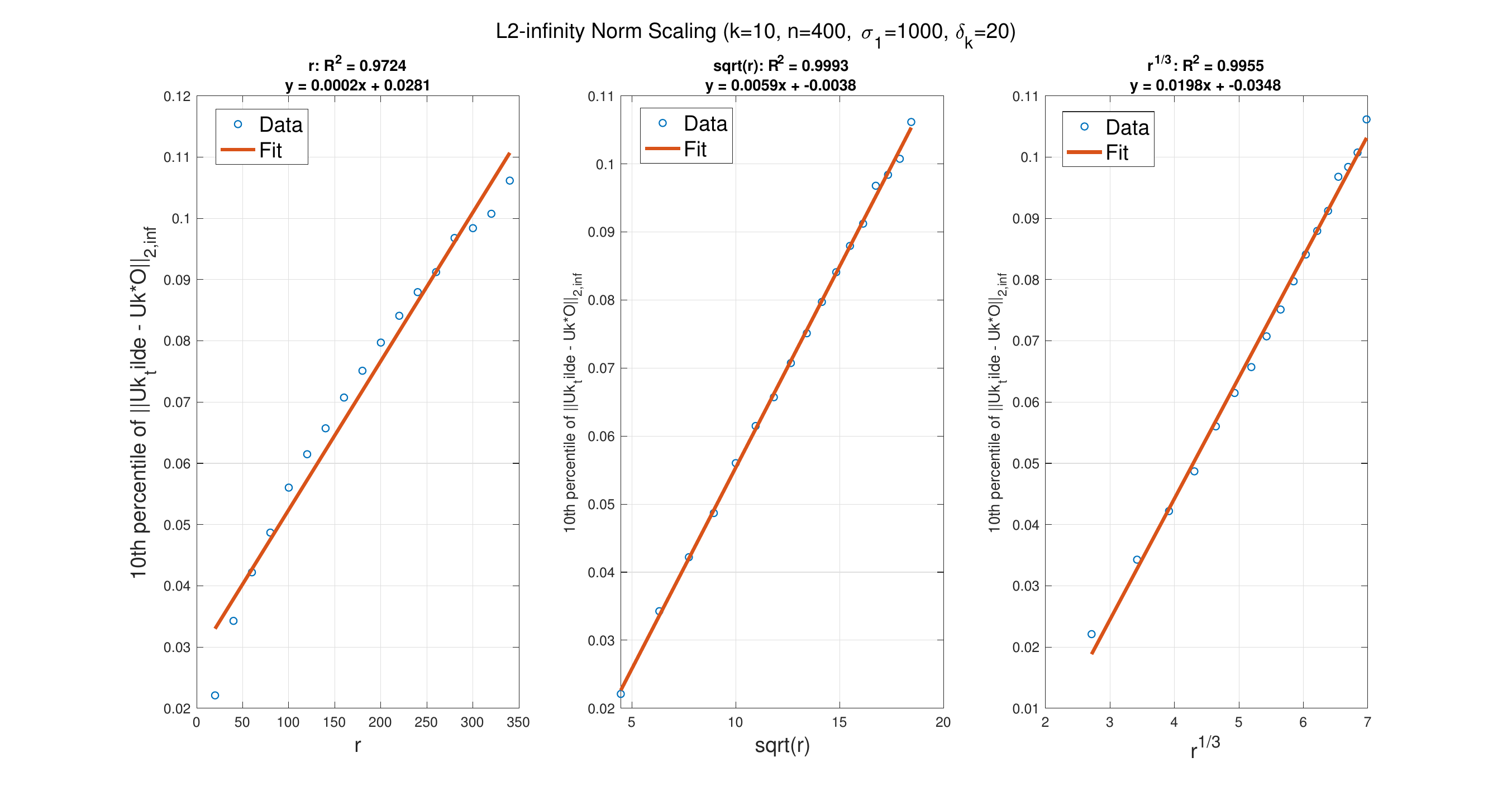}
\caption{\textbf{Empirical exploration of rank dependence in singular subspace perturbation: $\ell_{2,\infty}$ norm.} For each rank $r \in \{20, 40, \ldots, 350\}$, we simulate 100 independent trials of a rank-$r$ signal matrix $A$ corrupted by Gaussian noise and compute the \emph{10th percentile} of \(\| \widetilde{U}_{10} - U_{10} O \|_{2,\infty}\). We set $n = 400$. The singular values of $A$ are given as follows: $\sigma_1 = \cdots = \sigma_{10} = 1000$, $\sigma_{11} = 980$, and the remaining $r-11$ nonzero singular values decay linearly down to 950. The figure compares three scaling laws, $r$, $\sqrt{r}$, and $r^{1/3}$, via linear fits and $R^2$ values, showing that $\sqrt{r}$ provides the best empirical fit ($R^2 = 0.9993$).}
\label{fig:l2inf-k10}
\end{figure}

When \(k = r\), a case extensively studied in several recent works \cite{YCF21,YW24,CCFM21}, we have
\begin{align*}
\min_{O\in \mathbb O^{r\times r}} \| \widetilde{U}_{r} - U O \|_{2,\infty} \lesssim  \frac{r\sqrt{\log(N+n)}}{\sigma_r}  +  \frac{\|E\|^2}{\sigma_r^2} \|U\|_{2,\infty}.
\end{align*}
For context, Theorem 4.4 from \cite{CWC21} establishes $\ell_{2,\infty}$ bounds for the perturbation of the entire singular subspace under general noise matrices $E$.  Assuming square matrices of size $n \times n$, their result  yields:
\[
\min_{O \in \mathbb{O}^{r \times r}} \|\widetilde{U}_r - U O\|_{2,\infty} \lesssim \frac{\sqrt{r \log n}}{\sigma_r} + \frac{\sigma_1}{\sqrt{r} \sigma_r} \cdot \frac{\|E\|}{\sigma_r} \|U\|_{2,\infty},
\]
requiring $\sigma_r \ge c \sqrt{n \log n}$.
A sharper bound was recently established in \cite[Proposition 2]{YW24}:
\begin{align*}
\min_{O\in \mathbb O^{r\times r}} \| \widetilde{U}_{r} - U O \|_{2,\infty} \lesssim  \frac{\sqrt{r+\log(N+n)}}{\sigma_r}  +  \frac{\|E\|^2}{\sigma_r^2} \|U\|_{2,\infty}.
\end{align*}
This suggests that the \(\sqrt{k(r + \log(N+n))} \|U\|_{\infty}\) factor in \eqref{eq:ell-k} may be improvable. Additionally, the minimax lower bound from \cite[Theorem 2]{CLCPC21} shows that any estimator for $U$ under the $\ell_{2,\infty}$ norm incurs error at least $\min\{\frac{\sqrt{n}}{\sigma_r^2} +  \frac{1}{\sigma_r}, 1\}.$ If \(U\) is incoherent, e.g., \(\|U\|_{2,\infty} \lesssim \frac{\sqrt{r}}{\sqrt{n}}\), then the upper and lower bounds match up to a factor in terms of \(r\), indicating the near-optimality of our result.

\subsection{Summary}

Our numerical experiments offer compelling support for the rank dependence in several perturbation bounds derived in this paper.

\begin{itemize}
    \item \textit{Leading singular vector.} Our simulations confirm that $\sin \angle(u_1, \widetilde{u}_1)$ scales as $\sqrt{r}/\delta_1$ up to logarithmic factors. This validates the $\sqrt{r}$ dependence in the upper bound \eqref{eq:better-r-sup} and suggests that this bound is tight with high probability.

    For the $\ell_\infty$ norm bound, our empirical findings indicate that the leading-order factor \(\sqrt{r}\) in the upper bound may be unnecessary under incoherence. The $\ell_{\infty}$ bound contains a factor of $\|U\|_{2,\infty}$, which scales as $\sqrt{r/N}$ for incoherent matrices. This introduces an extra $\sqrt{r}$ factor that is not reflected in the observed lower bound behavior. A refined analysis of the role of $\|U\|_{2,\infty}$ could potentially improve the bound from $r$ to $\sqrt{r}$.
   \smallskip 
   
    \item \textit{Singular subspaces.} For general singular subspaces, the Frobenius norm bound \eqref{eq:Frobnorm} shows the expected $\sqrt{k}\sqrt{r + \log(N+n)}$ scaling, aligning with simulations. However, for the operator norm in \eqref{eq:opnorm}, the $\sqrt{k r}$ dependence appears loose, as numerical evidence suggests slower growth in $k\cdot r$. The current proof uses $\|B\| \leq \|B\|_F$, introducing a loose reduction that could be improved by a direct analysis of the operator norm.

For the $\ell_{2,\infty}$ norm bound, simulations support the \(\sqrt{r}\) scaling, but like the $\ell_\infty$ case, the leading term contains $\|U\|_{2,\infty}$, contributing an extra $\sqrt{r}$ under incoherence. Refining the role of $\|U\|_{2,\infty}$ under incoherence could improve the rank dependence in \eqref{eq:ell-inco}'s first term. 
\end{itemize}

Our numerical analysis suggests that the dependence on signal rank $r$ is empirically tight for Frobenius norm and angular perturbation bounds. However, the experiments identify potential room for improvement in both operator norm and $\ell_{2,\infty}$ norm bounds. We leave these for future exploration.

\section{Preliminary}\label{sec:basic1}
\subsection{Matrix norms}\label{sec:matrixnorms}
Consider an $N \times n$ matrix $A=(a_{ij})$ with singular values $\sigma_1\ge \cdots \ge \sigma_{\min\{N,n\}}\ge 0$. Let $\vvvert A\vvvert $ be a norm of $A$ of certain interest. 

The first type of matrix norms are the unitarily invariant norms. The norm $\vvvert \cdot \vvvert $ on $\mathbb R^{N\times n}$ is said to be \emph{unitarily invariant} if $\vvvert A\vvvert  = \vvvert UAV\vvvert $ for all orthogonal matrices $U \in \mathbb R^{N\times N}$ and $V \in \mathbb R^{n\times n}$. There is an intimate connection between the unitarily invariant norms and the singular values of matrices via the symmetric gauge functions (see \cite[Section IV]{Bhatia}). 
\begin{definition}[Symmetric gauge function]
A function $f: \mathbb R^n \to \mathbb R$ is a \emph{symmetric gauge function} if 
\begin{enumerate}[(i)]
\item $f$ is a norm,
\item $f(P x) = f(x)$ for all $x\in \mathbb R^n$ and $P\in S_n$ (the set of permutation matrices),
\item $f(\epsilon_1 x_1, \cdots, \epsilon_n x_n) = f( x_1, \cdots, x_n)$ if $\epsilon_j = \pm 1$.
\end{enumerate}
\end{definition}
We say the symmetric gauge function $f$ is \emph{normalized} if
$f(1,0,\cdots,0)=1.$
\begin{theorem}[Theorem IV.2.2 from \cite{Bhatia}]\label{thm:char}A norm $\vvvert \cdot \vvvert $ on $\mathbb{R}^{N \times n}$ is unitarily invariant if and only if $\vvvert A \vvvert  = f(\sigma_1, \dots, \sigma_{\min\{N,n\}})$ for all $A \in \mathbb{R}^{N \times n}$ for some symmetric gauge function $f$, where $\sigma_1, \dots, \sigma_{\min\{N,n\}}$ are the singular values of $A$.
\end{theorem}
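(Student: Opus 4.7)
The proof splits cleanly into two directions, centered on the map between $N \times n$ matrices and their singular value vectors in $\mathbb{R}^m$, where $m := \min\{N, n\}$.

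For the forward direction, I would assume $\vvvert \cdot \vvvert$ is unitarily invariant and construct $f$ explicitly. Given $x = (x_1, \dots, x_m) \in \mathbb{R}^m$, let $D_x \in \mathbb{R}^{N \times n}$ be the rectangular diagonal matrix with $x_i$ in position $(i,i)$ and zeros elsewhere, and set $f(x) := \vvvert D_x \vvvert$. That $f$ is a norm on $\mathbb{R}^m$ follows directly from the norm axioms for $\vvvert \cdot \vvvert$, using the linearity $D_{x+y} = D_x + D_y$ and $D_{\lambda x} = \lambda D_x$. Permutation invariance of $f$ follows because permuting the entries of $x$ corresponds to left/right multiplication of $D_x$ by suitable permutation matrices, which are orthogonal. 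Sign invariance follows similarly, using diagonal $\pm 1$ matrices in $\mathrm{O}(N)$ and $\mathrm{O}(n)$. Finally, for any $A \in \mathbb{R}^{N \times n}$ with SVD $A = U \Sigma V^\T$, the unitary invariance yields $\vvvert A \vvvert = \vvvert \Sigma \vvvert = f(\sigma_1, \dots, \sigma_m)$, establishing the identity.

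For the converse direction, I would assume $f$ is a symmetric gauge function and define $\vvvert A \vvvert := f(\sigma_1(A), \dots, \sigma_m(A))$, with singular values in nonincreasing order. Unitary invariance is immediate because singular values are themselves unitarily invariant. Absolute homogeneity is clear since $\sigma_i(\lambda A) = |\lambda| \sigma_i(A)$ and $f$ is sign-invariant and a norm. Positive definiteness follows from the normalization-style property $f(e_1) > 0$ together with sign/permutation invariance, which forces $f(x) > 0$ whenever $x \neq 0$ (take any nonzero coordinate and reduce to the standard basis case).

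The genuine obstacle is the triangle inequality $\vvvert A + B \vvvert \leq \vvvert A \vvvert + \vvvert B \vvvert$, since the singular values of $A + B$ are not a linear function of the singular values of $A$ and $B$. My plan is to invoke the Ky Fan dominance principle: the singular value vector $\sigma(A+B)$ is weakly majorized by $\sigma(A) + \sigma(B)$, meaning $\sum_{i=1}^k \sigma_i(A+B) \leq \sum_{i=1}^k \sigma_i(A) + \sum_{i=1}^k \sigma_i(B)$ for every $k$, a classical consequence of Ky Fan's variational characterization $\sum_{i=1}^k \sigma_i(M) = \max_{U, V} \tr(U^\T M V)$ over $N \times k$ and $n \times k$ matrices with orthonormal columns. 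Coupled with the well-known fact that any symmetric gauge function is monotone under weak majorization of nonnegative vectors (proved by writing the weak majorization as a doubly stochastic action and using convexity plus symmetry), this yields
\[
f(\sigma(A+B)) \leq f(\sigma(A) + \sigma(B)) \leq f(\sigma(A)) + f(\sigma(B)),
\]
where the last step uses the triangle inequality for $f$ as a norm. Assembling these steps gives the characterization. The two lemmas I would need to either cite or verify are the weak majorization of singular values under addition and the monotonicity of symmetric gauge functions under weak majorization; these are standard results from Chapter II--IV of Bhatia's text, so I would likely quote them rather than reprove them.
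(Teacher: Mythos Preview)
Your proposal is correct and follows the standard argument, but note that the paper does not actually prove this statement: it is quoted verbatim as Theorem IV.2.2 from Bhatia's \emph{Matrix Analysis} and used as a black box, so there is no ``paper's own proof'' to compare against. Your outline matches the classical von Neumann argument that Bhatia presents in that chapter, including the two lemmas you flag (Ky Fan's weak majorization $\sigma(A+B) \prec_w \sigma(A)+\sigma(B)$ and the monotonicity of symmetric gauge functions under weak majorization), so citing them from the same source is exactly the right move.
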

If $\vvvert \cdot \vvvert $ is a unitarily invariant norm on $\mathbb R^{N\times n}$ and $f$ is its associated symmetric gauge function, then for $k,s\le \min\{N, n\}$, a unitarily invariant norm on $\mathbb R^{k\times s}$ can be defined by $\vvvert A \vvvert = f(\sigma_1, \cdots, \sigma_{\min\{k,s\}},0,\cdots,0)$, where $\sigma_i$'s are the singular values of $A\in \mathbb R^{k\times s}$. As a result, a family of matrix norms can be defined based on $f$ that can be applied to matrices of varying dimensions. As such, we will not explicitly mention the dimensions of the unitarily invariant norm $\vvvert \cdot \vvvert $.

Moreover, a unitarily invariant norm $\vvvert \cdot \vvvert $ is said to be \emph{normalized} if its associated symmetric gauge function $f$ is {normalized}. Consequently, a normalized unitarily invariant norm always satisfies $\vvvert \diag(1,0,\cdots,0) \vvvert =1$.

Another characterization of the unitarily invariant norm is given by the symmetric property. 
\begin{theorem}[Proposition IV.2.4 from \cite{Bhatia}]\label{thm:char1}
A norm $\vvvert \cdot \vvvert $ on $\mathbb{R}^{N \times n}$ is unitarily invariant if and only if the norm is symmetric, that is, 
$$\vvvert ABC \vvvert \le \|A\| \cdot \vvvert B \vvvert \cdot \|C\|$$ 
for every $A \in \mathbb{R}^{N \times N}, B \in \mathbb{R}^{N \times n}$ and $C \in \mathbb{R}^{n \times n}$.
\end{theorem}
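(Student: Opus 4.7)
The plan is to prove the two directions of this equivalence separately, with the easier one being that the symmetry property implies unitary invariance, and the more substantive one being the converse. I will lean heavily on the already-stated characterization via symmetric gauge functions (Theorem \ref{thm:char}) for the harder direction.

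For the reverse implication, suppose the norm satisfies $\vvvert ABC \vvvert \le \|A\| \cdot \vvvert B \vvvert \cdot \|C\|$. I would observe that if $U \in \mathbb{R}^{N \times N}$ and $V \in \mathbb{R}^{n \times n}$ are orthogonal, then all their singular values equal $1$, so $\|U\| = \|V\| = 1$. The symmetry inequality then yields $\vvvert UBV \vvvert \le \vvvert B \vvvert$. Applying the same inequality to the identity $B = U^\T (UBV) V^\T$ gives the reverse direction, so $\vvvert UBV \vvvert = \vvvert B \vvvert$, establishing unitary invariance.

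For the forward implication, the strategy is to reduce the matrix inequality to a numerical inequality about singular values. By Theorem \ref{thm:char}, any unitarily invariant norm can be written as $\vvvert B \vvvert = f(\sigma_1(B), \ldots, \sigma_{\min\{N,n\}}(B))$ for some symmetric gauge function $f$. The target inequality then becomes
\[
f(\sigma(ABC)) \;\le\; \|A\| \cdot \|C\| \cdot f(\sigma(B)).
\]
Two ingredients suffice. First, a submultiplicativity bound at the level of singular values: $\sigma_i(ABC) \le \|A\| \cdot \|C\| \cdot \sigma_i(B)$ for every $i$, proved from the Courant--Fischer min-max characterization of singular values combined with the operator-norm bounds $\|Ax\| \le \|A\|\|x\|$ and $\|Cy\| \le \|C\|\|y\|$. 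Second, a monotonicity property of $f$: whenever $0 \le x_i \le y_i$ coordinatewise, $f(x) \le f(y)$. Combining monotonicity with the positive homogeneity of $f$ (automatic from the norm axiom in the definition of a symmetric gauge function) yields the displayed inequality and hence the theorem.

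The main obstacle I expect is the monotonicity step, since it is not immediate from the three defining axioms of a symmetric gauge function. The key trick is a convex-combination argument exploiting sign invariance: for $0 \le t \le 1$, write
\[
t y_j \;=\; \tfrac{1+t}{2} y_j + \tfrac{1-t}{2}(-y_j),
\]
while leaving the other coordinates unchanged as $y_i = \tfrac{1+t}{2} y_i + \tfrac{1-t}{2} y_i$. Applying the triangle inequality for $f$ together with the sign-flip invariance $f(y_1,\ldots,-y_j,\ldots) = f(y)$ gives $f(y_1,\ldots,t y_j,\ldots,y_n) \le f(y)$. Iterating coordinate by coordinate, replacing each $y_j$ by the smaller $x_j \ge 0$, produces $f(x) \le f(y)$ and completes the proof. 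Everything else (singular values of orthogonal matrices, the min-max formula, homogeneity of $f$) is routine.
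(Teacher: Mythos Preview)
Your proof is correct. The paper does not supply its own proof of this statement; it simply quotes it as Proposition~IV.2.4 from Bhatia's book, so there is nothing in the paper to compare against. Your argument (orthogonal matrices have operator norm one for the easy direction; the symmetric gauge function characterization plus the singular-value inequality $\sigma_i(ABC) \le \|A\|\,\|C\|\,\sigma_i(B)$ and monotonicity of gauge functions for the other) is the standard route and matches what one finds in Bhatia.
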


A wide range of matrix norms that are commonly used are part of the class of unitarily invariant norms. For instance, for $p\in [1, \infty]$, the Schatten $p$-norm of $A$ is defined by $$\|A\|_p = \left( \sum_{i=1}^{\min\{N,n\}} \sigma_i^p \right)^{1/p}.$$ In particular, the case $p = 2$ yields the Frobenius norm $\|A\|_F=\sqrt{\sum_{i,j} a_{ij}^2}$. The case $p=\infty$ yields the operator norm $\|A\|=\sigma_1$. The case $p=1$ yields the nuclear (or trace) norm $$\|A\|_* =\|A\|_1= \sum_{i=1}^{\min\{N,n\}}  \sigma_i = \tr\left(\sqrt{A A^\T} \right).$$ Note that
\begin{equation}\label{eq:sp}
\|A\|_p^2 = \|A^\T A \|_{p/2} \quad\text{for } p \ge 2.
\end{equation}

Another class of unitarily invariant norms is the Ky Fan $k$-norm $$\|A\|_{(k)} = \sum_{i=1}^k \sigma_i,\quad 1\le k \le \min\{N,n\}.$$  Hence, $\|A\|_{(1)} = \|A\|$ and $\|A\|_{(\min\{N,n\})} = \|A\|_*$. A highly significant result known as the Fan dominance theorem is connected to the Ky Fan norm: 
\begin{theorem}[Theorem IV.2.2 from \cite{Bhatia}]\label{thm:fan}
Let $A,B$ be two $n\times n$ matrices. If $$\|A\|_{(k)} \le \|B\|_{(k)}\quad \text{for } k=1,2,\cdots, n,$$ then $\vvvert A\vvvert \le \vvvert B\vvvert$ for all unitarily invariant norms. 
\end{theorem}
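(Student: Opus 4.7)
The plan is to reduce the statement to an inequality about symmetric gauge functions evaluated on the singular value vectors. By Theorem \ref{thm:char}, every unitarily invariant norm $\vvvert\cdot\vvvert$ on $\mathbb R^{n\times n}$ has the form $\vvvert M\vvvert = f(\sigma_1(M),\ldots,\sigma_n(M))$ for a unique symmetric gauge function $f$. Writing $a = (\sigma_1(A),\ldots,\sigma_n(A))$ and $b = (\sigma_1(B),\ldots,\sigma_n(B))$ (both vectors having non-negative, decreasing entries), the hypothesis $\|A\|_{(k)}\le \|B\|_{(k)}$ for every $k=1,\ldots,n$ becomes
\begin{equation*}
\sum_{i=1}^k a_i \le \sum_{i=1}^k b_i, \qquad k=1,\ldots,n,
\end{equation*}
i.e., $a$ is weakly majorized by $b$, denoted $a\prec_w b$. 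It therefore suffices to prove that $f(a)\le f(b)$ whenever $a,b\ge 0$ and $a\prec_w b$.

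The proof of this majorization fact has two ingredients. First, a monotonicity statement: if $0\le x\le y$ componentwise, then $f(x)\le f(y)$. Indeed, $x=Dy$ for some diagonal matrix $D$ with entries in $[0,1]$, and any such $D$ is a convex combination of sign matrices $\diag(\epsilon_1,\ldots,\epsilon_n)$ with $\epsilon_i\in\{\pm 1\}$. By the sign-invariance axiom (iii) of a symmetric gauge function each such sign matrix preserves $f$, and convexity of the norm $f$ then yields $f(x)\le f(y)$.

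Second, a standard decomposition of weak majorization: whenever $a\prec_w b$ with $a,b\ge 0$, there exists a vector $c\ge 0$ with $a\le c$ componentwise (after arranging both in decreasing order) and $c\prec b$ in the sense of ordinary majorization (all partial sums dominated with equality at $k=n$). Given such a $c$, the Hardy-Littlewood-Polya theorem together with Birkhoff's theorem expresses $c$ as a convex combination of permutations of $b$, say $c=\sum_j \lambda_j P_j b$. Using permutation invariance (axiom (ii)) and convexity of $f$ gives
\begin{equation*}
f(c)\le \sum_j \lambda_j f(P_j b)=f(b),
\end{equation*}
and combining with monotonicity yields $f(a)\le f(c)\le f(b)$. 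Applied to the singular value vectors, this completes the proof.

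The main obstacle is the existence of the augmenting vector $c$ in the weak-majorization decomposition. This is a combinatorial lemma in which one inflates the coordinates of $a$ (starting from the smallest, while preserving the ordering and non-negativity) until the total sum matches $\sum_i b_i$, and one must verify that the inflated partial sums still lie below those of $b$ so that the stronger relation $c\prec b$ survives. Once this step is established, the remainder of the argument is just a short assembly of the sign-invariance, permutation-invariance, and convexity axioms of a symmetric gauge function, applied through the isometric identification between $\vvvert\cdot\vvvert$ and $f$ given by Theorem \ref{thm:char}.
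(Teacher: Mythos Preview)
The paper does not prove this theorem at all; it is quoted as a known background result from Bhatia's book (Theorem IV.2.2), so there is no ``paper's own proof'' to compare against. Your proposal is the standard argument for the Fan dominance theorem via symmetric gauge functions and weak majorization, and it is correct as written; this is essentially the proof one finds in Bhatia.
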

If $\vvvert \cdot \vvvert$ is also normalized, then a direct implication of Theorem \ref{thm:fan} is that
\begin{align}
&\|A\| \le \vvvert A\vvvert \le \|A\|_*, \label{eq:equivnorms}\\
&\sigma_{\min}(A) \vvvert B\vvvert \le \vvvert AB\vvvert \le \|A\| \vvvert B\vvvert,\nonumber\\
&\sigma_{\min}(A) \vvvert B\vvvert \le \vvvert BA\vvvert \le \|A\| \vvvert B\vvvert.\nonumber
\end{align}
It also follows from Theorem \ref{thm:char1} and \eqref{eq:equivnorms} that $\vvvert AB\vvvert \le \vvvert A\vvvert \vvvert B\vvvert.$

We also consider the following norms, which do not belong to the class of unitarily invariant norms. Denote $A_{i,\cdot}$'s the rows of $A \in \mathbb{R}^{N \times n}$. 
The $\ell_{2,\infty}$ norm of $A$ is $$\|A\|_{2,\infty} =\max_{i} \|A_{i,\cdot}\|_2=\max_{1\le i\le N} \|e_i^\T A\|.$$ Finally, denote $\|A\|_{\max} = \max_{i,j} |a_{ij}|$. Note that $\|\cdot \|_{\max}$ is not 
sub-multiplicative.

\subsection{Distance between subspaces}\label{sec:distance} Using the angles between subspaces and using the orthogonal projections to describe their separation are two popular approaches for measuring the distance between subspaces. These two methods are essentially equivalent when it comes to any unitarily invariant norm $\vvvert \cdot \vvvert$. We start with some basic notions. 

If $U$ and $V$ are two subspaces of the same dimension $r$, then one could define the principal angles $0\le \theta_1\le \cdots \le \theta_r\le {\pi}/{2}$  between them recursively: 
\begin{equation*}
\cos (\theta_i) = \max_{u\in U, v\in V} u^T v =u_i^T v_i, \qquad \|u\|=\|v\|=1
\end{equation*}
subject to the constraint
$$ u_i^T u_l=0, \quad v_i^T v_l =0\quad \text{for } l=1,\ldots,i-1.$$
Denote  $\angle (U,V) := \diag(\theta_1, \cdots , \theta_r)$. Further, let 
\begin{align*}
&\sin\angle (U,V)  := \diag(\sin\theta_1, \cdots , \sin\theta_r),\\
&\cos\angle (U,V)  := \diag(\cos\theta_1, \cdots , \cos\theta_r).
\end{align*}
With abuse of notation, we also let $U=(u_1,\cdots,u_r)$ and $V=(v_1,\cdots,v_r)$ be matrices of size $n\times r$ whose columns are orthonormal bases of subspaces $U$ and $V$ respectively. Then $P_U=U U^T$ (resp. $P_V = V V^T$) is the orthogonal projection matrix onto the subspace $U$ (resp. $V$). For a subspace $W$, denote its complement by $W^\perp$.

The following facts are collected from \cite[Exercises VII. 1. 9 -- 1.11]{Bhatia}.
\begin{proposition} \label{prop:spaces}Let $U, V, P_U, P_V, \sin\angle (U,V) , \cos\angle (U,V) $ be as above.
\begin{enumerate}[(i)]
\item The nonzero singular values of $P_U P_V$ are the same as the nonzero singular values of $U^T V$.
\item The singular values of $P_U P_V$ are $\cos\theta_1, \cdots , \cos\theta_r$.
The nonzero singular values of $P_{U^\perp} P_V$ are the nonzero values of $\sin\theta_1, \cdots , \sin\theta_r$. 
\item The nonzero singular values of $P_U-P_V$ are the nonzero singular values $P_{U^\perp} P_V$, each counted twice; i.e., these are the nonzero numbers in $$\sin\theta_1, \sin\theta_1, \sin\theta_2, \sin\theta_2,\cdots, \sin\theta_r, \sin\theta_r.$$ 
\end{enumerate}
\end{proposition}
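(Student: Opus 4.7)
The plan is to reduce all three claims to a single canonical-form (CS-type) decomposition of $U$ and $V$ built from the SVD of the small $r\times r$ matrix $U^{T}V$. By the variational definition of principal angles, the recursive maximization of $u^{T}v$ produces orthonormal bases $\hat u_{1},\ldots,\hat u_{r}$ of $U$ and $\hat v_{1},\ldots,\hat v_{r}$ of $V$ satisfying $\hat u_{i}^{T}\hat v_{j}=\cos\theta_{i}\,\delta_{ij}$; equivalently, writing an SVD $U^{T}V=P C Q^{T}$ with $C=\diag(\cos\theta_{1},\ldots,\cos\theta_{r})$, one takes $\hat U:=UP$ and $\hat V:=VQ$, which are $n\times r$ matrices with orthonormal columns $\hat u_{i}$ and $\hat v_{i}$. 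Establishing this alignment between the SVD of $U^{T}V$ and the principal angle structure is the only non-trivial bookkeeping step.

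For (i), I would write $P_{U}P_{V}=U(U^{T}V)V^{T}=(UP)\,C\,(VQ)^{T}=\hat U\,C\,\hat V^{T}$. Since $\hat U$ and $\hat V$ are isometries, this is essentially an SVD of $P_{U}P_{V}$ up to padding by zero singular values, so its nonzero singular values coincide with the diagonal entries of $C$, i.e.\ the nonzero singular values of $U^{T}V$. This simultaneously settles the first half of (ii).

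The second half of (ii) and all of (iii) use a finer decomposition. For each $i$ with $\sin\theta_{i}>0$, set $w_{i}:=(\hat v_{i}-\cos\theta_{i}\hat u_{i})/\sin\theta_{i}$. Expanding $\hat v_{i}^{T}\hat v_{j}=\delta_{ij}$ and using $\hat u_{i}^{T}\hat v_{j}=\cos\theta_{i}\,\delta_{ij}$ quickly yields that the $w_{i}$ are orthonormal and lie in $U^{\perp}$, giving the canonical identity $\hat v_{i}=\cos\theta_{i}\,\hat u_{i}+\sin\theta_{i}\,w_{i}$. Substituting this into $P_{U^{\perp}}P_{V}=P_{V}-P_{U}P_{V}$ gives
\[
P_{U^{\perp}}P_{V}=\sum_{i}(\hat v_{i}-\cos\theta_{i}\hat u_{i})\hat v_{i}^{T}=\sum_{i:\sin\theta_{i}>0}\sin\theta_{i}\,w_{i}\hat v_{i}^{T},
\]
which is already in SVD form with nonzero singular values $\sin\theta_{i}$, completing (ii).

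For (iii), I would observe that $P_{U}-P_{V}$ preserves each two-dimensional subspace $\Span\{\hat u_{i},w_{i}\}$, the remaining ambient directions lying in its kernel (as one verifies by direct computation using the canonical identity together with the fact that any vector orthogonal to $U$ and to all $w_{j}$ is orthogonal to every $\hat v_{j}$ as well). In the orthonormal basis $(\hat u_{i},w_{i})$ the operator is represented by the symmetric matrix
\[
\begin{pmatrix}\sin^{2}\theta_{i} & -\sin\theta_{i}\cos\theta_{i}\\ -\sin\theta_{i}\cos\theta_{i} & -\sin^{2}\theta_{i}\end{pmatrix},
\]
whose trace is $0$ and whose determinant is $-\sin^{2}\theta_{i}$, forcing eigenvalues $\pm\sin\theta_{i}$. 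Assembling the blocks across $i$ shows that the nonzero singular values of $P_{U}-P_{V}$ are exactly the $\sin\theta_{i}$, each counted twice, as claimed. The only mild subtlety is handling the indices with $\sin\theta_{i}=0$, where $\hat v_{i}=\hat u_{i}\in U\cap V$ and the corresponding direction sits in the kernel of each of the three operators; once this is noted there is no real obstacle, and the bulk of the work really lies in cleanly constructing the canonical decomposition in the first paragraph.
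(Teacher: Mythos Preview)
Your argument is correct and is the standard CS-decomposition route. The paper does not actually supply a proof of this proposition: it simply records the facts as ``collected from \cite[Exercises VII.1.9--1.11]{Bhatia}.'' Your construction of the aligned bases $\hat u_i,\hat v_i$ via the SVD of $U^{T}V$, together with the auxiliary vectors $w_i=(\hat v_i-\cos\theta_i\hat u_i)/\sin\theta_i$, is precisely the canonical-pair decomposition Bhatia has in mind in those exercises, so your proof fills in exactly what the cited reference would ask the reader to verify.
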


For any unitarily invariant norm $\vvvert \cdot \vvvert$, by Proposition \ref{prop:spaces},  we observe 
\begin{align}\label{eq:sineq}
\vvvert \sin\angle (U,V)  \vvvert=\vvvert P_{U^\perp} P_V\vvvert=\vvvert P_{V^\perp} P_U\vvvert
\end{align}
and $$\vvvert P_U- P_V\vvvert = \vvvert P_{U^\perp} P_V \oplus P_U P_{V^\perp} \vvvert.$$ This suggests the (near) equivalence of $\vvvert \sin\angle (U,V)  \vvvert$ and $\vvvert P_U- P_V\vvvert$.  For instance, for the Schatten $p$-norm, we have
\begin{equation*}
\|P_U- P_V\|_{p} = 2^{\frac{1}{p}}\|P_{U^\perp} P_V\|_p= 2^{\frac{1}{p}} \| \sin\angle (U,V) \|_{p}.
\end{equation*}
For the Ky Fan $k$-norm, denote $\|A\|_{(0)}=0$. Then
\begin{equation*}
\|P_U- P_V\|_{(k)} = 
\begin{cases}
\|P_{U^\perp} P_V\|_{(\frac{k-1}{2})} + \|P_{U^\perp} P_V\|_{(\frac{k+1}{2})}, &\quad \text{if $k$ is odd};\\
 2 \|P_{U^\perp} P_V\|_{(\frac{k}{2})}, &\quad \text{if $k$ is even}.
\end{cases}
\end{equation*}

Another method that is commonly used to quantify the distance between $U$ and $V$ is to use  
$$\min_{O\in \mathbb{O}^{r\times r}} \vvvert UO- V\vvvert.$$
It is shown in \cite[Lemma 2.6]{CCFM21} that the above distance is (near) equivalent to the $\vvvert P_U- P_V\vvvert$ for the Frobenius norm and the operator norm. In fact, we can demonstrate that these distances are (near) equivalent when considering the Schatten-$p$ norm for any $p\in [2,\infty]:$ 
\begin{equation}\label{eq:equiv}
 \|\sin\angle (U,V) \|_p\le \min_{O\in \mathbb{O}^{r\times r}} \| UO- V\|_p \le \sqrt{2}\|\sin\angle (U,V) \|_p.
\end{equation}
The proof of \eqref{eq:equiv} is given in Appendix \ref{app:proofofnorm}.

More generally, for any unitarily invariant norm $\vvvert \cdot \vvvert $ on $\mathbb R^{N\times n}$, 
we have 
\begin{equation}\label{eq:equivup}
\min_{O\in \mathbb{O}^{r\times r}} \vvvert UO- V\vvvert \le \sqrt{2}\vvvert \sin\angle (U,V)  \vvvert.
\end{equation}
The proof of \eqref{eq:equivup} is given in Appendix \ref{app:proofequi}. 

In certain applications, the primary focus is to compare the matrices $U=(u_1,\cdots,u_r)$ and $V=(v_1,\cdots,v_r)$ of size $n\times r$ with respect to specific directions. 
According to Proposition \ref{prop:spaces},  the SVD of $U^\T V$ is given by 
\begin{align}\label{eq:svdUV}
U^\T V = O_1 \cos\angle(U,V) O_2^\T,
\end{align}
where $O_1, O_2 \in  \mathbb{O}^{r\times r}$. Denote $O:=O_1 O_2^\T\in  \mathbb{O}^{r\times r}$. We highlight the following deterministic result, the proof of which can be found in Appendix \ref{app:2inf}.
\begin{proposition}\label{prop:2inf}Let $x$ be any unit vector in $\mathbb R^{n}$ and $y$ be any unit vector in $\mathbb R^{r}$. We have
\begin{align*}
\|x^\T (V- UO) \| \le \|x^\T (V - P_U V)\| + \|x^\T U\| \|\sin\angle(U, V)\|^2
\end{align*}
and
\begin{align*}
\left|x^\T (V- UO) y \right| \le \left|x^\T (V- P_U V) y \right| + \| x^\T U\| \|\sin\angle(U, V)\|^2.
\end{align*}
In particular,
\begin{align*}
\|V- UO\|_{2,\infty} \le \|V - P_U V\|_{2,\infty} + \|U\|_{2,\infty} \|\sin\angle(U, V)\|^2.
\end{align*}
\end{proposition}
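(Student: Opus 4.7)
The plan is to start from the orthogonal decomposition
\[
V - UO = (V - P_U V) + U(U^T V - O),
\]
which splits the error into a component orthogonal to the column space of $U$ and a component lying inside it. Since $U$ has orthonormal columns, $\|x^T U (U^T V - O)\| \le \|x^T U\| \cdot \|U^T V - O\|$, so everything reduces to controlling $\|U^T V - O\|$ in operator norm.

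Using the given SVD $U^T V = O_1 \cos\angle(U,V)\, O_2^T$ and the definition $O = O_1 O_2^T$, one gets
\[
U^T V - O = O_1\bigl(\cos\angle(U,V) - I\bigr) O_2^T,
\]
so $\|U^T V - O\| = \max_i (1 - \cos\theta_i)$. The elementary inequality
\[
1 - \cos\theta_i = \frac{\sin^2\theta_i}{1 + \cos\theta_i} \le \sin^2\theta_i \qquad (0 \le \theta_i \le \pi/2)
\]
then yields $\|U^T V - O\| \le \|\sin\angle(U,V)\|^2$. This is the main (and only nontrivial) step, but it is essentially trigonometry once the SVD representation is in hand.

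Combining these two ingredients gives the first bound immediately:
\[
\|x^T(V - UO)\| \le \|x^T(V - P_U V)\| + \|x^T U\|\cdot\|U^T V - O\| \le \|x^T(V - P_U V)\| + \|x^T U\|\,\|\sin\angle(U,V)\|^2.
\]
For the bilinear form, insert the unit vector $y$ and use $\|y\| = 1$ to get
\[
|x^T(V - UO)y| \le |x^T(V - P_U V)y| + \|x^T U\|\cdot\|U^T V - O\|\cdot\|y\| \le |x^T(V - P_U V)y| + \|x^T U\|\,\|\sin\angle(U,V)\|^2.
\]
Finally, the $\ell_{2,\infty}$ statement is the first bound applied with $x = e_i$ (the canonical basis vectors) followed by maximization over $i$, since $\|e_i^T U\| \le \|U\|_{2,\infty}$ and $\max_i \|e_i^T(V - P_U V)\| = \|V - P_U V\|_{2,\infty}$.

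I do not anticipate any genuine obstacle: the whole argument hinges on the split $V - UO = (V - P_U V) + U(U^T V - O)$ and on the simple observation that the operator norm of $I - \cos\angle(U,V)$ is bounded by the square of the operator norm of $\sin\angle(U,V)$. Care is only needed to check that $O = O_1 O_2^T$ is indeed the minimizer-aligned orthogonal factor so that $U^T V - O$ has the clean diagonalization above, which is immediate from the stated SVD.
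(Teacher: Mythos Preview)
Your proposal is correct and follows essentially the same approach as the paper: split $V-UO = (V-P_UV) + U(U^\T V - O)$, bound $\|U^\T V - O\|$ via the SVD $U^\T V = O_1\cos\angle(U,V)O_2^\T$ with $O=O_1O_2^\T$, and use $1-\cos\theta_i \le \sin^2\theta_i$. The only cosmetic difference is that the paper writes $1-\cos\theta_r \le 1-\cos^2\theta_r = \sin^2\theta_r$ directly, whereas you use the equivalent identity $1-\cos\theta_i = \sin^2\theta_i/(1+\cos\theta_i)$.
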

Finally, it can be verified from the definition that for any orthogonal matrix $O$,
$$\|V- UO\|_{2,\infty}= \|VO^\T- U\|_{2,\infty}.$$

\section{Proofs of Theorems 2.3, 2.10, 2.11 and 2.13  and Lemma \ref{lem:lowerbdFO}} \label{sec:proofmain}

In the proofs below, we always work on the event where $\|E\|\le 2(\sqrt{N}+\sqrt n)$; Lemma 6.7 shows this event holds with probability at least $1 - 2 e^{-(\sqrt{N}+\sqrt{n})^2/2}\ge 1 - 2(N+n)^{-16(K+7)}$ since $(\sqrt N+\sqrt n)^2\ge 32(K+7)\log(N+n)$ by assumption.

Denote
$$I:=\llbracket k, s\rrbracket \cup \llbracket r+k, r+s\rrbracket$$
and
 \begin{align*} 
 J:=\llbracket 1, 2r \rrbracket \setminus I = \llbracket 1, k-1\rrbracket\cup \llbracket s+1, r\rrbracket  \cup \llbracket r+1, r+k-1\rrbracket \cup \llbracket r+s+1, 2r\rrbracket .
 \end{align*}
We first obtain an identity for the eigenvector $\widetilde{\mathbf u}_i$. For each $i\in I$, by Weyl's inequality, $|\widetilde \lambda_i| \ge \widetilde \sigma_{r_0}\ge \sigma_{r_0}-\|E\|>\|E\|=\|\mathcal E\|$ by supposition on $\sigma_{r_0}$ and thus $G(\widetilde \lambda_i)$ and $\Phi(\widetilde \lambda_i)$ are well-defined. As $(\mathcal A + \mathcal E)\widetilde{\mathbf u}_i = \widetilde \lambda_i \widetilde{\mathbf u}_i$, we solve for $\widetilde{\mathbf u}_i$  to obtain 
\begin{align*}
	\widetilde{\mathbf u}_i =(\widetilde \lambda_i I - \mathcal E)^{-1} \mathcal A \widetilde{\mathbf u}_i=G(\widetilde \lambda_i) \mathcal A \widetilde{\mathbf u}_i.
\end{align*}  

In fact, we can approximate $G(\widetilde \lambda_i)$ with a simpler random matrix, denoted as $\Pi(z)$ below. Depending on the magnitude of $\widetilde \lambda_i$ under consideration (the specifics of which will become clear in subsequent context), we choose $\Pi(\widetilde \lambda_i)$ to be either $\Phi(\widetilde \lambda_i)$ if $|\widetilde \lambda_i|$ is relatively small and $\frac{1}{\widetilde \lambda_i} I_{N+n} + \frac{1}{\widetilde \lambda_i^2} \mathcal E$ or even simply $\frac{1}{\widetilde \lambda_i} I_{N+n}$ if $|\widetilde \lambda_i|$ is sufficiently large. 
Furthermore, denote 
\begin{align}\label{eq:Xi}
\Xi(\widetilde \lambda_i):=G(\widetilde \lambda_i)-\Pi(\widetilde \lambda_i).
\end{align}
Hence, we rewrite
\begin{align}\label{eq:u}
\widetilde{\mathbf u}_i=\Pi(\widetilde \lambda_i) \mathcal A \widetilde{\mathbf u}_i + \Xi(\widetilde \lambda_i) \mathcal A \widetilde{\mathbf u}_i
\end{align}
This decomposition of $\widetilde{\mathbf u}_i$ is critical in facilitating the extraction of its desired property information. Note that Lemma 6.3 and Lemma 6.5 provides precise control on the size of $\Xi(\widetilde \lambda_i)$.

For $J \subset \llbracket 1, 2r\rrbracket$, we introduce the notation $\mathcal U_J$ to denote the $(N+n) \times |J|$ matrix formed from $\mathcal U$ by removing the columns containing ${\mathbf u}_i$ for $i \not\in J$.  Similarly, $\mathcal{D}_J$ will denote the $|J| \times |J|$ matrix formed from $\mathcal D$ by removing the rows and columns containing ${\lambda}_i$ for $i \not\in J$. Let $I:= \llbracket 1, 2r\rrbracket \setminus J$. In this way, we can decompose $A$ as
\begin{equation} \label{eq:Adecomp}
	\mathcal A = \mathcal U \mathcal D \mathcal U^T = {\mathcal U}_J {\mathcal D}_J {\mathcal U}_J^\T + {\mathcal U}_{I} {\mathcal D}_{I} {\mathcal U}_{I}^\T.   
\end{equation} 
Let $\mathcal P_{J}$ be the orthogonal projection onto the subspace $\Span\{\mathbf u_{k}: {k\in J}\}$. Clearly, $\mathcal P_J = \mathcal U_J \mathcal U_J^\T.$ If $J={\llbracket 1, k\rrbracket}$, we sometimes simply write $\mathcal U_{k}$ for $\mathcal U_J$ and $\mathcal P_k$ for $\mathcal P_J$.
Analogous notations $ \widetilde{\mathcal U}_J, \widetilde{P}_J, \widetilde{\mathcal D}_J$ are also defined for $\widetilde{\mathcal A}$. We also use ${\mathcal P}_{\llbracket 2r+1, N+n\rrbracket}={\mathcal U}_{\llbracket 2r+1, N+n\rrbracket} {\mathcal U}_{\llbracket 2r+1, N+n\rrbracket}^\T$ to denote the orthogonal projection onto the null space of $\mathcal A$.

Now we proceed to the proofs of the main results. 


\subsection{Proof of Theorem 2.3}

From \eqref{eq:sineq}, we start by observing 
\begin{align*}
\vvvert \sin \angle (\mathcal U_{I}, \widetilde{\mathcal U}_{I}) \vvvert =\vvvert {\mathcal P}_{I^c}  \widetilde{\mathcal P}_I \vvvert \le   \vvvert {\mathcal P}_{\llbracket 2r+1, N+n\rrbracket}  \widetilde{\mathcal P}_I \vvvert +  \vvvert {\mathcal P}_J  \widetilde{\mathcal P}_I \vvvert.
\end{align*}
We bound the two terms $ \vvvert {\mathcal P}_{\llbracket 2r+1, N+n\rrbracket}  \widetilde{\mathcal P}_I \vvvert$ and  $\vvvert {\mathcal P}_J  \widetilde{\mathcal P}_I \vvvert$ respectively. 

\begin{lemma}With probability 1, 
\begin{equation}\label{eq:nullbd}
 \vvvert {\mathcal P}_{\llbracket 2r+1, N+n\rrbracket}  \widetilde{\mathcal P}_I \vvvert \le 2\frac{\vvvert  \mathcal P_{\llbracket 2r+1, N+n\rrbracket} \mathcal E \widetilde{\mathcal P}_I \vvvert}{\sigma_{s}}.
\end{equation}
\end{lemma}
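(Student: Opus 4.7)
The plan is to exploit the fact that $\mathcal Q := \mathcal P_{\llbracket 2r+1, N+n\rrbracket}$ projects onto the null space of $\mathcal A$, so $\mathcal Q \mathcal A = 0$. Consequently, multiplying $\widetilde{\mathcal A} = \mathcal A + \mathcal E$ on the left by $\mathcal Q$ and on the right by $\widetilde{\mathcal P}_I$ yields the clean identity
\begin{equation*}
\mathcal Q \mathcal E \widetilde{\mathcal P}_I = \mathcal Q \widetilde{\mathcal A}\, \widetilde{\mathcal P}_I = \mathcal Q\, \widetilde{\mathcal U}_I \widetilde{\mathcal D}_I \widetilde{\mathcal U}_I^\T,
\end{equation*}
where the second equality uses the eigendecomposition $\widetilde{\mathcal A} \widetilde{\mathcal U}_I = \widetilde{\mathcal U}_I \widetilde{\mathcal D}_I$ together with $\widetilde{\mathcal P}_I = \widetilde{\mathcal U}_I \widetilde{\mathcal U}_I^\T$.

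Next, since $I = \llbracket k,s\rrbracket \cup \llbracket r+k,r+s\rrbracket$ indexes the eigenvalues $\pm \widetilde\sigma_k,\ldots,\pm \widetilde\sigma_s$ of $\widetilde{\mathcal A}$, the diagonal matrix $\widetilde{\mathcal D}_I$ is invertible with $\|\widetilde{\mathcal D}_I^{-1}\| = 1/\widetilde\sigma_s$. Multiplying the displayed identity on the right by $\widetilde{\mathcal U}_I \widetilde{\mathcal D}_I^{-1} \widetilde{\mathcal U}_I^\T$ and using $\widetilde{\mathcal U}_I^\T \widetilde{\mathcal U}_I = I$ gives
\begin{equation*}
\mathcal Q \widetilde{\mathcal P}_I = \mathcal Q \mathcal E\, \widetilde{\mathcal P}_I \cdot \widetilde{\mathcal U}_I \widetilde{\mathcal D}_I^{-1} \widetilde{\mathcal U}_I^\T.
\end{equation*}
Then the symmetric characterization of unitarily invariant norms (Theorem \ref{thm:char1}) yields
\begin{equation*}
\vvvert \mathcal Q \widetilde{\mathcal P}_I \vvvert \;\le\; \vvvert \mathcal Q \mathcal E \widetilde{\mathcal P}_I \vvvert \cdot \bigl\| \widetilde{\mathcal U}_I \widetilde{\mathcal D}_I^{-1} \widetilde{\mathcal U}_I^\T \bigr\| \;=\; \frac{\vvvert \mathcal Q \mathcal E \widetilde{\mathcal P}_I \vvvert}{\widetilde\sigma_s}.
\end{equation*}

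Finally, I would convert the $\widetilde\sigma_s$ in the denominator into $\sigma_s$ at the cost of the factor $2$. On the working event $\|E\| \le 2(\sqrt N + \sqrt n)$, Weyl's inequality gives $\widetilde \sigma_s \ge \sigma_s - \|E\|$; and since $s \le r_0$, the hypothesis $\sigma_{r_0} \ge 2b(\sqrt N + \sqrt n) + 80 b\eta r$ with $b\ge 2$ implies $\|E\| \le \sigma_s/b \le \sigma_s/2$, so $\widetilde\sigma_s \ge \sigma_s/2$ and the claimed bound \eqref{eq:nullbd} follows. There is no real obstacle here; the only point to be careful about is verifying that $\widetilde{\mathcal D}_I$ is invertible with the correct smallest singular value, and that the indexing convention (based on the non-zero eigenvalues of $\widetilde{\mathcal A}$ ordered with the first $r$ positive and first $r$ negative) matches the indexing of $I$.
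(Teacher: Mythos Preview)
Your proof is correct and follows essentially the same route as the paper. The paper first passes from projections to column matrices via Proposition~\ref{prop:spaces}(i), writing $\vvvert \mathcal P_{\llbracket 2r+1,N+n\rrbracket}\widetilde{\mathcal P}_I\vvvert = \vvvert \mathcal U_{\llbracket 2r+1,N+n\rrbracket}^\T \widetilde{\mathcal U}_I\vvvert$, then multiplies $(\mathcal A+\mathcal E)\widetilde{\mathcal U}_I = \widetilde{\mathcal U}_I \widetilde{\mathcal D}_I$ on the left by $\mathcal U_{\llbracket 2r+1,N+n\rrbracket}^\T$ and inverts $\widetilde{\mathcal D}_I$; you work directly with the projection $\mathcal Q$ and the identity $\mathcal Q\widetilde{\mathcal A}\widetilde{\mathcal P}_I = \mathcal Q\widetilde{\mathcal U}_I\widetilde{\mathcal D}_I\widetilde{\mathcal U}_I^\T$, which is a cosmetic variant of the same computation. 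Both finish with Weyl's inequality on the working event $\|E\|\le 2(\sqrt N+\sqrt n)$ to get $\widetilde\sigma_s\ge \sigma_s/2$.
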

\begin{proof}By Proposition  \ref{prop:spaces} (i) and Theorem \ref{thm:char},
$$ \vvvert {\mathcal P}_{\llbracket 2r+1, N+n\rrbracket}  \widetilde{\mathcal P}_I \vvvert = \vvvert \mathcal U_{\llbracket 2r+1, N+n\rrbracket}^\T  \widetilde{\mathcal U}_I \vvvert.$$
From the spectral decomposition of $\widetilde{\mathcal A}$, we have
\begin{align*}
(\mathcal A + \mathcal E) \widetilde{\mathcal U}_I = \widetilde{\mathcal U}_I \widetilde{\mathcal D}_I.
\end{align*}
Multiplying by $\mathcal U_{\llbracket 2r+1, N+n\rrbracket}^\T$ on the left of the equation above, we further have
\begin{align}\label{eq:061125}
\mathcal U_{\llbracket 2r+1, N+n\rrbracket}^\T \mathcal E \widetilde{\mathcal U}_{I} = \mathcal U_{\llbracket 2r+1, N+n\rrbracket}^T  \widetilde{\mathcal U}_I \widetilde{\mathcal D}_I.
\end{align}
As $\sigma_{r_0} \ge b\|E\| \ge 2 \| E \|$ by supposition, Weyl's inequality 
implies that 
\begin{equation} \label{eq:hatlambdabnd}
	\widetilde{\sigma}_{i} \geq \sigma_{i} - \|E \| \geq \frac{1}{2} \sigma_{i} 
\end{equation}  
for $k\le i \le s$. 
Hence, $\widetilde{\mathcal D}_I$ is invertible since $|\widetilde\lambda_i| =\widetilde \sigma_i >0$ for  $i\in \llbracket k, s\rrbracket$ and $|\widetilde\lambda_i| = \widetilde \sigma_{i-r}>0$ for  $i\in \llbracket r+k, r+s\rrbracket$. It follows from Theorem \ref{thm:char1} that 
\begin{align*}
\vvvert \mathcal U_{\llbracket 2r+1, N+n\rrbracket}^\T  \widetilde{\mathcal U}_I \vvvert &= \vvvert \mathcal U_{\llbracket 2r+1, N+n\rrbracket}^\T \mathcal E \widetilde{\mathcal U}_I \widetilde{\mathcal D}_I^{-1}\vvvert\\
 &\le \vvvert \mathcal U_{\llbracket 2r+1, N+n\rrbracket}^\T \mathcal E \widetilde{\mathcal U}_I\vvvert  \|\widetilde{\mathcal D}_I^{-1}\| \\
&= \frac{\vvvert \mathcal P_{\llbracket 2r+1, N+n\rrbracket} \mathcal E \widetilde{\mathcal P}_I \vvvert}{\widetilde{\sigma}_{s}}.
\end{align*}
The last equation above follows from the fact that for $U$ with orthonormal columns, $U^\T B$ and $UU^\T B$ share the same singular values. 

Thus by another application of \eqref{eq:hatlambdabnd}, we get
\begin{align}\label{eq:conclusionbd1}
\vvvert {\mathcal P}_{\llbracket 2r+1, N+n\rrbracket}  \widetilde{\mathcal P}_I \vvvert \le \frac{\vvvert  \mathcal P_{\llbracket 2r+1, N+n\rrbracket} \mathcal E \widetilde{\mathcal P}_I \vvvert}{\widetilde{\sigma}_{s}} \le 2 \frac{\vvvert  \mathcal P_{\llbracket 2r+1, N+n\rrbracket} \mathcal E \widetilde{\mathcal P}_I \vvvert}{{\sigma}_{s}}.
\end{align}
as desired.  
\end{proof}

It remains to bound $ \vvvert {\mathcal P}_J  \widetilde{\mathcal P}_I \vvvert =  \vvvert \mathcal{U}_J^\T  \widetilde{\mathcal U}_I \vvvert$. We apply \eqref{eq:equivnorms} to obtain
\begin{align}\label{eq:2ndtermub}
\vvvert {\mathcal P}_J  \widetilde{\mathcal P}_I \vvvert &=  \vvvert \mathcal{U}_J^\T  \widetilde{\mathcal U}_I \vvvert \le \|\mathcal{U}_J^\T  \widetilde{\mathcal U}_I\|_* \nonumber\\
&\le \sqrt{\rank(\mathcal{U}_J^\T  \widetilde{\mathcal U}_I)}\cdot \|\mathcal{U}_J^\T  \widetilde{\mathcal U}_I\|_F \nonumber\\
&\le 2\sqrt{\min\{s-k+1,r-s+k-1\}} \|\mathcal{U}_J^\T  \widetilde{\mathcal U}_I\|_F\nonumber \\
&= 2\sqrt{\min\{s-k+1,r-s+k-1\}} \sqrt{\sum_{i\in I}  \| \mathcal U_J^\T \widetilde{\mathbf u}_i \|^2}.
\end{align}
In particular, for the operator norm, when $|J|\neq 0$, we simply have
\begin{align}\label{eq:2ndterm-operator}
\|{\mathcal P}_J  \widetilde{\mathcal P}_I\| = \|\mathcal{U}_J^\T  \widetilde{\mathcal U}_I\|\le \|\mathcal{U}_J^\T  \widetilde{\mathcal U}_I\|_F = \sqrt{\sum_{i\in I}  \| \mathcal U_J^\T \widetilde{\mathbf u}_i \|^2}.
\end{align}

It remains to bound $\| \mathcal U_J^\T \widetilde{\mathbf u}_i \|$ for each $i\in I$. 
We have the following estimates
\begin{lemma}\label{lem:projsv}For every $i\in I$, 
\begin{align}\label{lem:bdeach-new}
 \| \mathcal U_J^\T \widetilde{\mathbf u}_i \| \le 3 \frac{(b+2)^2}{(b-1)^2} \frac{\eta}{\min\{\delta_{k-1}, \delta_s\}}
\end{align}
with probability at least $1-20(N+n)^{-K}$.
\end{lemma}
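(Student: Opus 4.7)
The plan is to derive a $2r$-dimensional linear equation for $\mathbf x_i := \mathcal U^\T \widetilde{\mathbf u}_i$ that decouples into $r$ independent $2\times 2$ blocks, and then invert each block to isolate the coordinates indexed by $J$. Starting from the resolvent identity $\widetilde{\mathbf u}_i = G(\widetilde\lambda_i)\mathcal A\widetilde{\mathbf u}_i$ (well-defined since $|\widetilde\lambda_i| \ge \sigma_{r_0} - \|E\| > \|\mathcal E\|$), I would split $G = \Phi + \Xi$ as in \eqref{eq:Xi}, use $\mathcal A = \mathcal U\mathcal D\mathcal U^\T$, and multiply by $\mathcal U^\T$ on the left to obtain
\[
\bigl(I - \mathcal U^\T\Phi(\widetilde\lambda_i)\mathcal U\mathcal D\bigr)\mathbf x_i \;=\; \mathcal U^\T\Xi(\widetilde\lambda_i)\mathcal U\mathcal D\mathbf x_i.
\]
The block form \eqref{eq:productPhi} of $\mathcal U^\T\Phi\mathcal U$ combined with $\mathcal D = \diag(D_r,-D_r)$ makes the coefficient matrix on the left block-diagonal in the pairing $(j, j+r)$ for $j\in \llbracket 1,r\rrbracket$, with block
\[
M_j := \begin{pmatrix} 1 - \alpha(\widetilde\lambda_i)\sigma_j & \beta(\widetilde\lambda_i)\sigma_j \\ -\beta(\widetilde\lambda_i)\sigma_j & 1 + \alpha(\widetilde\lambda_i)\sigma_j \end{pmatrix} \quad\text{and}\quad \det M_j = 1 - \sigma_j^2/\varphi(\widetilde\lambda_i),
\]
the second identity following from $\alpha^2 - \beta^2 = 1/\varphi$. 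Since $J$ is exactly the union of the pairs $\{j, j+r\}$ for $j\in \llbracket 1,k-1\rrbracket \cup \llbracket s+1,r\rrbracket$, block inversion gives
\[
\|\mathcal U_J^\T\widetilde{\mathbf u}_i\|^2 \;\le\; \max_{j\in \llbracket 1,k-1\rrbracket \cup \llbracket s+1,r\rrbracket}\|M_j^{-1}\|^2 \cdot \|\mathcal U^\T\Xi(\widetilde\lambda_i)\mathcal U\mathcal D\mathbf x_i\|^2.
\]

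For the right-hand factor I would note that $\mathcal A\widetilde{\mathbf u}_i$ lies in the column span of $\mathcal U$, so writing $\mathcal A\widetilde{\mathbf u}_i = \mathcal U\mathbf v_i$ with $\|\mathbf v_i\| = \|\widetilde\lambda_i\widetilde{\mathbf u}_i - \mathcal E\widetilde{\mathbf u}_i\| \le |\widetilde\lambda_i| + \|\mathcal E\| \le 2\widetilde\sigma_i$, Lemma \ref{lem:betterbd} applied at $z = \widetilde\lambda_i$ (whose magnitude lies in $\mathsf D$ under the hypotheses) produces
\[
\|\mathcal U^\T\Xi(\widetilde\lambda_i)\mathcal U\mathbf v_i\| \;\le\; \frac{\eta}{\widetilde\sigma_i^2}\cdot 2\widetilde\sigma_i \;=\; \frac{2\eta}{\widetilde\sigma_i}.
\]

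The hard part will be proving the uniform bound $\max_j\|M_j^{-1}\| \le c(b)\,\widetilde\sigma_i/\min\{\delta_{k-1},\delta_s\}$, because $\sigma_j$ for $j\in \llbracket 1,k-1\rrbracket$ can dwarf $\widetilde\sigma_i\asymp\sigma_{i_0}$. The plan is: Lemma \ref{lemma:resolventnorm} gives $|\alpha(\widetilde\lambda_i)|,\,|\beta(\widetilde\lambda_i)| \lesssim 1/\widetilde\sigma_i$ and $|\varphi(\widetilde\lambda_i)| \le \chi(b)^2\widetilde\sigma_i^2$; and a short symmetry check (the conjugation $G(-z) = -D\,G(z)D$ with $D = \diag(I_N,-I_n)$ forces $\phi_i(-z) = -\phi_i(z)$, hence $\varphi$ is even) lets me apply Theorem \ref{thm:singularlocation} to produce $i_0\in \llbracket k,s\rrbracket$ with $|\varphi(\widetilde\lambda_i) - \sigma_{i_0}^2| \le 20\xi(b)\chi(b)\eta r(\widetilde\sigma_i + \chi(b)\sigma_{i_0})$. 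For $j\in \llbracket 1,k-1\rrbracket \cup \llbracket s+1,r\rrbracket$ the gap condition yields $|\sigma_{i_0}-\sigma_j|\ge \min\{\delta_{k-1},\delta_s\}$, hence
\[
|\sigma_{i_0}^2-\sigma_j^2| \ge \min\{\delta_{k-1},\delta_s\}(\sigma_{i_0}+\sigma_j) \ge \min\{\delta_{k-1},\delta_s\}\max(\sigma_{i_0},\sigma_j),
\]
and the assumptions $\min\{\delta_{k-1},\delta_s\}\ge 75\chi(b)\eta r$ and $\sigma_{r_0}\ge 80b\eta r$ absorb the perturbative error to produce $|\det M_j| \ge c_1(b)\,\min\{\delta_{k-1},\delta_s\}\max(\sigma_{i_0},\sigma_j)/\widetilde\sigma_i^2$. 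The explicit inverse formula $M_j^{-1} = (\det M_j)^{-1}\begin{pmatrix} 1+\alpha\sigma_j & -\beta\sigma_j \\ \beta\sigma_j & 1-\alpha\sigma_j \end{pmatrix}$ combined with $\|M_j^{-1}\| \lesssim (1+\sigma_j/\widetilde\sigma_i)/|\det M_j|$ then yields the key cancellation: the $\max(\sigma_{i_0},\sigma_j)$ in the denominator swallows the potentially large $\sigma_j$ in the numerator, producing the desired $j$-uniform bound. Combining this with the earlier estimate gives $\|\mathcal U_J^\T\widetilde{\mathbf u}_i\| \lesssim \eta/\min\{\delta_{k-1},\delta_s\}$, and tracking the $b$-constants delivers the stated factor $3(b+2)^2/(b-1)^2$. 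The probability $1-20(N+n)^{-K}$ follows from a union bound over the events of Lemma \ref{lemma:norm}, Lemma \ref{lem:betterbd} and Theorem \ref{thm:singularlocation}; the boundary regime $\widetilde\sigma_i > 2n^3$ (outside $\mathsf D$) is treated identically after replacing $\Phi$ by the simpler approximation $\widetilde\lambda_i^{-1}I + \widetilde\lambda_i^{-2}\mathcal E$ of Lemma \ref{lem:simpleapprox} and the $\varphi$-location estimate by Lemma \ref{lem:svloc-large}.
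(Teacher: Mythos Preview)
Your proposal is correct and mirrors the paper's proof in Appendix \ref{app:bdeach}: your $2\times 2$ blocks $M_j$ are precisely those whose singular values Proposition \ref{prop:blockphi} records, your determinant bound via Theorem \ref{thm:singularlocation} together with the cancellation of $\sigma_j$ against $\max(\sigma_{i_0},\sigma_j)$ is exactly the paper's Step 2 case split $t\in\llbracket 1,k-1\rrbracket$ versus $t\in\llbracket s+1,r\rrbracket$, and your deterministic parity $G(-z)=-DG(z)D$ (forcing $\varphi$ even) is a clean variant of the paper's distributional symmetry $G(-z)\sim -G(z)$. One caveat on the large-$|\lambda_i|$ tail: with $\Pi=\widetilde\lambda_i^{-1}I+\widetilde\lambda_i^{-2}\mathcal E$ the coefficient matrix is no longer $(j,j+r)$-block-diagonal because $\mathcal U^\T\mathcal E\mathcal U$ is a full $2r\times 2r$ matrix, so the paper instead moves this term to the right-hand side (bounding it via Lemma \ref{lem:Unoise}) and inverts the trivially diagonal $\widetilde\lambda_i I-\mathcal D_J$ on the left---a small adjustment to your ``treated identically'' sketch.
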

The proof of Lemma \ref{lem:projsv} closely mirrors the strategy employed in Lemma 20 from \cite{OVW22}. For the sake of completeness, we have included the proof of Lemma \ref{lem:projsv} in Appendix \ref{app:bdeach}.
 
It follows from \eqref{eq:2ndtermub} and Lemma \ref{lem:projsv} that
\begin{align*}
&\vvvert {\mathcal P}_J  \widetilde{\mathcal P}_I \vvvert \le 6\sqrt{2} \frac{(b+2)^2}{(b-1)^2}\sqrt{\min\{s-k+1,r-s+k-1\}} \frac{ \eta\sqrt{s-k+1}}{\min\{\delta_{k-1}, \delta_s\}}
\end{align*}
with probability at least $1 - 20(N+n)^{-K}$.

Consequently, we arrive at 
\begin{align}\label{eq:conclude}
&\vvvert \sin \angle (\mathcal U_{I}, \widetilde{\mathcal U}_{I}) \vvvert =\vvvert {\mathcal P}_{I^c}  \widetilde{\mathcal P}_I \vvvert \nonumber\\
& \leq  6\sqrt{2} \frac{(b+2)^2}{(b-1)^2}\sqrt{\min\{s-k+1,r-s+k-1\}} \frac{ \eta\sqrt{s-k+1}}{\min\{\delta_{k-1}, \delta_s\}} \nonumber\\
&\quad + 2 \frac{ \vvvert \mathcal P_{\llbracket 2r+1, N+n\rrbracket} \mathcal E \widetilde{\mathcal P}_I\vvvert}{\sigma_{s} }.
\end{align}
In particular, the following bound holds for the operator norm:
\begin{align}\label{eq:conclude-op}
\| \sin \angle (\mathcal U_{I}, \widetilde{\mathcal U}_{I}) \| \le 3\sqrt{2} \frac{(b+2)^2}{(b-1)^2} \mathbf{1}_{\{|J| \neq 0 \}} \frac{ \eta\sqrt{s-k+1}}{\min\{\delta_{k-1}, \delta_s\}} + 2\frac{\|E\|}{\sigma_{s}}.
\end{align}

Let $\gamma_1,\cdots,\gamma_{2(s-k+1)}$ be the principal angles of  the subspaces $\mathcal U_{I}$ and $\widetilde{\mathcal U}_{I}$. Denote $\alpha_1,\cdots,\alpha_{s-k+1}$ (resp. $\beta_1,\cdots,\beta_{s-k+1}$) the principal angles of $ U_{k,s}, \widetilde{ U}_{k,s}$ (resp. $ V_{k,s}, \widetilde{ V}_{k,s}$). From the proof of \cite[Proposition 8]{OVW22}, we see that the singular values of $\mathcal U_{I}^\T \widetilde{\mathcal U}_{I}$, given by $\cos\gamma_1,\cdots,\cos\gamma_{2(s-k+1)}$, are exactly
$$\cos\alpha_1,\cdots,\cos\alpha_{s-k+1}, \cos\beta_1,\cdots,\cos\beta_{s-k+1}.$$
Hence, $$\vvvert \sin \angle (\mathcal U_{I}, \widetilde{\mathcal U}_{I}) \vvvert = \vvvert \sin \angle ( U_{k,s}, \widetilde{ U}_{k,s}) \oplus \sin \angle ( V_{k,s}, \widetilde{ V}_{k,s}) \vvvert.$$

Note that by the definitions of $\mathcal E, \widetilde{\mathcal P}_I$ and $\mathcal P_{\llbracket 2r+1, N+n\rrbracket}$,
\begin{align*}
\mathcal P_{\llbracket 2r+1, N+n\rrbracket} \mathcal E \widetilde{\mathcal P}_I =\begin{pmatrix}
0 & P_{U^\perp} E P_{\widetilde V_{k,s}}\\
P_{V^\perp} E^\T P_{\widetilde U_{k,s}} & 0
\end{pmatrix}.
\end{align*}
Using the unitary equivalence, we find 
$$\vvvert  \mathcal P_{\llbracket 2r+1, N+n\rrbracket} \mathcal E \widetilde{\mathcal P}_I \vvvert = \vvvert P_{U^\perp} E P_{\widetilde V_{k,s}} \oplus P_{V^\perp} E^\T P_{\widetilde U_{k,s}} \vvvert.$$
Hence, from \eqref{eq:conclude}, we conclude that
\begin{align}\label{eq:conclusionbd2}
&\vvvert \sin \angle ( U_{k,s}, \widetilde{ U}_{k,s}) \oplus \sin \angle ( V_{k,s}, \widetilde{ V}_{k,s}) \vvvert \nonumber\\
&\le 6\sqrt{2} \frac{(b+2)^2}{(b-1)^2}\sqrt{\min\{s-k+1,r-s+k-1\}} \frac{ \eta\sqrt{s-k+1}}{\min\{\delta_{k-1}, \delta_s\}} \nonumber\\
&\quad + 2 \frac{\vvvert P_{U^\perp} E P_{\widetilde V_{k,s}} \oplus P_{V^\perp} E^\T P_{\widetilde U_{k,s}} \vvvert}{{\sigma}_{s}}.
\end{align}
The conclusion of Theorem 2.3 follows immediately from the fact that
$$\max\{ \vvvert \sin \angle ( U_{k,s}, \widetilde{ U}_{k,s})\vvvert, \vvvert\sin \angle ( V_{k,s}, \widetilde{ V}_{k,s}) \vvvert  \} \le \vvvert \sin \angle ( U_{k,s}, \widetilde{ U}_{k,s}) \oplus \sin \angle ( V_{k,s}, \widetilde{ V}_{k,s}) \vvvert$$
by Theorem \ref{thm:fan}. 

Specifically, for the operator norm, from \eqref{eq:conclude-op}, we see that
\begin{align*}
&\max\{ \|\sin \angle ( U_{k,s}, \widetilde{ U}_{k,s})\|, \|\sin \angle ( V_{k,s}, \widetilde{ V}_{k,s})\| \}\\
&\le \|\sin \angle ( U_{k,s}, \widetilde{ U}_{k,s}) \oplus \sin \angle ( V_{k,s}, \widetilde{ V}_{k,s})\|=\|\sin \angle (\mathcal U_{I}, \widetilde{\mathcal U}_{I})\|\\
&\le 3\sqrt{2} \frac{(b+1)^2}{(b-1)^2} \mathbf{1}_{\{s-k+1\neq r \}}\frac{ \eta\sqrt{s-k+1}}{\min\{\delta_{k-1}, \delta_s\}} + 2\frac{\|E\|}{{\sigma}_{s}}.
\end{align*}
This completes the proof. 

\subsection{Proof of Theorem 2.10}\label{sec:proofofthminfinitybd}
 We start with the decomposition \eqref{eq:u}\footnote{ This decomposition may appear structurally similar to the Neumann expansion technique used in Eldridge et al.~\cite{EBW18}, where the perturbed eigenvector is expanded algebraically and then decomposed into components parallel and orthogonal to the true eigenvector. However, our decomposition is functional: we approximate the resolvent operator \( G(z) \) with a diagonal matrix \( \Pi(z) \), and the remainder is rigorously controlled using the isotropic local law. This enables us to identify the leading component of \( \widetilde{\mathbf u}_i \) and bound the remainder probabilistically, rather than through algebraic expansion. }:
$$\widetilde{\mathbf u}_i=\Pi(\widetilde \lambda_i) \mathcal A \widetilde{\mathbf u}_i + \Xi(\widetilde \lambda_i) \mathcal A \widetilde{\mathbf u}_i.$$

In the proof, we set
\begin{align}\label{eq:Pi-new}
\Pi(\widetilde \lambda_i):=\begin{cases}
\Phi(\widetilde \lambda_i), &\text{if } |\lambda_i|\le n^2; \\
\frac{1}{\widetilde \lambda_i} I_{N+n}, &\text{if } |\lambda_i| > n^2,
\end{cases}
\end{align} 
and recall that
\begin{align*}
\Xi(\widetilde \lambda_i)=G(\widetilde \lambda_i)-\Pi(\widetilde \lambda_i).
\end{align*}
Let $\mathcal Q=I-\mathcal P_r$ be the orthogonal projection matrix onto the null space of $\mathcal A$. It is elementary to verify that $\mathcal P_r \Pi(\widetilde \lambda_i) \mathcal A = \Pi(\widetilde \lambda_i) \mathcal A$ using the definitions of $\mathcal U$ and $\mathcal P_r = \mathcal U \mathcal U^\T$.  When  $\Pi(\widetilde \lambda_i )$ is a scalar matrix, the result follows immediately since $\mathcal P_r\mathcal A= \mathcal A.$ For the case where $\Pi(\widetilde \lambda_i) = \Phi(\widetilde \lambda_i)$, from \cite[Eq. (29)]{Wang24}, we have
\[\mathcal P_r \Pi(\widetilde \lambda_i) \mathcal A = \mathcal U \cdot \mathcal U^\T \Phi(\widetilde \lambda_i)\mathcal U \cdot \mathcal D \mathcal U^\T=\mathcal U \begin{pmatrix} \alpha I_r & \beta I_r \\ \beta I_r & \alpha I_r\end{pmatrix} \cdot \mathcal D \mathcal U^\T\]
and from \cite[Eq. (25)]{Wang24}, \[ \Pi(\widetilde \lambda_i) \mathcal A = \begin{pmatrix}
\frac{1}{\phi_1}I_N & 0 \\
0 &\frac{1}{\phi_2}I_n
\end{pmatrix} \mathcal U \cdot \mathcal D \mathcal U^\T.\]
Using the definitions of $\alpha,\beta$ in \cite[Eq. (29)]{Wang24} and the structure of $\mathcal U$, one can verify that \[ \mathcal U \begin{pmatrix} \alpha I_r & \beta I_r \\ \beta I_r & \alpha I_r\end{pmatrix} = \frac{1}{\sqrt 2} \begin{pmatrix} (\alpha+\beta)U & (\alpha+\beta)U\\ (\alpha-\beta)V  & -(\alpha-\beta)V\end{pmatrix} = \begin{pmatrix}
\frac{1}{\phi_1}I_N & 0 \\
0 &\frac{1}{\phi_2}I_n
\end{pmatrix} \mathcal U, \]
which establishes $\mathcal P_r \Pi(\widetilde \lambda_i) \mathcal A = \Pi(\widetilde \lambda_i) \mathcal A$. 

Hence, continuing from \eqref{eq:u}, we can derive the following expression
\begin{align*}
\mathcal Q \widetilde{\mathbf u}_i = \mathcal Q \Xi(\widetilde \lambda_i) \mathcal A \widetilde{\mathbf u}_i.
\end{align*}
Furthermore, we obtain the decomposition
\begin{align*}
\widetilde{\mathbf u}_i &= \mathcal P_I \widetilde{\mathbf u}_i + \mathcal P_J \widetilde{\mathbf u}_i + \mathcal Q \widetilde{\mathbf u}_i\\
&=\mathcal P_I \widetilde{\mathbf u}_i + \mathcal P_J \widetilde{\mathbf u}_i +  \mathcal Q \Xi(\widetilde \lambda_i) \mathcal A \widetilde{\mathbf u}_i.
\end{align*}
It follows that
\begin{align}\label{eq:decompU}
\widetilde{\mathcal U}_I - \mathcal P_I \widetilde{\mathcal U}_I = \mathcal P_J \widetilde{\mathcal U}_I + (\mathcal Q \Xi(\widetilde \lambda_i) \mathcal A \widetilde{\mathbf u}_i)_{i\in I}.
\end{align}

We aim to bound $$\|\widetilde U_{k,s}-P_{U_{k,s}}\widetilde U_{k,s}\|_{2,\infty} =\max_{1\le l \le N} \| \mathsf{e}_l^\T (U_{k,s}-P_{U_{k,s}}\widetilde U_{k,s})\|.$$ where $\mathsf{e}_l$'s are the canonical vectors in $\mathbb R^N$. From the definition of 
$$\mathcal U=\frac{1}{\sqrt 2} \begin{pmatrix}
U & U\\
V & -V
\end{pmatrix}
$$ from Section 6.1, it is elementary to check that 
$$\widetilde{\mathcal U}_I - P_I \widetilde{\mathcal U}_I  = \frac{1}{\sqrt 2} \begin{pmatrix}
\widetilde U_{k,s}-P_{U_{k,s}}\widetilde U_{k,s} &\widetilde U_{k,s}-P_{U_{k,s}}\widetilde U_{k,s}\\
\widetilde V_{k,s}-P_{V_{k,s}}\widetilde V_{k,s} & -(\widetilde V_{k,s}-P_{V_{k,s}}\widetilde V_{k,s})
\end{pmatrix}.$$ Hence,
\begin{align*}
\|\widetilde U_{k,s}-P_{U_{k,s}}\widetilde U_{k,s}\|_{2,\infty}= \max_{1\le l \le N}\left\|e_l^\T \left(\widetilde{\mathcal U}_I - \mathcal P_I \widetilde{\mathcal U}_I \right) \right\|, 
\end{align*}
where $e_l$'s are the canonical vectors in $\mathbb R^{N+n}$. Continuing from \eqref{eq:decompU}, we see
\begin{align}\label{eq:infbd}
\|\widetilde U_{k,s}-P_{U_{k,s}}\widetilde U_{k,s}\|_{2,\infty} &= \max_{1\le l \le N} \|e_l^\T \left(\widetilde{\mathcal U}_I - \mathcal P_I \widetilde{\mathcal U}_I \right)\| \nonumber\\
&\le \max_{1\le l \le N} \|e_l^\T \mathcal P_J \widetilde{\mathcal U}_I\|\cdot \mathbf{1}_{\{ |J|\neq 0 \}}+ \max_{1\le l \le N} \|e_l^\T (\mathcal Q \Xi(\widetilde \lambda_i) \mathcal A \widetilde{\mathbf u}_i)_{i\in I}\|.
\end{align}

Provided that $|J|\neq 0$ or equivalently, $s-k+1\neq r $, the first term on the right-hand side of \eqref{eq:infbd} can be bounded by
\begin{align*}
\max_{1\le l \le N} \|e_l^\T \mathcal P_J \widetilde{\mathcal U}_I\| &= \max_{1\le l \le N} \|e_l^\T \mathcal U_J \cdot \mathcal U_J^\T \widetilde{\mathcal U}_I\|\\
&\le \max_{1\le l \le N} \|e_l^\T \mathcal U_J\| \cdot \| \mathcal U_J^\T \widetilde{\mathcal U}_I\|  \\
&\le \| U\|_{2,\infty}  \| \mathcal U_J^\T \widetilde{\mathcal U}_I\|_F = \| U\|_{2,\infty}  \sqrt{\sum_{i\in I} \| \mathcal U_J^\T \widetilde{\mathbf u}_i \|^2}.
\end{align*}
By Lemma \ref{lem:projsv}, we further obtain 
\begin{align}\label{eq:1stbd}
\max_{1\le l \le N} \|e_l^\T \mathcal P_J \widetilde{\mathcal U}_I\| &\le  3\sqrt{2}\frac{(b+1)^2}{(b-1)^2}  \| U\|_{2,\infty} \frac{\eta\sqrt{s-k+1}}{\min\{\delta_{k-1}, \delta_s\}}
\end{align}
with probability at least $1-20(N+n)^{-K}$.

Next, we bound the second term on the right-hand side of \eqref{eq:infbd}: 
\begin{align}\label{eq:2ndbd}
\max_{1\le l \le N} \|e_l^\T (\mathcal Q \Xi(\widetilde \lambda_i) \mathcal A \widetilde{\mathbf u}_i)_{i\in I}\|=\max_{1\le l \le N} \sqrt{\sum_{i\in I}  (e_l^\T \mathcal Q \Xi(\widetilde \lambda_i) \mathcal A \widetilde{\mathbf u}_i)^2}.
\end{align}
For each $i \in I$, 
\begin{align}\label{eq:est1}
\left|e_l^\T \mathcal Q \Xi(\widetilde \lambda_i) \mathcal A \widetilde{\mathbf u}_i \right| &=  \left| e_l^\T (I-\mathcal U \mathcal U^\T) \Xi(\widetilde \lambda_i) \mathcal U \cdot \mathcal D \mathcal U^\T \widetilde{\mathbf u}_i \right|\nonumber\\
&\le  \left\| e_l^\T (I-\mathcal U \mathcal U^\T) \Xi(\widetilde \lambda_i) \mathcal U \right\| \cdot \left\| \mathcal D \mathcal U^\T \widetilde{\mathbf u}_i \right\|.
\end{align} 
Observe from $(\mathcal A + \mathcal E)  \widetilde{\mathbf u}_i =  \widetilde{\lambda}_i \widetilde{\mathbf u}_i$ that $\mathcal U \mathcal D \mathcal U^\T \widetilde{\mathbf u}_i=( \widetilde{\lambda}_i I -\mathcal E)\widetilde{\mathbf u}_i$. Multiplying $\mathcal U^\T$ on both sides, we get the bound 
\begin{align}\label{eq:bdonev01}
\left\|{\mathcal D} {\mathcal U}^\T \widetilde{\mathbf u}_i \right\| \le \|\mathcal E\|+|\widetilde{\lambda}_i| \le \left(1+\frac{1}{b-1} \right)|\widetilde{\lambda}_i| = \frac{b}{b-1}|\widetilde{\lambda}_i|
\end{align}
using the assumption $\|\mathcal E\|=\|E\| \le \frac{1}{b}|{\lambda}_i|$ and the Weyl's inequality $|\widetilde{\lambda}_i|\ge |{\lambda}_i| - \|E\| \ge (b-1)\|E\|$. 

To estimate 
\begin{align}\label{eq:con01}
\left\| e_l^\T (I-\mathcal U \mathcal U^\T) \Xi(\widetilde \lambda_i) \mathcal U \right\| &\le \left\|e_l^\T  \Xi(\widetilde \lambda_i) \mathcal U \right\| + \left\| e_l^\T \mathcal U \mathcal U^\T \Xi(\widetilde \lambda_i) \mathcal U \right\|\nonumber\\
& \le \left\|e_l^\T  \Xi(\widetilde \lambda_i) \mathcal U \right\| + \left\| U \|_{2,\infty} \|\mathcal U^\T \Xi(\widetilde \lambda_i) \mathcal U \right\|,
\end{align}
we split the index set $I$ into two disjoint sets: 
$$\mathcal I_s := \left\{ i \in I: | \lambda_i| \le n^2 \right\} \quad\text{and}\quad \mathcal I_b := \left\{ i \in I: | \lambda_i| > n^2 \right\}.$$
Note that $\mathcal I_s$ or  $\mathcal I_b$ could be the empty set.

\medskip

\noindent\emph{Case (1):} $i \in I \cap \mathcal I_s$. In this case,  
 $$\Xi(\widetilde \lambda_i)=G(\widetilde \lambda_i)-\Phi(\widetilde \lambda_i).$$  Note that if $z\in S_{\sigma_i}$ specified in \cite[Eq. (35)]{Wang24} for any $1\le i \le r_0$, then $|z| \ge 2b(\sqrt N + \sqrt n)$ by the supposition of $\sigma_i$. Recall $$\eta=\frac{11 b^2}{(b-1)^2} \sqrt{(K+7)\log (N+n)+2(\log 9) r}.$$ 

We work on the event $\mathsf E:=\cap_{i\in \llbracket k,s \rrbracket\cap \mathcal I_s} \mathsf E_i$ where 
\begin{align}\label{event}
\mathsf E_i&:=\left\{ \widetilde \sigma_i \in S_{\sigma_{l_i}} \text{ for some } l_i \in \llbracket 1,r_0\rrbracket\right\} \cap \left\{ \left\|  \mathcal U^\T  \Xi(\widetilde \sigma_i) \mathcal U \right\| \leq \frac{\eta}{\widetilde \sigma_i^2}\right\} \nonumber\\
&\cap \left\{\left| e_l^\T  \Xi(\widetilde \sigma_i) {\mathbf u}_s  \right| \leq  \frac{5b^2}{(b-1)^2} \frac{\sqrt{(K+7)\log (N+n)}}{\widetilde \sigma_i^2}\text{ for } 1\le l \le N+n, 1\le s \le r \right\}.
\end{align}
By Theorem 6.4, Lemma 6.3 and Lemma 6.2, the event $\mathsf E$ holds with probability at least $1-20(N+n)^{-K}$. For $i\in \llbracket k,s \rrbracket\cap \mathcal I_s$, $\widetilde \lambda_i=\widetilde \sigma_i $. It follows immediately that
\begin{align}\label{eq:bd0208}
&\left\|e_l^\T  \Xi(\widetilde \lambda_i) \mathcal U \right\| + \| U \|_{2,\infty} \left\|\mathcal U^\T \Xi(\widetilde \lambda_i) \mathcal U \right\| \nonumber\\
&= \sqrt{\sum_{s=1}^{2r} \left(e_l^\T  \Xi(\widetilde \lambda_i) \mathbf u_s \right)^2}+ \| U \|_{2,\infty} \left\|\mathcal U^\T \Xi(\widetilde \lambda_i) \mathcal U \right\| \nonumber\\ 
&< \frac{18b^2}{(b-1)^2} \frac{ \sqrt{r(K+7)\log(N+n)}  }{\widetilde \lambda_i^2}(1+ \|  U \|_{2,\infty}) .
\end{align}
Continuing from \eqref{eq:est1}, \eqref{eq:bdonev01} and \eqref{eq:con01}, we further have for any $i\in I\cap \mathcal I_s$,
\begin{equation}\label{eq:est11}
\left| e_l^\T \mathcal Q \Xi(\widetilde \lambda_i) \mathcal A \widetilde{\mathbf u}_i \right| \le  \frac{2b}{b-1}\frac{\gamma }{|\widetilde \lambda_i|}(1+ \| U \|_{2,\infty})
\end{equation}
where we define $$\gamma:= \frac{9 b^2}{(b-1)^2}\sqrt{r(K+7)\log(N+n)} $$ for the sake of brevity.
For $i\in \llbracket r+k, r+s\rrbracket\cap \mathcal I_s$, $\widetilde \lambda_i=-\widetilde \sigma_{i-r} $. Note that $\Xi(\widetilde \lambda_i) \sim -\Xi(\widetilde \sigma_{i-r})$ since the distribution of $E$ is symmetric. The bound \eqref{eq:est11} still holds. 

\medskip

\noindent\emph{Case (2):} $i \in I \cap \mathcal I_b$. In this case,  $$\Xi(\widetilde \lambda_i)=G(\widetilde \lambda_i)-\frac{1}{\widetilde \lambda_i} I_{N+n}.$$
By Weyl's inequality, $|\widetilde \lambda_i| \ge n^2 - \|\mathcal E\| \ge 4(\sqrt N + \sqrt n)$ for every $i \in \mathcal I_b$, we apply Lemma 6.5 to get
\begin{equation*}
\|\Xi(\widetilde{\lambda}_i) \| \le \frac{2\|\mathcal E\|}{\widetilde{\lambda}_i^2}. 
\end{equation*}
As a result,
\begin{align*}
\left\|e_l^\T  \Xi(\widetilde \lambda_i) \mathcal U \right\| + \|  U \|_{2,\infty} \left\|\mathcal U^\T \Xi(\widetilde \lambda_i) \mathcal U \right\| 
&\le (1+ \|  U \|_{2,\infty}) \|\Xi(\widetilde{\lambda}_i) \| \le \frac{2\|\mathcal E\| }{\widetilde{\lambda}_i^2}(1+ \|  U \|_{2,\infty}).
\end{align*}
Continuing from \eqref{eq:est1} and \eqref{eq:con01}, we further have
\begin{align}\label{eq:est21}
\left| e_l^\T \mathcal Q \Xi(\widetilde \lambda_i) \mathcal A \widetilde{\mathbf u}_i \right| \le \frac{2b}{b-1}\frac{\|\mathcal E\| }{|\widetilde{\lambda}_i|}(1+ \|  U \|_{2,\infty}).
\end{align}

Note by Weyl's inequality, for $i\in \llbracket k, s \rrbracket$, $|\widetilde{\lambda}_i| \ge \frac{b-1}{b} \sigma_i$ and for $i\in \llbracket r+k, r+s\rrbracket$, $|\widetilde{\lambda}_i| \ge \frac{b-1}{b} \sigma_{i-r}$. Continuing from \eqref{eq:2ndbd} with \eqref{eq:est11} and \eqref{eq:est21}, we conclude that
\begin{align*}
&\max_{1\le l \le N} \left\|e_l^\T (Q \Xi(\widetilde \lambda_i) \mathcal A \widetilde{\mathbf u}_i)_{i\in I}\right\|\\
&\le 2\sqrt{2}\frac{b^2}{(b-1)^2} (1+ \|  U \|_{2,\infty}) \sqrt{\sum_{i\in \llbracket k, s\rrbracket, \sigma_i\le n^2} \frac{\gamma^2}{\sigma_i^2} + \sum_{i\in \llbracket k, s\rrbracket, \sigma_i> n^2} \frac{\| E\|^2}{\sigma_i^2}}.
\end{align*}
Note that $\|E\|\le 2(\sqrt N + \sqrt n) \le 4\sqrt n$. Inserting the above estimate and \eqref{eq:1stbd} into \eqref{eq:infbd} yields that 
\begin{align*}
\left\|\widetilde U_{k,s}-P_{U_{k,s}}\widetilde U_{k,s} \right\|_{2,\infty}  & \le 3\sqrt{2}\frac{(b+1)^2}{(b-1)^2}  \| U \|_{2,\infty} \frac{\eta\sqrt{s-k+1}}{\min\{\delta_{k-1}, \delta_s\}} \mathbf{1}_{\{s-k+1\neq r\}}\\
& + \frac{2\sqrt{2} b^2}{(b-1)^2} (1+ \| U \|_{2,\infty}) \sqrt{\sum_{i\in \llbracket k, s\rrbracket, \sigma_i\le n^2} \frac{\gamma^2}{\sigma_i^2} + \sum_{i\in \llbracket k, s\rrbracket, \sigma_i> n^2} \frac{16 n}{\sigma_i^2}}.
\end{align*}
This concludes the proof by noting that $\|U\|_{2,\infty} \le 1$. 

\subsection{Proof of Theorem 2.11}\label{sec:prooflinear}
The proof strategy for Theorem 2.11 mirrors that of Theorem 2.10. We provide a brief outline below.

First, we estimate $\left\|x^\T (\widetilde U_{k,s}-P_{U_{k,s}}\widetilde U_{k,s}) \right\|$ for a unit vector $x\in \mathbb R^N$.  Let $\mathbf a=(x^\T, 0)^\T$ be a unit vector in $\mathbb{R}^{N+n}$.    Following the same line of the above proof, we first observe that
$$\left\|x^\T (\widetilde U_{k,s}-P_{U_{k,s}}\widetilde U_{k,s}) \right\| = \left\|\mathbf a^\T (\widetilde{\mathcal U}_I - P_I \widetilde{\mathcal U}_I) \right\|.$$
Using the same proof as that of \eqref{eq:infbd}, one gets
\begin{align}\label{eq:linearbd1}
\left\|\mathbf a^\T (\widetilde{\mathcal U}_I - P_I \widetilde{\mathcal U}_I) \right\| \le \|\mathbf a^\T \mathcal P_J \widetilde{\mathcal U}_I\| \mathbf{1}_{\{|J| \neq 0 \}} +  \|\mathbf a^\T (\mathcal Q \Xi(\widetilde \lambda_i) \mathcal A \widetilde{\mathbf u}_i)_{i\in I}\|.
\end{align}
For the first term on the right-hand side of \eqref{eq:linearbd1}, following the same line as \eqref{eq:1stbd}, we have with probability at least $1-20(N+n)^{-K}$ that
\begin{align*}
\|\mathbf a^\T \mathcal P_J \widetilde{\mathcal U}_I\|\le  3\sqrt{2}\frac{(b+1)^2}{(b-1)^2}  \| \mathbf a^\T U\| \frac{\eta\sqrt{s-k+1}}{\min\{\delta_{k-1}, \delta_s\}} \mathbf{1}_{\{|J| \neq 0 \}}.
\end{align*}
For the second term on the right-hand side of \eqref{eq:linearbd1}, using the similar arguments as \eqref{eq:2ndbd}, we also get with probability at least $1-20(N+n)^{-K}$ that
\begin{align*}
\|\mathbf a^\T (\mathcal Q \Xi(\widetilde \lambda_i) \mathcal A \widetilde{\mathbf u}_i)_{i\in I}\| \le \frac{2\sqrt{2} b^2}{(b-1)^2} (1+ \| \mathbf a^\T \mathcal U \|) \sqrt{\sum_{i\in \llbracket k, s\rrbracket, \sigma_i\le n^2} \frac{\gamma^2}{\sigma_i^2} +  \sum_{i\in \llbracket k, s\rrbracket, \sigma_i> n^2} \frac{16 n}{\sigma_i^2} }.
\end{align*}
Combining the above estimates and noting that $\| \mathbf a^\T \mathcal U \|=\| x^\T  U \|$, we conclude 
\begin{align*}
\left\|x^\T (\widetilde U_{k,s}-P_{U_{k,s}}\widetilde U_{k,s}) \right\| \le & 3\sqrt{2}\frac{(b+1)^2}{(b-1)^2}  \| x^\T  U \| \frac{\eta\sqrt{s-k+1}}{\min\{\delta_{k-1}, \delta_s\}}  \mathbf{1}_{\{s-k+1\neq r\}}\\
 &+ \frac{2\sqrt{2} b^2}{(b-1)^2} (1+\| x^\T  U \|) \sqrt{\sum_{i\in \llbracket k, s\rrbracket, \sigma_i\le n^2} \frac{\gamma^2}{\sigma_i^2} + \sum_{i\in \llbracket k, s\rrbracket, \sigma_i> n^2} \frac{16 n}{\sigma_i^2}}
\end{align*}
with probability at least $1-40(N+n)^{-K}$. 

Next, we turn to the estimation of $\left| x^\T (\widetilde U_{k,s}-P_{U_{k,s}}\widetilde U_{k,s}) y \right|$. Set $\mathbf b=(y^\T, 0)^\T \in \mathbb R^{2(k-s+1)}$. It is elementary to check that 
\begin{align*}
\left| x^\T (\widetilde U_{k,s}-P_{U_{k,s}}\widetilde U_{k,s}) y \right| =\sqrt{2} \left|\mathbf a^\T (\widetilde{\mathcal U}_I - P_I \widetilde{\mathcal U}_I)\mathbf b \right|.
\end{align*}
Using the decomposition in \eqref{eq:decompU}, we get 
\begin{align}\label{eq:bilinearbd1}
\left|\mathbf a^\T (\widetilde{\mathcal U}_I - P_I \widetilde{\mathcal U}_I)\mathbf b \right| \le \left| \mathbf a^\T \mathcal P_J \widetilde{\mathcal U}_I \mathbf b \right| \mathbf{1}_{\{|J| \neq 0 \}} +  \left| \mathbf a^\T (\mathcal Q \Xi(\widetilde \lambda_i) \mathcal A \widetilde{\mathbf u}_i)_{i\in I} \mathbf b \right|.
\end{align}
When $|J| \neq 0$, taking the definitions of $\mathbf a, \mathbf b$ into consideration, we can derive an upper bound for the first term on the right-hand side of equation \eqref{eq:bilinearbd1}:
\begin{align*}
 \left| \mathbf a^\T \mathcal P_J \widetilde{\mathcal U}_I \mathbf b \right| &= \left| \mathbf a^\T \mathcal U_J \cdot \mathcal U_J^\T \widetilde{\mathcal U}_I \mathbf b \right|\le \|\mathbf a^\T \mathcal U_J\| \cdot \| \mathcal U_J^\T \widetilde{\mathcal U}_I \mathbf b\|= \| x^\T U_{J_0}\| \left\| \sum_{i\in I_0} y_i \mathcal U_J^\T \widetilde{\mathbf u}_i \right\|
\end{align*}
where $J_0:=\llbracket 1, k-1\rrbracket \cup \llbracket s+1, r\rrbracket$ and $I_0:=\llbracket k, s\rrbracket$. By Lemma \ref{lem:projsv}, we further obtain for each $i\in I_0$,
\begin{align*}
\| \mathcal U_J^\T \widetilde{\mathbf u}_i \| \le 3 \frac{(b+1)^2}{(b-1)^2}  \frac{\eta}{\min\{\delta_{k-1}, \delta_s\}}
\end{align*}
with probability at least $1-20(N+n)^{-K}$. By Cauchy-Schwarz inequality, we further obtain
\begin{align}\label{eq:bilinear1}
 \left| \mathbf a^\T \mathcal P_J \widetilde{\mathcal U}_I \mathbf b \right| \le 3 \frac{(b+1)^2}{(b-1)^2}  \| x^\T U\| \frac{\eta \sqrt{\|y\|_0}}{\min\{\delta_{k-1}, \delta_s\}}
\end{align}
with probability at least $1-20(N+n)^{-K}$.

For the second term on the right-hand side of equation \eqref{eq:bilinearbd1},  we start with
\begin{align}\label{eq:bili01}
 \left| \mathbf a^\T (\mathcal Q \Xi(\widetilde \lambda_i) \mathcal A \widetilde{\mathbf u}_i)_{i\in I} \mathbf b \right| = \left| \sum_{i\in I_0} y_i \mathbf a^\T \mathcal Q \Xi(\widetilde \lambda_i) \mathcal A \widetilde{\mathbf u}_i \right|.
\end{align}
For each $i\in I_0$, similar to \eqref{eq:est1}, we have
\begin{align*}
\left|\mathbf a^\T \mathcal Q \Xi(\widetilde \lambda_i) \mathcal A \widetilde{\mathbf u}_i \right| &=  \left| \mathbf a^\T (I-\mathcal U \mathcal U^\T) \Xi(\widetilde \lambda_i) \mathcal U \cdot \mathcal D \mathcal U^\T \widetilde{\mathbf u}_i \right|\nonumber\\
&\le  \left\| \mathbf a^\T (I-\mathcal U \mathcal U^\T) \Xi(\widetilde \lambda_i) \mathcal U \right\| \cdot \left\| \mathcal D \mathcal U^\T \widetilde{\mathbf u}_i \right\|.
\end{align*}
Note that, by \eqref{eq:bdonev01}, 
$$\left\| \mathcal D \mathcal U^\T \widetilde{\mathbf u}_i \right\| \le \frac{b}{b-1} |\widetilde{\lambda}_i|.$$
With the same discussion after \eqref{eq:con01}, for $i\in I_0 \cap \mathcal I_s$ where $\mathcal I_s := \left\{ i \in I: | \lambda_i| \le n^2 \right\}$, the estimation of 
\begingroup
\allowdisplaybreaks
\begin{align*}
\left\| \mathbf a^\T (I-\mathcal U \mathcal U^\T) \Xi(\widetilde \lambda_i) \mathcal U \right\| &\le \left\|\mathbf a^\T  \Xi(\widetilde \lambda_i) \mathcal U \right\| + \left\| \mathbf a^\T \mathcal U \mathcal U^\T \Xi(\widetilde \lambda_i) \mathcal U \right\|\nonumber\\
& \le \left\|\mathbf a^\T  \Xi(\widetilde \lambda_i) \mathcal U \right\| + \|x^\T U \| \|\mathcal U^\T \Xi(\widetilde \lambda_i) \mathcal U \|\\
&= \sqrt{\sum_{s=1}^{2r} \left(\mathbf a^\T  \Xi(\widetilde \lambda_i) \mathbf u_s \right)^2}+ \| x^\T U \| \|\mathcal U^\T \Xi(\widetilde \lambda_i) \mathcal U \|\\
&< \frac{18b^2}{(b-1)^2} \frac{ \sqrt{r(K+7)\log(N+n)}  }{\widetilde \lambda_i^2}(1+ \| x^\T U \|)\\
&=\frac{2\gamma}{\widetilde \lambda_i^2}(1+ \| x^\T U \|)
\end{align*}
\endgroup
follows the same line as those of \eqref{eq:con01} and \eqref{eq:bd0208} with $e_l$ replaced by the unit vector $\mathbf a$. 
In particular, combined with $|\widetilde{\lambda}_i| \ge \frac{b-1}{b} \sigma_i$ for each $i\in I_0$ from the Weyl's inequality, we obtain
\begin{align*}
\left|\mathbf a^\T \mathcal Q \Xi(\widetilde \lambda_i) \mathcal A \widetilde{\mathbf u}_i \right| &\le  \left\| \mathbf a^\T (I-\mathcal U \mathcal U^\T) \Xi(\widetilde \lambda_i) \mathcal U \right\| \cdot \left\| \mathcal D \mathcal U^\T \widetilde{\mathbf u}_i \right\|\\
& \le \frac{2b}{b-1} \frac{\gamma }{|\widetilde \lambda_i|}(1+ \|x^\T U \|)\le \frac{2b^2}{(b-1)^2} \frac{\gamma }{\sigma_i}(1+ \|x^\T U \|)
\end{align*}
holds with probability at least $1-20(N+n)^{-K}$.  Next, for $i\in I_0 \cap \mathcal I_b$ where $\mathcal I_b := \left\{ i \in I: | \lambda_i| > n^2 \right\}$, similar to \eqref{eq:est21}, we obtain
\[ \left|\mathbf a^\T \mathcal Q \Xi(\widetilde \lambda_i) \mathcal A \widetilde{\mathbf u}_i \right| \le \frac{2b^2}{(b-1)^2} \frac{\|E\|}{\sigma_i} (1+\|x^\T U\|).\] 

Continuing from \eqref{eq:bili01}, we have
\begin{align}\label{eq:bilinear2} 
 \left| \mathbf a^\T (\mathcal Q \Xi(\widetilde \lambda_i) \mathcal A \widetilde{\mathbf u}_i)_{i\in I} \mathbf b \right| \le \frac{2b^2}{(b-1)^2}  (1+ \|x^\T U \|)  \left[  \sum_{i\in \llbracket k, s\rrbracket, \sigma_i\le n^2} \frac{{\gamma } |y_i|}{\sigma_i} + \sum_{i\in \llbracket k, s\rrbracket, \sigma_i> n^2} \frac{\|E\|}{\sigma_i} \right].
\end{align}
Finally, inserting \eqref{eq:bilinear1} and \eqref{eq:bilinear2} back into \eqref{eq:bilinearbd1}, we obtain that
\begin{align*}
\left| x^\T (\widetilde U_{k,s}-P_{U_{k,s}}\widetilde U_{k,s}) y \right| &=\sqrt{2} \left|\mathbf a^\T (\widetilde{\mathcal U}_I - P_I \widetilde{\mathcal U}_I)\mathbf b \right|\\
&\le 3\sqrt{2} \frac{(b+1)^2}{(b-1)^2}  \| x^\T U\| \frac{\eta \sqrt{\|y\|_0}}{\min\{\delta_{k-1}, \delta_s\}}\mathbf{1}_{\{s-k+1\neq r\}}\\
&\  \ + \frac{2\sqrt{2}b^2}{(b-1)^2}(1+ \|x^\T U \|) \left[  \sum_{i\in \llbracket k, s\rrbracket, \sigma_i\le n^2} \frac{{\gamma } |y_i|}{\sigma_i} +  \sum_{i\in \llbracket k, s\rrbracket, \sigma_i> n^2} \frac{\|E\|}{\sigma_i} \right]
\end{align*}
holds with probability at least $1-40(N+n)^{-K}$. The proof is completing by noting that $ \| x^\T U\| \le \|x\| \cdot \|U\| \le 1$.

\subsection{Proof of Theorem 2.13}\label{sec:proofweighted}The proof of Theorem 2.13 follows largely the proof of Theorem  2.10. We sketch the proof and focus on the difference. 
We start with the decomposition \eqref{eq:u}:
$$\widetilde{\mathbf u}_i=\Pi(\widetilde \lambda_i) \mathcal A \widetilde{\mathbf u}_i + \Xi(\widetilde \lambda_i) \mathcal A \widetilde{\mathbf u}_i$$
and set
\begin{align}\label{eq:Pi-new01}
\Pi(\widetilde \lambda_i):=\begin{cases}
\Phi(\widetilde \lambda_i), &\text{if } |\lambda_i|\le n^2; \\
\frac{1}{\widetilde \lambda_i} I_{N+n} + \frac{\mathcal E}{\widetilde \lambda_i^2}, &\text{if } |\lambda_i| > n^2.
\end{cases}
\end{align} 
The definition of $\Pi(\widetilde \lambda_i)$ differs from the one given in \eqref{eq:Pi-new} when $|\lambda_i| > n^2$. We adopt this definition to achieve more precise control over the error term by considering the weights $|\widetilde \lambda_i|$.

Let $\mathcal Q=I-\mathcal P_r$ be the orthogonal projection matrix onto the null space of $\mathcal A$. Using the same derivation as in the beginning of the proof of Theorem  2.10, we obtain the decomposition
\begin{align*}
\widetilde \lambda_i \widetilde{\mathbf u}_i =\mathcal P_I \widetilde \lambda_i \widetilde{\mathbf u}_i + \mathcal P_J \widetilde \lambda_i \widetilde{\mathbf u}_i +  \mathcal Q \Xi(\widetilde \lambda_i) \mathcal A \widetilde \lambda_i \widetilde{\mathbf u}_i
\end{align*}
and hence
\begin{align}\label{eq:decompU-new}
\widetilde{\mathcal U}_I \widetilde{\mathcal D}_I- \mathcal P_I \widetilde{\mathcal U}_I \widetilde{\mathcal D}_I= \mathcal P_J \widetilde{\mathcal U}_I \widetilde{\mathcal D}_I + (\mathcal Q \Xi(\widetilde \lambda_i) \mathcal A \widetilde \lambda_i \widetilde{\mathbf u}_i)_{i\in I}.
\end{align}
It can be verified using the definitions of $\widetilde{\mathcal U}_I, \widetilde{\mathcal D}_I$ and $\mathcal P_I = \widetilde{\mathcal U}_I \widetilde{\mathcal U}_I^\T$ that
\begin{align*}
\widetilde{\mathcal U}_I \widetilde{\mathcal D}_I- \mathcal P_I \widetilde{\mathcal U}_I \widetilde{\mathcal D}_I=\frac{1}{\sqrt 2}
\begin{pmatrix}
\widetilde U_{k,s} \widetilde D_{k,s} - P_{U_{k,s}} \widetilde U_{k,s} \widetilde D_{k,s} & -(\widetilde U_{k,s} \widetilde D_{k,s} - P_{U_{k,s}} \widetilde U_{k,s} \widetilde D_{k,s})\\
\widetilde V_{k,s} \widetilde D_{k,s} - P_{V_{k,s}} \widetilde V_{k,s} \widetilde D_{k,s} & \widetilde V_{k,s} \widetilde D_{k,s} - P_{V_{k,s}} \widetilde V_{k,s} \widetilde D_{k,s}.
\end{pmatrix}
\end{align*}
Therefore, combining \eqref{eq:decompU-new}, we observe that
\begin{align}\label{eq:wtbd}
&\|\widetilde U_{k,s} \widetilde D_{k,s} - P_{U_{k,s}} \widetilde U_{k,s} \widetilde D_{k,s}\|_{2,\infty} \nonumber\\&= \max_{1\le i \le N} \| e_l^\T (\widetilde{\mathcal U}_I \widetilde{\mathcal D}_I- \mathcal P_I \widetilde{\mathcal U}_I \widetilde{\mathcal D}_I)\| \nonumber\\
&= \max_{1\le i \le N} \| e_l^\T (\mathcal P_J \widetilde{\mathcal U}_I \widetilde{\mathcal D}_I + (\mathcal Q \Xi(\widetilde \lambda_i) \mathcal A \widetilde \lambda_i \widetilde{\mathbf u}_i)_{i\in I})\| \nonumber\\
&\le \max_{1\le i \le N} \| e_l^\T \mathcal P_J \widetilde{\mathcal U}_I \widetilde{\mathcal D}_I \| \mathbf{1}_{|J| \neq 0}+ \max_{1\le i \le N} \| e_l^\T (\mathcal Q \Xi(\widetilde \lambda_i) \mathcal A \widetilde \lambda_i \widetilde{\mathbf u}_i)_{i\in I})\|.
\end{align}
The first term on the right-hand side of \eqref{eq:wtbd} can be bounded similarly as that of \eqref{eq:1stbd} using Lemma \ref{lem:projsv}: if $|J| \neq 0$, then
\begin{align*}
\max_{1\le l \le N} \|e_l^\T \mathcal P_J \widetilde{\mathcal U}_I  \widetilde{\mathcal D}_I\| &= \max_{1\le l \le N} \|e_l^\T \mathcal U_J \cdot \mathcal U_J^\T \widetilde{\mathcal U}_I  \widetilde{\mathcal D}_I\|\\
&\le \max_{1\le l \le N} \|e_l^\T \mathcal U_J\| \cdot \| \mathcal U_J^\T \widetilde{\mathcal U}_I  \widetilde{\mathcal D}_I\|  \\
&\le \| U\|_{2,\infty}  \| \mathcal U_J^\T \widetilde{\mathcal U}_I  \widetilde{\mathcal D}_I\|_F = \| U\|_{2,\infty}  \sqrt{\sum_{i\in I}\widetilde \lambda_i^2 \| \mathcal U_J^\T \widetilde{\mathbf u}_i \|^2}\\
&\le  3\sqrt{2}\frac{(b+1)^2}{(b-1)^2}  \| U\|_{2,\infty} \frac{\eta \sigma_k \sqrt{s-k+1}}{\min\{\delta_{k-1}, \delta_s\}}
\end{align*}
with probability at least $1-20(N+n)^{-K}$.

The bound of the second term on the right-hand side of \eqref{eq:wtbd} proceeds in a similar manner to that of \eqref{eq:2ndbd}:
\begin{align}\label{eq:wtbd00}
\max_{1\le i \le N} \| e_l^\T (\mathcal Q \Xi(\widetilde \lambda_i) \mathcal A \widetilde \lambda_i \widetilde{\mathbf u}_i)_{i\in I})\|=\max_{1\le l \le N} \sqrt{\sum_{i\in I} \widetilde \lambda_i^2  (e_l^\T \mathcal Q \Xi(\widetilde \lambda_i) \mathcal A \widetilde{\mathbf u}_i)^2}.
\end{align}
For each $i\in I$, we first establish the following bound using \eqref{eq:est1}, \eqref{eq:bdonev01} and \eqref{eq:con01}:
\begin{align}\label{eq:wtbd01}
|\widetilde \lambda_i | \cdot \left| e_l^\T \mathcal Q \Xi(\widetilde \lambda_i) \mathcal A \widetilde{\mathbf u}_i \right| \le \frac{b}{b-1} \widetilde \lambda_i^2 \left( \|e_l^\T  \Xi(\widetilde \lambda_i) \mathcal U \| + \| U \|_{2,\infty} \|\mathcal U^\T \Xi(\widetilde \lambda_i) \mathcal U \|\right).
\end{align}
We then differentiate the cases by splitting the discussion according to whether $i\in I \cap \mathcal I_s$ or $i\in I \cap \mathcal I_b$, where 
$$\mathcal I_s := \left\{ i \in I: | \lambda_i| \le n^2 \right\} \quad\text{and}\quad \mathcal I_b := \left\{ i \in I: | \lambda_i| > n^2 \right\}.$$
If $i\in I \cap \mathcal I_s$, then by \eqref{eq:est11}, we immediately obtain
\begin{align}\label{eq:wtbd02}
|\widetilde \lambda_i | \cdot \left| e_l^\T \mathcal Q \Xi(\widetilde \lambda_i) \mathcal A \widetilde{\mathbf u}_i \right| \le \frac{2b}{b-1}\gamma (1+ \|U\|_{2,\infty})
\end{align}
with probability at least $1-20(N+n)^{-K}$.

If $i\in I \cap \mathcal I_b$, then the only difference from the proof of \eqref{eq:est21} is that $$\Xi(\widetilde \lambda_i)=G(\widetilde \lambda_i)-\frac{1}{\widetilde \lambda_i} I_{N+n}-\frac{\mathcal E}{\widetilde \lambda_i^2}$$ and by Lemma 6.5, we have 
$$\|\Xi(\widetilde \lambda_i)\| \le \frac{2\|\mathcal E\|^2 }{|\widetilde \lambda_i|^3}. $$
It follows that
\begin{align*}
\left\|e_l^\T  \Xi(\widetilde \lambda_i) \mathcal U \right\| + \|  U \|_{2,\infty} \left\|\mathcal U^\T \Xi(\widetilde \lambda_i) \mathcal U \right\| 
&\le (1+ \|  U \|_{2,\infty}) \|\Xi(\widetilde{\lambda}_i) \| \le \frac{2\|\mathcal E\|^2 }{|\widetilde{\lambda}_i|^3}(1+ \|  U \|_{2,\infty}).
\end{align*}
Continuing from \eqref{eq:wtbd01}, we further get
\begin{align}\label{eq:wtbd03}
|\widetilde \lambda_i | \cdot \left| e_l^\T \mathcal Q \Xi(\widetilde \lambda_i) \mathcal A \widetilde{\mathbf u}_i \right| \le \frac{2b}{b-1} \frac{\|E\|^2}{|\widetilde{\lambda}_i|}(1+ \|  U \|_{2,\infty}).
\end{align}

Note by Weyl's inequality, for $i\in \llbracket k, s \rrbracket$, $|\widetilde{\lambda}_i| \ge \frac{b-1}{b} \sigma_i$ and for $i\in \llbracket r+k, r+s\rrbracket$, $|\widetilde{\lambda}_i| \ge \frac{b-1}{b} \sigma_{i-r}$. Inserting \eqref{eq:wtbd02} and \eqref{eq:wtbd03} back into \eqref{eq:wtbd00}, we obtain that with probability at least $1-20(N+n)^{-K}$,
\begin{align*}
&\max_{1\le i \le N} \| e_l^\T (\mathcal Q \Xi(\widetilde \lambda_i) \mathcal A \widetilde \lambda_i \widetilde{\mathbf u}_i)_{i\in I})\| \\
&\le \frac{2\sqrt 2 b^2}{(b-1)^2} (1+ \|  U \|_{2,\infty}) \sqrt{\gamma^2(s-k+1) + \sum_{i\in \llbracket k, s\rrbracket,\sigma_i>n^2} \frac{\|E\|^2}{\sigma_i}}\\
&\le  \frac{4\sqrt 2 b^2}{(b-1)^2}  \sqrt{\gamma^2(s-k+1) + 16},
\end{align*}
where we used $ \|  U \|_{2,\infty}\le 1$ and the crude estimate
$$\sum_{i\in \llbracket k, s\rrbracket,\sigma_i>n^2} \frac{\|E\|^2}{\sigma_i} \le n \frac{16n}{n^2} =16.$$
This concludes the proof.

\subsection{Proof of Lemma \ref{lem:lowerbdFO}}\label{sec:prooflower}
Throughout the proof, we work on the event $\|E\|\le 2(\sqrt{N}+\sqrt{n})$ which holds with probability at least $1-C\exp(-c(\sqrt N + \sqrt n)^2)$ (see \cite[Proposition 2.4]{Vbook}). Under this event, we have $\sigma_k \ge 2\|E\|$. We adopt the linearization notation introduced in Section 6.1.

For simplicity, denote
$I:=\llbracket 1, k\rrbracket \cup \llbracket r+1, r+k\rrbracket$
and
$J:=\llbracket 1, 2r \rrbracket \setminus I =  \llbracket k+1, r\rrbracket  \cup  \llbracket k+r+1, 2r\rrbracket.$ Also, let $I_0:=\llbracket 1, N+n \rrbracket \setminus (I\cup J) = \llbracket 2r+1, N+n\rrbracket$.

We begin with these key identities:
\begin{align}
&\| \sin \angle (\mathcal U_{I}, \widetilde{\mathcal U}_{I}) \|_F^2 = \|\sin \angle ( U_{k}, \widetilde{ U}_{k})\|_F^2 + \|\sin \angle ( V_{k}, \widetilde{ V}_{k})\|_F^2,\label{eq:F-lowbd}\\
&\| \sin \angle (\mathcal U_{I}, \widetilde{\mathcal U}_{I}) \| = \max\{\|\sin \angle ( U_{k}, \widetilde{ U}_{k})\|, \|\sin \angle ( V_{k}, \widetilde{ V}_{k})\| \}.\label{eq:O-lowbd}
\end{align}
These follow from the observation that the principal angles between the subspaces $\mathcal U_{I}$ and $\widetilde{\mathcal U}_{I}$ are composed of the principal angles between $U_{k}$ and $\widetilde{ U}_{k}$ and those  between $V_{k}$ and $\widetilde{V}_{k}$ (see Proposition 8 from \cite{OVW22} for a detailed proof). Therefore, it suffices to work with the subspaces $\mathcal U_{I}$ and $\widetilde{\mathcal U}_{I}$. 

Let $\vvvert \cdot \vvvert$ denote either the Frobenius or operator norm. By applying \eqref{eq:sineq}, we have
\begin{align*}
\vvvert \sin \angle (\mathcal U_{I}, \widetilde{\mathcal U}_{I}) \vvvert =\vvvert {\mathcal P}_{I^c}  \widetilde{\mathcal P}_I \vvvert =\vvvert \mathcal P_{J} \widetilde{\mathcal P}_{I} + \mathcal P_{I_0} \widetilde{\mathcal P}_{I} \vvvert \ge \max\{\vvvert \mathcal P_{J} \widetilde{\mathcal P}_{I} \vvvert, \vvvert \mathcal P_{I_0} \widetilde{\mathcal P}_{I} \vvvert\},
\end{align*}
where the last inequality follows from the orthogonality $\mathcal P_{J} \mathcal P_{I_0}=0$. To establish a lower bound on $\vvvert \sin \angle (\mathcal U_{I}, \widetilde{\mathcal U}_{I}) \vvvert $, we focus on bounding $\vvvert \mathcal P_{I_0} \widetilde{\mathcal P}_{I} \vvvert$ from below.

Let $\mathcal P_{I_0} = \mathcal U_{I_0} \mathcal U_{I_0}^\T$. From \eqref{eq:061125}, we obtain $\mathcal U_{I_0}^\T \mathcal E \widetilde{\mathcal U}_{I} \widetilde{\mathcal D}_I^{-1}= \mathcal U_{I_0}^T  \widetilde{\mathcal U}_I,$ which is equivalent to $\mathcal P_{I_0} \widetilde{\mathcal U}_{I} = \mathcal P_{I_0} \mathcal E \widetilde{\mathcal U}_{I} \widetilde{\mathcal D}_I^{-1}$.  By the triangle inequality and \eqref{eq:equivnorms}, we derive
\begin{align}\label{eq:begineq}
\vvvert \mathcal P_{I_0} \widetilde{\mathcal P}_{I} \vvvert = \vvvert \mathcal P_{I_0} \widetilde{\mathcal U}_{I} \vvvert = \vvvert \mathcal P_{I_0} \mathcal E \widetilde{\mathcal U}_{I} \widetilde{\mathcal D}_I^{-1} \vvvert \ge \vvvert \mathcal P_{I_0} \mathcal E {\mathcal U}_{I}\mathcal O \widetilde{\mathcal D}_I^{-1} \vvvert - \vvvert \mathcal P_{I_0} \mathcal E ({\mathcal U}_{I}\mathcal O- \widetilde{\mathcal U}_{I}) \widetilde{\mathcal D}_I^{-1} \vvvert.
\end{align}
Here, $\mathcal O$ is an orthogonal matrix defined as follows. Let $$O_1 := \arg\min_{O\in\mathcal O^{k\times k}}\|U_k O-\widetilde{U}_k\|_F \quad\text{and}\quad O_2 := \arg\min_{O\in\mathcal O^{k\times k}}\|V_k O-\widetilde{V}_k\|_F.$$ Set $$\mathcal O:=\frac{1}{2}\begin{pmatrix}
O_1 + O_2& O_1-O_2\\
O_1 - O_2 & O_1 + O_2
\end{pmatrix}.
 $$
Since $O_1, O_2$ are orthogonal matrices, it is straightforward to verify that $\mathcal O$ is orthogonal.
 
Using the block structure of $\mathcal P_{I_0}$, we can write $\mathcal P_{I_0} = \begin{pmatrix}
Q_1 & 0\\
0&Q_2
\end{pmatrix},
$
where $Q_1$ and $Q_2$ are the orthogonal projections onto $U^{\perp}$ and $V^{\perp} $, respectively. Using the definition of $\mathcal U_I$, we compute
\begin{align*}
\mathcal U_I \mathcal O = \frac{1}{\sqrt 2}\begin{pmatrix}
U_k O_1 & U_k O_1\\
V_k O_2 & -V_k O_2
\end{pmatrix}.
\end{align*}
Consequently, with the definitions of $\mathcal E, \widetilde{\mathcal U}_{I}$ and $\widetilde{\mathcal D}_I$, we further obtain
 \begin{align}\label{eq:asmallcom-1}
 \mathcal P_{I_0} \mathcal E {\mathcal U}_{I}\mathcal O \widetilde{\mathcal D}_I^{-1} = \frac{1}{\sqrt 2}\begin{pmatrix}
Q_2 E^\T U_k O_1\widetilde{D}_k^{-1} & -Q_2 E^\T U_k O_1 \widetilde{D}_k^{-1}\\
Q_1 E V_k O_2\widetilde{D}_k^{-1} & Q_1 E V_k O_2  \widetilde{D}_k^{-1}
\end{pmatrix}
\end{align}
and
\begin{align}\label{eq:asmallcom-2}
({\mathcal U}_{I}\mathcal O- \widetilde{\mathcal U}_{I}) \widetilde{\mathcal D}_I^{-1} =\frac{1}{\sqrt 2}\begin{pmatrix}
(U_k O_1 - \widetilde{U}_k)\widetilde{D}_k^{-1} & -(U_k O_1 - \widetilde{U}_k)\widetilde{D}_k^{-1} \\
(V_k O_2 - \widetilde{V}_k)\widetilde{D}_k^{-1} & (V_k O_2 - \widetilde{V}_k)\widetilde{D}_k^{-1}
\end{pmatrix}.
\end{align}

For the Frobenius norm, we continue from \eqref{eq:begineq} to get
\begin{align}\label{eq:fnorm-lb}
\|\sin \angle (\mathcal U_{I}, \widetilde{\mathcal U}_{I})\|_F \ge \|\mathcal P_{I_0} \widetilde{\mathcal P}_{I}\|_F \ge \|\mathcal P_{I_0} \mathcal E {\mathcal U}_{I}\mathcal O \widetilde{\mathcal D}_I^{-1}\|_F - \|E\| \|({\mathcal U}_{I}\mathcal O- \widetilde{\mathcal U}_{I}) \widetilde{\mathcal D}_I^{-1}\|_F.
\end{align}
For the first term on the right-hand side of \eqref{eq:fnorm-lb}, \eqref{eq:asmallcom-1} yields
\begin{align}\label{eq:061625-1}
\|\mathcal P_{I_0} \mathcal E {\mathcal U}_{I}\mathcal O \widetilde{\mathcal D}_I^{-1}\|_F &= \sqrt{\|Q_2 E^\T U_k O_1\widetilde{D}_k^{-1}\|_F^2 + \|Q_1 E V_k O_2\widetilde{D}_k^{-1} \|_F^2}\nonumber\\
&=\sqrt{\sum_{i=1}^k \frac{1}{\widetilde{\sigma}_i^2}\left(  \|Q_2 E^\T \hat{u}_i\|^2 +  \|Q_1 E \hat{v}_i\|^2 \right)}, 
\end{align}
where we denote the columns of $U_k O_1, V_k O_2$ as $ \hat{u}_i,  \hat{v}_i$ ($1\le i \le k$) respectively. Note that $ \hat{u}_i,  \hat{v}_i$ are deterministic unit vectors. 

By the projection lemma from \cite[Lemma 11.8]{OVW}, for each $i$ and any $t>0$,
\begin{align*}
\Prob\left(|\|Q_1 E \hat{v_i}\|^2 - (N-r)| \ge t \right) \le C\exp\left(-\min\Big\{\frac{t^2}{N-r}, t \Big\}\right)
\end{align*}
and similarly, 
\begin{align*}
\Prob\left(|\|Q_2 E^\T \hat{u_i}\|^2 - (n-r)| \ge t \right) \le C\exp\left(-\min\Big\{\frac{t^2}{n-r}, t \Big\}\right).
\end{align*}
Here, the constant $C>0$ depends on the sub-gaussian norm of the entries of $E$. 

Therefore,  with probability at least $1-Ck\exp(-\frac{1}{4}(n-r))-Ck\exp(-\frac{1}{4}(N-r))$, we have for every $1\le i \le k$,
\begin{align*}
\|Q_1 E \hat{v_i}\|^2 \ge \frac{1}{2}(N-r) \quad\text{and}\quad \|Q_2 E^\T \hat{u_i}\|^2 \ge \frac{1}{2}(n-r).
\end{align*}
Using $\widetilde{\sigma}_i \le \frac{3}{2}\sigma_i$ by Weyl's inequality and our assumption that $\|E\| \le \frac{1}{2}\sigma_i$,  continuing from \eqref{eq:061625-1} gives
\begin{align*}
\|\mathcal P_{I_0} \mathcal E {\mathcal U}_{I}\mathcal O \widetilde{\mathcal D}_I^{-1}\|_F  \ge \frac{\sqrt{2}}{3} \sqrt{N+n-2r} \sqrt{\sum_{i=1}^k \frac{1}{\sigma_i^2}}.
\end{align*}

For the second term on the right-hand side of \eqref{eq:fnorm-lb}, by \eqref{eq:asmallcom-2}, we have
\begin{align*}
\|({\mathcal U}_{I}\mathcal O- \widetilde{\mathcal U}_{I}) \widetilde{\mathcal D}_I^{-1} \|_F &= \sqrt{\|(U_k O_1 - \widetilde{U}_k)\widetilde{D}_k^{-1} \|_F^2 + \|(V_k O_2 - \widetilde{V}_k)\widetilde{D}_k^{-1}\|_F^2 }\\
&\le \frac{1}{\widetilde{\sigma}_k} \sqrt{\|(U_k O_1 - \widetilde{U}_k) \|_F^2 + \|(V_k O_2 - \widetilde{V}_k)\|_F^2 }\\
&\le 2\sqrt{2} \frac{1}{\sigma_k}\sqrt{\|\sin\angle(U_k, \widetilde{U}_k) \|_F^2 + \|\sin\angle(V_k, \widetilde{V}_k)\|_F^2 } \\
&= 2\sqrt{2} \frac{1}{\sigma_k}\| \sin \angle (\mathcal U_{I}, \widetilde{\mathcal U}_{I}) \|_F,
\end{align*}
where in the last inequality, we used \eqref{eq:equiv} and $\widetilde{\sigma}_k \ge \frac{1}{2}\sigma_k$ by Weyl's inequality.

Combining the above discussion with \eqref{eq:fnorm-lb}, we obtain
\begin{align*}
\| \sin \angle (\mathcal U_{I}, \widetilde{\mathcal U}_{I}) \|_F &\ge \frac{\sqrt{2}}{3} \sqrt{N+n-2r} \sqrt{\sum_{i=1}^k \frac{1}{\sigma_i^2}} - 2\sqrt{2} \frac{\|E\|}{\sigma_k}\| \sin \angle (\mathcal U_{I}, \widetilde{\mathcal U}_{I}) \|_F\\
&\ge \frac{\sqrt{2}}{3} \sqrt{N+n-2r} \sqrt{\sum_{i=1}^k \frac{1}{\sigma_i^2}} - \sqrt{2} \| \sin \angle (\mathcal U_{I}, \widetilde{\mathcal U}_{I}) \|_F
\end{align*}
due to $\|E\|/\sigma_k \le 1/2$. Rearranging the terms, we arrive at 
\begin{align*}
\| \sin \angle (\mathcal U_{I}, \widetilde{\mathcal U}_{I}) \|_F  \ge \frac{\sqrt 2}{3(1+\sqrt 2)} \sqrt{N+n-2r} \sqrt{\sum_{i=1}^k \frac{1}{\sigma_i^2}}
\end{align*}
with probability at least $1-Ck\exp(-\frac{1}{4}(n-r))-Ck\exp(-\frac{1}{4}(N-r))$.
The final result follows from \eqref{eq:F-lowbd}:
$$\max\{ \|\sin \angle ( U_{k}, \widetilde{ U}_{k})\|_F, \|\sin \angle ( V_{k}, \widetilde{ V}_{k})\|_F\} \ge \frac{ \sqrt{N+n-2r}}{3(1+\sqrt 2)} \sqrt{\sum_{i=1}^k \frac{1}{\sigma_i^2}}.$$

Finally, the operator norm bound follows  from $\|\sin \angle (\mathcal U_{I}, \widetilde{\mathcal U}_{I})\| \ge \frac{1}{\sqrt{2k}} \|\sin \angle (\mathcal U_{I}, \widetilde{\mathcal U}_{I})\|_F$ and \eqref{eq:O-lowbd}, yielding
\begin{align*}
\max\{\|\sin \angle ( U_{k}, \widetilde{ U}_{k})\|, \|\sin \angle ( V_{k}, \widetilde{ V}_{k})\| \} &= \|\sin \angle (\mathcal U_{I}, \widetilde{\mathcal U}_{I})\| \ge\frac{ \sqrt{N+n-2r}}{3(1+\sqrt 2)\sqrt{k}} \sqrt{\sum_{i=1}^k \frac{1}{\sigma_i^2}}.
\end{align*}
This concludes the proof.

\section{Proofs of Theorems 2.6 and 2.7}\label{sec:general}
\subsection{Proof of Theorem 2.6}
By the min-max theorem, for an $N\times n$ matrix $S$, the $j$th largest singular value of $S$ is 
\begin{align}\label{eq:minmax}
\sigma_j(S) = \max_{ \substack{ W\in \mathbb R^N, \dim(W)=j \\ K \in \mathbb R^n, \dim(K)=j } } \min_{\substack{(x,y) \in W\times K \\ \|x\|=\|y\|=1} } x^T S y.
\end{align}
For the lower bound (8) of $\widetilde \sigma_k$, by \eqref{eq:minmax},
\begin{align*}
\widetilde \sigma_k \ge \min_{\substack{(x,y) \in U_k \times V_k \\ \|x\|=\|y\|=1} } x^T (A+E) y \ge \sigma_k - \max_{\substack{(x,y) \in U_k \times V_k \\ \|x\|=\|y\|=1} } |x^T E y|.
\end{align*}
Note that
\begin{align*}
\|U_k^T E V_k \|= \max_{\substack{(x,y) \in U_k \times V_k \\ \|x\|=\|y\|=1} } |x^T E y|.
\end{align*}
Our assumption immediately yields that $\widetilde \sigma_k \ge \sigma_k - t$ with probability at least $1-\varepsilon$.

For the upper bound (9) of $\widetilde \sigma_k$, by \eqref{eq:minmax},
\begin{align}\label{eq:lbd1}
\sigma_k \ge \widetilde \sigma_k - \max_{\substack{(x,y) \in \widetilde U_k \times \widetilde V_k \\ \|x\|=\|y\|=1} } |x^T E y|.
\end{align}

It is enough to bound the second term on the right side. For any unit vectors $x\in \widetilde U_k$ and $y \in \widetilde V_k$, we decompose
$x = P_{U^{\perp} } x + U U^T x$ and  $y = P_{V^{\perp} } y + V V^T \widetilde y. $ It follows from triangle inequality that
$$ |x^T E y| \le  \| P_{U^{\perp} } x\| \cdot \| P_{V^{\perp} } y \|  \cdot \| E\| + (\| P_{U^{\perp} }x \| + \| P_{V^{\perp} } y \| ) \|E\| + \| U^T E V \|. $$
To bound the term $\| P_{U^{\perp} } x\|$, first notice by Cauchy-Schwarz inequality, 
$$\max_{x\in \widetilde U_k, \|x\|=1 } \| P_{U^{\perp} } x\| \le \sqrt{k} \max_{1\le s \le k} \| P_{U^{\perp} } \widetilde u_s \|.$$
Next, multiplying $P_{U^{\perp} }$ on the left side of the equation $(A+E) \widetilde v_s = \widetilde \sigma_s \widetilde u_s$, we get $P_{U^{\perp} } E \widetilde v_s = \widetilde \sigma_s P_{U^{\perp} } \widetilde u_s.$ It follows that
\begin{align}\label{eq:proj}
\max_{1\le s \le k} \| P_{U^{\perp} } \widetilde u_s \| \le {\| E\| }/{\widetilde \sigma_k}
\end{align}
and thus
$$\max_{x\in \widetilde U_k, \|x\|=1 } \| P_{U^{\perp} } x\| \le \sqrt{k} {\| E\| }/{\widetilde \sigma_k}.$$
The same calculation leads to $$\max_{y\in \widetilde V_k, \|y\|=1 } \| P_{V^{\perp} } y\| \le \sqrt{k} {\| E\| }/{\widetilde \sigma_k}.$$  Therefore,
\begin{align}\label{eq:lbd2}
\max_{\substack{(x,y) \in \widetilde U_k \times \widetilde V_k \\ \|x\|=\|y\|=1} } |x^T E y| \le 2 \sqrt{k}\frac{\| E\|^2 }{\widetilde \sigma_k} + k \frac{\| E\|^3 }{\widetilde \sigma_k^2 } + \| U^T E V \|.
\end{align}
Since $\|U^T E V \| \le L$ and $\|E\| \le B$ with probability at least $1-\varepsilon$, by \eqref{eq:lbd1} and \eqref{eq:lbd2}, we obtain, with probability at least $1-\varepsilon$
$$\widetilde \sigma_k \le \sigma_k + 2\sqrt{k} \frac{B^2}{\widetilde \sigma_k} + k \frac{B^3}{\widetilde \sigma_k^2} + L.$$

\subsection{Proof of Theorem 2.7}
In the proof, we work on the event $$\Omega:=\{ \|E\| \le B \text{ and } \|U^T E V\| \le L \}.$$ By the supposition, $\Omega$ holds with probability at least $1-\varepsilon$.
Observe that $\|U_k^T E V_k\| \le \|U^T E V\| \le L.$
Using (8), the lower bound for $\widetilde \sigma_k$, together with $\delta_k \ge 2 L$, we have 
\begin{align*}
\widetilde \sigma_k \ge \sigma_k - L \ge {\sigma_k}/{2}>0
\end{align*}
and 
\begin{align*}
\widetilde \sigma_k - \sigma_{k+1}= {\widetilde \sigma}_k-\sigma_k + \delta_k \ge \delta_k-L \ge {\delta_k}/{2}.
\end{align*}


From \eqref{eq:sineq} and triangle inequality, we see
\begin{align}\label{eq:gensin}
\vvvert \sin\angle (U_k, \widetilde U_k) \vvvert = \vvvert P_{U_k^{\perp} } P_{\widetilde U_k} \vvvert \le  \vvvert P_{U^{\perp} } P_{\widetilde U_k} \vvvert +  \vvvert P_{U_{k+1,r} } P_{\widetilde U_k} \vvvert \mathbf{1}_{\{ k< r\}}.
\end{align}
We first bound the first term on the right-hand side of \eqref{eq:gensin}. Suppose $P_{U^{\perp}} = U_0 {U_0}^\T$ where the columns of $U_0$ are an orthonormal basis of the subspace $U^{\perp}$. Then
\begin{align*}
 \vvvert P_{U^{\perp} } P_{\widetilde U_k} \vvvert  = \vvvert U_0^\T \widetilde U_k \vvvert. 
\end{align*}
Multiplying $U_0^\T$ on the both sides of $(A+E) \widetilde V_k = \widetilde U_k \widetilde D_k$, we see $U_0^\T E \widetilde V_k = U_0^\T \widetilde U_k \widetilde D_k$ and hence, $ U_0^\T \widetilde U_k = U_0^\T E \widetilde V_k \widetilde D_k^{-1}.$ It follows that
\begin{align}\label{eq:gen1bd}
 \vvvert P_{U^{\perp} } P_{\widetilde U_k} \vvvert = \vvvert U_0^\T E \widetilde V_k \widetilde D_k^{-1} \vvvert \le \vvvert U_0^\T E \widetilde V_k  \vvvert \| \widetilde D_k^{-1} \| = \frac{\vvvert P_{U^{\perp} } E P_{\widetilde V_k}  \vvvert }{\widetilde \sigma_k}. 
\end{align}
In particular, for the operator norm, we have
\begin{align}\label{eq:gen1bd-op}
\|P_{U^{\perp} } P_{\widetilde U_k}\| \le \frac{\|E\|}{\widetilde \sigma_k}.
\end{align}

We proceed to bound the second term on the right-hand side of \eqref{eq:gensin}. It suffices to consider $k<r$. Observe that
\begin{align*}
\vvvert P_{U_{k+1,r} } P_{\widetilde U_k} \vvvert=\vvvert {U_{k+1,r}^\T } {\widetilde U_k} \vvvert &\le \|{U_{k+1,r}^\T } {\widetilde U_k}\|_{*}\\
&\le \sqrt{\rank({U_{k+1,r} }^\T {\widetilde U_k})} \| {U_{k+1,r}^\T } {\widetilde U_k}\|_F\\
&\le \sqrt{\min\{k,r-k \}} \sqrt{\sum_{i=1}^k \|{U_{k+1,r}^\T } {\widetilde u_i}\|^2}.
\end{align*}
In particular, for the operator norm, we have
\begin{align*}
\| P_{U_{k+1,r} } P_{\widetilde U_k} \| \le \| {U_{k+1,r}^\T } {\widetilde U_k}\|_F = \sqrt{\sum_{i=1}^k \|{U_{k+1,r}^\T } {\widetilde u_i}\|^2}.
\end{align*}

It remains to bound $\| U_{k+1,r}^\T {\widetilde u_i}\|$ for $1\le i \le k$. Multiplying $U_{k+1,r}^\T$ on both sides of $(A+E)\widetilde v_i =\widetilde \sigma_i \widetilde u_i$, we get
$$
D_{k+1,r} V_{k+1,r}^\T \widetilde v_i + U_{k+1,r}^\T E \widetilde v_i =\widetilde \sigma_i U_{k+1,r}^\T \widetilde u_i,
$$
which yields 
\begin{align}\label{eq:gensv1}
\widetilde \sigma_i D_{k+1,r} V_{k+1,r}^\T \widetilde v_i = \widetilde \sigma_i^2 U_{k+1,r}^\T \widetilde u_i - \widetilde \sigma_i U_{k+1,r}^\T E \widetilde v_i.
\end{align}
Similarly, multiplying $V_{k+1,r}^\T$ on both sides of $(A^\T+E^\T)\widetilde u_i =\widetilde \sigma_i \widetilde v_i$, we also get
$$
D_{k+1,r} U_{k+1,r}^\T \widetilde u_i + V_{k+1,r}^\T E^\T \widetilde U_i =\widetilde \sigma_i V_{k+1,r}^\T \widetilde v_i,
$$
which implies
\begin{align}\label{eq:gensv2}
\widetilde \sigma_i D_{k+1,r} V_{k+1,r}^\T \widetilde v_i =D_{k+1,r}^2 U_{k+1,r}^\T \widetilde u_i + D_{k+1,r}V_{k+1,r}^\T E^\T \widetilde u_i.
\end{align}
Combining \eqref{eq:gensv1} and \eqref{eq:gensv2}, one has
$$(\widetilde \sigma_i^2 I - D_{k+1,r}^2) U_{k+1,r}^\T \widetilde u_i  = \widetilde \sigma_i U_{k+1,r}^\T E \widetilde v_i + D_{k+1,r}V_{k+1,r}^\T E^\T \widetilde u_i.$$
As a result, by noting $\widetilde \sigma_i^2 I - D_{k+1,r}^2=\diag(\widetilde \sigma_i^2-\sigma_{k+1}^2,\cdots, \widetilde \sigma_i^2-\sigma_{r}^2)$, we obtain the following bound
\begin{align}\label{eq:genui}
\|U_{k+1,r}^\T \widetilde u_i \| &\le \frac{\widetilde \sigma_i \| U_{k+1,r}^\T E \widetilde v_i\| + \sigma_{k+1}\|V_{k+1,r}^\T E^\T \widetilde u_i\| }{\widetilde \sigma_i^2-\sigma_{k+1}^2}\nonumber\\
&\le \frac{ \max\{ \| U_{k+1,r}^\T E \widetilde v_i\|,\|V_{k+1,r}^\T E^\T \widetilde u_i\| \} }{\widetilde \sigma_k-\sigma_{k+1}}.
\end{align}
Now we turn to bound the numerator of the above expression. Decompose 
$\widetilde u_i = P_U \widetilde u_i +  P_{U^\perp} \widetilde u_i$. Then $$V_{k+1,r}^\T E^\T \widetilde u_i = V_{k+1,r}^\T E^\T U U^\T\widetilde u_i +  V_{k+1,r}^\T E^\T  P_{U^\perp} \widetilde u_i$$
and 
\begin{align*}
\| V_{k+1,r}^\T E^\T \widetilde u_i \| &\le \| V_{k+1,r}^\T E^\T U \|  +  \| E^\T \| \cdot\| P_{U^\perp} \widetilde u_i\|\\
&\le \|U^\T E V\| + \frac{\|E\|^2}{\widetilde \sigma_k}.
\end{align*}
The last inequality above follows from $\| P_{U^\perp} \widetilde u_i\| \le \|P_{U^{\perp} } P_{\widetilde U_k}\|$ and by applying \eqref{eq:gen1bd-op}. Likewise, we also have
\begin{align*}
\| U_{k+1,r}^\T E \widetilde v_i\| \le \|U^\T E V\| + \frac{\|E\|^2}{\widetilde \sigma_k}.
\end{align*}
Continuing from \eqref{eq:genui}, we see
\begin{align*}
\|U_{k+1,r}^\T \widetilde u_i \| &\le \frac{\|U^\T E V\| + {\|E\|^2}/{\widetilde \sigma_k}}{\widetilde \sigma_k-\sigma_{k+1}}\\
&\le 2\frac{\|U^\T E V\| }{\delta_k} + 4\frac{\|E\|^2}{\delta_k \sigma_k}
\end{align*}
by plugging in $ \widetilde \sigma_k-\sigma_{k+1}\ge \delta_k/2$ and $\widetilde \sigma_k \ge \sigma_k/2$.
Consequently, 
\begin{align*}
\vvvert P_{U_{k+1,r} } P_{\widetilde U_k} \vvvert &\le \sqrt{\min\{k,r-k \}} \sqrt{\sum_{i=1}^k \|{U_{k+1,r} }^\T {\widetilde u_i}\|^2}\\
&\le 2\sqrt{k \min\{k,r-k \}}\left(\frac{\|U^\T E V\| }{\delta_k} + 2\frac{\|E\|^2}{\delta_k \sigma_k} \right).
\end{align*}
In particular, for the operator norm,
\begin{align*}
\|P_{U_{k+1,r} } P_{\widetilde U_k}\|\le \sqrt{\sum_{i=1}^k \|{U_{k+1,r} }^\T {\widetilde u_i}\|^2}\le 2 \sqrt{k} \left(\frac{\|U^\T E V\| }{\delta_k} + 2\frac{\|E\|^2}{\delta_k \sigma_k} \right).
\end{align*}

Combining the above estimates with \eqref{eq:gen1bd}, and considering \eqref{eq:gensin}, we ultimately arrive at
\begin{align*}
\vvvert \sin\angle (U_k, \widetilde U_k) \vvvert &\le 2 \sqrt{k \min\{k,r-k \}} \left(\frac{\|U^\T E V\| }{\delta_k} + 2\frac{\|E\|^2}{\delta_k \sigma_k} \right) + \frac{\vvvert P_{U^{\perp} } E P_{\widetilde V_k}  \vvvert }{\widetilde \sigma_k}\\
&\le 2 \sqrt{k \min\{k,r-k \}} \left(\frac{\|U^\T E V\| }{\delta_k} + 2\frac{\|E\|^2}{\delta_k \sigma_k} \right) + 2\frac{k \|E\| }{\sigma_k}.
\end{align*}
More specifically, for the operator norm, we have
\begin{align*}
\| \sin\angle (U_k, \widetilde U_k) \| \le 2 \sqrt{k} \left(\frac{\|U^\T E V\| }{\delta_k} + 2\frac{\|E\|^2}{\delta_k \sigma_k} \right)\mathbf{1}_{\{ k< r\}} + 2 \frac{\|E\|}{\sigma_k}.
\end{align*}
By applying the result to $A^\T$ and $A^\T + E^\T$, we observe that the same bounds also hold for $\sin\angle (V_k, \widetilde V_k)$. This concludes the proof.

\section{Proof of Theorem 5.1}\label{sec:app:gmm}

We consider the model (17) and rewrite
$$\E(X) = (\theta_{z_1},\cdots,\theta_{z_n}) = (\theta_{1},\cdots,\theta_{k}) Z^\T,$$
where $Z \in \{0,1\}^{n\times k}$ with entry $Z_{ij}=1$ if $z_i = j$ and $Z_{ij}=0$ otherwise. It is clear that the information regarding the cluster labels $\mathbf z$ is entirely encoded within $Z$. Additionally, let $D=\diag(d_1,\cdots,d_k)$ where $d_i$ represents the cluster size associated with center $\theta_i$. Consequently, the matrix $Z D^{-1/2}$ has orthonormal columns. 

Given that $\theta_i$'s could be colinear, the rank of $\E(X)$ or $(\theta_{1},\cdots,\theta_{k})$, denoted as $r$, could be smaller than the number of clusters $k$. Consider the SVD of $$(\theta_{1},\cdots,\theta_{k}) D^{1/2} = U \Lambda {W}^\T,$$ where $\Lambda$ is a $k\times k$ diagonal matrix with rank $r$ and ${W}$ is a $k\times k$ orthogonal matrix. Observe that if we denote the SVD of $\E(X)$ as $\E(X)=U \Sigma V^\T$ with $U \in \mathbb R^{p\times k}$ and $V \in \mathbb R^{n\times k}$, then the following relationship emerges:
$$\E (X)  =(\theta_{1},\cdots,\theta_{k}) D^{1/2} (ZD^{-1/2})^\T= U \Lambda (Z D^{-1/2}  W)^\T= U \Sigma V^\T.$$
Therefore, $\Sigma = \Lambda=\diag(\sigma_1,\cdots,\sigma_r,0,\cdots,0)$ and 
$V= Z D^{-1/2} W$. Note that the choice of $U,V$ is not unique and we can only decide $U$ or $V$ up to an orthogonal transformation. 

Next, we show that the geometric relationship among the columns of $\E(X)$ is preserved among the columns of $U^\T \E(X)$. Consider the SVD of $\E(X)=U \Sigma V^\T$, where each column $\theta_j$ of $\E(X)$ can be expressed as $\theta_j =U \Sigma (V^\T)_j $ by denoting $(V^\T)_j$ as the column of $V^\T$. Let $(U^\T \E (X))_j$ represent the columns of $U^\T \E (X) = (U^\T \theta_{z_1},\cdots,U^\T \theta_{z_n}) $. For any two columns $\theta_i$ and $\theta_j$ of $\E(X)$, we have
\begin{align*}
\|\theta_i- \theta_j\|^2=(\theta_i- \theta_j)^\T (\theta_i- \theta_j)= \left((V^\T)_i- (V^\T)_j\right)^\T \Sigma^2 \left((V^\T)_i- (V^\T)_j\right) 
\end{align*}
Moreover, their corresponding columns $(U^\T \E (X))_i$ and $(U^\T \E (X))_j$ of $U^\T \E (X)$  satisfy
\begin{align*}
\|(U^\T \E (X))_i- (U^\T \E (X))_j\|^2 &= \| U^\T \theta_i - U^\T \theta_j \|^2 = (\theta_i- \theta_j)^\T  U U^\T(\theta_i- \theta_j)\\
&=\left((V^\T)_i- (V^\T)_j\right)^\T \Sigma U^\T  U U^\T  U\Sigma \left((V^\T)_i- (V^\T)_j\right) \\
&=\|\theta_i- \theta_j\|^2.
\end{align*}
It follows that $$\|(U^\T \E (X))_i- (U^\T \E (X))_j\| = \|\theta_i- \theta_j\|.$$ Therefore, if $i, j \in [n]$ belong to the same cluster, then $\|(U^\T \E (X))_i- (U^\T \E (X))_j\|=0$. On the other hand, if $i, j \in [n]$ belong to the distinct clusters, then $\|(U^\T \E (X))_i- (U^\T \E (X))_j\|\ge \Delta.$

The main step of the proof, as explained in Section 5.1, is to prove (18). Specifically, we aim to show that 
\begin{align}\label{eq:gmmineq1}
\max_{1\le j \le n}\|(\widetilde U_{k}^T X)_j - (U^\T \E (X))_j\| < \frac{1}{5} \Delta
\end{align}
holds with high probability. 

Recall that throughout the paper, we always assume $\|E\| \le 2(\sqrt n + \sqrt p)$. We start with the decompositions 
$$U^\T \E(X)=\Lambda V^\T = \begin{pmatrix} \Lambda_r V_r^\T \\
0
\end{pmatrix}
$$
and 
$$\widetilde U_{r}^T X = {\widetilde \Lambda_k} {\widetilde V_k^\T} = \begin{pmatrix}
{\widetilde \Lambda_r} {\widetilde V_r^\T}\\
{\widetilde \Lambda_{\llbracket r+1, k\rrbracket}} {\widetilde V_{\llbracket r+1, k\rrbracket}^\T}
\end{pmatrix}.
$$
Observe that $$\|{\widetilde \Lambda_{\llbracket r+1, k\rrbracket}} {\widetilde V_{\llbracket r+1, k\rrbracket}^\T}\| \le \widetilde \sigma_{r+1} \le \|E\| \le \frac{1}{20}\Delta$$
by Weyl's inequality and $\Delta \ge \sigma_r \ge 20 \|E\|$ from the supposition. Hence,
\begingroup
\allowdisplaybreaks
\begin{align}\label{eq:gmmbd}
&\max_{1\le j \le n}\|(\widetilde U_{k}^T X)_j - (U^\T \E (X))_j\|\nonumber \\
&\le \max_{1\le j \le n}\left(\|({\widetilde \Lambda_r} {\widetilde V_r^\T})_j - (\Lambda_r V_r^\T)_j\| +\|({\widetilde \Lambda_{\llbracket r+1, k\rrbracket}} {\widetilde V_{\llbracket r+1, k\rrbracket}^\T})_j\|\right) \nonumber\\
&\le \max_{1\le j \le n}\|({\widetilde \Lambda_r} {\widetilde V_r^\T})_j - (\Lambda_r V_r^\T)_j\| +\frac{1}{20}\Delta\nonumber\\
&=\max_{1\le j \le n }\| e_j^\T{\widetilde V_r} {\widetilde \Lambda_r} - e_j^\T V_r \Lambda_r \|+\frac{1}{20}\Delta =\| {\widetilde V_r} {\widetilde \Lambda_r} - V_r \Lambda_r\|_{2,\infty}+\frac{1}{20}\Delta\nonumber \\
&\le \| {\widetilde V_r} {\widetilde \Lambda_r} - V_r \widetilde{\Lambda}_r\|_{2,\infty} + \| V_r (\widetilde{\Lambda}_r -  \Lambda_r)\|_{2,\infty} + \frac{1}{20}\Delta.
\end{align}
\endgroup
Due to non-uniqueness of the choice of $V$ in the SVD of $\E(X)$, we choose a specified $V_r$ such that the conclusion of Corollary 2.14 holds (with $b=20$): that is, with probability at least $1 - 40(N+n)^{-L}$,
$$\| \widetilde{V}_{r}\widetilde{\Lambda}_{r} - V_r \widetilde{\Lambda}_{r}\|_{2,\infty} \le 45 r \sqrt{(L+7)\log (n+p)} (1+\|V_r\|_{2,\infty})+ 8 \|V_r\|_{2,\infty} \frac{(\sqrt n + \sqrt p)^2}{\sigma_r}.$$
Note that 
\begin{align*}
\|V_r\|_{2,\infty}\le \|V\|_{2,\infty}=\max_{i} \|e_i^\T V\| =\max_{i} \sqrt{\frac{1}{d_{z_i}} \sum_{j=1}^k W_{z_i, j}^2}\le \frac{1}{\sqrt{c_{\min}}}\le 1.
\end{align*}
Continuing from \eqref{eq:gmmbd}, we further obtain
\begin{align*}
&\max_{1\le j \le n}\|(\widetilde U_{k}^T X)_j - (U^\T \E (X))_j\|\\
&\le \| {\widetilde V_r} {\widetilde \Lambda_r} - V_r \widetilde{\Lambda}_r\|_{2,\infty} + \| V_r\|_{2,\infty} \|\widetilde{\Lambda}_r -  \Lambda_r\| + \frac{1}{20}\Delta\\
&\le 90 k \sqrt{(L+7)\log (n+p)} + \frac{8(\sqrt n + \sqrt p)^2}{\sqrt{c_{\min}} \sigma_r} + \frac{2(\sqrt n + \sqrt p)}{\sqrt{c_{\min}}} + \frac{1}{20}\Delta\\
&\le \frac{1}{20}\Delta + \frac{1}{200}\Delta + \frac{1}{20}\Delta + \frac{1}{20}\Delta<\frac{1}{5}\Delta
\end{align*}
by Weyl's inequality $\|\widetilde{\Lambda}_r -  \Lambda_r\|\le \|E\|$ and the suppositions that $\sigma_r\ge 40(\sqrt n + \sqrt p)$ and 
$$\Delta \ge \max\left\{ \frac{40(\sqrt n + \sqrt p)}{\sqrt{c_{\min}}}, 1800k\sqrt{(L+7)\log(n+p)}  \right\}.$$This concludes the proof.

\section{Proofs of \eqref{eq:equiv}, \eqref{eq:equivup}, Proposition \ref{prop:2inf} and \eqref{eq:svdUV}}\label{sec:otherproofs}
\subsection{Proof of \eqref{eq:equiv}}\label{app:proofofnorm}
It follows from \eqref{eq:sp} that for the Schatten $p$-norm with $p\ge 2$, 
\begin{align}\label{eq:upsp}
\min_{O\in \mathbb{O}^{r\times r}} \| UO- V\|_p^2 &= \min_{O\in \mathbb{O}^{r\times r}} \| (UO- V)^\T (UO- V)\|_{p/2} \nonumber\\
&=\min_{O\in \mathbb{O}^{r\times r}} \| 2I_r - O^\T U^\T V - V^\T U O\|_{p/2}.
\end{align}
Note that the SVD of $U^\T V$ can be written as $O_1^\T \cos\Theta O_2$ where $O_1,O_2$ are orthogonal matrices and $\cos\Theta:=\cos\angle(U,V)=\diag(\cos \theta_1,\cdots, \cos \theta_r)$. Continuing from \eqref{eq:upsp}, by the definition of unitarily invariant norms, we further have
\begin{align*}
\eqref{eq:upsp} &= \min_{O\in \mathbb{O}^{r\times r}} \| 2 I_r - \cos\Theta \cdot O_2O^\T O_1^\T - O_1 O O_2^\T \cos\Theta\|_{p/2} \nonumber\\
&\le \| 2I_r - 2 \cos\Theta\|_{p/2} = 2 \left( \sum_{i=1}^r (1-\cos\theta_i)^{p/2}\right)^{2/p} \\
&\le 2 \left( \sum_{i=1}^r \sin^{p}\theta_i\right)^{2/p}=2\|\sin\Theta\|_p^2,
\end{align*}
where we denote $\sin\Theta:=\sin\angle(U,V)$. In the first inequality, we choose $O=O_1^\T O_2$. The second inequality follows from $1-\cos\theta_i \le 1-\cos^2\theta_i=\sin^2\theta_i.$

For the lower bound, we still consider
\begin{align*}
\eqref{eq:upsp}&= \min_{O\in \mathbb{O}^{r\times r}} \| 2 I_r - \cos\Theta \cdot O_2O^\T O_1^\T - O_1 O O_2^\T \cos\Theta\|_{p/2} \nonumber\\
&=\min_{Y\in \mathbb{O}^{r\times r}} \| 2 I_r - \cos\Theta \cdot Y - Y^\T \cos\Theta\|_{p/2}
:=\min_{Y\in \mathbb{O}^{r\times r}} \| B_Y \|_{p/2}.
\end{align*}
Observe that $B_Y$ is positive semidefinite. To see this, let $x$ be an arbitrary unit vector in $\mathbb R^r$ and we have
\begin{align*}
x^\T B_Yx = 2 - 2 x^\T \cos\Theta\cdot Y x &\ge 2(1- |x^\T \cos\Theta\cdot Y x|) \ge 2 (1-\|\cos\Theta\|)\ge 0.
\end{align*}
Denote $p'=p/2$ for brevity. We use the following variational formula for the Schatten norms of positive semidefinite matrices:
\begin{align}\label{eq:varsch}
\|B_Y\|_{p'} =\max_{\|X\|_{q'}\le 1} \tr (B_Y X),
\end{align}
where $\|X\|_{q'}$ is the Schatten-$q'$ norm of $X\in \mathbb R^{r\times r}$ and $\frac{1}{p'} + \frac{1}{q'}=1$. To prove \eqref{eq:varsch}, by the H\"older's inequality for Schatten norms, 
$$\max_{\|X\|_{q'}\le 1} \tr (B_Y X) \le \|B_Y\|_{p'}\max_{\|X\|_{q'}\le 1} \|X\|_{q'} \le \|B_Y\|_{p'}.$$
On the other hand, taking $X=B_Y^{p'-1}/\|B_Y^{p'-1}\|_{q'}$,
$$\max_{\|X\|_{q'}\le 1} \tr (B_Y X) \ge \tr(B_Y^{p'})/\|B_Y^{p'-1}\|_{q'}= \|B_Y\|_{p'}^{p'}/\|B_Y\|_{p'}^{p'-1}=\|B_Y\|_{p'},$$
where we used $\tr(B_Y^{p'}) = \|B_Y\|_{p'}^{p'}$ since $B_Y$ is positive semidefinite and $\|B_Y^{p'-1}\|_{q'} = \|B_Y\|_{p'}^{p'-1}$ due to $\frac{1}{p'} + \frac{1}{q'}=1$. This proves \eqref{eq:varsch}.

Set $S=\sin^2\Theta$ for simplicity and let $$X=\frac{S^{p'-1}}{\|S^{p'-1}\|_{q'} }=\frac{S^{p'-1}}{\|S\|_{p'}^{p'-1}}.$$ We continue from \eqref{eq:varsch}:
\begin{align*}
\|B_Y\|_{p'} &\ge \tr\left(2 X - \cos\Theta \cdot Y X - Y^\T \cos\Theta X  \right)=2\tr(X-X\cos\Theta \cdot Y)\\
&\ge 2\left( \tr(X) - \|X\cos\Theta\|_{*}\right),
\end{align*}
where we applied H\"older's inequality $\left|\tr(X\cos\Theta \cdot Y ) \right| \le \|Y\| \cdot \|X\cos\Theta \|_{*}=\|X\cos\Theta \|_{*}.$ Plugging in $X$ and $S$, we get
\begin{align*}
\|B_Y\|_{p'} &\ge \frac{2}{\|S\|_{p'}^{p'-1}}\left( \tr(S^{p'-1}) - \|S^{p'-1} \cos\Theta\|_* \right)\\
&= \frac{2}{\|\sin^2\Theta\|_{p'}^{p'-1}}\left( \sum_{i=1}^r (\sin\theta_i)^{2(p'-1)} (1-\cos\theta_i) \right).
\end{align*}
Note that $\|\sin^2\Theta\|_{p'}^{p'-1} = \|\sin\Theta\|_{2p'}^{2(p'-1)}$ and $$1-\cos\theta_i = 2\sin^2(\theta_i/2) = \frac{\sin^2\theta_i}{2\cos^2(\theta_i/2)}\ge \frac{1}{2}\sin^2\theta_i.$$We further obtain
\begin{align*}
\|B_Y\|_{p'} &\ge \frac{1}{\|\sin\Theta\|_{2p'}^{2(p'-1)}}  \sum_{i=1}^r (\sin\theta_i)^{2 p'} = \frac{\|\sin\Theta\|_{2p'}^{2p'}}{\|\sin\Theta\|_{2p'}^{2(p'-1)}} = \|\sin\Theta\|_{2p'}^{2}=\|\sin\Theta\|_{p}^{2}.
\end{align*}
Therefore,
\begin{align*}
\eqref{eq:upsp}=\min_{Y\in \mathbb{O}^{r\times r}} \| B_Y \|_{p/2}\ge \|\sin\Theta\|_{p}^{2}.
\end{align*}
This completes the proof of \eqref{eq:equiv}.

\subsection{Proof of \eqref{eq:equivup}}\label{app:proofequi}
For simplicity, denote $\cos\Theta = \cos\angle(U,V)$ and $\sin\Theta = \sin\angle(U,V)$. Note that for any orthogonal matrices $Y, Z \in \mathbb R^{r\times r}$,
$$\vvvert UYZ^\T- V\vvvert =\vvvert UY- VZ\vvvert.$$
We use \cite[Theorem VII.1.8]{Bhatia}: there exist $r\times r$ orthogonal matrices $Y, Z$ and $n\times n$ orthogonal matrix $Q$ such that if $2r\le n$, then
\begin{equation*}
QUY= \begin{pmatrix}
I_r\\
0\\
0
\end{pmatrix} \quad \text{and}\quad
QUZ= \begin{pmatrix}
\cos\Theta\\
\sin\Theta\\
0
\end{pmatrix}.
\end{equation*}
Hence, 
\begin{equation*}
\min_{O\in \mathbb{O}^{r\times r}} \vvvert UO- V\vvvert \le \vvvert UYZ^\T- V\vvvert =\vvvert QUY- QVZ\vvvert= \vvvert 
\begin{pmatrix}
I_r - \cos\Theta \\
-\sin\Theta
\end{pmatrix}
\vvvert.
\end{equation*}
If $2r>n$, then 
\begin{equation*}
QUY= \begin{pmatrix}
I_{n-r} & 0\\
0 & I_{2r-n}\\
0&0
\end{pmatrix} \quad \text{and}\quad
QUZ= \begin{pmatrix}
\cos\Theta_1& 0 \\
0 & I_{2r-n}\\
\sin\Theta_1 & 0
\end{pmatrix},
\end{equation*}
where $\Theta_1$ is a diagonal matrix composed of the largest $n-r$ diagonal entries of $\Theta$ (note that the remaining diagonal entries of $\Theta$ are all zero). Therefore, by unitary equivalent, we still have 
\begin{equation}\label{eq:mindis}
\min_{O\in \mathbb{O}^{r\times r}} \vvvert UO- V\vvvert \le \vvvert UYZ^\T- V\vvvert =\vvvert QUY- QVZ\vvvert= \vvvert 
\begin{pmatrix}
I_r - \cos\Theta \\
-\sin\Theta
\end{pmatrix}
\vvvert.
\end{equation}
Note that the matrix on the right-hand side of \eqref{eq:mindis} has singular values $$\sqrt{(1-\cos\theta_i)^2 + \sin^2\theta_i} = 2\sin\left(\frac{\theta_i}{2} \right)$$ for $i=1,\cdots, r$. Then by Theorem \ref{thm:char},
\begin{equation*}
\min_{O\in \mathbb{O}^{r\times r}} \vvvert UO- V\vvvert \le f(2 \sin(\theta_1/2), \cdots,2 \sin\left(\theta_r/2)\right)
\end{equation*}  for the symmetric gauge function $f$ associated with the norm. Combining the above fact with the inequality $\sin (\theta/2) =\frac{1}{2} \frac{\sin \theta}{\cos(\theta/2)} \le \frac{\sin\theta}{\sqrt 2}$ for $\theta \in [0,\pi/2]$ and Theorem \ref{thm:fan}, we get the bound 
\begin{equation*}
\min_{O\in \mathbb{O}^{r\times r}} \vvvert UO- V\vvvert \le f(\sqrt{2} \sin(\theta_1), \cdots,\sqrt{2} \sin(\theta_r)) =\vvvert \sqrt{2}\sin\angle (U,V)  \vvvert.
\end{equation*}


\subsection{Proof of Proposition \ref{prop:2inf}}\label{app:2inf}
For any orthogonal matrix $O$, we first have
\begin{align*}
\|x^\T (V- UO) \| &\le \|x^\T (V - P_U V)\| + \| x^\T (U U^\T V - UO)\|\\
&=\|x^\T (V - P_U V)\| + \| x^\T U ( U^\T V - O)\|\\
&\le \|x^\T (V - P_U V)\| + \| x^\T U\| \| U^\T V - O\|.
\end{align*}
It remains to estimate $\| U^\T V - O\|$. Now consider a specific orthogonal matrix $O=O_1 O_2^\T$, where as per \eqref{eq:svdUV}, we have $U^\T V = O_1 \cos\angle(U,V) O_2^\T$.  Hence,
\begin{align*}
\| U^\T V - O\| &= \| O_1 \cos\angle(U,V) O_2^\T - O_1 O_2^\T\| = \|\cos\angle(U,V)-I_r\|\\
&=1-\cos\theta_r\le 1-\cos^2\theta_r = \sin^2\theta_r = \|\sin\angle(U, V)\|^2.
\end{align*}
Putting these estimates together, we arrive at 
\begin{align*}
\|x^\T (V- UO) \| &\le \|x^\T (V - P_U V)\| + \| x^\T U\| \|\sin\angle(U, V)\|^2.
\end{align*}

The other inequalities can be proved immediately by noting that
\begin{align*}
\left|x^\T (V- UO) y \right| &\le \left|x^\T (V- P_U V) y \right| + \left|x^\T (UU^\T V- UO) y \right|\\
&\le \left|x^\T (V- P_U V) y \right| + \|x^\T U\| \|U^\T V- O\|
\end{align*}
and
\begin{align*}
\|V- UO\|_{2,\infty} &\le \|V - P_U V\|_{2,\infty} +\|UU^\T V- UO\|_{2,\infty}\\
&\le \|V - P_U V\|_{2,\infty} +\|U\|_{2,\infty} \|U^\T V- O\|.
\end{align*}

\subsection{Proof of \eqref{eq:svdUV}}\label{app:eq:weightedbdnew}
As in \eqref{eq:svdUV}, from the SVD of ${U}_k^\T \widetilde{U}_{k} = O_1 \cos\angle(U_k, \widetilde{U}_{k}) O_2^\T$, we choose the orthogonal matrix $O=O_1 O_2^\T$. For notational simplicity, let us denote $\cos\angle(U_k, \widetilde{U}_{k}) =\diag(\cos\theta_1,\cdots,\cos\theta_k):= \cos \Theta$. 

Using a similar argument as in the proof of Proposition \ref{prop:2inf}, we obtain
\begin{align}\label{eq:240319}
\| \widetilde{U}_{k}\widetilde{D}_{k} - {U}_kO\widetilde{D}_{k}\|_{2,\infty}\le &\| \widetilde{U}_{k}\widetilde{D}_{k} - P_{U_k} \widetilde{U}_{k} \widetilde{D}_{k}\|_{2,\infty} + \|{U}_k\|_{2,\infty} \|({U}_k^\T \widetilde{U}_{k} - O)\widetilde{D}_{k}\|.
\end{align}
It suffices to establish a bound for $\|({U}_k^\T \widetilde{U}_{k} - O)\widetilde{D}_{k}\|$ in the second term on the right-hand side of \eqref{eq:240319}. A similar bound for the case $k=r$ has  been  previously established in \cite[Lemma 15]{YW24}. We generalize the proof from \cite[Lemma 15]{YW24} here with explicit constants.

Let $U_0$ denote  the matrix whose columns are orthonormal and span the complement of the subspace $U_k$. We first show that 
\begin{align}\label{eq:rewrite0319}
O - {U}_k^\T \widetilde{U}_{k} = 2 O_1 \cdot\sin(\Theta/2)\cdot (\sin\Theta)^{-1}\sin(\Theta/2) \cdot O_3^\T U_0^\T \widetilde{U}_{k}
\end{align}
for some orthogonal matrix $O_3$. Here, $\sin(\Theta/2) =\diag(\sin(\theta_1/2),\cdots, \sin(\theta_k/2))$ and $$(\sin\Theta)^{-1}\sin(\Theta/2) = \diag\left( \frac{\sin(\theta_1/2)}{\sin(\theta_1)}, \cdots, \frac{\sin(\theta_k/2)}{\sin(\theta_k)}\right).$$
To see \eqref{eq:rewrite0319}, since
\begin{align*}
(\widetilde{U}_{k}^\T U_0)(\widetilde{U}_{k}^\T U_0)^\T = \widetilde{U}_{k}^\T U_0 U_0^\T \widetilde{U}_{k} &=  \widetilde{U}_{k}^\T (I-U_k U_k^\T) \widetilde{U}_{k}\\
& = I - O_2 \cos^2(\Theta) O_2^\T=O_2\sin^2(\Theta)O_2^\T,
\end{align*}
the SVD of $\widetilde{U}_{k}^\T U_0$ is given by 
\begin{align}\label{eq:svd0319}
\widetilde{U}_{k}^\T U_0 = O_2 (\sin \Theta) O_3^\T
\end{align}
for some orthogonal matrix $O_3$. Combining \eqref{eq:svd0319} with 
$$O-{U}_k^\T \widetilde{U}_{k} = O_1(I - \cos\Theta) O_2^\T =2 O_1 \sin^2(\Theta/2) O_2^\T,$$
we prove \eqref{eq:rewrite0319} and consequently
\begin{align}\label{eq:rewrite031901}
(O - {U}_k^\T \widetilde{U}_{k}) \widetilde{D}_{k} = 2 O_1 \cdot\sin(\Theta/2)\cdot (\sin\Theta)^{-1}\sin(\Theta/2) \cdot O_3^\T U_0^\T \widetilde{U}_{k}\widetilde{D}_{k}.
\end{align}

To bound $ \|(O-{U}_k^\T \widetilde{U}_{k} )\widetilde{D}_{k}\|$, first observe 
\begin{align*}
\|\sin(\Theta/2)\| \le \frac{\sqrt 2}{2}\|\sin\Theta\| \quad \text{and} \quad \|(\sin\Theta)^{-1}\sin(\Theta/2)\| \le \frac{\sqrt 2}{2}
\end{align*}
by the facts that $\cos(\theta/2) \ge 1/\sqrt{2}$ and $\sin(\theta/2)=\frac{\sin\theta}{2\cos(\theta/2)}\le \frac{\sqrt 2}{2}\sin\theta$ for $\theta\in [0,\pi/2]$. Then continuing from \eqref{eq:rewrite031901}, we have
\begin{align*}
\|(O - {U}_k^\T \widetilde{U}_{k}) \widetilde{D}_{k}\| \le \|\sin\Theta\| \cdot \|U_0^\T \widetilde{U}_{k}\widetilde{D}_{k}\|. 
\end{align*}
It remains to bound $\|U_0^\T \widetilde{U}_{k}\widetilde{D}_{k}\|$. Let us denote $D_{k+1,r} = \diag(\sigma_{k+1},\cdots,\sigma_r)$ and $U_{k+1,r} = (u_{k+1},\cdots,u_r)$. Define $V_{k+1,r}$ analogously. Since $ \widetilde{U}_{k}\widetilde{D}_{k} = \widetilde{A} \widetilde{V}_{k} = (A+E)\widetilde{V}_{k}$, we have
\begin{align*}
\|U_0^\T \widetilde{U}_{k}\widetilde{D}_{k}\| = \| U_0^\T(A+E)\widetilde{V}_{k}\| &\le \| U_0^\T A \widetilde{V}_{k}\| +\| U_0^\T E \widetilde{V}_{k}\|\\
&\le \| D_{k+1,r} V_{k+1,r}^\T \widetilde{V}_{k}\| + \|E\| \\
&\le \sigma_{k+1} \| \sin\angle ( V_{k}, \widetilde{ V}_{k})\| + \|E\|.
\end{align*}
To obtain the last inequality above, let $V_0$ be the matrix whose columns are orthonormal and span the complement of the subspace $V_k$. Then
\begin{align*}
\| D_{k+1,r} V_{k+1,r}^\T \widetilde{V}_{k}\| \le \sigma_{k+1} \|V_{k+1,r}^\T \widetilde{V}_{k}\|&\le \sigma_{k+1} \|V_{0}^\T \widetilde{V}_{k}\| = \sigma_{k+1}\|P_{V_k^\perp}P_{\widetilde{V}_{k}} \|\\
&\le \sigma_{k+1} \|P_{V_k} -P_{\widetilde{V}_{k}}\| =\sigma_{k+1}\| \sin\angle ( V_{k}, \widetilde{ V}_{k})\|.
\end{align*}
We obtain that 
\begin{align*}
\|(O - {U}_k^\T \widetilde{U}_{k}) \widetilde{D}_{k}\| \le \|\sin\angle(U_k, \widetilde{U}_{k})\| \left(\sigma_{k+1} \| \sin\angle ( V_{k}, \widetilde{ V}_{k})\| + \|E\| \right). 
\end{align*}
This completes the proof. 

\subsubsection{Comparing $\| \widetilde{U}_{r}\widetilde{D}_{r} - {U}O\widetilde{D}_{r}\|_{2,\infty}$ and $\|\widetilde{U}_r \widetilde{D}_r O^\T- U D \|_{2,\infty}$}
This section examines the connections between the quantity $\| \widetilde{U}_{r}\widetilde{D}_{r} - {U}O\widetilde{D}_{r}\|_{2,\infty}$ studied in this paper and $\|\tilde{U}_r \tilde{D}_r O^\T- U D \|_{2,\infty}$ found in prior works (e.g., Proposition 3 in \cite{YW24}). The orthogonal matrix $O:=O_1 O_2^\T$ is derived from the SVD of ${U}^\T \widetilde{U}_{r} = O_1 \cos\angle(U, \widetilde{U}_{r}) O_2^\T$. By rewriting 
\[ \widetilde{U}_r \widetilde{D}_r O^\T- U D =(\widetilde{U}_r \widetilde{D}_r - UO \widetilde{D}_r)O^\T +  U (O \widetilde{D}_rO^\T - {D} ), \]
we observe that (since $\|MO\|_{2,\infty} \le \|M\|_{2,\infty}$ for any orthogonal matrix $O$)
\[ \left| \| \widetilde{U}_r \widetilde{D}_r O^\T- U D \|_{2,\infty} - \|\widetilde{U}_r \widetilde{D}_r - UO \widetilde{D}_r\|_{2,\infty} \right| \le \|U\|_{2,\infty} \|O \widetilde{D}_rO^\T - {D}\|. \]
While establishing a tight analytical bound on $\|\widetilde{D}_r O^\T - O^\T D\| = \|O\widetilde{D}_r O^\T - D\|$ remains challenging, our numerical experiments in Figure \ref{fig:difference} demonstrate that $\|\widetilde{D}_r O^\T - O^\T D\|$ is comparable in magnitude to $\|R\widetilde{D}_r O^\T - D\|$, where $R:= X Y^\T$ is an orthogonal matrix derived from the SVD of ${V}^\T \widetilde{V}_{r} = X \cos\angle(V, \widetilde{V}_{r}) Y^\T$. As shown in \cite[Lemma 16]{YW24},  with high probability  
\[\|R\widetilde{D}_r O^\T - D\| \lesssim \frac{\|E\|^2}{\sigma_r} + \sqrt{r+\log(N+n)}. \]
Our numerical results suggest that a similar bound may hold for $\|O\widetilde{D}_r O^\T -  D\|$. 

Figure \ref{fig:difference2} presents a comparison between $\| \widetilde{U}_{r}\widetilde{D}_{r} - {U}O\widetilde{D}_{r}\|_{2,\infty}$ and $\|\widetilde{U}_r \widetilde{D}_r O^\T- U D \|_{2,\infty}$. The numerical evidence indicates that  $\| \widetilde{U}_{r}\widetilde{D}_{r} - {U}O\widetilde{D}_{r}\|_{2,\infty}$ is typically smaller than $\|\widetilde{U}_r \widetilde{D}_r O^\T- U D \|_{2,\infty}$, though both quantities increase with the rank $r$. Given that \cite[Proposition 3]{YW24} establishes that with high probability that \[ \|\widetilde{U}_r \widetilde{D}_r O^\T- U D \|_{2,\infty} \lesssim \sqrt{r+\log(N+n)} + \|U\|_{2,\infty} \frac{\|E\|^2}{\sigma_r},\] this observation suggests potential for improving our Corollary 2.14 by replacing the term $r\sqrt{\log(N+n)}$ with $\sqrt{r+\log(N+n)}$. The theoretical verification of this observation remains an open question for future research.

\begin{figure}[!ht]
\centering
\includegraphics[width=0.9\textwidth]{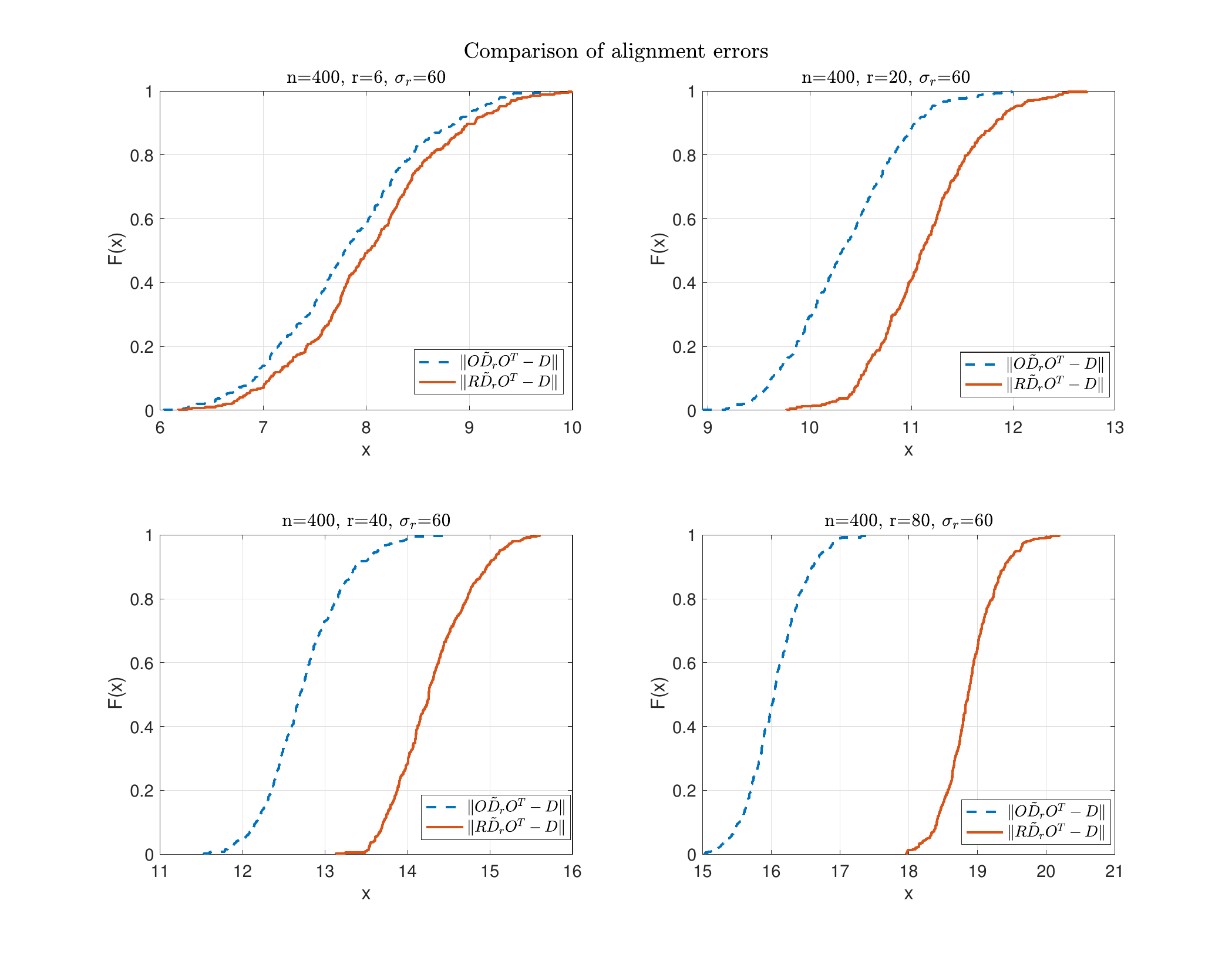}
   \caption{CDF plots comparing the errors $\|O\widetilde{D}_r O^\T - D\|$ (blue dotted curve) and $\|R\widetilde{D}_r O^\T - D\|$ (red curve) across 400 trials. The signal matrix $A = UDV^\T$ has rank $r$, with its right and left singular vectors $U$ and $V$ generated independently from Haar-distributed orthonormal matrices. The noise matrix $E$ has i.i.d. standard Gaussian entries. We set the smallest singular value of $A$ to $\sigma_r=60$. The four figures correspond to $r=6, 20, 40, 80$ respectively.}
\label{fig:difference}
\end{figure}

\begin{figure}[!ht]
\centering
\includegraphics[width=0.9\textwidth]{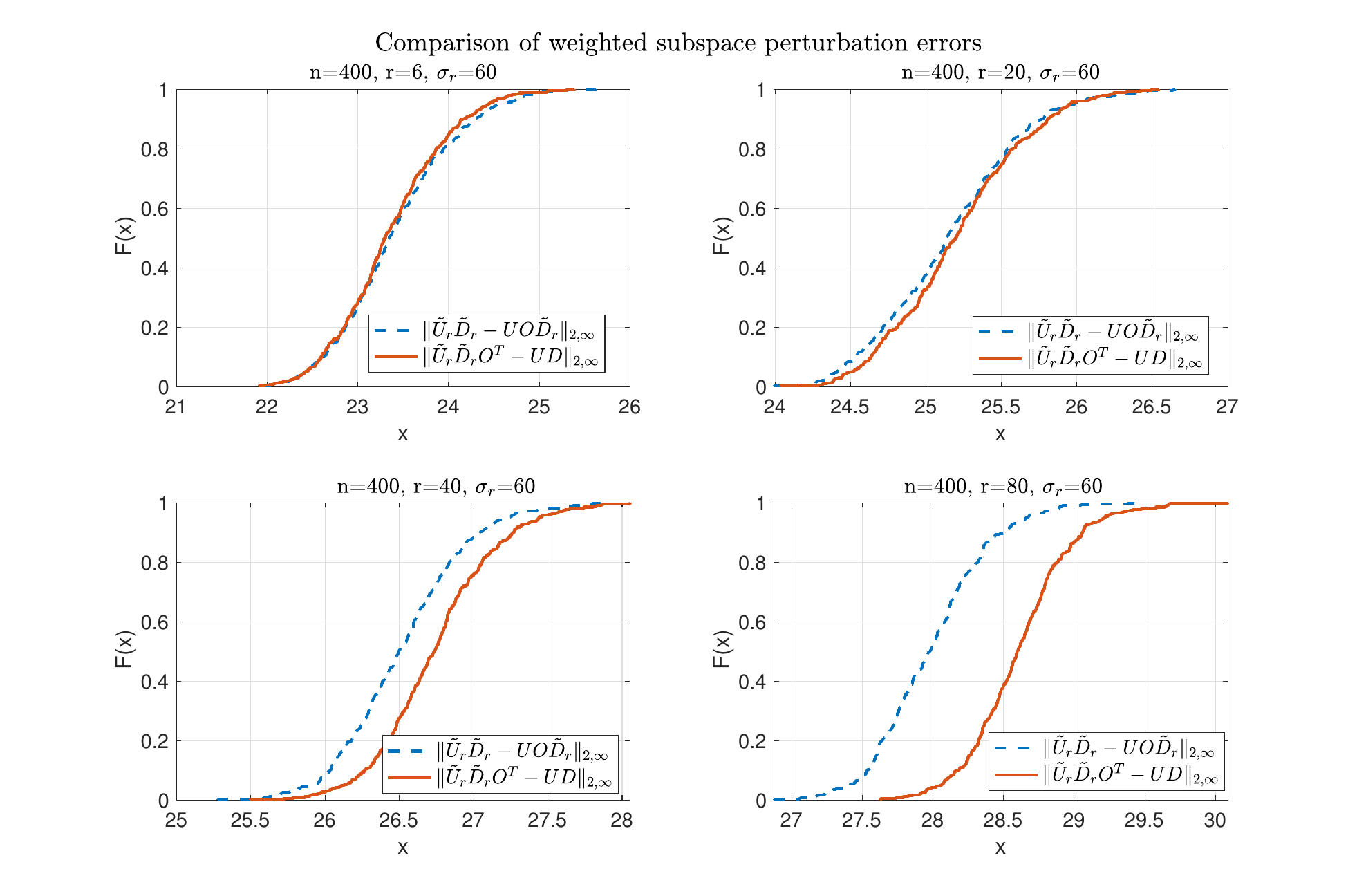}
   \caption{CDF plots comparing the weighted subspace perturbations $\| \widetilde{U}_{r}\widetilde{D}_{r} - {U}O\widetilde{D}_{r}\|_{2,\infty}$ (blue dotted curve) and $\|\widetilde{U}_r \widetilde{D}_r O^\T- U D \|_{2,\infty}$ (red curve) across 400 trials. The signal matrix $A = UDV^\T$ has rank $r$, with its right and left singular vectors $U$ and $V$ generated independently from Haar-distributed orthonormal matrices. The noise matrix $E$ has i.i.d. standard Gaussian entries. We set the smallest singular value of $A$ to $\sigma_r=60$. The four figures correspond to $r=6, 20, 40, 80$ respectively.}
\label{fig:difference2}
\end{figure}

\section{Proofs of Lemma 6.2 and Lemma 6.3}\label{app:twolem}
\subsection{Proof of Lemma 6.2}\label{app:lem:iso}
By the rotational invariance of $E$ and definition of $\mathcal E$, we observe that for any orthogonal matrices $$\mathcal O_1 =\begin{pmatrix}
O_1 & 0\\
0& O_2
\end{pmatrix}, \,
\mathcal O_2 =\begin{pmatrix}
\widehat O_1 & 0\\
0& \widehat O_2
\end{pmatrix}
$$
where $O_1,\widehat O_1 \in \mathbb R^{N\times N}$ and $O_2,\widehat O_2 \in \mathbb R^{n \times n}$ are orthogonal matrices, $$\mathcal O_1 \mathcal E \mathcal O_2 \sim \mathcal E.$$ Consequently,
$$\mathbf x^\T \left(G(z)-\Phi(z) \right) \mathbf y \sim ( \mathcal O_1 \mathbf x)^\T \left(G(z)-\Phi(z) \right) (\mathcal O_2\mathbf y).$$ Hence, it suffices to assume $\mathbf x = (x_1, 0\cdots,0,x_{N+1},0,\cdots,0)^\T$ with $x_1^2 + x_{N+1}^2=1$ and $\mathbf y = (y_1, 0\cdots,0,y_{N+1},0,\cdots,0)^\T$ with $y_1^2 + y_{N+1}^2=1$. Furthermore, 
\begin{align}\label{eq:reduceG}
&\mathbf x^\T \left(G(z)-\Phi(z) \right) \mathbf y \nonumber\\
&= x_1 y_1 (G_{11}(z)-\Phi_{11}(z)) + x_{N+1} y_{N+1}  (G_{N+1,N+1}(z)-\Phi_{N+1,N+1}(z))\nonumber \\ 
&\quad+x_1 y_{N+1} G_{1,N+1}(z) +  x_{N+1} y_1 G_{N+1,1}(z).
\end{align}
In order to prove Lemma 6.2,  it suffices to show that for each fixed $k \in \llbracket 1, N+n\rrbracket$,
\begin{align}\label{eq:Gkk}
\left| G_{kk}(z) - \Phi_{kk}(z) \right| \leq \frac{2b^2}{(b-1)^2}\frac{ \sqrt{(K+1)\log (N+n)}}{|z|^2} 
\end{align}
with probability at least $1 - 4(N+n)^{-(K+1)}$ and for fixed $i\neq j\in \llbracket 1, N+n\rrbracket$,
\begin{align}\label{eq:Gij}
  |G_{ij}(z)| \leq 2\sqrt{2}\left(\frac{b}{b-1} \right)^2\frac{  \sqrt{(K+1)\log (N+n)}}{ |z|^2}
  \end{align}
with probability at least $1 - 0.5 (N+n)^{-(K+1)}$. 


If so, continuing from \eqref{eq:reduceG}, we find that 
\begin{align*}
&\left| \mathbf x^\T \left(G(z)-\Phi(z) \right) \mathbf y \right|\\
 &\le \left(2\left(\frac{b}{b-1} \right)^2 + 2\sqrt{2}\left(\frac{b}{b-1} \right)^2 \right) \frac{  \sqrt{(K+1)\log (N+n)}}{ |z|^2}\\
&\le 5\left(\frac{b}{b-1} \right)^2 \frac{  \sqrt{(K+1)\log (N+n)}}{ |z|^2}
\end{align*}
with probability at least $1-9(N+n)^{-(K+1)}$. Here, we use the fact that $|x_1 y_1|+|x_{N+1} y_{N+1}|\le 1$ by the Cauchy-Schwarz inequality. 

The proofs of Equation \eqref{eq:Gkk} and Equation \eqref{eq:Gij} closely resemble the proof presented in \cite[Lemma 28]{OVW22}, with only minor cosmetic modifications. For completeness, we outline the main steps. As noted at the beginning of Section \ref{sec:proofmain}, we work on the event that $\|\mathcal E\|=\|E \| \leq 2 (\sqrt N +\sqrt{n})$.

We first prove \eqref{eq:Gkk}. From Lemma 27 in \cite{OVW22}, which follows from the Schur complement, for any $1\le k \le N+n$, \[ G_{kk}(z)  = \frac{1}{z - \mathcal E_{kk} - \sum_{ s,t}^{(k)} \mathcal E_{sk} G_{st}^{(k)}(z) \mathcal E_{tk} }. \]
For $1\le k \le N$, using the block structure of $\mathcal E$ and the expression for $\phi_1(z)$ in \cite[Eq. (26)]{Wang24} and \cite[Eq. (27)]{Wang24}, we have
\begin{align} \label{eq:diagonal}
	\left| G_{kk}(z) - \frac{1}{\phi_1(z)}  \right| &= \left| \frac{1}{z - \sum_{1\le i,j\le n}^{(k)} E_{ki} G_{N+i,N+j}^{(k)}(z) E_{kj}}- \frac{1}{\phi_1(z)} \right| \nonumber\\
	&\le \frac{ |\sum_{1\le i,j\le n}^{(k)} E_{ki} G_{N+i,N+j}^{(k)}(z) E_{kj}  - \sum_{t \in \llbracket N+1,N+n \rrbracket} G_{tt}(z)|}{ |z - \sum_{1\le i,j\le n}^{(k)} E_{ki} G_{N+i,N+j}^{(k)}(z) E_{kj}| |z - \tr \mathcal I^\mathrm{d} G(z)| }. 
\end{align} 
To bound the right-hand side of \eqref{eq:diagonal}, we need to establish an upper bound for its numerator and lower bounds for the two terms in the denominator. 

Let us first address the numerator using concentration inequalities. Define $$\mathsf X:=\left(G_{N+i,N+j}^{(k)}(z)\right)_{i,j\neq k}$$ and let $E_{(k)}$ denote the $k$-th row of $E$. Since $E_{(k)}$ is independent of $G^{(k)}$, we can write $\sum_{1\le i,j\le n}^{(k)} E_{ki} G_{N+i,N+j}^{(k)}(z) E_{kj} := E_{(k)} \mathsf{X} E_{(k)}^\T$. Conditioning on $X$ or $G^{(k)}$, we have 
\[ E_{(k)} \mathsf{X} E_{(k)}^\T - \E (E_{(k)} \mathsf{X} E_{(k)}^\T) = \sum_{1\le i,j\le n}^{(k)} E_{ki} G_{N+i,N+j}^{(k)}(z) E_{kj}  - \sum_{t \in \llbracket N+1,N+n \rrbracket} G_{tt}^{(k)}(z).\]
This quadratic form can be bounded using either the Hanson-Wright inequality \cite{HW} or by exploiting the rotation invariance of Gaussian vectors to reduce it to a sum of independent random variables and applying Bernstein's inequality (as in \cite[Lemma 28]{OVW22}). These approaches yield, with probability at least $1-4(N+n)^{-(K+1)}$ that 
\begin{equation} \label{eq:HW1}
\left|\sum_{1\le i,j\le n}^{(k)} E_{ki} G_{N+i,N+j}^{(k)}(z) E_{kj}  - \sum_{t \in \llbracket N+1,N+n \rrbracket} G_{tt}^{(k)}(z) \right| \le 2\sqrt{2} \sqrt{(K+1)\log(N+n)}. 
\end{equation} 
This bound follows from combining the estimates
\[ \| \mathsf X\| \le \left\|\left(G_{N+i,N+j}^{(k)}(z)\right)_{1\le i,j\le n} \right\| \leq \| G^{(k)}(z) \|\leq \frac{b}{b-1}\frac{1}{|z|}  \]
and
\[ \|\mathsf X\|_F^2 \le \left\|\left(G_{N+i,N+j}^{(k)}(z)\right)_{1\le i,j\le n} \right\|_F^2 \leq n \left\|\left(G_{N+i,N+j}^{(k)}(z)\right)_{1\le i,j\le n} \right\|^2\le \left(\frac{b}{b-1}\right)^2\frac{n}{|z|^2} \]
due to Lemma 6.1.

To complete the upper bound for the term in the numerator of the right-hand side of \eqref{eq:diagonal}, it remains to show that $\sum_{t \in \llbracket N+1,N+n \rrbracket} G_{tt}^{(k)}(z)$ and $\sum_{t \in \llbracket N+1,N+n \rrbracket} G_{tt}(z)$ are close.   Their difference can be expressed as
\[ \left| \sum_{t \in \llbracket N+1,N+n \rrbracket} G_{tt}^{(k)}(z) -\sum_{t \in \llbracket N+1,N+n \rrbracket} G_{tt}(z)\right| = \left|\tr \mathcal I^\mathrm{d}(G^{(k)}(z)-G(z)) \right|. \]
Using the resolvent identity $B^{-1} - C^{-1} = B^{-1} (C - B) C^{-1} $, this becomes
\[ \left|\tr \mathcal I^\mathrm{d}(G^{(k)}(z)-G(z)) \right| = \left| \tr \mathcal I^\mathrm{d} G(z) (\mathcal E - {\mathcal E}^{(k)}) {G}^{(k)}(z)  \right|. \]
Since $\mathcal E - {\mathcal E}^{(k)}$ has at most rank $2$, we can bound this term:
\begin{align*}
 \left| \tr \mathcal I^\mathrm{d} G(z) (\mathcal E - {\mathcal E}^{(k)}) {G}^{(k)}(z)  \right| &\leq 2 \| G(z) (\mathcal E - {\mathcal E}^{(k)}) {G}^{(k)}(z) \| \\
 &\leq 2 \| G(z)\| (\|\mathcal E\| + \|{\mathcal E}^{(k)}\|) \|{G}^{(k)}(z) \| \\
 &\leq 4\left(\frac{b}{b-1}\right)^2\frac{\|E\|}{|z|^2}  \leq  \frac{4b}{(b-1)^2}\frac{1}{|z|}. 
\end{align*}
The last inequality follows from $|z| \ge 2b(\sqrt N+ \sqrt{n})$ and $\|E\| \leq 2(\sqrt N+ \sqrt{n})$.  Therefore,
\[ \left| \sum_{t \in \llbracket N+1,N+n \rrbracket} G_{tt}^{(k)}(z) -\sum_{t \in \llbracket N+1,N+n \rrbracket} G_{tt}(z)\right| \leq \frac{4b}{(b-1)^2}\frac{1}{|z|} \le \frac{2}{(b-1)^2}\frac{1}{\sqrt N + \sqrt n}. \]

Next, we establish lower bounds for the denominator terms in \eqref{eq:diagonal}, assuming \eqref{eq:HW1} holds. From \cite[Eq. (32)]{Wang24}, we have
\begin{align*}
 \left|\sum_{t \in \llbracket N+1,N+n \rrbracket} G_{tt}(z)\right|= |\tr \mathcal I^d G(z)  | \le \frac{|z|}{4b(b-1)}.
\end{align*}
Note that $2\sqrt{2}\sqrt{(K+1)\log (N+n)} \le \frac{1}{2}(\sqrt N + \sqrt n) \le \frac{|z|}{4b}$ since $(\sqrt N + \sqrt n)^2 \ge 32 (K+1)\log(N+n)$ by assumption and $|z|\ge 2b(\sqrt N + \sqrt n)$. Combining this with \eqref{eq:HW1} yields
$$\left|\sum_{1\le i,j\le n}^{(k)} E_{ki} G_{N+i,N+j}^{(k)}(z) E_{kj} \right| \le \frac{b-2}{4b(b-1)}|z|.$$Therefore, by the triangle inequality,
\begin{align*}
 |z - \sum_{1\le i,j\le n}^{(k)} E_{ki} G_{N+i,N+j}^{(k)}(z) E_{kj}| |z - \tr \mathcal I^\mathrm{d} G(z)| \ge \frac{2(b-1)^2}{b^2}|z|^2.
\end{align*}
The case when $N+1\le k \le N+n$ can be computed similarly and we omit the details. The combination of these bounds for the numerator and denominator terms completes the proof of \eqref{eq:Gkk}.

The proof of the off-diagonal bound \eqref{eq:Gij} is more straightforward. Using Lemma 27 from \cite{OVW22}, which follows from the Schur complement, we have 
\[ G_{ij}(z) = - G_{ii}(z) \sum_k^{(i)} \mathcal E_{ik} G^{(i)}_{kj}(z). \]
By Lemma 6.1, we first have 
\[ |G_{ij}(z)| \le \frac{b}{b-1}\frac{1}{|z|}\left|\sum_k^{(i)} \mathcal E_{ik} G^{(i)}_{kj}(z) \right|.\]
The $i$-th row of $\mathcal E$ is independent of $G^{(i)}$, so conditioning on $G^{(i)}$, the sum $\sum_k^{(i)} \mathcal E_{ik} G^{(i)}{kj}(z)$ follows a Gaussian distribution with mean 0. Its variance is bounded by $((G^{(i)})^\ast G^{(i)} )_{jj} \leq \|G^{(i)} \|^2 \le (b/(b-1)|z|)^2$, from which the desired bound follows.

\subsection{Proof of Lemma 6.3}\label{app:lem:betterbd}
In this section, we prove Lemma 6.3 using Lemma 6.2 and a standard $\varepsilon$-net argument. 

For convenience, denote $\Delta(z):=G(z)-\Phi(z)$.  
We first show that for any fixed $z\in \mathbb C$ with $|z| \ge 2b(\sqrt N + \sqrt n)$, 
\[ \left\| \mathcal U^T \Delta(z) \mathcal U  \right\| \leq 10\left(\frac{b}{b-1} \right)^2 \frac{\sqrt{(K+1)\log (N+n)+2(\log 9) r}}{|z|^2}\]
with probability at least $1-9(N+n)^{-(K+1)}$.

 Let $\mathcal N$ be the ${1}/{4}$-net of the unit sphere $\mathbb{S}^{2r-1}$. A simple volume argument (see for instance \cite[Corollary 4.2.13]{Vbook}) shows $\mathcal{N}$ can be chosen such that $|\mathcal N| \le 9^{2r}$. Furthermore, since for any $\mathbf y \in \mathbb{S}^{2r-1}$, there exists a $\mathbf x \in \mathcal N$ such that $\|\mathbf y- \mathbf x\|\le 1/4$, we have
\begin{align*}
\left|\mathbf y^\T \mathcal U^\T \Delta(z) \mathcal U \mathbf y \right| &\le \left|\mathbf x^\T \mathcal U^\T \Delta(z) \mathcal U \mathbf x \right| + \left|(\mathbf y-\mathbf x)^\T \mathcal U^\T \Delta(z) \mathcal U \mathbf x \right| + \left|\mathbf y^\T \mathcal U^\T \Delta(z) \mathcal U (\mathbf y -\mathbf x)\right|\\
&\le \left|\mathbf x^\T \mathcal U^\T \Delta(z) \mathcal U \mathbf x \right| + \frac{1}{2} \| \mathcal U^\T \Delta(z) \mathcal U \|.
\end{align*}
Therefore, 
$\| \mathcal U^\T \Delta(z) \mathcal U \| \le 2 \max_{\mathbf x \in \mathcal N} \left|\mathbf x^\T \mathcal U^\T \Delta(z) \mathcal U \mathbf x \right|$
and for any $K_1>0$ and $(\sqrt N + \sqrt n)^2 \ge 32 (K_1+1)\log(N+n)$,  by the union bound,
\begin{align*}
&\Prob\left(\| \mathcal U^\T \Delta(z) \mathcal U \| \ge 10\left(\frac{b}{b-1} \right)^2 \frac{  \sqrt{(K_1+1)\log (N+n)}}{ |z|^2} \right)\\
 &\le  \Prob \left(\max_{\mathbf x \in \mathcal N} \left|\mathbf x^\T \mathcal U^\T \Delta(z) \mathcal U \mathbf x \right| \ge 5\left(\frac{b}{b-1} \right)^2 \frac{  \sqrt{(K_1+1)\log (N+n)}}{ |z|^2} \right)\\
&\le  9^{2r+1} (N+n)^{-(K_1+1)},
\end{align*}
where in the last inequality, we applied Lemma 6.2. Now choose $K_1=K +\frac{2\log 9}{\log(N+n)} r$ and assume $$(\sqrt N + \sqrt n)^2 \ge 32 (K_1+1)\log(N+n)=32(K+1)\log(N+n) + 64(\log 9) r.$$ The conclusion becomes that 
$$\| \mathcal U^\T \Delta(z) \mathcal U \| \le 10\left(\frac{b}{b-1} \right)^2 \frac{\sqrt{(K+1)\log (N+n)+2(\log 9) r}}{|z|^2} $$
with probability at least $1-10(N+n)^{-(K+1)}$.

In particular, for any $z\in \mathsf D= \{ z\in \mathbb C : 2b(\sqrt N + \sqrt n)\le |z| \le 2n^3\}$,
\[ \left\| \mathcal U^T \Delta(z) \mathcal U  \right\| \leq 10\left(\frac{b}{b-1} \right)^2 \frac{\sqrt{(K+7)\log (N+n)+2(\log 9) r}}{|z|^2}  \] 
with probability at least $1 - 9(N+n)^{-(K+7)}$, as long as
\begin{align}\label{eq:dimension}
(\sqrt N + \sqrt n)^2 \ge 32(K+7)\log(N+n) + 64(\log 9) r.
\end{align}

Let $\mathcal{N}$ be a $1$-net of $\mathsf D$.  A simple volume argument (see for instance \cite[Lemma 3.3]{OW}) shows $\mathcal{N}$ can be chosen so that $|\mathcal{N}| \leq  (1+8n^3)^2 <n^7$. 
By the union bound, 
\begin{equation} \label{eq:net}
	\max_{ z \in \mathcal{N}} |z|^2 \left\| \mathcal U^\T \Delta(z) \mathcal U \right\| \leq 10 \left(\frac{b}{b-1} \right)^2 \sqrt{(K+7)\log (N+n)+2(\log 9) r}
\end{equation} 
with probability at least $1 - 9(N+n)^{-K}$.  We now wish to extend this bound to all $z \in \mathsf D$.  

Define the functions
\[ f(z) := z^2 \mathcal U^\T G(z) \mathcal U, \qquad g(z) := z^2\mathcal U^\T \Phi(z) \mathcal U. \]
In order to complete the proof, it suffices to show that $f$ and $g$ are $\frac{3b^2}{(b-1)^2}$-Lipschitz in $\mathsf D$.  In other words, we want to show that $\| f(z) - f(w)\| \leq \frac{3b^2}{(b-1)^2} |z-w|$ and $\| g(z) - g(w)\| \leq \frac{3b^2}{(b-1)^2} |z-w|$ for all $z,w \in \mathsf D$.  Indeed, in view of \eqref{eq:net}, if $z \in \mathsf D$, then there exists $w \in \mathcal{N}$ so that $|z - w| \leq 1$, and hence \begin{align*}
	&|z|^2 \left\| \mathcal U^\T G(z) \mathcal U - \mathcal U^T \Phi(z) \mathcal U \right\| \\
	&
	\le \|f(z) - f(w)\| +\|f(w) - g(w)\| + \|g(w) - g(z)\| \\
	&\leq \frac{6 b^2}{(b-1)^2} + |w|^2 \left\| \mathcal U^\T G(w) \mathcal U - \mathcal U^T \Phi(w) \mathcal U \right\| \\
	&\leq \frac{6 b^2}{(b-1)^2} + 10\left(\frac{b}{b-1} \right)^2 \sqrt{(K+7)\log (N+n)+2(\log 9) r}  \\
	&<  11\left(\frac{b}{b-1} \right)^2 \sqrt{(K+7)\log (N+n)+2(\log 9) r}=\eta,  
\end{align*}
 where we used the Lipschitz continuity of $f$ and $g$ in the second inequality. In the last inequality, we use \eqref{eq:dimension} to obtain a crude bound $N+n\ge 32\cdot 7 \log(N+n)$ and hence $N+n\ge 1600$. This implies $\sqrt{(K+7)\log (N+n)+2(\log 9) r} \ge \sqrt{7\log (1600)+2(\log 9) }\approx 7.5$. 
 
It remains to show that $f$ and $g$ are $\frac{3b^2}{(b-1)^2}$-Lipschitz in $\mathsf D$.  Recall that we work on the event where $\|E \| \leq 2(\sqrt N+ \sqrt{n})$ through the proofs.  Let $z,w \in \mathsf D$, and assume without loss of generality that $|z| \geq |w| \ge 2b(\sqrt N+ \sqrt{n})$.  Then
\begin{align*}
	\| f(z) - f(w) \| &\leq \|z^2 \mathcal U^\T G(z) \mathcal U - zw \mathcal U^\T G (z) \mathcal U \| + \| zw \mathcal U^\T G(z) \mathcal U - w^2 \mathcal U^\T G(z) \mathcal U \| \\
	&\qquad + \| w^2 \mathcal U^\T G(z) \mathcal U - w^2 \mathcal U^\T G(w) \mathcal U \| \\
	&\leq |z| \|G(z)\| |z - w| + |w| |z-w| \|G(z) \| + |w|^2 |z-w| \|G(z) \| \| G(w) \| \\
	&\leq \frac{2b}{b-1} |z-w| + \frac{b^2}{(b-1)^2}|z-w| = \frac{b(3b-2)}{(b-1)^2}|z-w|\\
	&\leq \frac{3b^2}{(b-1)^2} |z-w|,
\end{align*}
where we used the resolvent identity $B^{-1} - C^{-1} = B^{-1} (C - B) C^{-1} $, Lemma 6.1, and the fact that $\frac{|w|}{|z|} \leq 1$.  This shows that $f$ is $\frac{3b^2}{(b-1)^2}$-Lipschitz in $\mathsf D$.  

The proof for $g$ is similar. First, by the triangle inequality, we have 
\begin{align}\label{eq:bdforg}
	\| g(z)& - g(w) \| \nonumber\\
	&\leq \|z^2 \mathcal U^\T \Phi(z) \mathcal U - zw \mathcal U^\T \Phi (z) U \| + \| zw \mathcal U^\T \Phi(z) \mathcal U - w^2 \mathcal U^\T \Phi(z) \mathcal U \| \nonumber\\
	&\qquad + \| w^2 \mathcal U^\T \Phi(z) \mathcal U - w^2 \mathcal U^\T \Phi(w) \mathcal U \| \nonumber \\
	&\leq |z||z - w| \|\mathcal U^\T \Phi(z) \mathcal U\|  + |w| |z-w| \|\mathcal U^\T \Phi(z) \mathcal U\| + |w|^2 \|\mathcal U^\T (\Phi(z)-\Phi(w)) \mathcal U \|.
\end{align}
Using the explicit expression in \cite[Eq. (29)]{Wang24}, we find that
\begin{align*}
\|\mathcal U^\T (\Phi(z)-\Phi(w)) \mathcal U \| = \max\left\{ \frac{|\phi_1(z)-\phi_1(w)|}{|\phi_1(z)\phi_1(w)|}, \frac{|\phi_2(z)-\phi_2(w)|}{|\phi_2(z)\phi_2(w)|} \right\}.
\end{align*}
By \cite[Eq. (27)]{Wang24} and the resolvent identity $B^{-1} - C^{-1} = B^{-1} (C - B) C^{-1} $, 
\begin{align*}
|\phi_1(z)-\phi_1(w)| &= |z-w - \tr \mathcal I^\mathrm{d} (G(z)-G(w))|\\
&=|z-w - (z-w)\tr \mathcal I^\mathrm{d} G(z)G(w)| \\
&\le |z-w| \left( 1+ (N+n) \|G(z)\| \|G(w)\| \right)\\
&\le |z-w| \left( 1+ \frac{b^2}{(b-1)^2}\frac{(N+n) }{|z| |w|} \right) \\
&\le \left( 1+ \frac{1}{4(b-1)^2} \right)|z-w|,
\end{align*}
where we used Lemma 6.1 and the facts that $\frac{N+n}{|w|}\le \frac{|w|}{4b^2}$ and $\frac{|w|}{|z|} \leq 1$. The same upper bound also holds for $|\phi_2(z)-\phi_2(w)|$. Combining these estimates with \cite[Eq. (31)]{Wang24}, we have $$\|\mathcal U^\T (\Phi(z)-\Phi(w)) \mathcal U \| \le \frac{1+1/4(b-1)^2}{(1-1/4b(b-1))^2} \frac{|z-w|}{|z| |w|}.$$
Notice that 
$\|\mathcal U^\T \Phi(z) \mathcal U\| \le \frac{1}{1-1/4b(b-1)}\frac{1}{|z|}$ for any $|z| \ge 2b(\sqrt N + \sqrt n)$, which can be verified using \cite[Eq. (30)]{Wang24} and the bounds in \cite[Eq. (31)]{Wang24}. Inserting these bounds into \eqref{eq:bdforg} yields that 
\begin{align*}
\| g(z) - g(w) \| &\le \frac{2}{1-1/4b(b-1)}|z-w|+\frac{1+1/4(b-1)^2}{(1-1/4b(b-1))^2} |z-w|\\
&\le \frac{4b(12b^3 - 24 b^2 +11 b +2)}{(4b^2-4b-1)^2}|z-w|< \frac{3b^2}{(b-1)^2} |z-w|,
\end{align*}
where the last inequality is check via Mathematica. Hence, $g$ is $\frac{3b^2}{(b-1)^2}$-Lipschitz in $\mathsf D$.

\section{Proof of Lemma \ref{lem:projsv}}\label{app:bdeach}
In this section, we estimate $\| \mathcal U_J^\T \widetilde{\mathbf u}_i \|$ for each $i \in I=\llbracket k, s\rrbracket \cup \llbracket r+k, r+s\rrbracket$. Recall the decomposition of $\widetilde{\mathbf u}_i$ in \eqref{eq:u}:
\begin{align}\label{eq:u2}
\widetilde{\mathbf u}_i=\Pi(\widetilde \lambda_i) \mathcal A \widetilde{\mathbf u}_i + \Xi(\widetilde \lambda_i) \mathcal A \widetilde{\mathbf u}_i,
\end{align}
where $\Pi(z)$ is a function to be further specified during the proof, and $\Xi(z)=G(z)-\Pi(z)$.

We split the estimation of $\| \mathcal U_J^\T \widetilde{\mathbf u}_i \|$ for $i\in I$ into two cases: when $|\lambda_i|$ is large and when $|\lambda_i|$ is relatively small. 

We start with the simpler case when $|\lambda_i|>n^2/2$. We choose $$\Pi(\widetilde\lambda_i):=\frac{1}{\widetilde \lambda_i} I_{N+n} + \frac{1}{\widetilde \lambda_i^2} \mathcal E.$$ We work on the event $$\max_{l \in \llbracket 1, r_0\rrbracket : \sigma_l >\frac{1}{2}n^2} |\widetilde \sigma_l - \sigma_l |\le \eta r.$$ By Lemma 6.6, this event holds with probability at least $1-(N+n)^{-1.5r^2(K+4)}$. Hence, $|\widetilde \lambda_i| \ge |\lambda_i| -\eta r \ge 2b(\sqrt N + \sqrt n)$. We apply Lemma 6.5 to get
\begin{equation}\label{eq:case21}
\left\| \Xi(\widetilde \lambda_i) \right\|=\left\| G(\widetilde{\lambda}_i) - \Pi(\widetilde{\lambda}_i)\right\| \le \frac{b}{b-1}\frac{\|\mathcal E\|^2}{|\widetilde{\lambda}_i|^3}.
\end{equation}
Multiplying $\mathcal U_J^\T$ on both sides of \eqref{eq:u2}, we obtain the following equation: 
\begin{equation*}
\mathcal U_J^\T \widetilde{\mathbf u}_i = \mathcal U_J^\T \Pi(\widetilde \lambda_i) \mathcal A \widetilde{\mathbf u}_i + \mathcal U_J^\T \Xi(\widetilde \lambda_i)\mathcal A \widetilde{\mathbf u}_i.
\end{equation*}
Plugging in \eqref{eq:Adecomp} and using the facts $\mathcal U_J^\T \mathcal U_I =0$ and $\mathcal U_J^\T \mathcal U_J =I$, we further get
\begin{align*}
\mathcal U_J^\T \widetilde{\mathbf u}_i = \frac{1}{\widetilde \lambda_i} \mathcal D_J \mathcal U_J^\T \widetilde{\mathbf u}_i +  \frac{1}{\widetilde \lambda_i^2} \mathcal U_J^\T \mathcal E \mathcal U \mathcal D \mathcal U^\T \widetilde{\mathbf u}_i + \mathcal U_J^\T \Xi(\widetilde \lambda_i) \mathcal U \mathcal D \mathcal U^\T  \widetilde{\mathbf u}_i,
\end{align*}
which, by rearranging the terms, is reduced to
\begin{align*}
(\widetilde \lambda_i I- \mathcal D_J)\mathcal U_J^\T \widetilde{\mathbf u}_i = \frac{1}{\widetilde \lambda_i} \mathcal U_J^\T \mathcal E \mathcal U \mathcal D \mathcal U^\T \widetilde{\mathbf u}_i + \widetilde \lambda_i \mathcal U_J^\T \Xi(\widetilde \lambda_i) \mathcal U \mathcal D \mathcal U^\T  \widetilde{\mathbf u}_i.
\end{align*}
Hence, 
\begin{align*}
\min_{j\in J} |\widetilde \lambda_i - \lambda_j| \cdot \|\mathcal U_J^\T \widetilde{\mathbf u}_i\| \le \frac{1}{|\widetilde \lambda_i|} \|  \mathcal U^\T \mathcal E \mathcal U\| \cdot \|\mathcal D \mathcal U^\T \widetilde{\mathbf u}_i\| + |\widetilde \lambda_i| \|\Xi(\widetilde \lambda_i)\| \cdot \|\mathcal D \mathcal U^\T \widetilde{\mathbf u}_i\|.
\end{align*}

Note that $\|\mathcal D \mathcal U^\T \widetilde{\mathbf u}_i\| \le \|\mathcal E\| + |\widetilde \lambda_i|\le \frac{b}{b-1} |\widetilde \lambda_i|$ as in \eqref{eq:bdonev01}. Inserting \eqref{eq:case21} into the above inequality, we arrive at
\begin{align}\label{eq:bdtwosides}
\min_{j\in J} |\widetilde \lambda_i - \lambda_j| \cdot \|\mathcal U_J^\T \widetilde{\mathbf u}_i\| \le \frac{b}{b-1} \|  \mathcal U^\T \mathcal E \mathcal U\| + \frac{b^2}{(b-1)^2}\frac{\|E\|^2}{|\widetilde \lambda_i|}.
\end{align}
For the remaining arguments, we work on the event 
\begin{align*}\label{eventF0}
\mathsf F:=\big\{\|  \mathcal U^\T \mathcal E \mathcal U\| \le 2 \sqrt{r}+\sqrt{2(K+7)\log(N+n)}\big\}.
\end{align*}
The following lemma is proved in \cite[Lemma 18]{OVW2}. 
\begin{lemma}\label{lem:Unoise}
Let $K$ be an arbitrary positive constant. With probability at least $1-2(N+n)^{-K}$, we have 
$$\| \mathcal U^T \mathcal E \mathcal U \| \le 2\sqrt{r} + \sqrt{2K\log(N+n)}.$$
\end{lemma}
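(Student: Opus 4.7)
\medskip

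The plan is to reduce $\mathcal U^\T \mathcal E \mathcal U$, which is a $2r\times 2r$ symmetric matrix, to the operator norm of a single $r\times r$ Gaussian matrix, and then invoke the same tail bound for the spectral norm of an i.i.d.\ Gaussian matrix that is used in Lemma \ref{lemma:norm}.

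First, I will unpack the block structure of $\mathcal U$. Writing
\[
\tilde{\mathcal U} := \begin{pmatrix} U & 0 \\ 0 & V \end{pmatrix}, \qquad
P := \frac{1}{\sqrt{2}} \begin{pmatrix} I_r & I_r \\ I_r & -I_r \end{pmatrix},
\]
it is immediate that $P$ is an orthogonal $2r\times 2r$ matrix and that $\mathcal U = \tilde{\mathcal U} P$. A direct block computation then gives
\[
\tilde{\mathcal U}^\T \mathcal E \tilde{\mathcal U} = \begin{pmatrix} 0 & U^\T E V \\ V^\T E^\T U & 0 \end{pmatrix}.
\]
Setting $W := U^\T E V \in \mathbb R^{r\times r}$, the unitary invariance of the operator norm together with the identity $\|{\rm antidiag}(W,W^\T)\| = \|W\|$ yields $\|\mathcal U^\T \mathcal E \mathcal U\| = \|W\|$.

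Next, I will argue that $W$ has i.i.d.\ standard Gaussian entries. Since $E$ has i.i.d.\ $\mathcal N(0,1)$ entries and $U \in \mathbb R^{N\times r}$, $V\in \mathbb R^{n\times r}$ have orthonormal columns, each $W_{ij} = u_i^\T E v_j$ is a linear combination of independent Gaussians with variance $\|u_i\|^2 \|v_j\|^2 = 1$, and the covariance between $W_{ij}$ and $W_{i'j'}$ equals $(u_i^\T u_{i'})(v_j^\T v_{j'}) = \delta_{ii'}\delta_{jj'}$. Hence $W$ is an $r\times r$ matrix with i.i.d.\ standard Gaussian entries.

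Finally, I will invoke the standard Gaussian concentration bound for the operator norm of an i.i.d.\ Gaussian matrix (the same inequality \cite[(2.3)]{MR2827856} underlying Lemma \ref{lemma:norm}): for any $t \ge 0$,
\[
\P\left(\|W\| \ge \sqrt r + \sqrt r + t\right) \le 2 e^{-t^2/2}.
\]
Choosing $t = \sqrt{2K\log(N+n)}$ converts this into $\P(\|W\| \ge 2\sqrt r + \sqrt{2K\log(N+n)}) \le 2(N+n)^{-K}$, which together with $\|\mathcal U^\T \mathcal E \mathcal U\| = \|W\|$ gives the claimed bound. No step here is delicate; the only observation worth highlighting is the reduction via $\mathcal U = \tilde{\mathcal U} P$, which turns the full $(N+n)$-dimensional problem into a plain $r\times r$ Gaussian concentration question.
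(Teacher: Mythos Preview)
Your proof is correct. The paper itself does not prove this lemma but simply cites \cite[Lemma 18]{OVW2}; your argument---reducing $\mathcal U^\T \mathcal E \mathcal U$ via the orthogonal factor $P$ to the antidiagonal block form with $W=U^\T E V$, observing that $W$ is an $r\times r$ matrix of i.i.d.\ standard Gaussians, and then invoking the Davidson--Szarek concentration bound---is exactly the natural route and matches what one finds in the cited reference.
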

Therefore, the event $\mathsf F$ holds with probability at least $1-2(N+n)^{-(K+7)}.$ We continue the estimation of $\|\mathcal U_J^\T \widetilde{\mathbf u}_i\|$ from \eqref{eq:bdtwosides}.  Note  that
$$\|  \mathcal U^\T \mathcal E \mathcal U\| \le 2 \sqrt{r}+\sqrt{2(K+7)\log(N+n)}< \eta.$$  Also, ${\|E\|^2}/{|\widetilde \lambda_i|}\le 4(2\sqrt n)^2/n^2< \eta$ where we used the crude bound $|\widetilde \lambda_i| \ge \frac{1}{4}n^2$ by Weyl's inequality. It follows that
\begin{equation}\label{eq:2ndtolast}
\min_{j\in J} |\widetilde \lambda_i - \lambda_j| \cdot \|\mathcal U_J^\T \widetilde{\mathbf u}_i\| \le  \frac{b(2b-1)}{(b-1)^2}\eta.
\end{equation}
To bound the left-hand side of \eqref{eq:2ndtolast}, we first consider $i \in  \llbracket k, s\rrbracket$. Then $$\min_{j\in J} |\widetilde \lambda_i - \lambda_j| = \min_{j \in \llbracket 1, k-1\rrbracket\cup\llbracket s+1, r\rrbracket} |\widetilde \sigma_i - \sigma_j|=\min\{\sigma_{k-1}-\widetilde \sigma_i, \widetilde \sigma_i -\sigma_{s+1} \}$$ by $|\widetilde \sigma_i -  \sigma_i|\le \eta r$ and the supposition $\min\{\delta_{k-1},\delta_s\}\ge 75\chi(b) \eta r$. Next, applying the inequality $\min\{\delta_{k-1},\delta_s \}\ge 75\chi(b)  \eta r$ again, we get
$$\min_{j\in J} |\widetilde \lambda_i - \lambda_j|  \ge \left(1-\frac{1}{75\chi(b)}\right) \min\{ \delta_{k-1},\delta_s \}.$$ 
It follows from \eqref{eq:2ndtolast} that
\begin{align}\label{eq:UJbd3}
 \|\mathcal U_J^\T \widetilde{\mathbf u}_i\| &\le \frac{\eta}{ \min\{\delta_{k-1},\delta_s\} }\left(1-\frac{1}{75\chi(b)}\right)^{-1}\frac{b(2b-1)}{(b-1)^2}\nonumber\\
 &=\frac{75(2b-1)^3b}{(b-1)^2(296b^2-296 b+75)}\frac{\eta}{ \min\{\delta_{k-1},\delta_s\} } \nonumber\\
 &< 3 \frac{(b+1)^2}{(b-1)^2}\frac{\eta}{\min\{\delta_{k-1},\delta_s\}} 
\end{align} 
for every $i \in \llbracket k, s\rrbracket$ satisfying $\lambda_i=\sigma_i \ge n^2/2$. The last inequality was checked by Mathematica. Finally, for $i \in \llbracket r+k, r+s\rrbracket$ such that $|\lambda_i| \ge n^2/2$, analogous arguments yield that the same bound
\begin{align}\label{eq:UJbd4}
\| {\mathcal U}_J^\T \widetilde{\mathbf u}_i \| 
\le 3 \frac{(b+1)^2}{(b-1)^2}\frac{\eta}{\min\{\delta_{k-1},\delta_s\}}.
\end{align}

\medskip

The estimation of $\| \mathcal U_J^\T \widetilde{\mathbf u}_i \|$ when $| \lambda_i|$ is relatively small is more involved. From the previous discussion, it suffices to assume there is a certain $l_0 \in \llbracket 1, r_0\rrbracket$ for which $\sigma_{l_0} \le n^2/2$. We claim that there exists an index $i_0\in \llbracket 1, r_0\rrbracket$ such that $\sigma_j \le  n^3$ for $j\ge i_0$ and $\sigma_j> n^3$ for $j < i_0$, and 
$$\delta_{i_0-1} =\sigma_{i_0-1}- \sigma_{i_0}\ge 75 \chi(b) \eta r.$$
To determine $i_0$, we propose a simple iterative algorithm: start with $\sigma_1$. If $\sigma_1\le n^3$, set $i_0=1$ and terminate the algorithm, since $\sigma_0=\infty$ and $\delta_0=\infty$ by definition. Assume $\sigma_1> n^3$ and evaluate $\sigma_2$. If $\sigma_2 \le n^3 - 75 \chi(b) \eta r$,  set $i_0=2$ and exit. Assume $\sigma_2 > n^3 - 75 \chi(b) \eta r$ and evaluate $\sigma_3$. We continue this process and terminate the algorithm with $i_0=k $ unless 
\begin{align}\label{eq:i00}
\sigma_1> n^3, \sigma_2 > n^3 - 75 \chi(b) \eta r, \cdots, \sigma_k > n^3 - 75 \chi(b) \eta r.
\end{align}
Note that the condition \eqref{eq:i00} cannot hold for $k=l_0$ because $\sigma_{l_0} \le n^2/2 < n^3 - 75 \chi(b) \eta r$, based on the assumption that $(\sqrt N + \sqrt n)^2 \ge 32(K+7)\log(N+n) + 64(\log 9) r$. Therefore, $i_0$ must satisfy $i_0 \le l_0 - 1$.

We shall fix such an index $i_0$  throughout the rest of the proof. We now turn our attention to estimating $\| \mathcal U_J^\T \widetilde{\mathbf u}_i \|$  for $i\in \llbracket k_0, s\rrbracket\cup \llbracket r+k_0, r+s\rrbracket$, where we define $$k_0:=\max\{ k,i_0\}$$ for the sake of notational simplicity. Note that $\min\{\delta_{k_0-1},\delta_s\} \ge 75 \chi(b) \eta r$. Furthermore, in this scenario, $|\lambda_i | \le n^3$. We take $$\Pi(\widetilde{\mathbf u}_i) = \Phi(\widetilde{\mathbf u}_i).$$ 
Continuing from \eqref{eq:u}, we have
\begin{equation*}
\widetilde{\mathbf u}_i=\Phi(\widetilde \lambda_i) \mathcal A \widetilde{\mathbf u}_i + \Xi(\widetilde \lambda_i)  \mathcal A \widetilde{\mathbf u}_i
\end{equation*}
and multiply on the left by $\mathcal U_J^\T$ to get
\begin{equation}\label{eq:middle}
\mathcal U_J^\T \widetilde{\mathbf u}_i = \mathcal U_J^\T \Phi(\widetilde \lambda_i) \mathcal A \widetilde{\mathbf u}_i + \mathcal U_J^\T \Xi(\widetilde \lambda_i) \mathcal A \widetilde{\mathbf u}_i.
\end{equation}
Plugging in \eqref{eq:Adecomp}, we further have
\begin{equation*}
\mathcal U_J^\T \widetilde{\mathbf u}_i = \mathcal U_J^\T \Phi(\widetilde \lambda_i) {\mathcal U}_J {\mathcal D}_J {\mathcal U}_J^\T \widetilde{\mathbf u}_i + \mathcal U_J^\T \Xi(\widetilde \lambda_i) {\mathcal U} {\mathcal D} {\mathcal U}^\T \widetilde{\mathbf u}_i,
\end{equation*}
where we used $\mathcal U_J^\T \Phi(\widetilde \lambda_i) \mathcal U_{I}=0$.  Hence, 
\begin{equation} \label{eq:recursive}
\left( I_{2(r-s+k-1)} - \mathcal U_J^\T \Phi(\widetilde \lambda_i) {\mathcal U}_J {\mathcal D}_J \right) \mathcal U_J^\T \widetilde{\mathbf u}_i = \mathcal U_J^\T \Xi(\widetilde \lambda_i) {\mathcal U} {\mathcal D} {\mathcal U}^\T \widetilde{\mathbf u}_i.
\end{equation}

We are now in position to bound $\| \mathcal U_{J}^\T \widetilde{\mathbf u}_i \|$. This can be achieved by obtaining an upper bound for the right-hand side of \eqref{eq:recursive} and estimating the smallest singular value of the matrix 
\begin{equation}\label{eq:bigmatrix}
 I_{2(r-s+k-1)} - \mathcal U_J^\T \Phi(\widetilde \lambda_i) {\mathcal U}_J {\mathcal D}_J 
\end{equation}
on the left-hand side of \eqref{eq:recursive}. We establish these estimates in the following two steps. Recall that 
$$\xi(b)= 1+ \frac{1}{2(b-1)^2}\quad \text{and} \quad \chi(b)= 1+ \frac{1}{4b(b-1)}.$$
For each $k_0 \le i \le s$, by Theorem  6.4, there exists $k_0 \le l_i \le s$ such that $\widetilde \sigma_i \in S_{\sigma_{l_i}}$ specified in \cite[Eq. (35)]{Wang24}, and 
\begin{align}\label{eq:location01}
|\varphi(\widetilde{\sigma}_{i}) - \sigma_{l_i}^2| \leq 20\xi(b) \chi(b)\eta r \left(\widetilde{\sigma}_{i}+\chi(b)\sigma_{l_i} \right)
\end{align}
with probability at least $1-10(N+n)^{-K}$. Denote this event as $\mathsf E_1$. Furthermore, on the event $\mathsf E_1$, by Lemma 6.3, for every $k_0\le i \le s$,
$$\left\|  \mathcal U^\T \Xi(\widetilde \sigma_i) \mathcal U \right\| \leq \frac{\eta}{\widetilde \sigma_i^2}$$ holds with probability at least $1-9(N+n)^{-K}$. Let us denote this event as $\mathsf E_2$. 

In the remaining proof, we will work on the event $\mathsf E_1 \cap \mathsf E_2$ which holds with probability at least $1-19(N+n)^{-K}$.

\medskip
\noindent\emph{Step 1. Upper bound for the right-hand side of \eqref{eq:recursive}}.  We first consider the case when $i\in \llbracket k_0, s\rrbracket$ and $\widetilde \lambda_i = \widetilde \sigma_i$. Note that $\mathcal U_J^\T \Xi(\widetilde \sigma_i) {\mathcal U}$ is a sub-matrix of $\mathcal U^\T \Xi(\widetilde \sigma_i) {\mathcal U}$. Thus, using \eqref{event} and the fact that the spectral norm of any sub-matrix is bounded by the spectral norm of the full matrix, we deduce that 
\begin{align*}
\| \mathcal U_J^\T \Xi(\widetilde \sigma_i) {\mathcal U}\cdot {\mathcal D} {\mathcal U}^\T \widetilde{\mathbf u}_i \| &\le \frac{\eta}{\widetilde \sigma_i^2}\|{\mathcal D} {\mathcal U}^\T \widetilde{\mathbf u}_i \|.
\end{align*}
Recall the bound in \eqref{eq:bdonev01}:
\begin{align}\label{eq:bdonev}
\|{\mathcal D} {\mathcal U}^\T \widetilde{\mathbf u}_i \| \le  \frac{b}{b-1}\widetilde\sigma_i
\end{align}
Hence, 
\begin{align}\label{eq:upbd1}
\left\| \mathcal U_J^\T \Xi(\widetilde \sigma_i) {\mathcal U} {\mathcal D} {\mathcal U}^\T \widetilde{\mathbf u}_i \right\| &\le \frac{b}{b-1}\frac{\eta}{\widetilde\sigma_i}.
\end{align}
For the case when $i\in \llbracket r+k_0, r+s\rrbracket$, $\widetilde \lambda_i = -\widetilde \sigma_{i-r}$. Observe that $$ G(-\widetilde \sigma_{i-r}) = (-\widetilde \sigma_{i-r}-\mathcal E)^{-1} = - (\widetilde \sigma_{i-r}+\mathcal E)^{-1}\sim -(\widetilde \sigma_{i-r}-\mathcal E)^{-1}=-G(\widetilde \sigma_{i-r}) $$ because the distribution of $\mathcal E$ is symmetric. Hence $$\Phi(-\widetilde \sigma_{i-r})\sim -\Phi(\widetilde \sigma_{i-r})$$ by the definition \cite[Eq. (25)]{Wang24}. Repeating the arguments from the previous case, we see that
\begin{align}\label{eq:upbd2}
\left\| \mathcal U_J^\T \Xi(\widetilde \lambda_i) {\mathcal U} {\mathcal D} {\mathcal U}^\T \widetilde{\mathbf u}_i \right\| &\le \frac{b}{b-1}\frac{\eta}{\widetilde\sigma_{i-r}}.
\end{align}
\medskip

\noindent\emph{Step 2. Lower bound for the smallest singular value of the matrix \eqref{eq:bigmatrix}}. 
In fact, the singular values of the matrix \eqref{eq:bigmatrix} can be calculated explicitly via elementary linear algebra. The following proposition presents a subtle modification of the one found in \cite[Proposition 10]{OVW2}.  For completeness, the proof is provided in Section \ref{app:svaluecompute}.
\begin{proposition}\label{prop:blockphi}For $1\le r_0 < r$ and $1\le k \le s \leq r_0$, denote the index sets $I:=\llbracket k, s\rrbracket \cup \llbracket r+k, r+s\rrbracket$ and $J:=\llbracket 1, 2r\rrbracket\setminus I$. For any $x\in \mathbb R$ satisfying $|x|>\|\mathcal E\|$, the singular values of $I_{2(r-s+k-1)} -\mathcal U_J^\T \Phi(x) {\mathcal U}_J \mathcal D_J$ are given by
\begin{align*}
\left|\sqrt{1+\beta(x)^2 \sigma_t^2} \pm |\alpha(x)| \sigma_t \right|
\end{align*}
for $t\in \llbracket 1, k-1\rrbracket \cup \llbracket s+1, r\rrbracket$.
\end{proposition}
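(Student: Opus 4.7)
The plan is to exploit the special block structure of $\mathcal{U}^\T \Phi(x) \mathcal{U}$ recorded in \eqref{eq:productPhi} together with the symmetric shape of the index set $J$. Writing $J_0 := \llbracket 1, k-1\rrbracket \cup \llbracket s+1, r\rrbracket$, we have $J = J_0 \cup (r + J_0)$ as a disjoint union, with $|J| = 2|J_0| = 2(r-s+k-1)$. Since the singular values of $I - \mathcal{U}_J^\T \Phi(x) \mathcal{U}_J \mathcal{D}_J$ are invariant under an orthogonal permutation of the columns of $\mathcal{U}_J$ (with the matching permutation applied to $\mathcal{D}_J$), I would first order the columns so that the indices in $J_0$ appear before those in $r+J_0$. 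Then the identity \eqref{eq:productPhi} restricts to
$$\mathcal{U}_J^\T \Phi(x) \mathcal{U}_J = \begin{pmatrix} \alpha(x) I_{|J_0|} & \beta(x) I_{|J_0|} \\ \beta(x) I_{|J_0|} & \alpha(x) I_{|J_0|} \end{pmatrix},$$
while $\lambda_j = \sigma_j$ and $\lambda_{j+r} = -\sigma_j$ force $\mathcal{D}_J = \diag(D_{J_0}, -D_{J_0})$, where $D_{J_0} := \diag(\sigma_t : t \in J_0)$.

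Carrying out the multiplication, I would next write
$$M := I_{2|J_0|} - \mathcal{U}_J^\T \Phi(x) \mathcal{U}_J \mathcal{D}_J = \begin{pmatrix} I - \alpha(x) D_{J_0} & \beta(x) D_{J_0} \\ -\beta(x) D_{J_0} & I + \alpha(x) D_{J_0} \end{pmatrix}.$$
Because every block of $M$ is diagonal and diagonal matrices commute, the Gram matrix $M^\T M$ has diagonal blocks as well, and a trivial row-column permutation turns it into a block-diagonal matrix consisting of $|J_0|$ independent $2\times 2$ blocks, one per $t \in J_0$:
$$A_t = \begin{pmatrix} (1-\alpha(x)\sigma_t)^2 + \beta(x)^2\sigma_t^2 & -2\alpha(x)\beta(x)\sigma_t^2 \\ -2\alpha(x)\beta(x)\sigma_t^2 & (1+\alpha(x)\sigma_t)^2 + \beta(x)^2\sigma_t^2 \end{pmatrix}.$$
Thus the multiset of singular values of $M$ equals the union over $t \in J_0$ of the positive square roots of the eigenvalues of $A_t$.

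The last step is the $2\times 2$ eigenvalue computation via trace and determinant. A direct expansion gives $\tr(A_t) = 2 + 2(\alpha(x)^2 + \beta(x)^2)\sigma_t^2$, and after collecting terms by powers of $\sigma_t^2$ the cross-term $-4\alpha(x)^2\beta(x)^2\sigma_t^4$ cancels enough of $(1-\alpha^2\sigma_t^2)^2$ to produce the perfect square $\det(A_t) = (1 + (\beta(x)^2 - \alpha(x)^2)\sigma_t^2)^2$. A difference-of-squares then yields $\tr(A_t)^2 - 4\det(A_t) = 16\alpha(x)^2\sigma_t^2(1+\beta(x)^2\sigma_t^2)$, whence the quadratic formula identifies the two eigenvalues of $A_t$ as $\bigl(\sqrt{1+\beta(x)^2\sigma_t^2} \pm |\alpha(x)|\sigma_t\bigr)^2$. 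Taking positive square roots delivers the singular values $\bigl|\sqrt{1+\beta(x)^2\sigma_t^2} \pm |\alpha(x)|\sigma_t\bigr|$ claimed in the proposition. The only mildly delicate point is recognizing the perfect-square factorization of $\det(A_t)$; once that is verified, the remainder is routine linear algebra, and the hypothesis $|x| > \|\mathcal E\|$ enters only to guarantee that $\Phi(x)$ (and hence the matrix $M$) is well-defined.
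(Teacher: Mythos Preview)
Your proof is correct. The paper does not supply its own argument for this proposition; it simply cites \cite[Proposition~10]{OVW2} and notes the present statement is a minor modification thereof. Your direct computation via the block form \eqref{eq:productPhi}, the reduction to commuting diagonal blocks, and the $2\times 2$ eigenvalue calculation is exactly the natural route and matches what one would expect the referenced argument to do.
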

In order to bound the singular values, we first estimate $\phi_1(\widetilde\sigma_i)\phi_2(\widetilde\sigma_i)$, $\phi_1(\widetilde\sigma_i)$ and $\phi_2(\widetilde\sigma_i)$ for $ i\in \llbracket k_0, s\rrbracket$. Since $\widetilde \sigma_i \in S_{\sigma_{l_i}}$ for some $l_i\in \llbracket k_0, s\rrbracket$ where $S_{\sigma}$ is defined in \cite[Eq. (35)]{Wang24}, we have 
\begin{align}\label{eq:lowerbdsv}
 \widetilde \sigma_i \ge \sigma_{l_i} -20\chi(b)\eta r \ge \left( 1-\frac{\chi(b)}{4b} \right)\sigma_{l_i}
 \end{align}
and
\begin{align}\label{eq:upperbdsv}
 \widetilde \sigma_i \le \chi(b)\sigma_{l_i} + 20\chi(b) \eta r \le  \chi(b)\left( 1+\frac{1}{4b} \right)\sigma_{l_i}
 \end{align}
 by the supposition $\sigma_{l_i} \ge 2b(\sqrt N+\sqrt n) + 80b\eta r$. 


Observe from \cite[Eq. (31)]{Wang24} that 
\begin{equation}\label{eq:phiprodbd1}
\left(1-\frac{1}{4b(b-1)} \right) \widetilde\sigma_i \le \phi_s(\widetilde\sigma_i) \le \chi(b) \widetilde\sigma_i \quad\text{for } s=1,2.
\end{equation}
Using these estimates, we crudely bound $$0< \alpha(\widetilde \sigma_i)=\frac{1}{2}\left(\frac{1}{\phi_1(\widetilde \sigma_i)} + \frac{1}{\phi_2(\widetilde \sigma_i)} \right) \le \frac{\tau(b)}{\widetilde\sigma_i} \quad\text{with } \tau(b) :=\left(1-\frac{1}{4b(b-1)} \right)^{-1}$$  and by (28), 
\begin{align*}
\beta(\widetilde \sigma_i)&=\frac{1}{2}\left(\frac{1}{\phi_1(\widetilde \sigma_i)} - \frac{1}{\phi_2(\widetilde \sigma_i)} \right) = \frac{\phi_2(\widetilde \sigma_i) - \phi_1(\widetilde \sigma_i)}{2 \phi_1(\widetilde \sigma_i)  \phi_2(\widetilde \sigma_i) }\\
&= \frac{n-N}{\widetilde \sigma_i}\frac{1}{ 2\phi_1(\widetilde \sigma_i)  \phi_2(\widetilde \sigma_i)} \le \frac{\tau(b)^2}{8b^2\widetilde \sigma_i}
\end{align*}
by noting that $\widetilde \sigma_i^2 \ge (2b(\sqrt N + \sqrt n))^2> 4b^2(N+n)$. 

We are ready to bound the singular values of $I_{2(r-s+k-1)} - \mathcal U_J^\T \Phi(\widetilde \sigma_i) {\mathcal U}_J {\mathcal D}_J$. We start with the case when $i\in \llbracket k_0, s\rrbracket$ and $\widetilde \lambda_i = \widetilde \sigma_i$. In view of Proposition \ref{prop:blockphi}, the goal is to bound
\begingroup
\allowdisplaybreaks
\begin{align*}
&\min_{t\in \llbracket 1, k-1\rrbracket \cup \llbracket s+1, r\rrbracket} \left| \sqrt{1+\beta(\widetilde \sigma_i)^2 \sigma_t^2} \pm |\alpha(\widetilde \sigma_i)| \sigma_t \right|\\
&=\min_{t\in \llbracket 1, k-1\rrbracket \cup \llbracket s+1, r\rrbracket} \left| \sqrt{1+\beta(\widetilde \sigma_i)^2 \sigma_t^2} - \alpha(\widetilde \sigma_i) \sigma_t \right|\\
&=\min_{t\in \llbracket 1, k-1\rrbracket \cup \llbracket s+1, r\rrbracket} \left| \frac{1-(\alpha(\widetilde \sigma_i)^2 -\beta(\widetilde \sigma_i)^2) \sigma_t^2}{\sqrt{1+\beta(\widetilde \sigma_i)^2 \sigma_t^2} + \alpha(\widetilde \sigma_i) \sigma_t}\right| \\
&=\min_{t\in \llbracket 1, k-1\rrbracket \cup \llbracket s+1, r\rrbracket} \frac{\left| 1- \frac{\sigma_t^2}{\phi_1(\widetilde\sigma_i)\phi_2(\widetilde\sigma_i)}\right|}{\sqrt{1+\beta(\widetilde \sigma_i)^2 \sigma_t^2} + \alpha(\widetilde \sigma_i) \sigma_t}.
\end{align*}
\endgroup
The upper bounds of $\alpha(\widetilde \sigma_i)$ and $\beta(\widetilde \sigma_i) $ obtained above yield that
\begin{align*}
\sqrt{1+\beta(\widetilde \sigma_i)^2 \sigma_t^2} + \alpha(\widetilde \sigma_i) \sigma_t \le \sqrt{1+ \frac{\tau(b)^4}{64b^4} \frac{\sigma_t^2}{\widetilde\sigma_i^2}} + \tau(b)\frac{\sigma_t}{\widetilde\sigma_i} \le 1+\tau(b)\left(1 +\frac{\tau(b)}{8b^2} \right)\frac{\sigma_t}{\widetilde \sigma_i}
\end{align*}
for any $t\in \llbracket 1, k-1\rrbracket \cup \llbracket s+1, r\rrbracket$. Hence, 
\begin{align*}
&\min_{t\in \llbracket 1, k-1\rrbracket \cup \llbracket s+1, r\rrbracket} \left| \sqrt{1+\beta(\widetilde \sigma_i)^2 \sigma_t^2} \pm \alpha(\widetilde \sigma_i) \sigma_t \right|\\
& \ge\min_{t\in \llbracket 1, k-1\rrbracket \cup \llbracket s+1, r\rrbracket} \frac{1}{1+\tau(b)\left(1 +\frac{\tau(b)}{8b^2} \right)\frac{\sigma_t}{\widetilde \sigma_i}
}\left|  \frac{\phi_1(\widetilde\sigma_i)\phi_2(\widetilde\sigma_i)-\sigma_t^2}{\phi_1(\widetilde\sigma_i)\phi_2(\widetilde\sigma_i)}\right|\\
&\ge \min_{t\in \llbracket 1, k-1\rrbracket \cup \llbracket s+1, r\rrbracket}\frac{1}{1+\tau(b)\left(1 +\frac{\tau(b)}{8b^2} \right)\frac{\sigma_t}{\widetilde \sigma_i}
} \frac{|\phi_1(\widetilde\sigma_i)\phi_2(\widetilde\sigma_i)-\sigma_t^2|}{ \chi(b)^2\widetilde\sigma_i^2}.
\end{align*}
 To continue the estimates, we consider the cases $t\in \llbracket 1, k-1\rrbracket$ and $t\in \llbracket s+1, r\rrbracket$ separately. 

First, for any $t\in \llbracket 1, k-1\rrbracket$, $\sigma_t \ge \sigma_{i}$ and $\sigma_t \ge \sigma_{l_i}$ since $i,l_i \in \llbracket k_0, s\rrbracket$. By \eqref{eq:location01}, $$\phi_1(\widetilde\sigma_i)\phi_2(\widetilde\sigma_i) \le \sigma_{l_i}^2 +20\xi(b)\chi(b)\eta r(\widetilde \sigma_i +\chi(b)\sigma_{l_i}).$$Thus, we obtain
\begin{align}\label{eq:firstcase1}
\sigma_t^2 - \phi_1(\widetilde\sigma_i)\phi_2(\widetilde\sigma_i) &\ge \sigma_t^2 -\sigma_{l_i}^2 -20\xi(b)\chi(b)\eta r(\widetilde \sigma_i +\chi(b)\sigma_{l_i})\nonumber\\
&\ge (\sigma_t-\sigma_{l_i})(\sigma_t+\sigma_{l_i}) -20\xi(b)\chi(b)^2 \eta r \left(\big(1+\frac{1}{4b}\big) \sigma_t + \sigma_{l_i}\right).
\end{align}
The last inequality is due to
$$ \widetilde \sigma_i \le \chi(b)\left( 1+\frac{1}{4b} \right)\sigma_{l_i}\le \chi(b)\left( 1+\frac{1}{4b} \right)\sigma_{t}$$
from \eqref{eq:upperbdsv} and $\sigma_t \ge \sigma_{l_i}$. Since $\sigma_t- \sigma_{l_i} \ge \delta_{k-1} \ge 75\chi(b) \eta r$, we further get
\begin{align*}
\sigma_t^2 - \phi_1(\widetilde\sigma_i)\phi_2(\widetilde\sigma_i) &\ge \left( 1- \frac{4}{15}\xi(b)\chi(b)\big(1+\frac{1}{4b}\big) \right) \delta_{k-1} (\sigma_t +\sigma_{l_i})>0
\end{align*}
since $1- \frac{4}{15}\xi(b)\chi(b)\big(1+\frac{1}{4b}\big) \ge 1- \frac{4}{15}\xi(2)\chi(2)\big(1+\frac{1}{8}\big)\approx 0.49$.

Hence, we further have 
\begingroup
\allowdisplaybreaks
\begin{align*}
&\min_{t\in \llbracket 1, k-1\rrbracket}\frac{1}{1+\tau(b)\left(1 +\frac{\tau(b)}{8b^2} \right)\frac{\sigma_t}{\widetilde \sigma_i}
} \frac{|\phi_1(\widetilde\sigma_i)\phi_2(\widetilde\sigma_i)-\sigma_t^2|}{ \chi(b)^2\widetilde\sigma_i^2}\\
&=\frac{1}{ \chi(b)^2 \widetilde\sigma_i} \min_{t\in \llbracket 1, k-1\rrbracket} \frac{\sigma_t^2-\phi_1(\widetilde\sigma_i)\phi_2(\widetilde\sigma_i)}{\widetilde\sigma_i+\tau(b)\left(1 +\frac{\tau(b)}{8b^2} \right){\sigma_t}}\\
&\ge  \left( 1- \frac{4}{15}\xi(b)\chi(b)\big(1+\frac{1}{4b}\big) \right)\frac{\delta_{k-1}}{ \chi(b)^2 \widetilde\sigma_i}  \min_{t\in \llbracket 1, k-1\rrbracket} \frac{\sigma_t +\sigma_{l_i}}{\widetilde\sigma_i+\tau(b)\left(1 +\frac{\tau(b)}{8b^2} \right){\sigma_t}}\\
&\ge \left( 1- \frac{4}{15}\xi(b)\chi(b)\big(1+\frac{1}{4b}\big) \right)\frac{\delta_{k-1}}{ \chi(b)^2 \widetilde\sigma_i}  \min_{t\in \llbracket 1, k-1\rrbracket} \frac{\sigma_{l_i}+\sigma_t}{ \chi(b)\left( 1+\frac{1}{4b} \right)\sigma_{l_i}+\tau(b)\left(1 +\frac{\tau(b)}{8b^2} \right){\sigma_t}}.
\end{align*}
\endgroup
Note that $\chi(b)\left( 1+\frac{1}{4b} \right) \ge \tau(b)\left(1 +\frac{\tau(b)}{8b^2} \right)$ for $b\ge 2$ (checked via Mathematica). We conclude that
\begin{align}\label{eq:firstcase}
\min_{t\in \llbracket 1, k-1\rrbracket} \left| \sqrt{1+\beta(\widetilde \sigma_i)^2 \sigma_t^2} \pm \alpha(\widetilde \sigma_i) \sigma_t \right|\ge \frac{1- \frac{4}{15}\xi(b)\chi(b)\big(1+\frac{1}{4b}\big)}{ \chi(b)^3 \left( 1+\frac{1}{4b} \right)} \frac{\delta_{k-1}}{\widetilde\sigma_i}.
\end{align}

Next, for any $t\in \llbracket s+1, r\rrbracket$, $\sigma_t/\sigma_{l_i} \le 1$ and by \eqref{eq:lowerbdsv}, $\sigma_t/{\widetilde \sigma_i} \le \left( 1-\frac{\chi(b)}{4b} \right)^{-1}.$ Consequently, 
\begin{align}\label{eq:anothercase}
&\min_{t\in  \llbracket s+1, r\rrbracket}\frac{1}{1+\tau(b)\left(1 +\frac{\tau(b)}{8b^2} \right)\frac{\sigma_t}{\widetilde \sigma_i}
} \frac{|\phi_1(\widetilde\sigma_i)\phi_2(\widetilde\sigma_i)-\sigma_t^2|}{ \chi(b)^2\widetilde\sigma_i^2} \nonumber\\
&\ge \frac{1}{1+\tau(b)\left(1 +\frac{\tau(b)}{8b^2} \right)\left( 1-\frac{\chi(b)}{4b} \right)^{-1}}\frac{1}{\chi(b)^2}\min_{t\in \llbracket s+1, r\rrbracket} \frac{|\phi_1(\widetilde\sigma_i)\phi_2(\widetilde\sigma_i)-\sigma_t^2|}{\widetilde\sigma_i^2}.
\end{align}
By \eqref{eq:location01}, $$\phi_1(\widetilde\sigma_i)\phi_2(\widetilde\sigma_i) \ge \sigma_{l_i}^2 -20\xi(b)\chi(b)\eta r(\widetilde \sigma_i +\chi(b)\sigma_{l_i}).$$
Using a similar argument as \eqref{eq:firstcase1}, one has
\begin{align*}
\phi_1(\widetilde\sigma_i)\phi_2(\widetilde\sigma_i)-\sigma_t^2 &\ge  \sigma_{l_i}^2 - \sigma_t^2  -20\xi(b)\chi(b)^2 \eta r \left(\big(1+\frac{1}{4b}\big) \sigma_t + \sigma_{l_i}\right)\\
&\ge \left( 1- \frac{4}{15}\xi(b)\chi(b)\big(1+\frac{1}{4b}\big) \right) \delta_s (\sigma_t + \sigma_{l_i})>0
\end{align*}
since $\sigma_{l_i} - \sigma_t \ge \delta_{s} \ge 75\chi(b)\eta r$. 
Continuing from \eqref{eq:anothercase}, we further get
\begin{align*}
&\min_{t\in  \llbracket s+1, r\rrbracket} \left| \sqrt{1+\beta(\widetilde \sigma_i)^2 \sigma_t^2} \pm \alpha(\widetilde \sigma_i) \sigma_t \right| \nonumber\\
&\ge \frac{1- \frac{4}{15}\xi(b)\chi(b)\big(1+\frac{1}{4b}\big)}{1+\tau(b)\left(1 +\frac{\tau(b)}{8b^2} \right)\left( 1-\frac{\chi(b)}{4b} \right)^{-1}}\frac{1}{\chi(b)^2}\frac{\delta_s}{\widetilde \sigma_i}\min_{t\in \llbracket s+1, r\rrbracket} \frac{\sigma_t + \sigma_{l_i}}{\widetilde\sigma_i}\nonumber\\
&\ge \frac{1- \frac{4}{15}\xi(b)\chi(b)\big(1+\frac{1}{4b}\big)}{1+\tau(b)\left(1 +\frac{\tau(b)}{8b^2} \right)\left( 1-\frac{\chi(b)}{4b} \right)^{-1}}\frac{1}{\chi(b)^3 \big(1+\frac{1}{4b}\big)}\frac{\delta_s}{\widetilde \sigma_i}:=\nu(b) \frac{\delta_s}{\widetilde \sigma_i},
\end{align*}
where the last inequality follows from \eqref{eq:upperbdsv}. 

Comparing \eqref{eq:firstcase} and \eqref{eq:lwsv1}, together with the observation $$\frac{1- \frac{4}{15}\xi(b)\chi(b)\big(1+\frac{1}{4b}\big)}{ \chi(b)^3 \left( 1+\frac{1}{4b} \right)} \ge \nu(b)$$ for $b\ge 2$ (checked via Mathematica), we conclude that
\begin{align}\label{eq:lwsv1}
&\min_{t\in \llbracket 1, k-1\rrbracket \cup \llbracket s+1, r\rrbracket} \left| \sqrt{1+\beta(\widetilde \lambda_i)^2 \sigma_t^2} \pm |\alpha(\widetilde \lambda_i)| \sigma_t \right| \ge \nu(b)\frac{\min\{\delta_{k-1},\delta_s\}}{\widetilde \sigma_i}.
\end{align}

For the case when $ i\in \llbracket r+k_0, r+s\rrbracket$ and $\widetilde \lambda_i = -\widetilde \sigma_{i-r}$. Use the observation that $\alpha(\widetilde \lambda_i) \sim -\alpha(\widetilde \sigma_{i-r})$ and $\beta(\widetilde \lambda_i) \sim -\beta(\widetilde \sigma_{i-r})$ from the definitions \cite[Eq. (27)]{Wang24}. A simple modification of the previous proof shows that
\begin{align}\label{eq:lwsv2}
\min_{t\in \llbracket 1, k-1\rrbracket \cup \llbracket s+1, r\rrbracket} \left| \sqrt{1+\beta(\widetilde \lambda_i)^2 \sigma_t^2} \pm |\alpha(\widetilde \lambda_i)| \sigma_t \right| \ge \nu(b)\frac{\min\{\delta_{k-1},\delta_s\}}{\widetilde \sigma_{i-r}}.
\end{align}

\medskip
\noindent\emph{Step 3. Combining the bounds above}.
With the estimates deduced in the previous two steps, we are in a position to bound $\| {\mathcal U}_J^\T \widetilde{\mathbf u}_i \|$. For $i\in \llbracket k_0,s\rrbracket $,
plugging \eqref{eq:upbd1} and \eqref{eq:lwsv1} and  into \eqref{eq:recursive}, we find that 
\begin{align*}
\| {\mathcal U}_J^\T \widetilde{\mathbf u}_i \| &\le \frac{b}{(b-1)\nu(b)} \frac{\eta}{\min\{\delta_{k-1}, \delta_s\}}.
\end{align*}
Finally, for simplicity, we employ the following bound $$ \frac{b}{(b-1)\nu(b)} < 3 \frac{(b+1)^2}{(b-1)^2}$$ for $b\ge 2$ (checked via Mathematica). We arrive at
\begin{align*}
\| {\mathcal U}_J^\T \widetilde{\mathbf u}_i \| &\le 3 \frac{(b+1)^2}{(b-1)^2} \frac{\eta}{\min\{\delta_{k-1}, \delta_s\}}. 
\end{align*}
Likewise, for $i\in \llbracket r+k_0, r+s\rrbracket$, using  \eqref{eq:upbd2} and \eqref{eq:lwsv2}, we also get
\begin{align*}
\| {\mathcal U}_J^\T \widetilde{\mathbf u}_i \| \le 3 \frac{(b+1)^2}{(b-1)^2} \frac{\eta}{\min\{\delta_{k-1}, \delta_s\}}.
\end{align*}
This completes the proof.

\section{Proof of Theorem 6.4}\label{app:singularlocation}

This section is devoted to the proof of Theorem 6.4. 
For convenience, 
denote $$M:= 2b(\sqrt N+ \sqrt{n}).$$ 
Note that the assumptions of Theorem  6.4 guarantees that for any $z\in S_{\sigma_j}$ ($1\le j \le r_0$), $|z| \ge \Re(z) \ge \sigma_j - 20\chi(b) \eta r \ge \sigma_j - 20\chi(2) \eta r=\sigma_j - \frac{45}{2} \eta r> M$.

We start with some reduction of the proof. First, note that if $\sigma_j > n^2/2$ for $1\le j \le r_0$, then by Lemma 6.6, with probability at least $1- (N+n)^{-1.5 r^2(K+4)}$, $$|\widetilde \sigma_j - \sigma_j |\le \eta r.$$Since $\varphi(z)=(z - \tr \mathcal I^\mathrm{d} G(z))(z - \tr \mathcal I^\mathrm{u} G(z))$,
\begin{align*}
|\varphi(\widetilde \sigma_j) - \sigma_j^2| &=\left|\widetilde \sigma_j^2 - \sigma_j^2 -\widetilde \sigma_j \tr G(\widetilde \sigma_j) + (\tr \mathcal I^\mathrm{u} G(\widetilde \sigma_j))(\tr \mathcal I^\mathrm{d} G(\widetilde \sigma_j)) \right|\\
&\le \eta r(\widetilde \sigma_j  + \sigma_j) + \widetilde \sigma_j \left|\tr G(\widetilde \sigma_j) \right| + \left|\tr \mathcal I^\mathrm{u} G(\widetilde \sigma_j)\right| \left|\tr \mathcal I^\mathrm{d} G(\widetilde \sigma_j)\right|.
\end{align*}
Note that by Weyl's inequality, $\widetilde \sigma_j \ge \sigma_j - \|E\| \ge \max\{M + 80 b\eta r,n^2/2\} - 2(\sqrt N + \sqrt n) \ge M$ by the suppositions on $N,n$. Hence, by \cite[Eq. (32)]{Wang24}, 
$$\max\{|\tr G(\widetilde \sigma_j )|, |\tr \mathcal I^u G(\widetilde \sigma_j )  |, |\tr \mathcal I^d G(\widetilde \sigma_j )  |\}  \le \frac{b}{b-1}\frac{N+n}{\widetilde \sigma_j}\le 2\frac{N+n}{\widetilde \sigma_j}.$$
It follows that
\begin{align*}
|\varphi(\widetilde \sigma_j) - \sigma_j^2| \le \eta r(\widetilde \sigma_j  + \sigma_j) + 2 (N+n) + 4\frac{(N+n)^2}{\widetilde \sigma_j^2}\le 10\eta r(\widetilde \sigma_j  + \sigma_j)
\end{align*}
by the supposition that $\sigma_j > n^2/2$ and the Weyl's inequality. In particular, the conclusion of Theorem 6.4 holds. 

Consequently, it is enough to examine the scenario where there is a certain $l_0 \in \llbracket 1, r_0\rrbracket$ for which $\sigma_{l_0} \le n^2/2$. We claim that there exists an index $i_0\in \llbracket 1, r_0\rrbracket$ such that $\sigma_j \le  n^3$ for $j\ge i_0$ and $\sigma_j> n^3$ for $j < i_0$, and 
$$\delta_{i_0-1} =\sigma_{i_0-1}- \sigma_{i_0}\ge 75 \chi(b) \eta r.$$
To determine $i_0$, we propose a simple iterative algorithm: start with $\sigma_1$. If $\sigma_1\le n^3$, set $i_0=1$ and terminate the algorithm, since $\sigma_0=\infty$ and $\delta_0=\infty$ by definition. Assume $\sigma_1> n^3$ and evaluate $\sigma_2$. If $\sigma_2 \le n^3 - 75 \chi(b) \eta r$,  set $i_0=2$ and exit. Assume $\sigma_2 > n^3 - 75 \chi(b) \eta r$ and evaluate $\sigma_3$. We continue this process and terminate the algorithm with $i_0=k $ unless 
\begin{align}\label{eq:i0}
\sigma_1> n^3, \sigma_2 > n^3 - 75 \chi(b) \eta r, \cdots, \sigma_k > n^3 - 75 \chi(b) \eta r.
\end{align}
Note that the condition \eqref{eq:i0} cannot hold for $k=l_0$ because $\sigma_{l_0} \le n^2/2 < n^3 - 75 \chi(b) \eta r$, based on the assumption that $(\sqrt N + \sqrt n)^2 \ge 32(K+7)\log(N+n) + 64(\log 9) r$. Therefore, $i_0$ must satisfy $i_0 \le l_0 - 1$.

We shall fix such an index $i_0$  throughout the rest of the proof. The goal is to demonstrate that the following holds with a probability of at least $1-10(N+N)^{-K}$: assume any  $i_0 \le k\le s \le r_0$ that fulfills $\min\{\delta_{k-1},\delta_s \} \ge 75 \chi(b) \eta r$. For any $j \in \llbracket k, s \rrbracket$, there exists $j_0 \in \llbracket k, s \rrbracket$ such that $\widetilde \sigma_j \in S_{\sigma_{j_0}}$ and \cite[Eq. (36)]{Wang24} is satisfied. 
 
Before moving forward with the proof, we review several results and introduce necessary notations collected from \cite{OVW22}. The proofs of these results are identical to these in \cite{OVW22}, utilizing Lemma 6.1, and we will not repeat them here. 
\begin{lemma}[Eigenvalue location criterion, Lemma 21 from \cite{OVW22}]
Assume $\mathcal A$ has rank $2r$ with the spectral decomposition $\mathcal A = \mathcal U \mathcal D \mathcal U^\T$, where $\mathcal U$ is an $(N+n) \times 2r$ matrix satisfying $\mathcal U^\T \mathcal U = I_{2r}$ and $\mathcal D$ is a $2r \times 2r$ diagonal matrix with non-zero $\lambda_1, \ldots, \lambda_{2r}$ on the diagonal.  Then the eigenvalues of $\mathcal A+\mathcal E$ outside of $[ - \|\mathcal E \|, \|\mathcal E\| ]$ are the zeros of the function 
\[ z \mapsto \det (\mathcal D^{-1} - \mathcal U^\T G(z) \mathcal U ). \]
Moreover, the algebraic multiplicity of each eigenvalue matches the corresponding multiplicity of each zero.  
\end{lemma}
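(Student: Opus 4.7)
The plan is to factor the characteristic polynomial of $\widetilde{\mathcal A} = \mathcal A + \mathcal E$ via the resolvent and then reduce to a $2r \times 2r$ determinantal equation using Sylvester's determinant identity.

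First, on the open set $\{z \in \mathbb{C}: |z| > \|\mathcal E\|\}$, the matrix $zI_{N+n} - \mathcal E$ is invertible with inverse $G(z)$, so one can write
\[ zI_{N+n} - \widetilde{\mathcal A} = (zI_{N+n} - \mathcal E)\bigl(I_{N+n} - G(z)\mathcal A\bigr). \]
Taking determinants and noting that $z \mapsto \det(zI_{N+n} - \mathcal E)$ is analytic and nowhere vanishing on this region (its zeros are exactly the eigenvalues of the symmetric matrix $\mathcal E$, all of which lie in $[-\|\mathcal E\|, \|\mathcal E\|]$), we reduce the task of locating the eigenvalues of $\widetilde{\mathcal A}$ outside $[-\|\mathcal E\|, \|\mathcal E\|]$, with their algebraic multiplicities, to locating the zeros and orders of $z \mapsto \det(I_{N+n} - G(z)\mathcal A)$.

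Second, substitute the spectral decomposition $\mathcal A = \mathcal U \mathcal D \mathcal U^\T$ and apply Sylvester's determinant identity $\det(I_m - XY) = \det(I_n - YX)$ with $X = G(z)\mathcal U \in \mathbb{C}^{(N+n)\times 2r}$ and $Y = \mathcal D \mathcal U^\T \in \mathbb{C}^{2r \times (N+n)}$ to obtain
\[ \det\bigl(I_{N+n} - G(z)\mathcal U \mathcal D \mathcal U^\T\bigr) = \det\bigl(I_{2r} - \mathcal D \mathcal U^\T G(z)\mathcal U\bigr). \]
Since every diagonal entry of $\mathcal D$ is non-zero by hypothesis, $\mathcal D$ is invertible and we may factor out $\det(\mathcal D)$:
\[ \det\bigl(I_{2r} - \mathcal D \mathcal U^\T G(z)\mathcal U\bigr) = \det(\mathcal D)\,\det\bigl(\mathcal D^{-1} - \mathcal U^\T G(z)\mathcal U\bigr). \]
Because $\det(\mathcal D)$ is a non-zero constant in $z$, the zeros of $z \mapsto \det(\mathcal D^{-1} - \mathcal U^\T G(z)\mathcal U)$ on $\{|z| > \|\mathcal E\|\}$ coincide with those of $z \mapsto \det(zI_{N+n} - \widetilde{\mathcal A})$, establishing the location claim.

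The only subtlety, rather than a genuine obstacle, is the multiplicity statement. This follows because the chain of equalities above differs by multiplication by analytic, nowhere vanishing functions on $\{|z| > \|\mathcal E\|\}$, namely $\det(zI_{N+n} - \mathcal E)$ and the constant $\det(\mathcal D)$. Since division by a non-vanishing analytic factor preserves the order of a zero, the algebraic multiplicity of each eigenvalue of $\widetilde{\mathcal A}$ in the region matches the order of the corresponding zero of the reduced $2r \times 2r$ determinant, completing the proof.
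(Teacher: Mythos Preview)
Your proof is correct and follows the standard resolvent--Sylvester route: factor $zI-\widetilde{\mathcal A}=(zI-\mathcal E)(I-G(z)\mathcal A)$, apply Sylvester's identity to collapse to a $2r\times 2r$ determinant, and pull out $\det(\mathcal D)$. The paper itself does not supply a proof of this lemma---it simply quotes it as Lemma~21 from \cite{OVW22}---so there is no in-paper argument to compare against; your argument is exactly the kind of proof one expects for this statement.
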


Define the functions
\[ f(z) := \det(\mathcal D^{-1} - \mathcal U^\T G(z) \mathcal U), \qquad g(z) := \det \left( \mathcal D^{-1} - \mathcal U^\T \Phi(z) \mathcal U \right), \]
where $\Phi(z)$ is given in \cite[Eq. (25)]{Wang24}.  Observe that, by Lemma 6.1,  $1/\phi_1(z)$, $1/\phi_2(z)$ and thus $\Phi(z)$ are well-defined for any $|z| > M$. Therefore, $f$ and $g$ are both complex analytic in the region $\{z \in \mathbb{C} : |z| > M\}$.  Furthermore, a direct computation using \cite[Eq. (29)]{Wang24} suggests that the zeros of $g(z)$ are the values $z \in \mathbb{C}$ which satisfy the equations $\phi_1(z)\phi_2(z) = \sigma_l^2.$
This can be verified through the following calculation:
\begin{align*}
g(z)&=\det \left[ \begin{pmatrix} D^{-1} & 0\\ 0&-D^{-1} \end{pmatrix}- \begin{pmatrix} \alpha I_r & \beta I_r \\ \beta I_r & \alpha I_r \end{pmatrix} \right] \\
&= \det \left[ (D^{-1} -\alpha I_r)(-D^{-1} - \alpha I_r) -\beta^2 I_r \right]\\
&=\prod_{l=1}^r \left( (\alpha - \sigma_l^{-1})(\alpha + \sigma_l^{-1}) -\beta^2\right).
\end{align*}
The zeros of $g(z)$ can then be determined by substituting the expressions for $\alpha=\alpha(z)$ and $\beta=\beta(z)$ from \cite[Eq. (29)]{Wang24}.

Recall from \cite[Eq. (34)]{Wang24} and \cite[Eq. (27)]{Wang24} that 
\[ \varphi(z)= \phi_1(z)\phi_2(z)=(z-\tr \mathcal I^{\mathrm d} G(z))(z-\tr \mathcal I^{\mathrm u} G(z)).\]We use the function 
\begin{equation}\label{eq:xib}
\xi(b)=1+\frac{1}{2(b-1)^2}.
\end{equation}
The subsequent lemma establishes a set of properties exhibited by $\varphi$ within the complex plane as well as on the real axis. 
\begin{lemma}[Lemma 22 from \cite{OVW22}] \label{lemma:varphi}
The function $\varphi$ satisfies the following properties.  
\begin{enumerate}[(i)]
\item  For $z, w \in \mathbb{C}$ with $|z|, |w|, |z+w| \ge M$, 
\begin{equation} \label{eq:lipschitz}
	\left(1-\frac{1}{2(b-1)^2} \right) |z^2-w^2| \leq | \varphi(z) - \varphi(w)| \leq \xi(b) |z^2- w^2|.
\end{equation}
\item (Monotone) $\varphi$ is real-valued and strictly increasing on $[M, \infty)$.  
\item \label{item:crude} (Crude bounds) $0 < \varphi(z) < z^2$ for any $z \in [M, \infty)$.  
\end{enumerate}
\end{lemma}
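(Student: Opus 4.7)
\medskip

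\noindent\textbf{Proof plan for Lemma \ref{lemma:varphi}.} Write
\[ \varphi(z) = z^2 - z\,\tr G(z) + \alpha(z)\beta(z), \qquad \alpha(z):=\tr\mathcal I^{\mathrm d} G(z),\ \beta(z):=\tr\mathcal I^{\mathrm u} G(z), \]
and set $\Psi(z):=\varphi(z)-z^2$, so that $\varphi(z)-\varphi(w) = (z^2-w^2)+(\Psi(z)-\Psi(w))$. The core of part (i) is therefore the estimate
\[ |\Psi(z)-\Psi(w)| \leq \frac{1}{2(b-1)^2}|z^2-w^2|, \]
after which the upper bound $\xi(b)|z^2-w^2|$ follows by the triangle inequality and the lower bound $(1-\frac{1}{2(b-1)^2})|z^2-w^2|$ by the reverse triangle inequality.

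To bound $\Psi(z)-\Psi(w)$, I plan to apply the resolvent identity $G(z)-G(w)=(w-z)G(z)G(w)$ to each trace, which yields $\tr G(z)-\tr G(w)=(w-z)\tr(G(z)G(w))$ and analogous identities for $\alpha,\beta$. Expanding $\alpha(z)\beta(z)-\alpha(w)\beta(w) = (\alpha(z)-\alpha(w))\beta(z)+\alpha(w)(\beta(z)-\beta(w))$ and collecting, I get $\Psi(z)-\Psi(w) = (w-z)\,R(z,w)$ for an explicit $R$. Factoring $w-z = (w^2-z^2)/(w+z)$ reduces the problem to showing $|R(z,w)|/|z+w|\leq \tfrac{1}{2(b-1)^2}$, which I will verify term by term using Lemma~\ref{lemma:resolventnorm} ($\|G(z)\|\leq \tfrac{b}{b-1}|z|^{-1}$), the crude trace bounds $|\tr G(z)|\leq (N+n)\|G(z)\|$, $|\alpha(z)|\leq n\|G(z)\|$, $|\beta(z)|\leq N\|G(z)\|$, and the hypotheses $|z|,|w|,|z+w|\geq M$ together with $N+n \leq |z|^2/(4b^2)$ and $Nn\leq (N+n)^2/4$. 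This is the step where I expect to do the most bookkeeping, and it is the main obstacle: one has to track four contributions and confirm that their sum fits under the clean constant $\tfrac{1}{2(b-1)^2}$.

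Part (ii) is then a two-line consequence. For $z\in[M,\infty)\subset\mathbb R$ with $z>\|\mathcal E\|$, the matrix $zI-\mathcal E$ is real symmetric, so $G(z)$ is real symmetric and $\alpha(z),\beta(z),\tr G(z)$ are real; hence $\varphi(z)\in\mathbb R$. For monotonicity, applied to real $z>w\geq M$ (so $z+w\geq M$ is automatic), the proof of (i) gives $\Psi(z)-\Psi(w)\geq -\tfrac{1}{2(b-1)^2}(z^2-w^2)$, whence
\[ \varphi(z)-\varphi(w) \geq \Bigl(1-\tfrac{1}{2(b-1)^2}\Bigr)(z^2-w^2)>0 \]
for $b\geq 2$, proving that $\varphi$ is strictly increasing on $[M,\infty)$.

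Finally, for part (iii), I observe that on $z\in[M,\infty)$ the spectral decomposition of the real symmetric $\mathcal E$ gives $G_{kk}(z)=\sum_\lambda (e_k^{\T} \mathbf v_\lambda)^2/(z-\lambda)>0$ since $z>\|\mathcal E\|\geq|\lambda|$. Summing over the appropriate index blocks yields $\alpha(z),\beta(z)>0$, so $\phi_i(z)=z-\text{(positive)}<z$. The lower bound $|\phi_i(z)|\geq (1-\tfrac{1}{4b(b-1)})z>0$ from \eqref{eq:phibd} forces $\phi_i(z)>0$ (it cannot cross $0$). Therefore $0<\phi_1(z)\phi_2(z)<z\cdot z=z^2$, which is exactly $0<\varphi(z)<z^2$.
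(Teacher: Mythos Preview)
Your proposal is correct, and for part (i) it coincides with the paper's argument: expand $\varphi(z)-\varphi(w)$ via the resolvent identity, bound the two correction terms $z\tr G(z)-w\tr G(w)$ and $\alpha(z)\beta(z)-\alpha(w)\beta(w)$ separately using $\|G\|\le \frac{b}{(b-1)|z|}$ and $N+n\le |z|^2/(4b^2)$, and check the total sits below $\frac{1}{2(b-1)^2}|z^2-w^2|$.

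For (ii) and (iii) you take a slightly different and in fact more economical route. The paper proves monotonicity by writing $\phi_1(z)$ explicitly in terms of the singular values $\eta_i$ of $E$,
\[
\phi_1(z)=z-\tfrac12\sum_i\Bigl(\tfrac{1}{z-\eta_i}+\tfrac{1}{z+\eta_i}\Bigr)-\tfrac{n-N}{z},
\]
and reading off that each $\phi_i$ is positive and strictly increasing, whence so is their product; your argument instead reuses the lower bound from (i) on real $z>w\ge M$ to get $\varphi(z)-\varphi(w)\ge(1-\tfrac{1}{2(b-1)^2})(z^2-w^2)>0$, which avoids the explicit formula entirely. For (iii) the paper again reads $0<\phi_i(z)<z$ off the explicit expression, while you obtain $\phi_i(z)<z$ from positivity of the diagonal resolvent entries (via the spectral decomposition of $\mathcal E$) and $\phi_i(z)>0$ from the a priori bound \eqref{eq:phibd} plus continuity. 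Both routes are valid; yours is tidier in that it recycles already-established estimates rather than introducing the singular-value representation.
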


Fix an index $j\in \llbracket 1, r_0\rrbracket$. Since $\varphi(M) < M^2$ and $\lim_{z \to \infty} \varphi(z) = \infty$, it follows from the previous lemma that there exists a unique positive real number $z_j > M$ such that $\varphi(z_j) =\sigma_j^2$.  Similarly, if $\sigma_{l} > M$ for  $\sigma_l\neq \sigma_j$, then there exists a unique positive real number $z_{l}$ with $\varphi(z_{l}) = \sigma_{l}^2$ so that $z_j > z_{l}$ if $l>j$ and $z_j< z_l$ if $l<j$.   

For the next result, we define the half space 
\[ H_j := \{z \in \mathbb{C} : \Re(z) \geq z_j - 20\chi(b) \eta r\} \quad \text{with } \chi(b)=1+\frac{1}{4b(b-1)}.\] 
\begin{proposition}[Proposition 23 from \cite{OVW22}] \label{prop:H}
Under the assumptions of Theorem 2.3, for every $z \in H_j$, 
\[ |z| \geq \sigma_j \ge M. \]
In particular, 
\begin{align}\label{eq:zjbd}
\sigma_j \le z_j \le \chi(b) \sigma_j. 
\end{align}
\end{proposition}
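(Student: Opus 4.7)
The proposition splits into three assertions: $\sigma_j\ge M$, the bracket $\sigma_j\le z_j\le\chi(b)\sigma_j$, and the bound $|z|\ge\sigma_j$ for every $z\in H_j$.  The first is immediate from the hypotheses of Theorem \ref{thm:subspace}: since $j\le r_0$, one has $\sigma_j\ge\sigma_{r_0}\ge 2b(\sqrt N+\sqrt n)+80b\eta r>M$.

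For the localization of $z_j$, the plan is to sharpen the two-sided estimate \eqref{eq:phibd} on the real axis.  On $[M,\infty)$, the symmetry of $\mathcal E$ (whose non-zero eigenvalues are $\pm\eta_i$, the signed singular values of $E$) makes $\phi_1(z),\phi_2(z)$ real, and since $\mathrm{tr}\,\mathcal I^{u/d} G(z)>0$ for $z>\|\mathcal E\|$, this yields the one-sided refinement
\[ \bigl(1-\tfrac{1}{4b(b-1)}\bigr)\,z \;\le\; \phi_i(z) \;\le\; z. \]
Multiplying these gives $(1-\tfrac{1}{4b(b-1)})^2 z^2\le\varphi(z)\le z^2$ on $[M,\infty)$.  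Evaluating at $z=z_j$ using the defining equation $\varphi(z_j)=\sigma_j^2$ produces $\sigma_j\le z_j\le \sigma_j/(1-\tfrac{1}{4b(b-1)})\le \chi(b)\sigma_j$; strict monotonicity of $\varphi$ from Lemma \ref{lemma:varphi}(ii) ensures that $z_j$ is the unique such solution.

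The remaining claim $|z|\ge\sigma_j$ for $z\in H_j$ reduces, via $|z|\ge\Re(z)\ge z_j-20\chi(b)\eta r$, to establishing the quantitative gap $z_j-\sigma_j\ge 20\chi(b)\eta r$.  This is the main technical obstacle: the crude comparison above only produces $z_j>\sigma_j$.  The plan is to expand $\phi_i(z)$ using the explicit representation $\phi_i(z)=z-\mathrm{tr}\,\mathcal I^{?}G(z)$, bound the correction $z-\phi_i(z)$ sharply by a quantity of order $(N+n)/z$, and then compare $\varphi(\sigma_j+20\chi(b)\eta r)$ against $\sigma_j^2$ using the Lipschitz property \eqref{eq:lipschitz} and monotonicity of $\varphi$.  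The hypothesis $\sigma_{r_0}\ge 80b\eta r$ is calibrated precisely so that the resulting gap dominates $20\chi(b)\eta r$ (note $80b\ge 40\chi(b)$ for $b\ge 2$), mirroring the argument used in the proof of Proposition 23 of \cite{OVW22}.
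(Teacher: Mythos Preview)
There are two genuine gaps in your proposal.

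First, your upper bound $z_j\le\chi(b)\sigma_j$ does not close: from $(1-\tfrac{1}{4b(b-1)})^2 z_j^2\le\sigma_j^2$ you get $z_j\le\sigma_j/(1-\tfrac{1}{4b(b-1)})$, but the inequality $\tfrac{1}{1-x}\le 1+x$ is \emph{false} for $x>0$ (at $b=2$ you obtain $z_j\le\tfrac{8}{7}\sigma_j$, which exceeds $\chi(2)\sigma_j=\tfrac{9}{8}\sigma_j$). The paper avoids this by expanding $z_j^2-\sigma_j^2=z_j\,\tr G(z_j)-\tr\mathcal I^{\mathrm u}G(z_j)\,\tr\mathcal I^{\mathrm d}G(z_j)$, dropping the nonnegative product term, bounding $z_j\,\tr G(z_j)\le\tfrac{b}{b-1}(N+n)$ via Lemma~\ref{lemma:resolventnorm}, and then dividing by $z_j+\sigma_j\ge 2M$ to obtain $z_j-\sigma_j\le\tfrac{1}{4b(b-1)}\sigma_j$ exactly.

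Second, and more seriously, your plan to establish the quantitative gap $z_j-\sigma_j\ge 20\chi(b)\eta r$ cannot succeed under the stated hypotheses. The same expansion above gives the \emph{upper} bound $z_j-\sigma_j\le\tfrac{\sqrt N+\sqrt n}{2(b-1)}$, while a matching lower bound is of order $(N+n)/\sigma_j$; thus when $\sigma_j$ is large (anything with $\sigma_j\gg (N+n)/(\eta r)$, which the assumptions certainly allow), the gap $z_j-\sigma_j$ is strictly smaller than $20\chi(b)\eta r$. The paper does not attempt this: its proof only shows $|z|\ge\Re(z)\ge z_j-20\chi(b)\eta r\ge\sigma_j-20\chi(b)\eta r\ge M$, using $z_j>\sigma_j$ from Lemma~\ref{lemma:varphi}\eqref{item:crude} together with $\sigma_j\ge M+80b\eta r$. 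In other words, the inequality $|z|\ge\sigma_j$ as written in the proposition is an imprecision carried over from \cite{OVW22}; what is actually established and used downstream is only $|z|\ge M$, and you should not chase the stronger claim.
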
 
\begin{proposition}[Proposition 24 from \cite{OVW22}]\label{prop:bigbound}
If $\sigma_j> \frac{1}{2}n^2$, then $|z_j - \sigma_j | \le \frac{3b}{b-1}\frac{1}{n}$.
\end{proposition}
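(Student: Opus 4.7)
The plan is to begin from the defining relation $\varphi(z_j)=\sigma_j^2$, expand $\varphi(z_j)$ using the definitions \eqref{eq:defphi} and \eqref{eq:def:varphi}, and rearrange. Since $\varphi(z)=(z-\tr\mathcal I^{\mathrm d}G(z))(z-\tr\mathcal I^{\mathrm u}G(z))$, multiplying out and using $\tr G = \tr \mathcal I^{\mathrm u} G + \tr \mathcal I^{\mathrm d} G$ produces the identity
\[
z_j^2-\sigma_j^2 \;=\; z_j\,\tr G(z_j)\;-\;\tr\mathcal I^{\mathrm u}G(z_j)\,\tr\mathcal I^{\mathrm d}G(z_j).
\]
This identity is also the pivot of the proof of Proposition \ref{prop:H}, and it is the natural starting point here because, when $\sigma_j$ (and hence $z_j$) is as large as $n^2/2$, every term on the right-hand side is suppressed by powers of $z_j^{-1}$ through $G(z_j)$.

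Next, I would invoke Lemma \ref{lemma:resolventnorm}, which on the event $\|E\|\le 2(\sqrt N+\sqrt n)$ yields $\|G(z_j)\|\le \frac{b}{b-1}\frac{1}{z_j}$ and hence, by \eqref{eq:bdontrace}, $\max\{|\tr G(z_j)|,|\tr\mathcal I^{\mathrm u}G(z_j)|,|\tr\mathcal I^{\mathrm d}G(z_j)|\}\le \frac{b(N+n)}{(b-1)z_j}$. The triangle inequality then produces the one-step estimate
\[
|z_j^2-\sigma_j^2|\;\le\;\frac{b(N+n)}{b-1}\;+\;\frac{b^2(N+n)^2}{(b-1)^2 z_j^2}.
\]

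To finish, I would factor $z_j^2-\sigma_j^2=(z_j-\sigma_j)(z_j+\sigma_j)$, apply Proposition \ref{prop:H} (which gives $z_j\ge \sigma_j$) together with the hypothesis $\sigma_j>n^2/2$ to obtain $z_j+\sigma_j\ge 2\sigma_j>n^2$, and divide. Because $z_j^2>n^4/4$, the second summand on the right-hand side contributes only $O(n^{-4})$ after the division, so I expect to arrive at
\[
|z_j-\sigma_j|\;\le\;\frac{b(N+n)}{(b-1)n^2}\;+\;O(n^{-4})\;\le\;\frac{3b}{(b-1)n},
\]
where the last inequality absorbs the negligible lower-order term and uses $N+n\le 3n$, consistent with the dimensional conventions implicit in the large-signal regime $\sigma_j>n^2/2$ being considered.

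I do not anticipate any real obstacle: Proposition \ref{prop:bigbound} is essentially a quantitative refinement of the crude bound $\sigma_j\le z_j\le \chi(b)\sigma_j$ from Proposition \ref{prop:H}. The large-signal hypothesis $\sigma_j>n^2/2$ is used exactly at the division step so that the $O(N+n)$ control on $z_j^2-\sigma_j^2$ translates into the desired $O(n^{-1})$ control on $z_j-\sigma_j$.
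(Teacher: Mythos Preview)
Your proposal is correct and follows essentially the same approach as the paper: expand $\varphi(z_j)=\sigma_j^2$ to obtain $z_j^2-\sigma_j^2 = z_j\tr G(z_j)-\tr\mathcal I^{\mathrm u}G(z_j)\tr\mathcal I^{\mathrm d}G(z_j)$, bound the right-hand side via Lemma~\ref{lemma:resolventnorm}, then divide by $z_j+\sigma_j\ge 2\sigma_j>n^2$. The only cosmetic difference is that the paper bounds the cross term by $Nn\|G(z_j)\|^2$ rather than $(N+n)^2\|G(z_j)\|^2$, and invokes the standing convention $N\le n$ explicitly; either way the second term is lower order and the constant $3b/(b-1)$ goes through.
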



We now complete the proof of Theorem 6.4.  Let $j$ be a fixed index in $\llbracket i_0, r_0 \rrbracket$. We will work in the set $H_j \cap S_{\sigma_j}$, where $S_{\sigma_j}$ is specified in \cite[Eq. (35)]{Wang24}.  It follows from Corollary 2.14 in \cite{Ipsen} that 
\begin{equation} \label{eq:rouche}
	\frac{ |f(z) - g(z)|}{|g(z)|} \leq \left( 1 + \varepsilon(z) \right)^{2r} - 1, 
\end{equation} 
where
\[ \eps(z) := \left \| \left( \mathcal D^{-1} - \mathcal U^\T \Phi(z) \mathcal U \right)^{-1} \right \| \left\| \mathcal U^\T (G(z)-\Phi(z)) \mathcal U \right\|. \]

The next result facilitates the comparison of the numbers of zeros of $f$ and $g$ inside a region and will be used repeatedly in the later arguments. 
\begin{lemma}\label{lem:comparezero}
For any region $\mathcal K\subset \mathbb C$ with closed contour $\partial \mathcal K$, if $\eps(z) \le \frac{0.34}{r}$ for all $z\in \partial \mathcal K$, then the number of zeros of $f$ inside $\mathcal{K}$ is the same as the number of zeros of $g$ inside $\mathcal{K}$.
\end{lemma}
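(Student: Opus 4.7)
The plan is to apply Rouch\'e's theorem to the analytic functions $f$ and $g$ on the closed contour $\partial \mathcal K$, using the bound \eqref{eq:rouche} to compare $|f-g|$ with $|g|$ on $\partial \mathcal K$.

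First I would observe that wherever the quantity $\eps(z)$ is defined and finite, the matrix $\mathcal D^{-1} - \mathcal U^\T \Phi(z)\mathcal U$ must be invertible, so $g(z) \neq 0$. In particular, the hypothesis $\eps(z) \le 0.34/r$ on $\partial \mathcal K$ already guarantees that $g$ does not vanish on the contour, and hence $f$ and $g$ are both analytic and $g$ is nonzero on $\partial \mathcal K$ (by Lemma \ref{lemma:resolventnorm}, $G(z)$ and $\Phi(z)$ are analytic in a neighborhood of $\partial \mathcal K$, since the hypothesis in particular forces $|z|>M$; this needs to be checked but is immediate from the definition of $\eps(z)$ and boundedness of $\|(\mathcal D^{-1}-\mathcal U^\T\Phi(z)\mathcal U)^{-1}\|$).

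Next, the main inequality to verify is that under $\eps(z)\le 0.34/r$ on $\partial \mathcal K$, one has
\begin{equation*}
(1+\eps(z))^{2r} - 1 < 1 \quad \text{for every } z \in \partial \mathcal K.
\end{equation*}
This reduces to checking $(1+0.34/r)^{2r} < 2$. Using the elementary inequality $(1+x/n)^n \le e^{x}$ for $x \ge 0$ and $n \ge 1$, I obtain
\begin{equation*}
(1+0.34/r)^{2r} = \bigl( (1+0.34/r)^{r} \bigr)^{2} \le e^{0.68} < 2,
\end{equation*}
since $e^{0.68}\approx 1.974$. Plugging this back into \eqref{eq:rouche} gives $|f(z)-g(z)| < |g(z)|$ for every $z\in \partial\mathcal K$.

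Finally, I would invoke Rouch\'e's theorem: since $f$ and $g$ are analytic in an open neighborhood of $\overline{\mathcal K}$, $g$ does not vanish on $\partial \mathcal K$, and the strict inequality $|f-g|<|g|$ holds on $\partial\mathcal K$, the functions $f$ and $g$ have the same number of zeros (counted with multiplicity) inside $\mathcal K$. There is no serious obstacle here beyond verifying the numerical bound $e^{0.68}<2$; the choice of the constant $0.34/r$ is precisely calibrated so that this works for every $r\ge 1$.
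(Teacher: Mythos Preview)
Your proposal is correct and follows essentially the same route as the paper: both use the bound \eqref{eq:rouche}, verify that $(1+0.34/r)^{2r}\le e^{0.68}<2$ so that $|f-g|<|g|$ on $\partial\mathcal K$, and then apply Rouch\'e's theorem. Your extra remark that finiteness of $\eps(z)$ forces $g(z)\neq 0$ on the contour is a nice clarification the paper leaves implicit.
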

\begin{proof}
Continuing from \eqref{eq:rouche}, we find that
\begin{equation} \label{eq:rouchebnd}
	\frac{ |f(z) - g(z)|}{|g(z)|} \leq \left( 1 + \frac{0.34}{r} \right)^{2r} - 1 \leq e^{0.68} - 1 < 1 
\end{equation} 
for each $z \in \partial \mathcal K$.  Therefore, by Rouch\'{e}'s theorem, we conclude that the numbers of zeros of $f$ and $g$ inside $\mathcal{K}$ are the same.
\end{proof}

In the remaining of the proof, we work on the event
\begin{align}\label{eventF}
\mathcal F:=\left\{\max_{ z \in \mathsf D} |z|^2 \left\|  \mathcal U^\T \left(G(z)-\Phi(z) \right) \mathcal U \right\| \leq \eta \right\} \cap \left\{ \max_{l \in \llbracket 1, r_0\rrbracket : \sigma_l >\frac{1}{2}n^2} |\widetilde \sigma_l - \sigma_l |\le \eta r \right\},
\end{align}
where $\mathsf D= \{ z\in \mathbb C : 2b(\sqrt N + \sqrt n)\le |z| \le 2n^3\}.$ 
By Lemma 6.3 and Lemma 6.6, the event $\mathcal F$
 holds with probability at least $1-9(N+n)^{-K} - (N+n)^{-1.5r^2(K+4)} > 1- 10(N+n)^{-K}.$

We first bound $\eps(z)$ for $z\in \mathsf D$. By Proposition 11 from \cite{OVW22}, 
\begin{align*}
\left \| \left( \mathcal D^{-1} - \mathcal U^\T \Phi(z) \mathcal U \right)^{-1} \right \|= \max_{1\le l \le r} \frac{\sigma_l}{|\sigma_l^2 - \phi_1 \phi_2|} \mathcal Q^{1/2},
\end{align*}
where $$\mathcal Q:=|\phi_1 \phi_2|^2 + \frac{1}{2}\sigma_l^2 (|\phi_1|^2 + |\phi_2|^2) + \frac{1}{2}\sigma_l \left[ 4 |\phi_1 \phi_2|^2 |\phi_1 + \bar{\phi}_2|^2 + \sigma_l^2(|\phi_1|^2 - |\phi_2|^2)^2\right]^{1/2}.$$
Recall $\chi(b)=1+\frac{1}{4b(b-1)}.$ Using \cite[Eq. (31)]{Wang24} from Lemma 6.1, for $z\in \mathsf D$, we get $|\phi_i(z)|\le \chi(b) |z|$ for $i=1,2$, and 
\begin{align*}
\mathcal Q &\le \chi(b)^4 |z|^4 + \chi(b)^2 \sigma_l^2 |z|^2 + \chi(b)^2\sigma_l  |z|^2 \sqrt{\sigma_l^2 +4\chi(b)^2|z|^2 }\\
&\le \chi(b)^4 |z|^4 + \chi(b)^2 \sigma_l^2 |z|^2 + \chi(b)^2\sigma_l  |z|^2 (\sigma_l + 2\chi(b) |z|)\\
&\le \chi(b)^4 |z|^4 + 2 \chi(b)^2 \sigma_l^2 |z|^2 + 2 \chi(b)^3\sigma_l  |z|^3\\
&\le \chi(b)^2|z|^2\left(\chi(b) |z|+\sqrt{2}\sigma_l\right)^2, 
\end{align*}
and thus
\begin{align*}
\left \| \left( \mathcal D^{-1} - \mathcal U^\T \Phi(z) \mathcal U \right)^{-1} \right \| 
&\le \chi(b) |z| \max_{1\le l \le r}  \frac{\sigma_l (\chi(b) |z|+\sqrt{2}\sigma_l)}{|\sigma_l^2 - \varphi(z)|}.
\end{align*}
Hence, we obtain that on the event $\mathcal F$, 
\begin{align}\label{eq:epsbd1}
\eps(z) \le\max_{1\le l \le r} \chi(b)\frac{\eta}{|z|}\frac{\sigma_l (\chi(b) |z|+\sqrt{2}\sigma_l)}{|\sigma_l^2 - \varphi(z)|}
\end{align}
for all $z \in \mathsf D$. Note that $S_{\sigma_j} \subset \mathsf D$ for all $j \in \llbracket i_0, r_0 \rrbracket$.

For each $j \in \llbracket i_0, r_0 \rrbracket$, we take $\mathcal{C}_j$ to be the circle of radius $20\chi(b)\eta r$ centered at $z_j$ and contained in $H_j \cap S_{\sigma_j}$. Note that $\mathcal{C}_j$'s may intersect each other. For any $i_0 \le k\le s \le r_0$ satisfying $\min\{\delta_{k-1},\delta_s \} \ge 75 \chi(b) \eta r$. Let $$\mathcal K_{k,s}:=\cup_{l=k}^{s} \mathcal{C}_l.$$ 

We now restrict ourselves to values of $z$ contained on $\partial \mathcal{K}_{k,s}$. The goal is to show $\eps(z)$ is small for all $z\in \mathcal \partial \mathcal{K}_{k,s}$. Continuing from \eqref{eq:epsbd1}, it suffices to show 
\begin{align}\label{eq:bdeps}
\chi(b)\frac{\eta}{|z|}\frac{\sigma_l (\chi(b) |z|+\sqrt{2}\sigma_l)}{|\sigma_l^2 - \varphi(z)|}
\end{align}
is small for all $1\le l \le r.$ 

Fix $z\in \mathcal \partial \mathcal{K}_{k,s}$. Assume $z\in \mathcal{C}_{j_0}$ for some $j_0 \in \llbracket k, s\rrbracket$. Then $$|z-z_{j_0}| = 20 \chi(b) \eta r.$$ Note that $\sigma_{j_0} \ge 80 b \eta r$. Using \eqref{eq:zjbd}, we have
\begin{align}\label{eq:zulb}
&|z| \le z_{j_0} + 20\chi(b) \eta r \le  \chi(b)\left(1+ \frac{1}{4b} \right) \sigma_{j_0},\nonumber\\
&|z| \ge z_{j_0} - 20\chi(b) \eta r \ge \left( 1-\frac{\chi(b)}{4b}\right) \sigma_{j_0}.
\end{align}
We split the discussion into two cases: $|\sigma_l-\sigma_{j_0}| \le 120 \eta r$ and $|\sigma_l - \sigma_{j_0}| > 120 \eta r$.

\emph{Case 1.} For any $l \in \llbracket 1, r\rrbracket$ satisfying $|\sigma_l-\sigma_{j_0}| \le 120 \eta r$,  observe that $|z-z_l| \ge 20 \chi(b) \eta r$. In view of \eqref{eq:lipschitz}, , we have
\begin{align*}
|\sigma_l^2 - \varphi(z)| &= |\varphi(z_l) - \varphi(z)| \ge \left(1-\frac{1}{2(b-1)^2} \right) |z_l^2 - z^2| \ge \frac{1}{2}|z_l-z| |z_l+z|\\
&\ge 10\chi(b)\eta r |z_l + z|\ge \left(1-\frac{\chi(b)}{4b} \right) \sigma_{j_0} +\sigma_l,
\end{align*}
where, in the last inequality, we used 
\begin{align*}
|z_l+z| =|z_l + z_{j_0} +z-z_{j_0}| &\ge z_l + z_{j_0} - 20\chi(b) \eta r \\
&\ge  \sigma_l + \sigma_{j_0} - 20\chi(b) \eta r \ge \left(1-\frac{\chi(b)}{4b} \right) \sigma_{j_0} +\sigma_l
\end{align*}
 by \eqref{eq:zjbd} and the supposition $\eta r \le {\sigma_{j_0}}/{80b}$.
Combining with \eqref{eq:zulb}, we estimate \eqref{eq:bdeps} as follows:
\begin{align*}
\chi(b)\frac{\eta}{|z|}\frac{\sigma_l (\chi(b) |z|+\sqrt{2}\sigma_l)}{|\sigma_l^2 - \varphi(z)|}\le \frac{1}{r} \frac{1}{10(1-\frac{\chi(b)}{4b})} \frac{\sigma_l}{\sigma_{j_0}} \frac{\chi(b)^2 (1+\frac{1}{4b}) \sigma_{j_0} + \sqrt{2}\sigma_l}{(1-\frac{\chi(b)}{4b}) \sigma_{j_0} +\sigma_l}.
\end{align*}
Note that $\sigma_l \le \sigma_{j_0}+120\eta r \le (1 + 120/80b) \sigma_{j_0} \le (7/4) \sigma_{j_0}$ for $b\ge 2$. Also, $\chi(b) \le \chi(2) = 9/8$ for $b\ge 2$. We further obtain that
\begin{align*}
\chi(b)\frac{\eta}{|z|}\frac{\sigma_l (\chi(b) |z|+\sqrt{2}\sigma_l)}{|\sigma_l^2 - \varphi(z)|}&\le \frac{1}{r} \frac{1}{10(1- 9/64)} \frac{7}{4}  \frac{(9/8)^3 \sigma_{j_0} + \sqrt 2 \sigma_l}{ (1- 9/64) \sigma_{j_0} + \sigma_l}\\
&\le \frac{1}{r} \frac{7}{40(1- 9/64)} \frac{(9/8)^3}{ 1- 9/64} < \frac{0.34}{ r}.
\end{align*}

\emph{Case 2. For any $l \in \llbracket 1, r\rrbracket$ satisfying $|\sigma_l-\sigma_{j_0}| > 120 \eta r$, } we start with 
\begin{align*}
|\sigma_l^2 - \varphi(z)| &\ge |\sigma_l^2- \sigma_{j_0}^2| -| \sigma_{j_0}^2 - \varphi(z)|=|\sigma_l^2- \sigma_{j_0}^2| -| \varphi(z_{j_0}) - \varphi(z)|\\
&\ge |\sigma_l-\sigma_{j_0}| (\sigma_l+\sigma_{j_0}) -  20 \chi(b) \xi(b)\eta r|z_{j_0} + z|.
\end{align*}
by \eqref{eq:lipschitz} and $|z-z_{j_0}| = 20\chi(b) \eta r$. Since  
\begin{align*}
|z_{j_0} + z| &\le 2z_{j_0}+| z- z_{j_0}| = 2z_{j_0} +20\chi(b) \eta r\\
& \le 2\chi(b) \sigma_{j_0} +20\chi(b) \eta r \le 2\chi(b) \left(1+ {1}/{8b} \right)\sigma_{j_0} \le \frac{34}{16}\chi(b) \sigma_{j_0}
\end{align*}
 and $|\sigma_l-\sigma_{j_0}| > 120 \eta r$, we further get 
\begin{align*}
|\sigma_l^2 - \varphi(z)| &\ge |\sigma_l-\sigma_{j_0}| (\sigma_l+\sigma_{j_0}) -  20 \chi(b)^2\xi(b) \frac{34}{16}\frac{1}{120} |\sigma_l-\sigma_{j_0}| (\sigma_l+\sigma_{j_0})\\
&=\left( 1- \frac{17}{48} \chi(b)^2\xi(b) \right)|\sigma_l-\sigma_{j_0}| (\sigma_l+\sigma_{j_0}).
\end{align*}

Hence, using  \eqref{eq:zulb}, we get
\begin{align*}
&\chi(b)\frac{\eta}{|z|}\frac{\sigma_l (\chi(b) |z|+\sqrt{2}\sigma_l)}{|\sigma_l^2 - \varphi(z)|} \le \frac{\chi(b) \eta}{(1-\frac{\chi(b)}{4b}) (1-\frac{17}{48} \chi(b)^2\xi(b))} \frac{\sigma_l}{\sigma_{j_0}}\frac{ \chi(b)^2(1+\frac{1}{4b})\sigma_{j_0} + \sqrt{2}\sigma_l }{|\sigma_l-\sigma_{j_0}| (\sigma_l+\sigma_{j_0})}.
\end{align*}
To continue the estimates, we simply use the fact that $\chi(b),\xi(b)$ are decreasing. Thus $\chi(b)\le \chi(2)=9/8$ and $\xi(b)=1+1/(2(b-1)^2)\le 3/2$ for $b\ge 2$. Hence,
\begin{align*}
&\chi(b)\frac{\eta}{|z|}\frac{\sigma_l (\chi(b) |z|+\sqrt{2}\sigma_l)}{|\sigma_l^2 - \varphi(z)|}\le \frac{(9/8)^4}{\frac{55}{64} (1-\frac{17}{48}(\frac{9}{8})^2\frac{3}{2})}\frac{\sigma_l}{\sigma_{j_0}} \frac{\eta}{|\sigma_{j_0}-\sigma_l|} < 6 \frac{\sigma_l}{\sigma_{j_0}} \frac{\eta}{|\sigma_{j_0}-\sigma_l|}.
\end{align*}
If $\sigma_l \le 2 \sigma_{j_0}$, then 
\begin{align*}
&6 \frac{\sigma_l}{\sigma_{j_0}} \frac{\eta}{|\sigma_{j_0}-\sigma_l|} \le 12 \frac{\eta}{120 \eta r} = \frac{0.1}{r}.
\end{align*}
If $\sigma_l \ge 2 \sigma_{j_0}$, then $\sigma_l-\sigma_{j_0} \ge 0.5 \sigma_l $ and 
\begin{align*}
&6 \frac{\sigma_l}{\sigma_{j_0}} \frac{\eta}{|\sigma_{j_0}-\sigma_l|}\le 12 \frac{\eta}{\sigma_{j_0}} \le  12 \frac{\eta}{160\eta r}=\frac{0.075}{r}.
\end{align*}

Thus, we conclude that 
\begin{equation}\label{eq:epsbd}
 \eps(z) \leq \frac{0.34}{ r} 
\end{equation} 
for all $z \in \mathcal{K}_{k,s}$. By Lemma \ref{lem:comparezero}, the number of zeros of $f$ inside $\mathcal{K}_{k,s}$ is the same as the number of zeros of $g$ inside $\mathcal{K}_{k,s}$. 

Since $\min\{\delta_{i_0-1},\delta_{r_0} \} \ge 75\chi(b) \eta r$ by our supposition, we could take $k=i_0$ and $s=r_0$ and thus $\mathcal{K}_{i_0,r_0} = \cup_{l=i_0}^{r_0} \mathcal{C}_l$. 
Since $g$ has $r_0 - i_0 +1$ zeros inside $\mathcal{K}_{i_0,r_0}$, it follows that $\mathcal A + \mathcal E$ has exactly $r_0 - i_0 +1$ eigenvalues inside $\mathcal{K}_{i_0,r_0}$. 
More generally, for any $i_0 \le k\le s \le r_0$ satisfying $\min\{\delta_{k-1},\delta_s \} \ge 75 \chi(b) \eta r$, we conclude that the number of eigenvalues of $\mathcal A + \mathcal E$ inside $\mathcal{K}_{k,s}$ is $s-k+1$, the same as the number of zeros of $g$ inside $\mathcal{K}_{k,s}$.  
It remains to show that these eigenvalues are exactly $\widetilde \sigma_{k}, \cdots, \widetilde \sigma_{s}$. If this is the case, then for any $j \in \llbracket k, s \rrbracket$, there exists $j_0 \in \llbracket k, s \rrbracket$ such that $\widetilde \sigma_j \in \mathcal{C}_{j_0}$ and thus $$|\widetilde \sigma_j -z_{j_0}| \le 20 \chi(b) \eta r. $$ In particular, $\widetilde \sigma_j \in S_{\sigma_{j_0}}$. By $\varphi(z_{j_0})=\sigma_{j_0}^2$, \eqref{eq:lipschitz} and \eqref{eq:zjbd},
\begin{align*}
|\varphi(\widetilde{\sigma}_{j}) - \sigma_{j_0}^2| =|\varphi(\widetilde{\sigma}_{j}) - \varphi(z_{j_0})|\leq 20\xi(b) \chi(b)\eta r \left(\widetilde{\sigma}_{j}+\chi(b)\sigma_{j_0} \right).
\end{align*}
This will complete the proof. 

It remains to prove that for any $i_0 \le k\le s \le r_0$ satisfying $\min\{\delta_{k-1},\delta_s \} \ge 75 \chi(b) \eta r$, the eigenvalues of $\mathcal A + \mathcal E$ inside $\cup_{l=k}^s \mathcal C_l$ are exactly $\widetilde \sigma_{k}, \cdots, \widetilde \sigma_{s}$.  We will do so by proving the following claims hold on the event $\mathcal F$ (see Figure \ref{fig:cycles} for an illustration): 
\begin{figure}[!ht]
 \begin{center}
   \includegraphics[width=12cm]{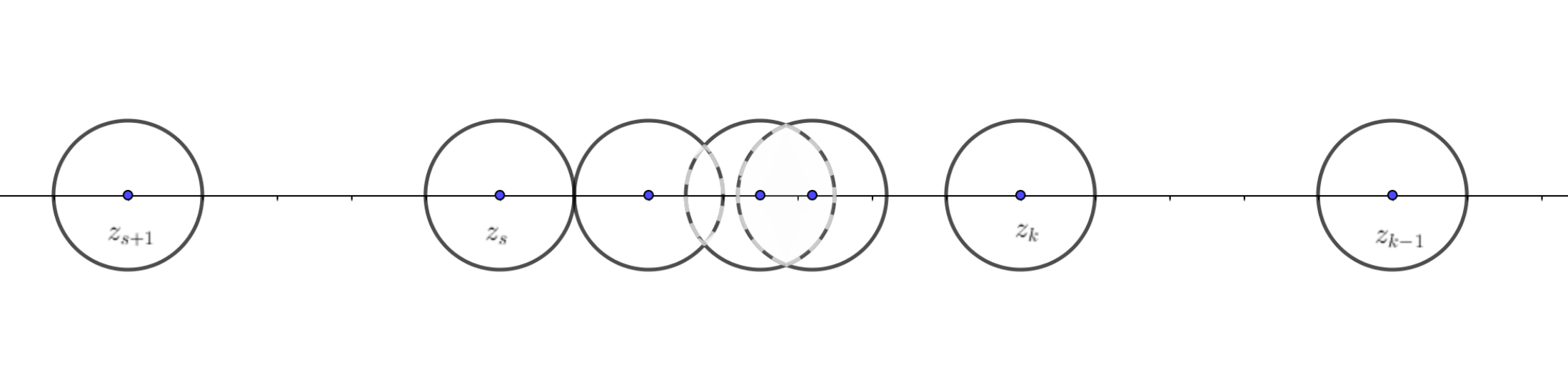}
   \caption{Distinct circles $\mathcal C_j$ with centers $z_j$ on the real line for $i_0 \le j \le r_0$.}
    \label{fig:cycles}
 \end{center}
\end{figure}

\noindent\emph{Claim 1.} For any $i_0 \le k\le s \le r_0$ satisfying $\min\{\delta_{k-1},\delta_s \} \ge 75 \chi(b) \eta r$, $\cup_{l=k}^s \mathcal C_l$ does not intersect other circles.\\
\noindent\emph{Claim 2.}  $\mathcal A+\mathcal E$ has exactly $i_0-1$ eigenvalues larger than $z_{i_0}+20\chi(b)\eta r$.\\
\noindent\emph{Claim 3.} No eigenvalues of $\mathcal A+\mathcal E$ lie between disjoint circles. 

For the moment, let us assume these claims are true. Note that $\widetilde \sigma_{i_0}$ has to lie inside one of the $\mathcal C_j$'s ($i_0 \le j \le r_0$) because it is the largest eigenvalue of $\mathcal A+\mathcal E$ that is no larger than $z_{i_0}+20\chi(b)\eta r$ (due to \emph{Claim 2}) and thus it satisfies $\widetilde \sigma_{i_0} > z_{r_0} - 20\chi(b)\eta r$. Since the number of zeros of $g(z)$ located inside $\mathcal K_{i_0,r_0}=\cup_{j=i_0}^{r_0}\mathcal C_j$, which is $r_0-i_0+1$, is the same as that of $f(z)$ inside $\mathcal K_{i_0,r_0}$, we have $\widetilde \sigma_{i_0},\ldots,\widetilde \sigma_{r_0}$ lie inside $\mathcal K_{i_0,r_0}$. The conclusion follows by \emph{Claim 1}, \emph{Claim 3} and the fact that the number of zeros of $g(z)$ in each $\mathcal K_{k,s}$ is the same as that of $f(z)$.

We start with the proof of the \emph{Claim 1}. It suffices to show that if $|\sigma_l -\sigma_j| \ge 75 \chi(b) \eta r$, then $\mathcal C_l$ and $\mathcal C_j$ do not intersect. By Lemma \ref{lemma:varphi},
$$|z_l^2 - z_j^2| \ge \frac{1}{\xi(b)}|\varphi(z_l) - \varphi(z_j)| =\frac{|\sigma_l^2 - \sigma_j^2|}{\xi(b)}  \ge \frac{75 \chi(b) \eta r}{\xi(b)}(\sigma_l + \sigma_j).$$
Since $|z_l^2 - z_j^2| = (z_l+z_j) |z_l-z_j| \le \chi(b)(\sigma_l + \sigma_j) |z_l-z_j|$ by Proposition \ref{prop:H}, we have
\begin{equation}\label{eq:zljdiff}
|z_l - z_j |\ge \frac{75}{\xi(b)}\eta r, 
\end{equation}
and thus
$$\dist(\mathcal C_j, \mathcal C_l) \ge |z_l - z_j| - 40\chi(b)  \eta r \ge \frac{75}{\xi(b)}\eta r-40\chi(b)\eta r\ge \left(\frac{75}{\xi(2)}-40\chi(2)\right)\eta r>0. $$

\medskip

Next, we prove \emph{Claim 2}. We split the proof into two cases: $i_0=1$ and $i_0>1$.

\noindent\emph{Case 1: $i_0=1$.} We prove that no eigenvalues of $\mathcal A+\mathcal E$ are larger than $z_1+20\chi(b) \eta r$. We now take $\mathcal{C}_0$ to be any circle with radius $20\chi(b)  \eta r$ centered at a point $z_0>z_1+20\chi(b)\eta r$ on the real line inside the region $H_1 \cap \hat{S}_{\sigma_1}$ such that $\dist(z_1,\mathcal{C}_0) \geq 20\chi(b)  \eta r$.  Here 
\begin{align}\label{eq:modifiedS}
\hat{S}_{\sigma_1} : = \{w \in \mathbb{C} : &| \Im(w) | \leq 20\chi(b) \eta r, \nonumber\\
&2b(\sqrt N + \sqrt n)+138 \eta r \leq \Re(w) \leq \frac{3}{2}\sigma_1 +  20\chi(b) \eta r \}
\end{align}
 is a slight modification of the set ${S}_{\sigma}$ in \cite[Eq. (35)]{Wang24}. Note that $\widetilde{\sigma}_1 \in \hat{S}_{\sigma_1}$: the upper bound $\widetilde{\sigma}_1\le \frac{3}{2}\sigma_1$ follows from the Weyl's inequality and the supposition $\|\mathcal E\|\le \frac{1}{b}\sigma_1 \le \frac{1}{2}\sigma_1$; the lower bound is because it is the largest eigenvalue and $\widetilde{\sigma}_1 \ge z_j -20\chi(b) \eta r \ge \sigma_j -20\chi(b)\eta r$ for any $j\in \llbracket i_0, r_0 \rrbracket$ due to fact that the number of eigenvalues of $\mathcal A+ \mathcal E$ inside $\cup_{l=i_0}^{r_0} \mathcal{C}_l$ is $r_0-i_0+1$. For $z\in \hat{S}_{\sigma_1}$, $$2b(\sqrt N + \sqrt n)\le |z| \le 40\chi(b)\eta r +\frac{3}{2}\sigma_2 \le \frac{40\chi(b)}{80b}\sigma_1 + \frac{3}{2}\sigma_1 \le \frac{57}{32}\sigma_1 < 2n^2,$$ hence $z\in \mathsf D$ and the conclusion of Lemma 6.3 holds. In particular, the bound \eqref{eq:epsbd1} also holds for $z\in \hat{S}_{\sigma_1}$. 
 We show
\[ \eps(z) < \frac{1}{3r} \]
for all $z \in \mathcal{C}_0$.  The proof is similar to the proof of \eqref{eq:epsbd} and we sketch it here. For any $z \in \mathcal{C}_0$, from $|z-z_0|=20\chi(b) \eta r$  and $z_0-z_1 > 40\chi(b)  \eta r$, we obtain 
$|z| \le z_0 + 20\chi(b)  \eta r$ and $$|z| \ge z_0 - 20\chi(b)  \eta r \ge z_1 + 20\chi(b)  \eta r > \sigma_1 + 20\chi(b) \eta r > \sigma_1.$$ Again, by Lemma \ref{lemma:varphi}, we see for any $1\le l \le r$, 
\begin{align}\label{eq:onelastineq}
|\sigma_l^2 - \varphi(z)| = |\varphi(z_l) -\varphi(z)| &\ge \frac{1}{2} |z_l^2 - z^2| \nonumber\\
&\ge  \frac{1}{2} (z_l + \Re(z))(\Re(z) -z_l)\nonumber\\
&\ge \frac{1}{2}(\sigma_l + z_0 -20\chi(b) \eta r)(z_0-z_l -20\chi(b)\eta r) \nonumber\\
&\ge 10\chi(b) \eta r (\sigma_l + z_0 -20\chi(b)\eta r).
\end{align}
Plugging these estimates back into \eqref{eq:epsbd1}, we see
\begin{align*}
\eps(z) \le  \max_{1\le l \le r}\chi(b) \frac{\eta\sigma_l}{\sigma_1} \frac{\chi(b)(z_0 + 20\chi(b) \eta r) + \sqrt{2} \sigma_l}{10\chi(b)\eta r( z_0+\sigma_l -20\chi(b)\eta r)} < \frac{1}{5r},
\end{align*}
 where we used the bound $\chi(b)(z_0 + 20\chi(b) \eta r) + \sqrt{2} \sigma_l \le 2(z_0+\sigma_l -20\chi(b)\eta r)$ in the last inequality. 

By Lemma \ref{lem:comparezero}, $f$ has the same number of zeros inside $\mathcal{C}_0$ as $g$.  As $g$ has no zeros inside $\mathcal{C}_0$\footnote{This follows from Lemma \ref{lemma:varphi} and the fact that $\Im(\varphi(z)) \neq 0$ whenever $\Im z \neq 0$ for all $|z| > M$.}, $\mathcal A+\mathcal E$ has no eigenvalues inside $\mathcal{C}_0$.  Since the circle $\mathcal{C}_0$ was arbitrarily chosen inside this region, we conclude that $\mathcal A+\mathcal E$ has no eigenvalues larger than $z_1+20\chi(b)\eta r$.

\medskip

\noindent\emph{Case 2: $i_0>1$.} 
On the event $\mathcal F$, we have
\begin{equation}\label{ineq:big}
\max_{l\in \llbracket 1,r_0 \rrbracket; \sigma_l>n^2/2}|\widetilde \sigma_l - \sigma_l |\le \eta r.
\end{equation}
Note that $\sigma_{i_0-1}>n^3>n^2$.  Combining \eqref{ineq:big}, Proposition \ref{prop:bigbound} and $$z_{i_0-1} - z_{i_0}\ge \frac{75}{\xi(b)}\eta r\ge \frac{75}{\xi(2)}\eta r = 50\eta r,$$ which follows from the supposition $\delta_{i_0-1}\ge 75\chi(b) \eta r$ and the same argument as \eqref{eq:zljdiff}, we get 
\begin{align*}
\widetilde \sigma_{i_0-1} \ge \sigma_{i_0-1} - \eta r \ge z_{i_0-1} - \frac{3b}{b-1}\frac{1}{n} - \eta r \ge z_{i_0} + 50\eta r- \frac{6}{n} > z_{i_0} + 20\chi(b) \eta r.
\end{align*}
Hence, $\mathcal A+\mathcal E$ has at least $i_0-1$ eigenvalues larger than $z_{i_0}+20\chi(b)\eta r$. 

We first consider $\sigma_{i_0} >\frac{1}{2}n^2$.  It follows from \eqref{ineq:big} and Proposition \ref{prop:bigbound} that$$\widetilde \sigma_{i_0} \le \sigma_{i_0}+\eta r \le z_{i_0} + \frac{3b}{b-1}\frac{1}{n} +\eta r \le z_{i_0} +\frac{6}{n} +\eta r < z_{i_0} + 20\chi(b)\eta r.$$ This shows that $\mathcal A+\mathcal E$ has exactly $i_0-1$ eigenvalues larger than $z_{i_0}+20\chi(b)\eta r$.

Now consider $\sigma_{i_0} \le \frac{1}{2}n^2$. By Weyl's inequality, $\widetilde \sigma_{i_0} \le \sigma_{i_0} + \|E\| \le (1+\frac{1}{b})\sigma_{i_0}$. If $(1+\frac{1}{b})\sigma_{i_0} \le z_{i_0} + 20\chi(b)\eta r$,   the proof is already done. Now we assume $(1+\frac{1}{b})\sigma_{i_0} > z_{i_0} + 20\chi(b)\eta r$. If $(1+\frac{1}{b})\sigma_{i_0} - (z_{i_0} + 20\chi(b)\eta r)< 20\eta r$, following \eqref{eq:zjbd},  we have $\chi(b)\sigma_{i_0} \ge z_{i_0} > (1+\frac{1}{b})\sigma_{i_0} - 20(\chi(b)+1)\eta r$ and thus $\sigma_{i_0} < 80b\eta r\frac{(b-1)(\chi(b)+1)}{4b-5} $. Note that $\frac{(b-1)(\chi(b)+1)}{4b-5}$ is decreasing for $b\ge 2$ and $\frac{(b-1)(\chi(b)+1)}{4b-5} \le 17/24$.  Hence,  $\sigma_{i_0} < 80b\eta r$ contradicts the supposition that $\sigma_{i_0} \ge 80b\eta r$. 

It suffices to assume $(1+\frac{1}{b})\sigma_{i_0} - (z_{i_0} + 20\chi(b)\eta r)\ge 20\eta r$, which implies that $z_{i_0} \le (1+\frac{1}{b})\sigma_{i_0} - 20(\chi(b)+1)\eta r$. To prove $\widetilde \sigma_{i_0} \le z_{i_0} + 20\chi(b)\eta r$, we show that $f$ has no zeros on the interval $(z_{i_0}+20\chi(b)\eta r, (1+\frac{1}{b})\sigma_{i_0})$. The proof is similar to the proof of \emph{Case 1} when $i_0=1$. We only mention the differences. Define $\hat{S}_{\sigma_{i_0}}$ as in \eqref{eq:modifiedS} and the bound \eqref{eq:epsbd1} also holds for $z\in \hat{S}_{\sigma_{i_0}}$. The goal is to show $\varepsilon(z)<1/3r$ for all $z\in\mathcal C_0$, where $\mathcal C_0$ is any circle with radius $10  \eta r$ centered at a point $z_0 \in (z_{i_0}+20\chi(b) \eta r, (1+\frac{1}{b})\sigma_{i_0})$  inside the region $H_{i_0} \cap \hat{S}_{\sigma_{i_0}}$ such that $\dist(z_{0},z_{i_0}+20\chi(b) \eta r) \geq 10  \eta r$ and $\dist(z_{0},(1+\frac{1}{b})\sigma_{i_0}) \geq 10  \eta r$. If so, by Lemma \ref{lem:comparezero}, $f$ has the same number of zeros inside $\mathcal C_0$ as $g$.  Note that $g$ has no zeros inside $\mathcal C_0$ since $\Im(\varphi(z)) \neq 0$ whenever $\Im z \neq 0$ for all $|z| > M$ and $z_{i_0-1} \ge \sigma_{i_0-1} - \frac{3b}{b-1}\frac{1}{n} > n^2 - \frac{3b}{b-1}\frac{1}{n} > \frac{3}{2}\sigma_{i_0}\ge(1+\frac{1}{b})\sigma_{i_0}$ by Proposition \ref{prop:bigbound}. Since $\mathcal{C}_0$ was arbitrarily chosen, $\mathcal A + \mathcal E$ has no eigenvalues on $(z_{i_0}+20\chi(b)\eta r, (1+\frac{1}{b})\sigma_{i_0})$. 

It remains to bound $\eps(z)$ from \eqref{eq:epsbd1}. Note that $z_0 - z_{i_0} \ge 10 \eta r + 20\chi(b) \eta r$ and $(1+\frac{1}{b})\sigma_{i_0} - z_0 \ge 10 \eta r$.  For $z\in \mathcal C_0$, from $|z-z_0| = 10 \eta r$, we get $|z| \ge z_0 - 10 \eta r \ge z_{i_0} + 20\chi(b) \eta r \ge \sigma_{i_0} + 20\chi(b) \eta r> \sigma_{i_0}$ and $|z| \le z_0 + 10 \eta r\le (1+\frac{1}{b})\sigma_{i_0}.$

The same arguments as those in \emph{Case 1} yield that 
$$\max_{i_0\le l \le r}  \chi(b)\frac{\eta}{|z|}\frac{\sigma_l (\chi(b) |z|+\sqrt{2}\sigma_l)}{|\sigma_l^2 - \varphi(z)|} < \frac{1}{3r}$$ for any $z\in \mathcal C_0$. 
We only need to control
\begin{align*}
\max_{1\le l \le i_0-1}  \chi(b)\frac{\eta}{|z|}\frac{\sigma_l (\chi(b) |z|+\sqrt{2}\sigma_l)}{|\sigma_l^2 - \varphi(z)|}.
\end{align*}
For any $1\le l \le i_0-1$,  using similar computation from \eqref{eq:onelastineq}, we get
\begin{align*}
|\sigma_l^2 - \varphi(z)| &\ge \frac{1}{2}(\sigma_l + z_0 -10\eta r)(z_l-z_0 -10\eta r).
\end{align*}
Plugging in $z_0 \ge z_{i_0} + 10 \eta r + 20\chi(b) \eta r\ge \sigma_{i_0}+10 \eta r + 20\chi(b) \eta r$, we obtain $$\sigma_l + z_0 -20\chi(b)\eta r \ge \sigma_l + \sigma_{i_0} +20\chi(b)\eta r.$$ From $\sigma_l  \geq n^3$, we see $\sigma_{i_0}\leq \frac{1}{2}n^2 < \frac{1}{2}\sigma_l$. This, together with \eqref{eq:zjbd} and $z_0\le (1+\frac{1}{b}) \sigma_{i_0} - 10\eta r$, implies that 
\begin{align*}
z_l-z_0 -10\eta r &\ge \sigma_l -(1+\frac{1}{b}) \sigma_{i_0} \ge \sigma_l - \frac{1}{2}(1+\frac{1}{b}) \sigma_l \ge \frac{1}{4}\sigma_l.
\end{align*}
Hence, $|\sigma_l^2 - \varphi(z)| \ge \frac{1}{8} \sigma_l (\sigma_l + \sigma_{i_0} +20\chi(b)\eta r)$ and 
\begin{align*}
\max_{1\le l \le i_0-1}  \chi(b)\frac{\eta}{|z|}\frac{\sigma_l (\chi(b) |z|+\sqrt{2}\sigma_l)}{|\sigma_l^2 - \varphi(z)|}&\le \max_{1\le l \le i_0-1}  \chi(b)\frac{\eta \sigma_l}{\sigma_{i_0}}\frac{\chi(b) (1+\frac{1}{b}) \sigma_{i_0} +\sqrt{2}\sigma_l}{\frac{1}{8}(\sigma_l + \sigma_{i_0} +20\chi(b)\eta r) \sigma_l }\\
&< 45 \frac{\eta}{\sigma_{i_0}} \le  \frac{45\eta}{160\eta r}< \frac{1}{3r}
\end{align*}using the assumption $\sigma_{i_0}\ge 80b \eta r\ge 160 \eta r$ and the bound $\chi(b)\le \chi(2)=9/8$.
Therefore, $\eps(z) < {1}/{3r}$ for all $z\in \mathcal C_0.$ 

\medskip

The proof of \emph{Claim 3} is similar to the previous argument. Let $\mathcal C_{j_1}, \mathcal C_{j_2}$ be two disjoint circles for $j_1, j_2 \in \llbracket i_0, r_0\rrbracket$. Then $|z_{j_1}-z_{j_2}| > 40 \chi(b) \eta r$. Let $\mathsf d:=\dist(\mathcal C_{j_1}, \mathcal C_{j_2})>0.$ We show that $\mathcal A + \mathcal E$ has no eigenvalues lying on the real line between $\mathcal C_{j_1}$ and $\mathcal C_{j_2}$. Take any point $x$ on the real line between the two circles so that $\mathcal C_x$, the circle centered at $x$ with radius $\mathsf r:=\frac{1}{10}\min\{d, 20\chi(b)\eta r\}$ (say), is inside the region $H_{j_1}\cap S_{\sigma_{j_1}}$ or $H_{j_2}\cap S_{\sigma_{j_2}}$, where $\dist(x, \mathcal C_{j_1}) > \mathsf r$ and $\dist(x, \mathcal C_{j_2}) > \mathsf r$. Then using similar calculations as in the proof of \emph{Claim 2}, it suffices to show that $\eps(z)<{1}/{3r}$. The remaining arguments are similar to those in the proof of \emph{Claim 2}; we omit the details.

\section{Proofs of (28) and Proposition \ref{prop:blockphi}} \label{app:proof2parts}

\subsection{Proof of (28)}
For the sake of completeness, we include the proof of (28) in this section. The proof was established in \cite[Appendix B.4]{OVW22}.

Assuming $n \geq N$, the matrix $E$ has rank $N$ almost surely, allowing us to express its singular value decomposition as $E = \sum_{i=1}^N \eta_i x_i y_i^T,$ 
with its null space spanned by orthonormal vectors $h_1,\ldots,h_{n-N}$. Given the block structure of $\mathcal{E}$:
\[
\mathcal{E} = \begin{pmatrix} 0 & E\\ E^T & 0\end{pmatrix},
\]
its spectral properties are determined by those of $E$. Specifically, the resolvent $G(z)=(z-\mathcal{E})^{-1}$ has the eigendecomposition:
\[
G(z) = \sum_{i=1}^N \frac{w_i w_i^T}{z-\eta_i} + \sum_{i=1}^N \frac{w_{N+i} w_{N+i}^T}{z+\eta_i} + \frac{1}{z}\sum_{j=1}^{n-N} w_{2N+j} w_{2N+j}^{T},
\]
where the eigenvectors are $w_i =\frac{1}{\sqrt 2} (x_i^T, y_i^T)^T$, $w_{N+i} =\frac{1}{\sqrt 2} (x_i^T, -y_i^T)^T$, and $w_{2N+j} = ( 0, h_j^T)^T$.

From this, we can compute the traces
\begin{align*}
\tr\mathcal{I}^d G(z) &= \frac{1}{2} \sum_{i=1}^N \left(\frac{1}{z-\eta_i} + \frac{1}{z+\eta_i}\right) + \frac{n-N}{z},\\
\tr\mathcal{I}^u G(z) &= \frac{1}{2} \sum_{i=1}^N \left(\frac{1}{z-\eta_i} + \frac{1}{z+\eta_i}\right).
\end{align*}
Equation (28) follows from the definitions that $\phi_1(z) = z - \tr \mathcal{I}^\mathrm{d} G(z)$ and $\phi_2(z) = z- \tr \mathcal{I}^\mathrm{u} G(z)$.

\subsection{Proof of Proposition \ref{prop:blockphi}}\label{app:svaluecompute}
Note that $\Phi(x)$ is well-defined when $|x|>\|\mathcal E\|$.  We use $\alpha\equiv\alpha(x)$ and $\beta \equiv \beta(x)$ for brevity. Denote $J=\llbracket 1,2r\rrbracket\setminus I :=J_1 \cup J_2$ where $J_1= \llbracket 1,r\rrbracket\setminus\llbracket k,s\rrbracket$ and $J_2= \llbracket r+1,2r\rrbracket\setminus\llbracket r+k,r+s\rrbracket$. Further, let $\mathsf t:=|J_1| =|J_2|.$

We analyze the eigenvalues of 
\begin{align}\label{eq:positivematrix}
\left( I_{2\mathsf t} - \mathcal U_J^\T \Phi(x) \mathcal U_J \mathcal D_J \right)^\T \left( I_{2l} - \mathcal U_J^\T \Phi(x) \mathcal U_J \mathcal D_J \right),
\end{align}
where, using the expressions of $\mathcal D$ and  $\mathcal U^\T \Phi(z) \mathcal U$ from Section 6, we have
\begin{align*}
I_{2\mathsf t} - \mathcal U_J^\T \Phi(x) \mathcal U_J \mathcal D_J = I_{2\mathsf t}- \begin{pmatrix} \alpha I_{\mathsf t} & \beta I_{\mathsf t}\\
\beta I_{\mathsf t} & \alpha I_{\mathsf t} 
\end{pmatrix}
\begin{pmatrix} D_{J_1} &0\\
0 & -D_{J_1}
\end{pmatrix}=
\begin{pmatrix} I_{\mathsf t}- \alpha D_{J_1} & \beta D_{J_1}\\
-\beta D_{J_1} & I_{\mathsf t}+ \alpha D_{J_1}
\end{pmatrix}.
\end{align*}

The matrix \eqref{eq:positivematrix} can be expressed as
$$
\begin{pmatrix}
I_{\mathsf t}-2\alpha D_{J_1} + (\alpha^2 +\beta^2) D_{J_1}^2 & -2\alpha \beta D_{J_1}^2\\
-2\alpha \beta D_{J_1}^2 & I_{\mathsf t}+2\alpha D_{J_1} + (\alpha^2 +\beta^2) D_{J_1}^2
\end{pmatrix},
$$
where each block is a diagonal matrix. Direct computation shows that the eigenvalues of \eqref{eq:positivematrix} are
\begin{align*}
1+(\alpha^2 +\beta^2)\sigma_l^2 \pm 2\alpha \sigma_l \sqrt{1+\beta^2 \sigma_l^2} = \left(\sqrt{1+\beta^2 \sigma_l^2} \pm \alpha \sigma_l \right)^2
\end{align*}
for $l \in J_1$. 
The conclusion of Propositions \ref{prop:blockphi} follows by taking the square root of these eigenvalues.


\end{document}